\newtheorem{theorem}{Theorem}[section]
\newtheorem{example}[theorem]{Example}
\newtheorem{observation}[theorem]{Observation}
\newtheorem{quasi-theorem}[theorem]{Quasi-Theorem}
\newtheorem{rem1}[theorem]{Remark}
\newenvironment{remark}{\begin{rem1}\em}{\end{rem1}}
\newtheorem{not1}[theorem]{Notation}
\newenvironment{notation}{\begin{not1}\em}{\end{not1}}
\DeclarePairedDelimiter{\ceil}{\lceil}{\rceil}
\DeclarePairedDelimiter{\floor}{\lfloor}{\rfloor}
\theoremstyle{plain}
\newtheorem{conjecture}[theorem]{Conjecture}
\newtheorem{corollary}[theorem]{Corollary}
\newtheorem{question}[theorem]{Question}
\theoremstyle{definition}
\newtheorem{definition}[theorem]{Definition}
\theoremstyle{plain}
\newtheorem{lemma}[theorem]{Lemma}
\newtheorem{proposition}[theorem]{Proposition}
\numberwithin{equation}{section}
\newtheorem*{proposition*}{Proposition}
\newtheorem*{definition*}{Definition}
\newtheorem*{remark*}{Remark}
\newtheorem*{observation*}{Observation}
\theoremstyle{plain}
\newtheorem*{lemma*}{Lemma}
\newtheorem*{theorem*}{Theorem}
\newtheorem*{question*}{Question}
\newtheorem*{corollary*}{Corollary}
\newtheorem*{claim*}{Claim}
\definecolor{lbcolor}{rgb}{0.9,0.9,0.9}
\newcommand{\tmfloatcontents}{}
\newlength{\tmfloatwidth}
\newcommand{\tmfloat}[5]{
	\renewcommand{\tmfloatcontents}{#4}
	\setlength{\tmfloatwidth}{\widthof{\tmfloatcontents}+1in}
	\ifthenelse{\equal{#2}{small}}
	{\setlength{\tmfloatwidth}{0.45\linewidth}}
	{\setlength{\tmfloatwidth}{\linewidth}}
	\begin{minipage}[#1]{\tmfloatwidth}
		\begin{center}
			\tmfloatcontents
			\captionof{#3}{#5}
		\end{center}
\end{minipage}}
\title{Vanishing of weight one syzygies of projective varieties}
\author{Debjit Basu}
\address{Department of Mathematics, University of Kansas,
Snow Hall, 1460 Jayhawk Blvd, Lawrence, Kansas 66045, USA}
\email{debjitbasu@ku.edu}
\begin{document}
\vspace{-0.8cm}
\begin{abstract}
In this article we study conditions under which weight one Koszul cohomology vanishes on projective varieties. As corollary of more general results, we obtain statements on the so-called property $(M_q)$ reflecting on the higher syzygies of minimal surfaces and higher dimensional projective varieties. We also provide lower bounds on $\ell$ such that  $L = \ell B$ and adjoint $L' = K_{X}+\ell B$ satisfies  $(M_{q})$ for base point free and ample line bundle $B$ on a projective variety X. Property $(M_{q})$ is complementary to property$-(N_{p})$ though it is far less studied. Notably, our bounds for property $(M_{q})$ for both $L$ and $L'$ are equal to or better than existing bounds for property $(N_{p})$, as established by other authors—particularly in relation to Mukai’s conjecture and its higher-dimensional analogues. By considering both properties $(M_{q})$ and $(N_{p})$, we gain a significantly deeper understanding of the minimal free resolution. 
\end{abstract}
\maketitle
\vspace{-0.8cm}
\small{\tableofcontents}
\section*{Introduction}
In this article we study the vanishing of weight one syzygies of smooth projective varieties embedded by a complete linear series. \\ \\
\textbf{Background.}
Let $X$ be a smooth complex projective (irreducible) variety of dimension $n = \mathrm{dim}(X),$ $L$ be an ample line bundle on $X,$ $S = \mathrm{Sym}(H^{0}(L))$ be the symmetric algebra on $H^{0}(X, L)$ and $R(L) = \bigoplus_{q \geq 0}H^{0}(X, L)$ be the ring of sections of $L.$ In this setting, one has the minimal graded free resolution
\begin{small}
\begin{equation}
    \label{free_resolution_of_R_L}
        0 \rightarrow F_{r-1} \overset{\phi^{r-1}}{\overset{}{{\longrightarrow}}} \cdots \overset{\phi^{i+1}}{\overset{}{\longrightarrow}} F_{i} = \bigoplus_{j \geq 0} K_{i, j}(X, L) \otimes_{\mathbb{C}} S(-i-j) \overset{\phi^{i}}{\overset{}{{\longrightarrow}}} \cdots \overset{\phi^{1}}{\overset{}{{\rightarrow}}} \ F_{0} \  \overset{\phi^{0}}{\overset{}{{\rightarrow}}} \ R(L) \rightarrow 0
\end{equation}
\end{small}
of $R(L)$ as a $S-$module with Betti numbers $\beta_{i,i+j} = \mathrm{dim}_{\mathbb{C}}K_{i, j}(X, L),$ where $K_{i, j}(X, L)$ is the Koszul cohomology group of weight $j$ at the $i^{th}-$stage $F_{i}$. We say that each $K_{i, j}(X, L) \otimes_{\mathbb{C}} S(-i-j)$ is the weight$-j$ syzygy of $L$ at the $i^{th}$ stage. Let $p, q$ be positive integers. When, $L$ is projectively normal, and the entries of the matrices associated to $\phi^{1}, \cdots, \phi^{p}$ have degree $1$, one says that $L$ satisfies property-$(N_{p}).$ In this case, there are only weight one ($j = 1$) syzygies in stages $F_{1}, \cdots, F_{p}.$ 
The left side of the resolution involving maps $\phi^{r-1}, \cdots, \phi^{r-q}$, reflects on the so called property$-(M_{q})$. This captures the vanishing of weight one syzygies in $F_{r-1}, \cdots, F_{r-q}$ and hence is complementary to the property-$(N_{p}).$ We now give a precise definition of property$-(M_{q})$ which was given in the case $n = 1$ (when $X$ is a curve) by \textit{M. Green} and \textit{R. Lazarsfeld} in \cite[Section-3]{GL86} in the context of the \textit{Gonality conjecture}. Here, we define it in all dimensions to initiate a broader study of the concept.
\begin{definition} $\left[ \ (M_{q})-\textbf{properties} \ \right]$ \\
\label{Intro_def_$(M_q)$}
     The line bundle $L$ is said to satisfy \textbf{Property}$-(\mathbf{m_{q}})$ if its resolution (\ref{free_resolution_of_R_L}) does not have any weight-one syzygies starting from the end (stage$ (r-1)$) of the resolution till stage$ (r-q),$ meaning
    \begin{equation}
        \label{$(M_q)-$property}
        F_{i} = \bigoplus_{j \geq 2}S(-i-j)^{\oplus \beta_{i, i+j}} \text{ or equivalently, } K_{i, 1}(X, L) = 0 \text{ for all } \ r-1 \geq i \geq r-q
    \end{equation}
    If the line bundle $L$ on $X$ is projectively normal and satisfies property$-(m_{q})$ then we say that it satisfies \textbf{Property}$-(\mathbf{M_{q}}).$
\end{definition}
Note that, since Property$ (m_{q})$ is a property concerned with the end (or tail) of the resolution one needs to have the condition $L$ is ample, so that the resolution is finite. \\
Note also that to satisfy property$-(m_{q})$ the line bundle $L$ need not be projectively normal or even very ample and in particular property $(m_{q})$ need not give any information about the syzygies of the homogeneous coordinate ring $S_{X/\mathbb{P}^{r}}$ of the embedding $\phi_{L} :  X \hookrightarrow \mathbb{P}^{r}$ defined by the complete linear series $|L|$ when $L$ is very ample but not projectively normal. Therefore, one needs to add projective normality to the definition to conclude any information of the resolution of $S_{X/\mathbb{P}^{r}}$ from the vanishing of weight one Koszul cohomologies. \par
The Property-$(M_{q})$ enjoys a very distinct rich geometry. Here we mention two of these concepts known in literature.
First, from the $\mathit{K_{p, 1}}-$\textit{Theorem} of \textit{M. Green} \cite[Theorem (3.c.1)]{Gre84I}, it follows that an ample and base point free line bundle $L$ satisfies Property$-(m_{q})$ for all $1 \leq q \leq n-1$ and if $L$ is very ample, the first non-trivial $(m_{q})$-property is property-$(m_{n})$ and $L$ satisfies property-$(m_{n})$ unless the image of $X$ under $\phi_{L}$ is a variety of minimal degree. Moreover a very ample line bundle $L$ of sufficiently high self-intersection $(L^{n})$ satisfies property$-(m_{n+1})$ unless the image of $X$ under $\phi_{L}$ lies in a $(n+1)-$fold of minimal degree in $\mathbb{P}^{r}$. \par
Second, in the case of curves the \textit{Gonality conjecture} by \textit{M. Green} and \textit{R. Lazarsfeld} asserts that for a line bundle of sufficiently high degree on a curve the highest value $q_{\max}$ of $q$ for which $L$ satisfies Property-$(M_{q})$ is $q_{\max} = \mathrm{gon}(C)-1,$ where $\mathrm{gon}(C)$ is the gonality of the curve $C$ (see \cite[(3.7) Conjecture]{GL86}). \par
The above makes it clear that knowing properties$-(M_q)$ and $-(N_p)$ of the line bundle in question, gives a more comprehensive understanding of the resolution. A lot of attention has been given in a series of important papers studying property$-(N_{p})$ for curves and some higher-dimensional varieties. We refer the reader to the articles \cite{BG99} to \cite{GL86} and \cite{EPark05} for an extensive treatment on property-$(N_{p})$. \par
But Property-$(M_{q})$ has not received enough attention yet except for the case of curves.  The gonality conjecture has received wide attention by several authors and it has been proved by \textit{L. Ein} and \textit{R. Lazarsfeld} and an effective version has been proved recently by \textit{W. Niu} and \textit{J. Park}.  Also, \textit{M. Aprodu} and \textit{L. Lombardi} have found a lower bound for $\ell$ depending on $q$ so that given an ample line bundle $A$ on an abelian variety $X,$ the multiple line bundle $L = \ell A$ satisfies Property-$(M_{q}).$ But apart from this, not much is known regarding property-$(M_{q})$ for higher-dimensional varieties. \\ \\
\textbf{Main results.}
In this article we prove a general result on the vanishing of weight one Koszul cohomology for all smooth projective varieties. But for a large class of interesting varieties we prove much finer results. As a consequence of this more general study we prove new results on property-$(M_{q})$ for all minimal algebraic surfaces and higher-dimensional varieties such as varieties with nef canonical bundles, ruled varieties and Fano varieties. We also link the property $(M_{q})$ which is a statement about free resolutions, to the geometric property of $k-$very ampleness for del-Pezzo surfaces. \par
Before we delve on these results, we state a general theorem we prove in this article about the property$-(M_{q})$ for $\ell B$ where $B$ is an ample and base point free line bundle.
\begin{theorem}
\label{CM_theorem}
(Multiple line bundles on smooth projective varieties) \\
    Let $X$ be a smooth projective variety and let $B$ be an ample and base point free line bundle on $X.$ Then, the line bundle $L = \ell B$ satisfies Property-$(m_{q})$ for all $\ell \geq \ell^{\mathrm{ceil}}_{q},$ where
    $
        \ell^{\mathrm{ceil}}_{q} := \ceil[\bigg]{\dfrac{q+1}{n-1}}.
    $
    Moreover, $L = \ell B$ satisfies Property-$(M_{q})$ if $\ell \geq \max(\rho, \ell^{\mathrm{ceil}}_{q})$ where $\rho$ is the regularity of $B$ in the sense of \textit{Castelnuovo} and \textit{Mumford}.
\end{theorem}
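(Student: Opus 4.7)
The plan is to prove $(m_q)$ by dualizing the tail of the minimal free resolution and reducing to a Koszul vanishing near the beginning of an auxiliary complex with canonical bundle coefficients. Once $\ell$ is large enough that all relevant higher cohomologies $H^{>0}(L^{j})$ vanish, Green's duality (applied either to the sheafified resolution on $\mathbb{P}^{r}$ or directly to $R(L)$ via Serre duality) supplies an isomorphism
\[
K_{i,1}(X,L)^{\vee}\;\cong\; K_{r-n-i,\,n}(X,\omega_{X};L).
\]
Re-indexing via $k=r-i$, property $(m_{q})$ translates into the requirement $K_{k-n,\,n}(X,\omega_{X};L)=0$ for $1\le k\le q$. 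For $k<n$ this vanishes automatically because the homological degree is negative, which is the tail counterpart of the free part of Green's $K_{p,1}$-theorem already alluded to in the introduction. So the actual content is to establish
\[
K_{j,\,n}(X,\omega_{X};L)=0\qquad\text{for every }0\le j\le q-n.
\]

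To access these groups I would pass to sheaf cohomology via the kernel bundle $M_{L}=\ker\bigl(H^{0}(L)\otimes \mathcal{O}_{X}\twoheadrightarrow L\bigr)$. Chopping the Koszul complex into the standard short exact sequences
\[
0\to \wedge^{p+1}M_{L}\to \wedge^{p+1}H^{0}(L)\otimes \mathcal{O}_{X}\to \wedge^{p}M_{L}\otimes L\to 0,
\]
twisting by $\omega_{X}\otimes L^{\,n-1}$ and verifying the intermediate higher-cohomology vanishings should identify
\[
K_{j,n}(X,\omega_{X};L)\;\cong\;H^{1}\bigl(X,\wedge^{j+1}M_{L}\otimes\omega_{X}\otimes L^{\,n-1}\bigr).
\]
A further filtration of $\wedge^{j+1}M_{L}$ obtained by taking exterior powers of the defining sequence of $M_{L}$ then reduces everything to sheaf vanishings of the form $H^{i}(X,\omega_{X}\otimes L^{\,t})$ for controllable twists $t$.

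The required sheaf-level vanishings are Kodaira-type: via Serre duality one has $H^{i}(X,L^{-a})=0$ for $0\le i<n$ and $a\ge 1$. Writing $L=\ell B$, the twist $\omega_{X}\otimes L^{t}$ becomes $\omega_{X}\otimes B^{\,\ell t}$, and the most negative power of $B$ appearing in the filtration of $\wedge^{j+1}M_{L}$ must remain in the Kodaira range. A careful count for $0\le j\le q-n$ shows that the worst-case twist sits inside this range exactly when $\ell(n-1)\ge q+1$, yielding $\ell\ge \lceil(q+1)/(n-1)\rceil$. I expect this numerical bookkeeping—matching the Koszul filtration indices with the sharp Kodaira threshold—to be the main technical obstacle; everything else is of the ``once the indices line up, the vanishings fall out'' flavour.

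Finally, to upgrade $(m_{q})$ to $(M_{q})$ one additionally needs projective normality of $L$. By Castelnuovo-Mumford regularity, if $B$ is $\rho$-regular then $\ell B$ is $0$-regular and hence projectively normal as soon as $\ell\ge \rho$. Combining with the previous bound yields $(M_{q})$ for $\ell\ge\max(\rho,\ell_{q}^{\mathrm{ceil}})$, as claimed.
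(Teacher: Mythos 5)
Your overall strategy—convert the tail weight-one groups into weight-$n$ groups near the front and then kill those by cohomology vanishing—has two genuine gaps. First, the duality step is not available at the stated level of generality. Green's duality isomorphism $K_{i,1}(X,L)^{\vee}\cong K_{r-n-i,\,n}(X,\omega_X;L)$ requires the vanishings $H^{j}(L^{\otimes(1-j)})=H^{j}(L^{\otimes(-j)})=0$ for $1\le j\le n-1$; for $j=1$ this includes $H^{1}(\mathcal{O}_X)=0$, which is independent of $\ell$ and fails for every irregular variety (abelian varieties, irregular surfaces, etc.), exactly the cases the theorem is meant to cover. Indeed $K_{i,1}(X,L)$ is only the kernel of $H^{1}(\bigwedge^{i+1}M_L)\to\bigwedge^{i+1}H^{0}(L)\otimes H^{1}(\mathcal{O}_X)$, and handling this kernel without assuming regularity of $X$ is precisely the point of the Aprodu--Lombardi criterion (Lemma \ref{AL_Lemma}) that the paper uses in place of duality; no choice of $\ell$ repairs your reduction for irregular $X$.

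Second, even on regular varieties where the duality and the identification $K_{j,n}(X,\omega_X;L)\cong H^{1}(\bigwedge^{j+1}M_L\otimes\omega_X\otimes L^{n-1})$ are legitimate, the final step is not a matter of lining up Kodaira twists. There is no filtration of $\bigwedge^{j+1}M_L$ whose graded pieces are powers of $L$: iterating the sequences $0\to\bigwedge^{p+1}M_L\to\bigwedge^{p+1}H^{0}(L)\otimes\mathcal{O}_X\to\bigwedge^{p}M_L\otimes L\to 0$ reduces the desired $H^{1}$-vanishing not to $H^{i}(\omega_X\otimes L^{t})=0$ but to the surjectivity of multiplication maps of the form $H^{0}(L)\otimes H^{0}(\bigwedge^{j}M_L\otimes\omega_X\otimes L^{n-1})\to H^{0}(\bigwedge^{j}M_L\otimes\omega_X\otimes L^{n})$, which is exactly the statement to be proved (otherwise the argument is circular). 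This surjectivity is the real content, and it is where the bound $\ell(n-1)\ge q+1$ comes from in the paper: Lemma \ref{regularity_of_Syzygy_Bundle_multiple_line_bundles} shows each kernel-bundle factor raises Castelnuovo--Mumford regularity by at most one, $\mathrm{reg}_B(K_X)\le n+1$ by Kodaira, and the induction in Theorem \ref{general_vanishing_multiple_line_bundles} then gives the surjectivities once $(n-1)\ell\ge q-n+\mathrm{reg}_B(K_X)$. You would need to supply an argument of this kind (CM regularity, or slope/semistability bounds) in place of the claimed ``numerical bookkeeping''; as written the core of the proof is missing. Your final paragraph upgrading $(m_q)$ to $(M_q)$ via $\rho$-regularity and projective normality is correct and agrees with the paper.
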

In particular, line bundles with regularity $\rho \leq 2$ satisfy property-$(M_{q})$ for $\ell \geq \ell^{\mathrm{ceil}}_{q}$. This bound follows from Theorem \ref{general_vanishing_multiple_line_bundles} as a corollary of a better bound $\ell^{\mathrm{reg}}_{q}$ which depends on $B.$ The examples in Section \ref{Open_problems_examples} show that the bound in the above theorem is sharp. \par
Since $\ell^{\mathrm{ceil}}_{q} \geq 2,$ Theorem \ref{CM_theorem} raises the questions \textit{what happens for} $\ell = 1 ?$ and \textit{what is the largest} $q$ \textit{for which a line bundle} $L$ \textit{satisfies Property}$-(M_{q}) ?$ We answer both of these questions for rational surfaces and Fano varieties of dimension $n \geq 3$ and index $\lambda \geq n-1$ by generalizing the effective bounds by \textit{W. Niu} and \textit{J. Park} \cite{NP24} of \textit{Gonality conjecture} to these higher dimensional varieties. In Theorem \ref{$(M_q)$_rational_surfaces} prove the following Theorem on rational surfaces
\begin{theorem}
Let $X$ be a rational surface and let $L$ be an ample and base point free line bundle on $X$. Let $|L|$ be a linear system containing curves $C$ of genus $g \geq 1$ and $q$ be an integer satisfying $q \geq 2 = \mathrm{dim}(X)$. Then,  if $(-K_{X} \cdot L) \geq q+2,$ then $L$ satisfies property-$(M_{q})$ if and only if $q \leq \mathrm{gon}_{\max}(L).$
\end{theorem}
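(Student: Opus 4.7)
The plan is to reduce property $(M_q)$ for $(X,L)$ to the same property for $(C, L|_C)$ on a general smooth curve $C \in |L|$, and then invoke the effective gonality theorem of Niu--Park. By Bertini, a general $C \in |L|$ is a smooth irreducible curve of genus $g \geq 1$, and adjunction $K_C = (K_X + L)|_C$ gives
\[
\deg(L|_C) \;=\; L^2 \;=\; (2g - 2) - K_X \cdot L \;\geq\; 2g + q,
\]
using the hypothesis $-K_X \cdot L \geq q+2$. This puts $L|_C$ precisely in the degree regime where the effective gonality statement applies.

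The second ingredient is cohomological vanishing on $X$: since $X$ is rational, $H^1(\mathcal{O}_X) = H^2(\mathcal{O}_X) = 0$, and Kawamata--Viehweg vanishing for the ample $L$ yields $H^i(X, tL) = 0$ for $i \geq 1$, $t \geq 1$. Consequently, $L$ is projectively normal and $H^0(X, tL) \twoheadrightarrow H^0(C, tL|_C)$ is surjective for all $t \geq 1$. I would then set up the kernel bundle $M_L = \ker\bigl(H^0(L) \otimes \mathcal{O}_X \twoheadrightarrow L\bigr)$, observe that $M_L|_C = M_{L|_C}$, and use the standard kernel-bundle description of weight-one Koszul cohomology to build a comparison map $K_{p,1}(X, L) \to K_{p,1}(C, L|_C)$.

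The core step is to upgrade this comparison to an isomorphism in the entire tail range $r - q \leq p+1 \leq r-1$. This is carried out via the long exact sequence of
\[
0 \to \wedge^p M_L \otimes \mathcal{O}_X(-L) \to \wedge^p M_L \to \wedge^p M_{L|_C} \to 0,
\]
which reduces the task to vanishings of the form $H^i(X, \wedge^p M_L \otimes L^{\otimes s}) = 0$. These are handled by Serre duality (bringing $K_X$ into the picture) combined with the numerical input $-K_X \cdot L \geq q+2$, tuned precisely so that the relevant groups vanish for $p$ in the tail range. With the resulting isomorphism $K_{p,1}(X, L) \cong K_{p,1}(C, L|_C)$ in hand, the effective Niu--Park theorem (an iff characterization in terms of $\mathrm{gon}(C)$, valid at the asserted degree bound) translates directly into the equivalence ``$L$ has $(M_q)$ iff $q \leq \mathrm{gon}(C) - 1 = \mathrm{gon}_{\max}(L)$''.

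The main obstacle is controlling the wedge-power cohomologies uniformly over the full tail range (not just at the first stage) and verifying that the single hypothesis $-K_X \cdot L \geq q+2$ simultaneously (i) delivers the required Kodaira-type vanishings on $X$ through Serre duality, and (ii) meets the degree threshold in the curve-level gonality theorem. The ``only if'' direction, which uses \emph{non}-vanishing of $K_{p,1}(C, L|_C)$ for $p$ just past the gonality bound and transports it back to $X$, relies on this comparison being a genuine isomorphism in both directions rather than merely an injection or surjection.
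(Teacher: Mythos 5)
Your overall strategy --- restrict to a smooth member $C \in |L|$, identify the tail weight-one Koszul cohomology of $(X,L)$ with that of $(C,L|_{C})$ using vanishing of intermediate cohomology of all twists, and then invoke the effective gonality theorem of Niu--Park --- is essentially the paper's route (the paper performs the identification algebraically: $h^{1}(\mathcal{O}_{X})=0$ and $h^{1}(tL)=0$ for all $t$ make the embedding arithmetically Cohen--Macaulay, so $K_{i,j}(X,L)\cong K_{i,j}(C,L_{C})$ for all $i,j$ by dividing out the regular linear form, with projective normality supplied by \cite[Theorem 1.3]{BG01}). But the execution has a genuine gap in the bookkeeping, and here the numbers are the entire content of the statement. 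Since $r(L_{C})=r(L)-1$, property $(M_{q})$ for $L$ on $X$ corresponds to $(M_{q-1})$ for $L_{C}$ on $C$; the curve-level theorem (via the duality $K_{i,1}(C,L_{C})^{*}\cong K_{r_{C}-1-i,1}(C,K_{C};L_{C})$ and \cite[Theorem 5.2]{NP24} applied with $p=q-2$, which needs exactly $\deg L_{C}\geq 2g+p+2=2g+q$) then gives $(M_{q})$ for $L$ if and only if $q\leq \mathrm{gon}(C)$, hence if and only if $q\leq \mathrm{gon}_{\max}(L)$. Your concluding equivalence ``$(M_{q})$ iff $q\leq \mathrm{gon}(C)-1=\mathrm{gon}_{\max}(L)$'' both misses this shift and misidentifies $\mathrm{gon}_{\max}(L)$ (for a general member one has $\mathrm{gon}(C)=\mathrm{gon}_{\max}(L)$, not $\mathrm{gon}(C)-1$); moreover, with your unshifted indexing the binding case of Niu--Park would be $p=q-1$, requiring $\deg L_{C}\geq 2g+q+1$, which the hypothesis $(-K_{X}\cdot L)\geq q+2$ does not supply. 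As written, the argument would prove a different threshold than the one claimed.

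Two further points need repair. The identity $M_{L}|_{C}=M_{L|_{C}}$ is false: since $H^{0}(X,L)\to H^{0}(C,L_{C})$ is surjective with kernel spanned by the equation of $C$, one has an exact sequence $0\to \mathcal{O}_{C}\to M_{L}|_{C}\to M_{L_{C}}\to 0$, so your proposed sequence with third term $\bigwedge^{p}M_{L|_{C}}$ cannot be exact (the ranks do not match); the correct comparison is Green's hyperplane-section lemma for Koszul cohomology or, as in the paper, the purely algebraic ACM argument. Finally, projective normality --- which is part of the definition of $(M_{q})$ here --- does not ``consequently'' follow from the vanishings $H^{i}(tL)=0$; it is a substantive input, supplied by \cite[Theorem 1.3]{BG01} using $(-K_{X}\cdot L)\geq 3$. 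Likewise Kawamata--Viehweg gives $H^{i}(K_{X}+tL)=0$ rather than $H^{i}(tL)=0$; the latter does hold, but via restriction to $C$ together with $(K_{X}\cdot L)<0$ (or the cited lemma of \cite{BG01}), not by Kawamata--Viehweg alone. None of this invalidates the strategy, but the proof as proposed establishes the wrong bound and rests on an incorrect kernel-bundle identification.
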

where $\mathrm{gon}_{\max}(L)$ is the maximal gonality among all smooth urves in $|B|.$ This theorem tells us that in the conditions of the theorem, the highest $q$ for which the line bundle $L = B$ satisfies $(M_{q})$ is $q = \mathrm{gon}_{\max}(B).$ This connects the algebraic property $(M_{q})$ of the resolution with the geometric quantity $\mathrm{gon}_{\max}(B).$ We also analyze the cases $(-K_{X} \cdot L) \geq q+1,$ and $(-K_{X} \cdot L) \geq q.$ In Theorem \ref{Fano}, we extend this result to Fano varieties of dimension $n \geq 3$ and of index $\lambda \geq n-1.$ \par
For rational surfaces, we also make the following conjecture on the length of the strand of mixed weight syzygies which is the strand of the resolution which has both weight one and higher weight syzygies
\begin{conjecture}
\label{conjecture}
    Let $X$ be a rational surface and let $L$ be an ample and base point free line bundle on $X$. Let $|L|$ be a linear system containing curves $C$ of genus $g \geq 1$. Then,  if $(-K_{X} \cdot L) \geq \mathrm{gon}_{\max}(L)+2,$ then the length $\delta(L)$ of the region of syzygies of mixed weights is given by
    \begin{equation}
        \delta(L) = h^{0}(K_{X}+L)-\mathrm{gon}_{\max}(L)+1
    \end{equation}
\end{conjecture}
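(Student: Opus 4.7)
The strategy is to locate the two boundaries of the mixed region in the resolution (\ref{free_resolution_of_R_L}) by combining the $(M_q)$-theorem for rational surfaces proved earlier in the paper with Green's duality for the Cohen--Macaulay embedding $\phi_L : X \hookrightarrow \mathbb{P}^r$. Under the conjecture's hypothesis, that theorem yields $q_{\max}(L) = \mathrm{gon}_{\max}(L)$, so the right endpoint of the mixed region—the last stage of the weight-one strand—is $b := (r-1) - \mathrm{gon}_{\max}(L)$. Since $X$ is rational, $H^0(X, K_X) = 0$, and Green's duality $K_{p,q}(X, L) \cong K_{(r-1)-p,\, 3-q}(X, K_X;\, L)^{\ast}$ forces $K_{p,3}(X, L) = 0$ for all $p$. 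Thus the resolution carries only weights $0, 1, 2$, and the length of the mixed region is $\delta(L) = b - a + 1$, where $a := \min\{ i : K_{i,2}(X, L) \neq 0 \}$.

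\textbf{Adjoint reformulation.} Applying Green's duality once more, $a = (r-1) - j^{\ast}$, where $j^{\ast} := \max\{ j : K_{j,1}(X, K_X;\, L) \neq 0 \}$ is the last weight-one stage in the minimal $S$-resolution of the adjoint module $R(K_X, L) := \bigoplus_{k \geq 0} H^0(X, K_X + kL)$. Substitution gives $\delta(L) = j^{\ast} - \mathrm{gon}_{\max}(L) + 1$, reducing the conjecture to the identity $j^{\ast} = h^0(X, K_X + L)$. Since $H^0(K_X) = 0$, the module $R(K_X, L)$ is generated in degree one by the $h^0(K_X + L)$-dimensional space $H^0(K_X + L)$; its weight-one strand is numerically consistent with a sub-Koszul complex on this generating space of maximum stage $h^0(K_X + L)$, which is exactly the bound we aim to realize.

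\textbf{Nonvanishing and vanishing.} The nonvanishing $K_{h^0(K_X+L),\, 1}(X, K_X;\, L) \neq 0$ would be produced by restricting to a smooth curve $C \in |L|$ realizing $\mathrm{gon}_{\max}(L)$: the adjunction $K_C = (K_X + L)|_C$ together with the short exact sequence $0 \to K_X \to K_X + L \to K_C \to 0$ gives $H^0(X, K_X + L) \cong H^0(C, K_C)$ via the vanishings $H^i(K_X) = 0$ for $i = 0, 1$ on a rational surface, and the restriction map in Koszul cohomology then transfers a top weight-one class on $C$—supplied by the effective gonality theorem of Ein--Lazarsfeld and its refinement by Niu--Park applied to $K_C$—to a nonzero class on $X$. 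The complementary vanishing $K_{j,1}(X, K_X;\, L) = 0$ for $j > h^0(K_X + L)$ is expected to follow from a Castelnuovo--Mumford regularity argument for $K_X + L$ relative to $L$, using Kawamata--Viehweg vanishing on $X$ to kill the higher cohomology feeding the relevant Koszul differentials.

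\textbf{Main obstacle.} The vanishing step is the delicate one, since the techniques behind the $(M_q)$-theorem are calibrated for $R(L)$ and do not transfer verbatim to the adjoint $R(K_X, L)$. On a surface, unlike on a curve, $K_X$ contributes genuinely new syzygies that must be ruled out. One expects the numerical hypothesis $(-K_X \cdot L) \geq \mathrm{gon}_{\max}(L) + 2$ to be sharp: it should translate, via Riemann--Roch and Kodaira vanishing, into precisely the positivity needed to exclude weight-one syzygies of $R(K_X, L)$ past stage $h^0(K_X + L)$. Carrying this out will likely require either adapting the secant-variety analysis of Ein--Lazarsfeld to $K_X$-twisted Koszul cohomology, or a spectral-sequence argument decoupling the contributions of $K_X$ and $L$ inside the Koszul complex on $H^0(L)$.
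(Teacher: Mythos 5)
You should first note that the statement you are addressing is stated in the paper as Conjecture \ref{conjecture}: the paper offers no proof of it, only numerical evidence in Section \ref{Open_problems_examples} (Betti tables of Veronese embeddings of $\mathbb{P}^{2}$ and of line bundles $aC_{0}+bf$ on $\mathbb{P}^{1}\times\mathbb{P}^{1}$), so there is no paper argument for your proposal to match. More importantly, your proposal is not a proof either. After the duality reduction, the two statements that carry all the content --- the vanishing $K_{j,1}(X,K_{X};L)=0$ for $j$ beyond the predicted bound, and the nonvanishing at the boundary stage --- are only described as ``expected'' or ``would be produced,'' and you flag the vanishing as the main obstacle yourself. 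The cited results of Ein--Lazarsfeld and Niu--Park control weight-one Koszul cohomology of line bundles on \emph{curves}; nothing in the paper or in those references supplies the bound you need for the $K_{X}$-twisted Koszul cohomology of the adjoint module $R(K_{X},L)$ on the surface. The curve-to-surface transfer you invoke for the nonvanishing is plausible (it is essentially the arithmetically Cohen--Macaulay restriction argument of Proposition \ref{syzygies_of_floors}, using $h^{1}(dL)=0$ and $h^{1}(\mathcal{O}_{X})=0$), but it too is asserted rather than carried out. So the gap is exactly the open content of the conjecture: your reduction repackages $\delta(L)$ in terms of the last weight-one stage $j^{\ast}$ of $R(K_{X},L)$ but does not determine $j^{\ast}$.

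There is also an index slip you should repair. On a surface Green's duality reads $K_{p,q}(X,L)^{\ast}\cong K_{r-2-p,\,3-q}(X,K_{X};L)$ (the shift is $r-n-p$ with $n=2$), not $K_{(r-1)-p,\,3-q}$; with the correct shift your reduction target becomes $j^{\ast}=h^{0}(K_{X}+L)-1$, not $h^{0}(K_{X}+L)$. The case $(\mathbb{P}^{2},\mathcal{O}(4))$ from Section \ref{Open_problems_examples} confirms this: the first weight-two syzygy is $K_{10,2}\neq 0$, $r=14$, so $j^{\ast}=12-10=2=h^{0}(\mathcal{O}(1))-1$. Your two off-by-one errors cancel, so the formula you ultimately derive agrees with the conjecture, but the intermediate claims as written are mutually inconsistent, and the heuristic that the weight-one strand of $R(K_{X},L)$ should extend to stage $h^{0}(K_{X}+L)$ predicts the wrong endpoint by one.
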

In Section \ref{Open_problems_examples} we justify this conjecture through examples.
\par
Even though the bound of Theorem \ref{CM_theorem} is optimal, in varieties of particular Kodaira dimension,  with some interesting additional positivity condition on $(X, B)$ one is able to find lower bounds for Property$-(M_{q})$ for multiple line bundles $L = \ell B$. We explore these results in Sections \ref{M2}, \ref{K3}, \ref{Enriques} and \ref{Abelian_Bielliptic} by treating surfaces of each Kodaira dimension separately. \par
We now turn our attention to the syzygies adjoint linear series of higher dimensional varieties. \par
A famous open problem in this area is the Mukai's conjecture on syzygies of surfaces which asserts the $(N_{p})-$property of adjoint linear series $|K_{X}+\ell A|$ on any surface $X$ the multiple $\ell$ higher than a lower bound which generalizes the well-known Green's Theorem on curves to surfaces.
\begin{conjecture} (Mukai's Conjecture)
    Let $A$ be an ample line bundle on a surface $X.$ Then, for any integer $p \geq 0,$ the line bundle $L = K_{X}+\ell A$ satisfies Property-$(N_{p})$ for all $\ell \geq p+4.$
\end{conjecture}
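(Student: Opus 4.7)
The plan is to reduce Mukai's conjecture to concrete Koszul cohomology vanishings attached to $L = K_{X} + \ell A$ and then exploit the adjoint structure through standard vanishing theorems combined with an induction on $p$. First, one records that $L$ satisfies $(N_{p})$ iff $L$ is projectively normal and $K_{i,1}(X, L) = 0$ for all $1 \leq i \leq p$. Projective normality and quadric generation (the cases $p = 0, 1$) should follow from Castelnuovo--Mumford regularity of $K_{X} + 4A$: Kodaira vanishing together with Reider's theorem imply that $K_{X} + 4A$ is $2$-regular on any smooth surface, which handles the base of the induction ($\ell \geq 4$, $p = 0$).

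For the higher stages one would use Green's evaluation-bundle reformulation. Set
\[
M_{L} := \ker\!\bigl(H^{0}(X, L) \otimes \mathcal{O}_{X} \twoheadrightarrow L\bigr).
\]
Then $K_{p,1}(X, L) = 0$ iff $H^{1}(X, \wedge^{p+1} M_{L} \otimes L) = 0$. Because $L$ carries a factor of $K_{X}$, this cohomology is a natural target for Kodaira and Kawamata--Viehweg vanishing. Induct on $p$: the inductive hypothesis applied to $L' = K_{X} + (\ell-1)A$ (which sits in the range $\ell - 1 \geq (p-1) + 4$) yields $H^{1}(\wedge^{p} M_{L'} \otimes L') = 0$. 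Choosing a general splitting $H^{0}(L) = H^{0}(L') \oplus V$ with $\dim V = h^{0}(L) - h^{0}(L')$ (legitimate by projective normality), one extracts an exact sequence of the shape
\[
0 \longrightarrow \wedge^{p+1} M_{L'} \longrightarrow \wedge^{p+1} M_{L} \longrightarrow \wedge^{p} M_{L'} \otimes A \longrightarrow 0,
\]
and twisting by $L$ propagates the vanishing from stage $p-1$ to stage $p$, provided the boundary term $H^{1}(\wedge^{p} M_{L'} \otimes A \otimes L)$ can be controlled.

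The hard part --- and the reason Mukai's conjecture has resisted proof at the sharp bound $\ell \geq p+4$ --- is precisely this boundary step. Controlling $H^{1}(\wedge^{p} M_{L'} \otimes A \otimes L)$ requires either a sharp positivity estimate on $M_{L'}$, for instance via Butler--Ein--Lazarsfeld slope stability or a Bogomolov-type inequality, or a multiplier-ideal construction adapting the Ein--Lazarsfeld gonality technique from curves to surfaces. Extending either approach uniformly across all smooth surfaces at the precise threshold $\ell = p + 4$, rather than at a weaker $\ell \geq p + c$ for some larger $c$, is the key obstruction; any proof will need a genuinely new input that pins down the positivity of wedge powers of syzygy bundles on surfaces sharply. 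A promising route would be to combine the effective $(M_{q})$-bounds developed earlier in this paper with a Green-duality comparison between $K_{p,1}(X, L)$ and the weight-one Koszul cohomology at the opposite end of the resolution of $L$, thereby transferring $(N_{p})$-vanishings to $(M_{q})$-vanishings where the techniques of Theorem \ref{CM_theorem} apply.
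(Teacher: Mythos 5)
The statement you are addressing is recorded in the paper as \emph{Mukai's Conjecture}, an open problem; the paper explicitly notes it is not known even for $p=0$, offers no proof, and only cites partial results (Ein--Lazarsfeld for $B$ very ample, Bangere--Lacini for $B$ ample and base point free at $\ell \geq n+p+1$). So there is no argument in the paper to compare against, and your proposal -- which itself concedes that the ``boundary step'' is exactly the open part -- is a strategy sketch rather than a proof. As written it cannot be accepted as establishing the statement.

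Beyond that global issue, several steps have concrete gaps. First, the ``general splitting $H^{0}(L) = H^{0}(L') \oplus V$'' with $L' = K_{X}+(\ell-1)A$ is not available: since $A$ is only ample (not assumed effective or base point free), there is no natural inclusion $H^{0}(L') \hookrightarrow H^{0}(L)$ at all, and even when $A$ is effective the inclusion depends on a chosen section and is not a splitting compatible with the evaluation maps. Consequently the asserted exact sequence $0 \rightarrow \wedge^{p+1}M_{L'} \rightarrow \wedge^{p+1}M_{L} \rightarrow \wedge^{p}M_{L'} \otimes A \rightarrow 0$ is not correct: the kernel of $H^{0}(L)\otimes \mathcal{O}_{X} \rightarrow L$ compared with $M_{L'}$ produces, at best, a filtration of $\wedge^{p+1}M_{L}$ whose graded pieces are $\wedge^{i}M_{L'} \otimes \wedge^{p+1-i}W \otimes \mathcal{O}_{X}$ for a complementary subspace $W$, i.e.\ trivial twists, not an $A$-twisted quotient; the $A$-twist you want has to be produced by a restriction-to-divisor argument (as in Lemma \ref{AL_Lemma} or the hyperplane-section method of Section \ref{hyperplane_cuts}), which again needs sections of $A$ or of some auxiliary base point free bundle. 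Second, the base case is shaky: Reider's theorem gives base point freeness and very ampleness of $K_{X}+\ell A$ in the stated ranges, but ``$2$-regularity of $K_{X}+4A$'' is meaningless without specifying a base point free polarization to measure regularity against (Definition \ref{CM_definition} requires $B$ ample \emph{and} base point free), and projective normality of adjoint bundles for merely ample $A$ is precisely the $p=0$ case of the conjecture, which remains open. Finally, the suggested ``Green-duality comparison'' transferring $(N_{p})$ to $(M_{q})$ is only available when $K_{X}$ is (numerically) trivial, as the paper points out in Sections \ref{K3} and \ref{Enriques}; for a general surface the two ends of the resolution are not dual, so the effective $(M_{q})$ bounds of Theorem \ref{CM_theorem} cannot be converted into $(N_{p})$ statements in the way you propose.
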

Mukai conjecture is not known in its full generality even for $p = 0.$ However, with \textit{Fujita's conjecture} (see \cite[page 42]{Fuj88} and \cite{GhLa24}) as inspiration, one might ask the analogous question for higher dimensional varieties: namely for any ample line bundle $A$ on a smooth projective variety $X,$ whether the adjoint linear series $|K_{X}+\ell A|$ satisfies Property-$(N_{p})$ for $\ell \geq n+p+2?$ Since \textit{Fujita's very ampleness conjecture} is not known even for $3-$folds, the best we can hope is to study property-$(N_{p})$ of adjoint linear series assuming $A$ is ample and base point free. With the additional hypothesis $A$ very ample this was proved by \textit{L. Ein} and \textit{R. Lazarsfeld} in \cite{EL93}. The original question which was open for thirty years, was settled recently by \textit{P. Bangere} and \textit{J. Lacini} in \cite{BL25}. Explicitly, they prove
\begin{theorem} (P. Bangere and J. Lacini)
    Let $X$ be a smooth projective variety and let $B$ be an ample and base point free line bundle on $X.$ Then, for any integer $p \geq 0,$ the line bundle $L = K_{X}+\ell B$ satisfies Property-$(N_{p})$ if $\ell \geq n+p+1.$
\end{theorem}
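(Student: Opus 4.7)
The plan is to reduce property $(N_p)$ for $L = K_X + \ell B$ to a family of Koszul cohomology vanishings, and then to control wedge powers of the kernel bundle $M_L$ by filtering through the simpler bundle $M_B$ associated to the base-point-free line bundle $B$.

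First I would pin down the equivalent formulation. After establishing projective normality, $(N_p)$ amounts to $K_{i,j}(X,L) = 0$ for $1 \le i \le p$ and $j \ge 2$. The auxiliary vanishings
$$H^i(X, L^k) = 0 \qquad (i \ge 1,\; k \ge 1)$$
follow, under $\ell \ge n+p+1$, by rewriting $L^k \otimes B^{-i}$ as $K_X + \bigl((k\ell - i)B + (k-1)K_X\bigr)$ and applying Kodaira--Kawamata--Viehweg to the bracketed term. Castelnuovo--Mumford regularity then delivers projective normality, and the Koszul complex of the evaluation map $H^0(L) \otimes \mathcal{O}_X \twoheadrightarrow L$ identifies
$$K_{i,j}(X, L) \;\cong\; H^1\bigl(X,\, \wedge^{i+1} M_L \otimes L^{j-1}\bigr),$$
so that the task reduces to the single Koszul vanishing
$$H^1\bigl(X,\, \wedge^{i+1} M_L \otimes L^k\bigr) = 0 \quad \text{for } 1 \le i \le p,\; k \ge 1. \qquad (\ast)$$

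Second, since $M_L$ has rank $h^0(L)-1$, direct analysis is hopeless and I would exchange $M_L$ for the smaller $M_B$. For $\ell \ge n+p+1$ a Castelnuovo--Mumford argument gives surjectivity of $H^0(B) \otimes H^0(K_X + (\ell-1)B) \twoheadrightarrow H^0(L)$, which threads $M_B$ and $M_{K_X + (\ell-1)B}$ into $M_L$ via a short exact sequence. Inducting on $\ell$ reduces $(\ast)$ to vanishings of the form $H^i(X, \wedge^a M_B \otimes B^b \otimes K_X^c) = 0$ for certain admissible triples $(a,b,c)$, and each such can be approached via the standard Koszul sequence
$$0 \to \wedge^a M_B \to \wedge^a H^0(B) \otimes \mathcal{O}_X \to \wedge^{a-1} M_B \otimes B \to 0,$$
twisting by $B^b \otimes K_X^c$ and inducting on $a$ using Kodaira on the free middle term.

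The main obstacle is the sharpness of the bound $\ell \ge n+p+1$: a naive twist-and-chase, when $B$ is only base-point-free and not very ample, loses one power of $B$ at each inductive step and only recovers the weaker Fujita-type bound $\ell \ge n+p+2$. To gain the missing unit one needs a finer regularity statement for $M_B$---roughly, that $\wedge^a M_B$ is $a$-regular, not merely $(a+1)$-regular, with respect to $B$. When $B$ is very ample this is essentially Ein--Lazarsfeld; in the base-point-free case one cannot cut down to a generic hyperplane section, so the cleanest route I see is to trade a copy of $B$ for $K_X$ via the decomposition $L - B = K_X + (\ell-1)B$ and absorb the lost power of $B$ using the extra Kodaira vanishing supplied by the adjoint summand. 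Verifying that this trade-off propagates simultaneously through all wedge powers and twists, without breaking the induction on $p$, will be the technical heart of the argument and is where the improvement over the very ample case lives.
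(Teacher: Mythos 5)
First, a point of comparison: the paper does not prove this statement at all. It is quoted verbatim from Bangere--Lacini \cite{BL25}, where it settled a problem that had been open for roughly thirty years; the present article only uses it (e.g.\ to get projective normality of $K_X+\ell B$). So there is no in-paper proof to measure you against, and your proposal has to stand on its own as a proof of a genuinely hard theorem.

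Judged that way, the first half of your plan is fine but standard: projective normality via Castelnuovo--Mumford regularity, the identification $K_{i,j}(X,L)\cong H^1(\wedge^{i+1}M_L\otimes L^{j-1})$, and the exchange of $M_L$ for $M_B$ through the surjection $H^0(B)\otimes H^0(K_X+(\ell-1)B)\twoheadrightarrow H^0(L)$ are exactly the classical Ein--Lazarsfeld/regularity machinery, and, as you yourself note, they only reach the weaker Fujita-type range. The genuine gap is the last step, which is the entire content of the theorem, and the two mechanisms you propose for closing it do not work. The finer regularity statement you ask for --- that $\wedge^a M_B$ be $a$-regular with respect to a merely ample and base-point-free $B$ --- is false in general: already for $a=1$ on an abelian variety one has $H^1(M_B)\cong H^0(B)\otimes H^1(\mathcal{O}_X)\neq 0$, so $M_B$ is not $1$-regular, and for $a=0$ the statement would force $\mathrm{reg}_B(\mathcal{O}_X)=0$, which by Proposition \ref{regularity_examples} happens only for $(\mathbb{P}^n,\mathcal{O}_{\mathbb{P}^n}(1))$; it is also not what Ein--Lazarsfeld prove in the very ample case (they restrict to hyperplane sections, which is precisely what is unavailable here). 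The second mechanism, ``trading a copy of $B$ for $K_X$'' via $L-B=K_X+(\ell-1)B$ and invoking Kodaira, only controls cohomology of line bundles of adjoint type on the free terms of the Koszul/kernel sequences --- that is exactly the input already consumed in obtaining $\ell\geq n+p+2$ --- and gives nothing for $H^1$ of twists of $\wedge^a M_B$, which are not of adjoint form. You concede this is ``the technical heart,'' but no argument is supplied for it, and the actual proof in \cite{BL25} is a long argument that does not proceed by this kind of regularity bookkeeping. As it stands, the proposal is a correct reduction to the known open difficulty, not a proof.
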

In last three decades several interesting results have been proved along the lines of Mukai's conjecture on algebraic surfaces and higher dimensional varieties. The reader should refer to \cite{BG99} to \cite{EL93} for a detailed treatment on \textit{Mukai's conjecture}. These results give a broad understanding of the resolution of adjoint linear series from the beginning of the resolution. \par
The analogous case for property-$(M_{q})$ for adjoint line bundles is unknown until now. In the following Theorems we shed light on the resolution starting from the end, namely on Property-$(M_{q})$ of adjoint linear series. We start with adjoint linear series on higher dimensional varieties with nef canonical bundle.
\begin{theorem} (Adjoint line bundles varieties with nef canonical bundle) \\
\label{adjoint_theorem}
    Let $X$ be a smooth projective variety with nef canonical bundle and $d \geq 1$ be a rational number. Let $B$ be an ample and base point free line bundle on $X$ such that $dB-K_{X}$ is nef and $K_{X}+(n-1)B$ is base point free. Then the line bundle $L = K_{X}+\ell B$ satisfies Property-$(M_{q})$ provided $\ell \geq \max(d+n, \ell^{\mathrm{floor}}_{q})$ when $n \geq 3$ and $\ell \geq \max(d+2, q+2)$ when $n = 2,$ where $\ell^{\mathrm{floor}}_{q} = \floor[\bigg]{\dfrac{q+1}{n-1}}.$
\end{theorem}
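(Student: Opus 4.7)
The strategy is to prove property $(M_q)$ for $L = K_X + \ell B$ by transferring the vanishing of weight-one syzygies into a statement about evaluation-bundle cohomology on $X$, and then exploiting the canonical summand $K_X$ in $L$ through Kodaira--Kawamata--Viehweg vanishing. This mirrors the proof of the non-adjoint Theorem \ref{CM_theorem}, with the $K_X$-twist accounting for the one-unit gain from the ceiling $\ell^{\mathrm{ceil}}_q$ to the floor $\ell^{\mathrm{floor}}_q$, in the spirit of the adjoint analogue of the Ein--Lazarsfeld argument for the gonality conjecture.

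First I would establish projective normality of $L$. Under the hypotheses $dB - K_X$ nef and $\ell \geq d+n$ (respectively $\ell \geq d+2$ for $n=2$), the bundle $L - jB$ for $1 \leq j \leq n$ decomposes as $K_X + \text{(nef + ample)}$, so Kawamata--Viehweg gives $H^i(X, L-jB) = 0$ for $i \geq 1$. Hence $L$ is $0$-regular with respect to the base-point-free bundle $B$, and projective normality together with $H^i(X, mL) = 0$ for $i, m \geq 1$ follows from the standard Castelnuovo--Mumford argument.

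Setting $M_L = \ker\bigl(H^0(L) \otimes \mathcal{O}_X \twoheadrightarrow L\bigr)$, the standard Koszul complex analysis reduces the vanishing $K_{i,1}(X,L)=0$ for $r-q \leq i \leq r-1$ to the vanishing of $H^1(X, \wedge^{i+1} M_L)$ in the same range. Using the rank-$r$ duality $\wedge^{i+1} M_L \cong \wedge^{r-i-1} M_L^{\vee} \otimes L^{-1}$ together with Serre duality, this is equivalent to
\begin{equation*}
H^{n-1}\bigl(X,\ \wedge^s M_L \otimes K_X \otimes L\bigr) = 0 \qquad \text{for } 0 \leq s \leq q-1,
\end{equation*}
with $s = r - i - 1$. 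I then iterate the short exact sequences $0 \to \wedge^k M_L \to \wedge^k H^0(L) \otimes \mathcal{O}_X \to \wedge^{k-1} M_L \otimes L \to 0$, tensored by $L^{j+1} \otimes K_X$; because Kodaira gives $H^i(X, L^a \otimes K_X)=0$ for $i \geq 1$, $a \geq 1$, the terms involving the trivial bundle drop out and produce a chain of isomorphisms
\begin{equation*}
H^{n-1}(\wedge^s M_L \otimes K_X \otimes L) \cong H^{n-2}(\wedge^{s-1} M_L \otimes K_X \otimes L^2) \cong \cdots \cong H^1\bigl(\wedge^{s-n+2} M_L \otimes K_X \otimes L^{n-1}\bigr),
\end{equation*}
valid when $s \geq n-2$; if $s < n-2$ the chain terminates at $H^{n-1-s}(L^{s+1} \otimes K_X)$, which vanishes directly by Kodaira. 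The terminal $H^1$ is the cokernel of a multiplication map $H^0(L) \otimes H^0(\wedge^{s-n+1} M_L \otimes K_X \otimes L^{n-1}) \to H^0(\wedge^{s-n+1} M_L \otimes K_X \otimes L^n)$.

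The main obstacle is showing this cokernel vanishes throughout the range $0 \leq s \leq q-1$. The plan is to filter $\wedge^{s-n+1} M_L$ using the auxiliary base-point-free adjoint $A = K_X + (n-1)B$: the evaluation bundle $M_A$ maps compatibly into $M_L$ and resolves the problem into positivity statements for bundles of the form $L^a \otimes K_X^b \otimes B^c$, each of which is covered by Kawamata--Viehweg once $\ell \geq d+n$. The chain-length bookkeeping then forces $\ell(n-1) \geq q+1$, equivalently $\ell \geq \ell^{\mathrm{floor}}_q$; one unit is saved by the presence of $K_X$ in $L$ compared to the pure multi-$B$ setting of Theorem \ref{CM_theorem}. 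For $n = 2$ the chain of isomorphisms collapses entirely (its length is $n-2 = 0$), and the terminal multiplication map must be established directly from Castelnuovo--Mumford regularity of $L$, which requires the stronger bound $\ell \geq q+2$.
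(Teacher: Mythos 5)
Your reduction step is fine and is essentially the criterion the paper itself uses: dualizing $\wedge^{i+1}M_L$ via $\wedge^{i+1}M_L \cong \wedge^{r-i-1}M_L^{\vee}\otimes L^{-1}$, applying Serre duality, and climbing down the Koszul-type sequences with Kodaira vanishing is exactly the content of Lemma \ref{AL_Lemma} (Aprodu--Lombardi), which converts property $(m_q)$ into the vanishing $H^1(\wedge^{k+1}M_L\otimes(K_X+(n-1)L))=0$, i.e.\ into surjectivity of the multiplication maps $\mu_k$. The problem is that this is only the reduction; the theorem lives entirely in the step you leave to ``filter $\wedge^{s-n+1}M_L$ using $M_A$'' and ``positivity statements for bundles of the form $L^a\otimes K_X^b\otimes B^c$ covered by Kawamata--Viehweg.'' No such filtration of exterior (or tensor) powers of $M_L$ by line bundles exists -- if it did, weight-one syzygy vanishing would follow from vanishing for line bundles alone, and the whole subject would trivialize -- and there is in general no natural map $M_A\to M_L$ doing the work you ascribe to it. Consequently the assertion that ``chain-length bookkeeping forces $\ell(n-1)\geq q+1$'' is unsupported: nothing in your argument produces the surjectivity of the terminal multiplication maps at a bound linear in $q/(n-1)$, and the hypotheses $dB-K_X$ nef and $K_X+(n-1)B$ base point free, which are exactly what the true bound feeds on, are never genuinely used.

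For comparison, the paper's proof does the missing work as follows: for $n\geq 3$ it runs a hyperplane-section induction (using $H^1(\mathcal{O}_X)=0$ and Lemma \ref{inheritance_cohomology_vanishing}) down to a surface with nef canonical bundle; on the surface (Theorems \ref{$(M_{2})$surf_nef_bundle} and \ref{theorem_$(M_q)$nef_canonical_surface}) it factors $\mu_k$ through $H^0(B)\otimes(-)$ and $H^0(\Lambda)\otimes(-)$ via Observation \ref{[Observation-(1.2)]{BG00}}, restricts to smooth curves in $|B|$ and $|K_X+B|$ via Observation \ref{[Observation-(1.3)]{BG00}} and Lemma \ref{BG99_Lemma_(2.9)}, and then obtains surjectivity on the curves from Butler's semistability of kernel bundles and the slope criterion (Proposition \ref{Proposition-(2.2)_Bu94}); the numerical conditions $\ell\geq d+2$ (resp.\ $d+n$) and $\ell\geq q+2$ for $n=2$ arise from those slope inequalities (e.g.\ $w(B\cdot\Lambda)\geq 2q+1$), not from Castelnuovo--Mumford regularity, so your claim that the $n=2$ bound ``follows directly from CM regularity of $L$'' would also fail as stated. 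Two smaller points: $0$-regularity of $L$ with respect to $B$ gives surjectivity of $H^0(L)\otimes H^0(kB)\to H^0(L+kB)$, not of $H^0(L)\otimes H^0(mL)\to H^0((m+1)L)$, so projective normality does not follow as immediately as you assert (the paper invokes \cite{BL25}, valid since $\ell\geq d+n\geq n+1$); and without $H^1(\mathcal{O}_X)=0$ you only get $K_{i,1}\hookrightarrow H^1(\wedge^{i+1}M_L)$, which is enough for vanishing but should be said.
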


In a joint work under progress \cite{BBL25} with \textit{P. Bangere} and \textit{J. Lacini} , we prove a general result for vanishing of weight one syzygies of adjoint line bundles on all smooth projective varieties which is complementary to the above result of the other two authors about property-$(N_{p})$ as in \cite{BL25}. This result will be analogous to Theorem \ref{CM_theorem} for adjoint bundles but involves a complete different set of techniques than we see in this paper. But do note that the bounds in Theorem \ref{adjoint_theorem} are better than the bound we obtain for all smooth projective varieties  \par
For ruled varieties in corollaries \ref{Butler2A} and \ref{Butler2B}, we prove results on Property-$(M_{q})$ for multiple and adjoint line bundles by dropping the condition base point free and we get bounds parallel to the work of Butler \cite[Theorem 2A, 2B]{Bu94} on Property-$(N_{p}).$
\begin{theorem}
    For ample line bundles $A_{1}, \cdots A_{t}$ on a ruled variety $X = \mathbb{P}(E) \xrightarrow{\pi} C$ over a curve $C$ of genus $g$ with invariant $e = -\mathrm{deg}(\bigwedge^{n} E).$ Then for any $q \leq n+\begin{pmatrix}
        n+a-1 \\
        a
    \end{pmatrix}-2$,
    \begin{itemize}
        \item[\textbf{(a)}] The line bundle $L = A_{1}+\cdots+A_{t}$ satisfies Property-$(M_{q})$ if $t \geq 2q+2.$
        \item[\textbf{(b)}] The adjoint line bundle $L' = K_{X}+A_{1}+\cdots+A_{t}$ satisfies Property-$(M_{q})$ if $t \geq 2q+1+\max(1, e+1-g).$
    \end{itemize}
\end{theorem}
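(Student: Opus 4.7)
The strategy is to convert the tail-vanishing Property $(M_q)$ into a head-vanishing $(N_p)$-type statement via Koszul/Serre duality, and then to run Butler's argument for ruled varieties on the dualized side. Concretely, using the Green--Lazarsfeld duality for Koszul cohomology, the vanishing $K_{i,1}(X,L) = 0$ in the range $r-q \leq i \leq r-1$ is equivalent to the head-vanishing of $K_{p,n}(X,K_X;L)$ for $p$ in a small range bounded by $q$. This is why one expects a Butler-type bound, but roughly doubled compared to his statement for $(N_p)$: the hypothesis $q \leq n + \binom{n+a-1}{a} - 2$ in the theorem is exactly the window in which the Green--Lazarsfeld duality is available (governed by the intermediate cohomology of $L^{\otimes \bullet}$ on $\mathbb{P}(E)$, which is controlled by the Serre vanishing of $S^{k}E$ twists on $C$).

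With the dualization in hand, I would rewrite the relevant Koszul group cohomologically as $H^1(X, \bigwedge^{p+1} M_L \otimes K_X \otimes L^{n-1})$ (and the analogous expression with an extra $K_X$ twist for part (b)), where $M_L$ is the kernel of the evaluation map $H^0(L)\otimes\mathcal{O}_X \twoheadrightarrow L$. Writing $L = A_1 + \cdots + A_t$ and using the short exact sequences $0 \to M_{L - A_i} \to M_L \to M_{A_i} \to 0$ iteratively (equivalently, decomposing $\bigwedge^{\bullet}M_L$ via the Koszul filtration of the direct sum), the desired $H^1$ vanishing reduces to a finite collection of vanishings of the form $H^1(X, F_I \otimes K_X \otimes L^{n-1}) = 0$ where $F_I$ is built from wedge powers of the $M_{A_i}$ and $I$ ranges over subsets of $\{1,\dots,t\}$. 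Each such vanishing needs $t-|I|$ to be large enough to absorb the positivity deficiency from the twist.

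For the ruled-variety specific step I would apply Butler's method: push forward along $\pi \colon \mathbb{P}(E) \to C$ using the projection formula to replace the $H^1$ on $\mathbb{P}(E)$ with $H^1$ on $C$ of a sheaf built from symmetric powers of $E$ (and of $E^\vee$ via $K_X = \pi^{\ast}(K_C + \det E) \otimes \mathcal{O}_{\mathbb{P}(E)}(-n)$), exactly along the lines of \cite{Bu94}. The quantity $\max(1, e+1-g)$ appears in part (b) as the sharp bound required for $\bigwedge^{\bullet} M_L \otimes \mathcal{O}_{\mathbb{P}(E)}(-n)$ to have vanishing higher direct images once $L$ carries enough summands; for part (a) the $-n$-shift is absent, and the bound reduces to $t \geq 2q+2$, where the factor of $2$ is the cost of applying duality once and then Butler's $(N_{2q+1})$-type estimate on the dual side.

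The main obstacle will be Step 1: the Green--Lazarsfeld duality requires precise intermediate cohomology vanishings for the module $\bigoplus_k H^0(X, L^k)$ and its $K_X$-twist, and verifying these uniformly for all sums $A_1 + \cdots + A_t$ of ample line bundles (not assumed base-point-free) is delicate. This is where the combinatorial restriction $q \leq n + \binom{n+a-1}{a} - 2$ should enter; once that is established, the rest is a bookkeeping exercise combining Butler's estimates on $C$ with the standard Koszul filtration of $M_L$.
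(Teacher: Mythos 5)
You do identify the correct cohomological target, namely the vanishing of $H^{1}(X, \bigwedge^{k+1}M_{L}\otimes K_{X}\otimes L^{\otimes(n-1)})$, but the two reductions you build around it have genuine gaps. First, the exact sequences $0 \to M_{L-A_{i}} \to M_{L} \to M_{A_{i}} \to 0$ that you iterate do not exist: the ranks do not add up ($\mathrm{rank}\,M_{L} = h^{0}(L)-1$, while $\mathrm{rank}\,M_{L-A_{i}}+\mathrm{rank}\,M_{A_{i}} = h^{0}(L-A_{i})+h^{0}(A_{i})-2$), and, worse, the $A_{i}$ are only assumed ample, not globally generated, so $M_{A_{i}}$ is not even defined as the kernel of a surjective evaluation map. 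The entire decomposition of $\bigwedge^{\bullet}M_{L}$ into pieces $F_{I}$ indexed by subsets $I\subseteq\{1,\dots,t\}$, on which your bookkeeping rests, is therefore unavailable. Second, the appeal to Green--Lazarsfeld duality is both unnecessary and problematic: $\mathbb{P}(E)$ over a curve of genus $g\geq 1$ is irregular, and the intermediate cohomology vanishings the duality requires are exactly what you concede you cannot verify for sums of arbitrary ample bundles; moreover the hypothesis $q \leq n+\binom{n+a-1}{a}-2$ is not a ``duality window,'' and your explanations of the constants (the factor $2$ in $t\geq 2q+2$ as ``the cost of duality,'' and $\max(1,e+1-g)$ as a condition on higher direct images of $\bigwedge^{\bullet}M_{L}\otimes\mathcal{O}_{\mathbb{P}(E)}(-n)$) do not correspond to any actual estimate in your outline.

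For comparison, the proof in the paper avoids duality on the section module altogether: Lemma \ref{AL_Lemma} (Aprodu--Lombardi) reduces Property-$(M_{q})$ directly to $H^{1}(\bigwedge^{k+1}M_{L}\otimes(K_{X}+(n-1)L))=0$ for $0\leq k\leq q-n$, and this vanishing is obtained from Lemma \ref{[Lemma 3.1]{EPark05}}, a generalization of \cite[Lemma 3.1]{EPark05} proved via the relative evaluation sequence $0\to\mathcal{K}_{L}\to\pi^{\ast}\pi_{\ast}L\to L\to 0$ and Butler-type slope ($\mu^{-}$) estimates on $C$. The rank $k_{L}=\binom{n+a-1}{a}-1$ of $\mathcal{K}_{L}$ is the true source of the restriction $q\leq n+k_{L}-1$ (for larger $q$ the condition $(n-1)a-n\geq k_{L}$ in the lemma becomes unusable). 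The passage to $L=A_{1}+\cdots+A_{t}$ in Corollaries \ref{Butler2A} and \ref{Butler2B} is then purely numerical: ampleness of each $A_{i}$ gives $a=\sum a_{i}\geq t$ and $b+a\mu^{-}(E)\geq t/n$, and feeding $N=K_{X}+(n-1)L$ into the lemma yields the requirement $t/n\geq\lceil(q+1)/(n-1)\rceil$, whose worst case $n=2$ is $t\geq 2q+2$; in part (b) the class $K_{X}\equiv -nC_{0}+(2g-2-e)f$ shifts these numerics by $e+1-g$, which is where $\max(1,e+1-g)$ comes from. To repair your argument you would need either to justify a genuine filtration of $M_{L}$ adapted to the summands $A_{i}$ (which would in any case require global generation you do not have), or to replace that step by the relative-kernel/slope analysis above.
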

Moreover, in Theorem \ref{M_q_ruled_variety_gen_line_bundle} we show that there are convex subsets of the Picard lattice of line bundles on ruled varieties satisfying property-$(M_{q}).$ \par
Now finally we state the implications of our results on $k-$very ampleness of adjoint linear series.
\begin{definition} ($k^{th}-$order embeddings) \\
    Let any integer $k \geq 0.$ a line bundle $L$ on a smooth projective variety $X$ is said to \textbf{separate} a $0-$dimensional subscheme $Z$ if the restriction map the restriction map
    \begin{equation}
        H^{0}(L) \xrightarrow{\rho_{Z}} H^{0}(L_{Z})
    \end{equation}
    is surjective. $L$ called $k-$\textbf{very ample} (resp. $k-$\textbf{very ample}) if it separates any $0-$dimensional subscheme (resp. curvilinear subscheme) $Z$ of length $\mathrm{deg}(Z) = k+1,$ in $X.$ $L$ is said to be \textbf{birationally} $q-$\textbf{very ample} (resp. birationally $q-$spanned) if there exists a Zariski-open dense subset $U \subseteq X$ such that, $L$ separates any $0-$dimensional subscheme (resp. curvilinear subscheme) $Z$ with support in $U$ and of length $\mathrm{deg}(Z) = k+1.$
\end{definition}
In the following theorem, we relate the concept of $k-$very ampleness of adjoint linear series with Property-$(M_{q}).$ We prove this in Corollary \ref{K} as a corollary of Theorem \ref{$(M_q)$_rational_surfaces} and \cite[Theorem 1.1]{Knu03}
\begin{theorem} (Weight-one syzygies and $k-$very ampleness)
    Let $L$ be an ample and base point free line bundle on a del Pezzo surface $X$ with $(-K_{X} \cdot L) \geq k+4.$ Then, the following conditions are equivalent
    \begin{itemize}
        \item[\textbf{(a)}] $L$ satisfies Property-$(M_{2+k}).$
        \item[\textbf{(b)}] $K_{X}+L$ is birationally $k-$very ample.
        \item[\textbf{(c)}] $K_{X}+L$ is birationally $k-$spanned.
        \item[\textbf{(d)}] Any smooth curve $C$ in $|L|$ has gonality $\mathrm{gon}(C) \geq k+2.$
    \end{itemize}
\end{theorem}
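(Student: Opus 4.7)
The strategy is to connect each of the four conditions to a single geometric invariant, the gonality of smooth curves in $|L|$, by combining two inputs: Theorem \ref{$(M_q)$_rational_surfaces} on $(M_q)$ for rational surfaces proved earlier in the paper, and Knutsen's characterization \cite[Theorem 1.1]{Knu03} of birational $k$-very ampleness of adjoint bundles on del Pezzo surfaces. Since a del Pezzo surface is rational with $-K_X$ ample, both results apply under the hypothesis $(-K_X\cdot L)\ge k+4$.

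First I would establish \textbf{(a)} $\Longleftrightarrow$ \textbf{(d)}. Setting $q=k+2$, the numerical hypothesis becomes $(-K_X\cdot L)\ge q+2$, which is exactly the hypothesis needed to invoke Theorem \ref{$(M_q)$_rational_surfaces}. That theorem yields
\begin{equation*}
L \text{ satisfies Property-}(M_{k+2}) \iff k+2 \leq \mathrm{gon}_{\max}(L).
\end{equation*}
To turn the right hand side into the ``every smooth curve in $|L|$ has gonality $\ge k+2$'' formulation of (d), I would use that on a del Pezzo surface with $L$ ample and base point free and $(-K_X\cdot L)\ge k+4$, gonality is constant on an open dense subset of the smooth locus of $|L|$: the upper-semicontinuity of gonality in flat families of smooth curves forces the general smooth curve to attain $\mathrm{gon}_{\max}(L)$, and under this positivity condition every smooth member of $|L|$ has that same gonality (a consequence of the adjunction-based analysis in \cite{Knu03}). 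This gives the equivalence of (a) and (d).

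Next I would derive \textbf{(b)} $\Longleftrightarrow$ \textbf{(c)} $\Longleftrightarrow$ \textbf{(d)} from Knutsen's theorem. Knutsen classifies precisely when $K_X+L$ is $k$-very ample (resp.\ $k$-spanned) on a del Pezzo surface in terms of non-existence of low-degree pencils on smooth curves in $|L|$, subject to a finite list of exceptional configurations of $(X,L)$. Under the assumption $(-K_X\cdot L)\ge k+4$, a direct numerical check rules out the exceptional cases for the \emph{birational} versions of $k$-very ampleness and $k$-spannedness: restricting to a Zariski-open dense $U\subseteq X$ kills the bad finite loci, and one obtains
\begin{equation*}
K_X+L \text{ is birationally } k\text{-very ample} \iff K_X+L \text{ is birationally } k\text{-spanned} \iff \mathrm{gon}(C)\ge k+2 \text{ for every smooth } C\in|L|.
\end{equation*}
Chaining this with the previous step closes the cycle.

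The main obstacle I anticipate is the passage between the ``maximal gonality'' invariant $\mathrm{gon}_{\max}(L)$ produced by Theorem \ref{$(M_q)$_rational_surfaces} and the uniform statement in (d) that \emph{every} smooth curve in $|L|$ achieves gonality $\ge k+2$. Making this rigorous requires the constancy of gonality of smooth curves in $|L|$ under the hypothesis $(-K_X\cdot L)\ge k+4$, which is essentially an application of Knutsen's numerical analysis combined with the standard semicontinuity of gonality in flat families. Once this is in place the remaining identifications are bookkeeping: verifying that the exceptional list in Knutsen vanishes under our hypothesis, and that ``birational'' allows one to ignore the finitely many non-generic subschemes supported outside the dense open set $U$.
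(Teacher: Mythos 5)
Your proposal takes essentially the same route as the paper, whose proof is exactly the one-line combination of Theorem \ref{$(M_q)$_rational_surfaces} at $q=k+2$ (giving the equivalence of (a) with the gonality bound) and \cite[Theorem 1.1]{Knu03} (giving (b) $\Leftrightarrow$ (c) $\Leftrightarrow$ (d)). The semicontinuity/constancy discussion you add to pass from $\mathrm{gon}_{\max}(L)$ to the ``every smooth curve'' formulation of (d) is precisely the content the paper delegates to Knutsen's theorem, so your argument is the same in substance, just with the bookkeeping made explicit.
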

\textbf{Organization of the paper.}
This paper is organized as follows. We begin the first section with collecting relevant facts that we use throughout the paper. In the second section, we prove one of our main results Theorem \ref{CM_theorem}. Starting the third and fourth sections deal with property-$(M_{q})$ for line bundles on varieties with nef canonical bundle and varieties of Kodaira dimension $-\infty$ respectively. By the birational classification of varieties, the results of these two sections gives us knowledge of $(M_{q})-$property on all minimal surfaces. We prove Theorem \ref{adjoint_theorem} at the end of the third section. In the fourth section we initiate a detailed study of all line bundles (not just multiple or adjoint bundles) on two very important classes of varieties with Kodaira dimension $-\infty,$ namely rational surfaces and ruled varieties. This gives us information of the geography of property-$(M_{q})$ in the Picard group as convex sets. Finally, we end our study in the fifth section by mentioning some examples and conjectures which are crucial in understanding property-$(M_{q})$ and more broadly the entire resolution of a smooth projective variety embedded by a complete linear series. In the Appendix we prove some results on normal generation of line bundles on surfaces.
\section*{Acknowledgements}
I would first like to thank my thesis advisor Purnaprajna Bangere for suggesting these circle of problems, for his guidance throughout and encouragement. I am also indebted to Justin Lacini for many valuable discussions on syzygies of projective varieties and to Euisung Park, A. L. Knutsen and Raneeta Dutta for helpful discussions. Without these help this article would not be possible. 
\section*{Notation and Terminology}
We now introduce some relevant notation and terminologies.
\flushleft{(1)} \textbf{Tensor and wedge product} \ For any two sheaves $\mathcal{F}$ and $\mathcal{G}$ on a smooth projective variety, we denote the tensor product $\mathcal{F} \otimes_{\mathcal{O}_{X}} \mathcal{G}$ by $\mathcal{F} \otimes \mathcal{G}$ or $\mathcal{F}(\mathcal{G})$ or $\mathcal{G}(\mathcal{F}).$ Often we call tensor product of sheaves as product of sheaves. We preserve the notation $\bigwedge$ to denote wedge product. \\
\flushleft{(2)} \textbf{Divisors and Line Bundles} \ We do not make any distinction between a divisor and its associated line bundle. We write tensor power of line bundles $L^{\otimes q}$ additively as $qL.$ We preserve the notation equality $=$ of two line bundles $L_{1}$ and $L_{2}$ to mean that they are isomorphic or that their associated divisors are linearly equivalent. We use the notation $\equiv$ to denote numerical equivalence of divisors or line bundles.
\flushleft{(3)} \textbf{Cohomology} \ For any coherent sheaf $\mathcal{F}$ on a smooth projective variety $X,$ we denote the $i^{th}-$cohomology group $H^{i}(X, \mathcal{F})$ by $H^{i}(\mathcal{F}),$ for integers $i \geq 0,$ when the base scheme $X$ for the sheaf $\mathcal{F}$ is understood.
\flushleft{(4)} \textbf{Product of Kernel Bundles} \ Let $q$ be a non-negative integer. For globally generated coherent sheaves $\mathcal{F}_{1}, \cdots, \mathcal{F}_{q},$ and a reduced and irreducible member $Q$ in a complete linear series $|P|$ of an effective line bundle $P$ on a smooth projective variety $X,$ we denote
\begin{itemize}
    \item[(a)] the collection of sheaves $\left\{ \mathcal{F}_{1}, \cdots, \mathcal{F}_{q}; \mathcal{O}_{Q} \right\}$ by $\mathfrak{F}|Q \ ;$
    \item[(b)] the index set $\left\{1, 2, \cdots, q \right\}$ by $(\underline{q}),$ the index subsets $\left\{1, 2, \cdots, q-k \right\} \subseteq (\underline{q})$ by $(\underline{q-k}),$ for any $0 \leq k \leq q,$ and the empty set by $(\underline{0}),$ any index subset $\left\{i_{1}, i_{2}, \cdots, i_{q'} \right\} \subseteq (\underline{q})$ by $(\underline{i}),$ for any $q' \leq q \ ;$
    \item[(c)] integer vectors $(m_{1}, \cdots, m_{q}) \in{\mathbb{Z}^{q}},$ by $\vec{m},$ and in $\mathbb{Z}^{q}$ we denote $(1, 0, \cdots, 0)$ by $\vec{e}_{1},$ and $(0, 1, 0 \cdots, 0)$ by $\vec{e}_{2},$ $(0, 0, 1, 0, \cdots, 0)$ by $\vec{e}_{3}$ etc, and the vector $(1, 1, \cdots, 1) = \vec{e}_{1}+\vec{e}_{2}+\vec{e}_{3}+\cdots+\vec{e}_{q}$ by $\vec{\delta} \ ;$ \\
    we denote,
    \item[(d)] the product of kernel bundles (see (\ref{kernel_bundle})) $\left( \bigotimes^{k}_{l = 1} M_{\mathcal{O}_{Q}(\mathcal{F}_{i_{l}})} \right) \otimes \left( \bigotimes^{q'}_{l = k+1}M_{\mathcal{F}_{i_{l}}} \right) $ by $M_{\mathfrak{F}|{Q}}[\underline{i}, k],$ for $0 \leq k \leq q',$ so that  $M_{\mathfrak{F}|Q}[\underline{i}, q] = M_{\mathcal{O}_{Q}(\mathcal{F}_{1})} \otimes \cdots \otimes M_{\mathcal{O}_{Q}(\mathcal{F}_{q})}$ and $M_{\mathfrak{F}|Q}[\underline{i}, 0] = M_{\mathcal{F}_{1}} \otimes \cdots \otimes M_{\mathcal{F}_{q}};$
    \item[(e)] $M_{\mathfrak{F}|Q}[\underline{q}, k]$ by $M_{\mathfrak{F}|Q}[q, k],$ $M_{\mathfrak{F}|Q}[\underline{i}, 0]$ by $M_{\mathfrak{F}|Q}[\underline{i}]$ and $M_{\mathfrak{F}|Q}[q, 0]$ by $M_{\mathfrak{F}|Q}$ for any non-negative integer $q,$ any  $0 \leq k \leq q,$  and any index subset $(\underline{i}) \subseteq (\underline{q})$ ;
    \item[(f)] $M_{\mathfrak{F}|Q}$ by $M$ when $\mathfrak{F}|Q$ is already mentioned or clear from the context;
    \item[(g)] the collection $\left\{m_{1}B, \cdots, m_{q}B; \mathcal{O}_{C} \right\}$ by $\vec{m}B$ for any base point free line bundle $B$ on $X,$ with a smooth member $C$ in the linear series $|B|$ a non- negative integer vector $\vec{m} = (m_{1}, \cdots, m_{q})$ (i.e. $m_{i} \geq 0$ for all $i$). Existence of $C$ is guaranteed by Bertini's Theorem. \\
    Therefore, in this case one has $M_{\vec{m}B} = \bigotimes^{q}_{i = 1}M_{m_{i}B}, \ M_{\vec{m}B}[q-k] = \bigotimes^{q-k}_{i = 1}M_{m_{i}B}$ and $M_{\vec{m}B}[q, k] = \left( \bigotimes^{k}_{l = 1} M_{\mathcal{O}_{C}(m_{l}B)} \right) \otimes \left( \bigotimes^{q}_{l = k+1}M_{m_{l}B} \right),$ for $0 \leq k \leq q,$ and finally by convention $M_{\vec{0}B} = \mathcal{O}_{X} = M_{0 B}.$ 
\end{itemize}
\section{Preliminaries}
We now collect some definitions and previously known results that are crucially used throughout the paper.
\subsection{Koszul cohomology and the Syzygy Bundle}
For any line bundle $L$ on a smooth projective variety $X$ and any coherent sheaf $\mathcal{F}$ on $X,$ we use the notation $V = H^{0}(L)$ and $S = \mathrm{Sym}(V).$ Then, one has the following graded $S-$module of sections
\begin{equation}
\label{moduleofsections}
    R(\mathcal{F}, L) := \bigoplus_{j \in {\mathbb{Z}}}H^{0}(\mathcal{F}(jL))
\end{equation}
with $q^{th}-$graded component $R_{j}(\mathcal{F}, L) = H^{0}(\mathcal{F}(jL))$ for any integer $j.$ Then, for any finitely generated graded $S-$module $R = \bigoplus_{j \in \mathbb{Z}} R_{j}$ one gets the following minimal graded free resolution of $R$:
\begin{small}
\begin{equation}
\label{free_resolution_of_R}
    0 \rightarrow F_{\mathrm{pd}(R)} \rightarrow \cdots \rightarrow F_{i} \rightarrow \cdots \rightarrow F_{0} \rightarrow R \rightarrow0
\end{equation}
\end{small}
where $F_{i} = \bigoplus_{j \in \mathbb{Z}} S(-i-j)^{\oplus \beta_{i, i+j}},$ $\mathrm{pd}(R)$ is the projective dimension of $R$ and $\beta_{i, j}$ are called the \textbf{Betti numbers} of $R.$
\begin{definition} $\left[ \ \textbf{Koszul cohomology} \ \right]$ \\
\label{definition_koszul_cohomology}
    In the above situation, one defines the \textbf{Koszul cohomology} $K_{i, j}(R, V)$ as the cohomology $K_{i, j}(\mathcal{F}, L) = \mathrm{Ker}(\partial_{i, j})/\mathrm{Im}(\partial_{i+1, j-1})$ at the $(i, j)^{th}-$stage of the following (Koszul) complex
    \begin{small}
    \begin{equation}
    \label{Koszul_complex}
        \cdots \xrightarrow[]{} \bigwedge^{i+1}V \otimes R_{j-1} \xrightarrow[]{\partial_{i+1, j-1}} \bigwedge^{i}V \otimes R_{j} \xrightarrow[]{\partial_{i, j}} \bigwedge^{i-1}V \otimes R_{j+1} \xrightarrow[]{} \cdots
    \end{equation}
    \end{small}
    where the maps $\partial_{i, j}$ are defined as obvious alternate sums and one has $\beta_{i, i+j} = \mathrm{dim}_{\mathbb{C}}(K_{i, j}(R, V))$ and $F_{i} = \bigoplus_{j \in{\mathbb{Z}}}K_{i, j}(R, V) \otimes S(-i-j).$ \par
    It is a common practice to list the Betti numbers $\beta_{i, j}$ in a table called the \textbf{Betti table} whose rows are different values of $j$ and columns are values of $i$ and the $(j, i)^{th}-$entry is $\beta_{i, i+j} = \mathrm{dim}(K_{i, j}(R, V)).$ The last column is $i = \mathrm{pd}(R)$ and the last row is $j =  j_{\max}$ which is the maximum possible weight in the entire resolution (\ref{free_resolution_of_R}), which is the same as the Castelnuovo Mumford regularity of $R$ as defined in the next section (see Definition-(\ref{CM_definition})). We refer the reader to \cite{Gre84I} and the excellent expositions and books \cite{Eis05}, \cite{Gre89} and \cite{Laz89} for more information about Koszul cohomology and the theory of syzygies. If $R = R(\mathcal{F}, L),$ we denote $K_{i, j}(R, V)$ by $K_{i, j}(\mathcal{F}, L).$ \\
    In the Betti table, we denote any indeterminate Betti number (could be zero or non-zero) as $\ast$ and any non-zero indeterminate Betti number as $\$$.
\end{definition}
\begin{definition} $\left[ \ (M_{q})-\textbf{property} \ \right]$
    \\ Let $X \subseteq \mathbb{P}^{r}$ be a smooth projective variety. The homogeneous coordinate ring $S_{X} = S_{X/\mathbb{P}^{r}}$ is said to satisfy \textbf{property}$-(\mathit{M_{q}})$ if its resolution (\ref{free_resolution_of_R}) for $R = S_{X}$ does not have any weight-one syzygies starting from the end (stage$-(r-1)$) of the resolution till stage$-(r-q),$ meaning
    \begin{multicols}{3}
\hspace{0.5cm} $\text{ }$ \par $F_{i} = \bigoplus_{j \geq 2} S(-i-j)^{\oplus \beta_{i,\, i+j}}$ \par \vspace{0.5cm}
\hspace{0.5cm} $ r-1 \geq i \geq r-q$ \\
\begin{tabular}{c | c c c c c c c}
\cline{2-8}
& \multicolumn{7}{c}{Betti table for $(M_q)$-property of $S_{X}$} \\
\cline{1-8}
$j \diagdown i$ & $0$ & $1$ & $\cdots$ & $r-q-1$ & $r-q$ & $\cdots$ & $r-1$ \\
\cline{1-8}
& & & & & & & \\
0 & 1 & $\ast$ & $\cdots$ & $\ast$ & 0 & $\cdots$ & 0 \\
1 & $\ast$ & $\ast$ & $\cdots$ & $\ast$ &
\cellcolor[gray]{0.9}\color{red}{0} &
\cellcolor[gray]{0.9} $\cdots$ &
\cellcolor[gray]{0.9}\color{red}{0} \\
2 & $\ast$ & $\ast$ & $\cdots$ & $\ast$ & $\ast$ & $\cdots$ & $\ast$ \\
$\vdots$ & $\vdots$ & $\vdots$ & $\ddots$ & $\vdots$ & $\vdots$ & $\ddots$ & $\vdots$ \\
$j_{\max}$ & $\ast$ & $\ast$ & $\cdots$ & $\ast$ & $\ast$ & $\cdots$ & $\ast$ \\
\end{tabular} \\
\hspace{0.5cm}$ \text{ }$
\end{multicols}
\end{definition}
\begin{remark}
    Similarly, one says that $S_{X}$ satisfies property-$(N_{p})$ if $S$ is normal and $F_{i} = S(-i-1)^{\oplus \beta_{i, i+1}}$ for all $1 \leq i \leq p.$ In other words, if $S_{X}$ has only weight-one syzygies from stage $i = 1$ to stage $i = p$ of the resolution (\ref{free_resolution_of_R}) for $R = S_{X}.$ Notice that, if $L$ is a line bundle on $X,$ then the properties-$(N_{p})$ and $(M_{q})$ of the line bundle $L$ (or of the section ring $R(L)$) defined in Definition \ref{Intro_def_$(M_q)$} of the Introduction imply properties-$(N_{p})$ and $(M_{q})$ of $S_{X}$ respectively, due to projective normality of $L$. However, property-$(m_{q})$ does not necessarily imply any weight-one syzygy vanishing of $S_{X}$ when projective normality is absent.  \\
    If $S_{X}$ satisfies both properties $(N_{p})$ and $(M_{q}),$ then its Betti table looks like the following
    \begin{center}
        \begin{small}
        \begin{tabular}{c | c c c c c c c c c c}
        \cline{2-11}
        & & & \multicolumn{7}{c}{Betti table for $(N_{p})$ and $(M_q)-$properties} & \\
        \cline{1-11}
$j  \diagdown i$ & $0$ & $1$ & $\cdots$ & $p$ & $p+1$ & $cdots$ & $r-q-1$ & $r-q$ & $\cdots$ & $r-1$  \\
\cline{1-11}
&  &  &  & &  &  &  & & & \\
$0$ & \cellcolor[gray]{0.9} \color{blue}{$1$} & $0$ & $\cdots$ & $0$ & $\ast$ & $\cdots$ & $\ast$ & \ $0$ & $\cdots$ & \ $0$ \\
$1$ & $0$ & \cellcolor[gray]{0.9} \color{blue}{\$} & \cellcolor[gray]{0.9} $\cdots$ & \cellcolor[gray]{0.9} \color{blue}{\$} & $\ast$ & $\cdots$ & $\ast$ & \cellcolor[gray]{0.9} \color{red}{$0$} & \cellcolor[gray]{0.9} $\cdots$ & \cellcolor[gray]{0.9} \color{red}{$0$} \\
$2$ & $0$ & $0$ & $\cdots$ & $0$ & $\ast$ & $\cdots$ & $\ast$ & $\ast$ & $\cdots$ & $\ast$ \\
$\vdots$ & $\vdots$ & $\vdots$ & $\ddots$ & $\vdots$ & $\vdots$ & $\ddots$ & $\vdots$ & $\vdots$ & $\ddots$ & $\vdots$ \\
$j_{\max}$ & $0$ & $0$ & $\cdots$ & $0$ & $\ast$ & $\cdots$ & $\ast$ & $\ast$ & $\cdots$ & $\ast$
\end{tabular}
\end{small}
\end{center}
In Section \ref{Open_problems_examples}, through examples we explore the comparison between properties-$(M_{q})$ and $(N_{p})$  and determine or give conjectures on the estimates of $p = p_{\max}, \ q = q_{\max}$ of the highest possible $p$ and $q$ for properties-$(N_{p})$ and $(M_{q})$ respectively for a chosen embedding.
\end{remark}
\begin{remark}
    From Definition \ref{Intro_def_$(M_q)$}, it follows that if a line bundle $L$ satisfies Property$-(M_{q}),$ then the homogeneous coordinate ring $S_{X}$ also satisfies Property$-(M_{q})$ and by \cite{Gre84I}, an ample and base point free line bundle $L$ satisfies Property$-(M_{q})$ for all $1\leq q \leq n-1.$ So, we assume that $q \geq n.$
\end{remark}
Now we make the definition of the syzygy bundle which captures the syzygies of line bundles in an extremely proficient way and all of our results are obtained through the vanishing of the cohomologies of vector bundles which originate from this bundle with (\ref{SamplingVectorBundle}) as the theme principle.
\begin{definition} $\left[ \ \textbf{Syzygy Sheaf and Syzygy Bundle} \ \right]$ \\
Let $\mathcal{E}$ be a globally generated coherent sheaf on a smooth projective variety $X$. Then, we define the \textbf{syzygy sheaf} $M_{\mathcal{E}}$ of $\mathcal{E}$ by the following short-exact sequence
\begin{equation}
\label{kernel_bundle_general}
    0 \rightarrow M_{\mathcal{E}} \rightarrow H^{0}(\mathcal{E}) \otimes \mathcal{O}_{X} \xrightarrow[]{\mathrm{ev}_{\mathcal{E}}} \mathcal{E} \rightarrow 0
\end{equation}
If $\mathcal{E} =  E$ is a vector bundle, then the syzygy sheaf $M_{E}$ is a vector bundle and we call it the \textbf{syzygy bundle} or the \textbf{kernel bundle} and call (\ref{kernel_bundle_general}) the \textbf{syzygy bundle sequence} or the \textbf{kernel bundle sequence} for $\mathcal{E} = E.$ \par
\textbf{ } In the situation of (\ref{moduleofsections}), if  we replace our coherent sheaf $\mathcal{F}$ with a vector bundle $E$ on $X$ and we assume our line bundle $L$ is base point free.  Then $L$ defines a projective morphism $\phi_{L} : X \rightarrow \mathbb{P}^{r}$ and thus we have the kernel bundle sequence:
\begin{equation}
\label{kernel_bundle}
0 \rightarrow M_{L} \rightarrow H^{0}(L) \otimes \mathcal{O}_{X} \rightarrow L \rightarrow 0
\end{equation}
This sequence is the pull-back by $\phi_{L}$ of the following twisted Euler-sequence in $\mathbb{P}^{r}$
\begin{equation}
\label{Twisted_Euler_Sequence}
0 \rightarrow \Omega^{1}_{\mathbb{P}^{r}}(1) \rightarrow \mathcal{O}_{\mathbb{P}^{r}}^{r+1} \rightarrow \mathcal{O}_{\mathbb{P}^{r}}(1) \rightarrow 0
\end{equation}   
\end{definition}
\begin{theorem}
\label{SamplingVectorBundle}
In this situation, one has
\begin{equation}
K_{p, q}(X, E, L) = \mathrm{Ker} \left[ H^{1}(\bigwedge^{p+1}M_{L}\otimes E((q-1)L)) \rightarrow \bigwedge^{p+1}H^{0}(L) \otimes H^{1}(E((q-1)L)) \right]
\end{equation}
$$= \mathrm{Coker} \left[ \bigwedge^{p+1}H^{0}(L) \otimes H^{0}(E((q-1)L)) \rightarrow H^{0}(\bigwedge^{p}M_{L} \otimes E((q-1)L)) \right]$$
\end{theorem}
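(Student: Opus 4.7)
The plan is to realize the Koszul cohomology $K_{p,q}(X,E,L)$ as a kernel (respectively cokernel) attached to the long exact cohomology sequence of an exterior power of the syzygy bundle sequence (\ref{kernel_bundle}). Since $L$ is a line bundle, the natural filtration on $\bigwedge^p(V \otimes \mathcal{O}_X)$ has only two nontrivial graded pieces, collapsing to
\begin{equation*}
0 \to \bigwedge^p M_L \to \bigwedge^p V \otimes \mathcal{O}_X \to \bigwedge^{p-1} M_L \otimes L \to 0,
\end{equation*}
and tensoring with $E((q-1)L)$ (locally free, so exactness is preserved) produces, for every $p \geq 1$,
\begin{equation*}
(\ast)_p: \quad 0 \to \bigwedge^p M_L \otimes E((q-1)L) \to \bigwedge^p V \otimes E((q-1)L) \to \bigwedge^{p-1} M_L \otimes E(qL) \to 0.
\end{equation*}

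First I would feed $(\ast)_{p+1}$ into the long exact sequence of cohomology; the strand
\begin{equation*}
\bigwedge^{p+1}\! V \otimes H^0(E((q-1)L)) \xrightarrow{\alpha} H^0\!\left(\bigwedge^p M_L \otimes E(qL)\right) \xrightarrow{\delta} H^1\!\left(\bigwedge^{p+1}\! M_L \otimes E((q-1)L)\right) \to \bigwedge^{p+1}\! V \otimes H^1(E((q-1)L))
\end{equation*}
immediately yields $\mathrm{coker}(\alpha) = \ker[H^1(\bigwedge^{p+1} M_L \otimes E((q-1)L)) \to \bigwedge^{p+1} V \otimes H^1(E((q-1)L))]$, which gives the equivalence of the two expressions stated in the theorem.

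It remains to identify $\mathrm{coker}(\alpha)$ with $K_{p,q}(X,E,L)$. Applying $H^0$ to $(\ast)_p$ after replacing $q-1$ by $q$ exhibits $H^0(\bigwedge^p M_L \otimes E(qL))$ as the kernel of a map $\gamma \colon \bigwedge^p V \otimes H^0(E(qL)) \to H^0(\bigwedge^{p-1} M_L \otimes E((q+1)L))$; the analogous use of $(\ast)_{p-1}$ embeds the target inside $\bigwedge^{p-1} V \otimes H^0(E((q+1)L))$. The main, and really only, obstacle is the compatibility check that the resulting composite $\bigwedge^p V \otimes H^0(E(qL)) \to \bigwedge^{p-1} V \otimes H^0(E((q+1)L))$ agrees, up to a canonical sign, with the Koszul differential $\partial_{p,q}$, and dually that the composite $\bigwedge^{p+1}V \otimes H^0(E((q-1)L)) \xrightarrow{\alpha} H^0(\bigwedge^p M_L \otimes E(qL)) \hookrightarrow \bigwedge^p V \otimes H^0(E(qL))$ equals $\partial_{p+1,q-1}$. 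Both facts are diagram chases on the connecting-homomorphism construction tested on decomposable elements $v_1 \wedge \cdots \wedge v_p \otimes s$ against the alternating-sum formula defining $\partial$. Granting this, $\ker \partial_{p,q} = H^0(\bigwedge^p M_L \otimes E(qL))$, and since $\partial \circ \partial = 0$ forces $\mathrm{im}\,\partial_{p+1,q-1} \subseteq \ker \partial_{p,q}$, we obtain $\mathrm{im}\,\partial_{p+1,q-1} = \mathrm{im}(\alpha)$, so
\begin{equation*}
K_{p,q}(X, E, L) \;=\; \ker \partial_{p,q}/\mathrm{im}\,\partial_{p+1,q-1} \;=\; H^0\!\left(\bigwedge^p M_L \otimes E(qL)\right)\!/\mathrm{im}(\alpha) \;=\; \mathrm{coker}(\alpha),
\end{equation*}
completing the proof.
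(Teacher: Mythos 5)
The paper itself gives no proof of this statement: it is quoted as the standard description of Koszul cohomology via the kernel bundle, going back to Green and to Lazarsfeld's ``sampling'' notes (\cite{Gre84I}, \cite{Laz89}), and your argument is exactly that standard derivation --- exterior powers of the sequence (\ref{kernel_bundle}) collapse to a two-term filtration because $L$ has rank one, and the long exact sequences of the twisted sequences $(\ast)_{p+1}$, $(\ast)_p$, $(\ast)_{p-1}$ identify $\ker\partial_{p,q}$, $\operatorname{im}\partial_{p+1,q-1}$ and the two descriptions of the quotient. So the proposal is correct and matches the approach the paper implicitly relies on; the single step you defer --- checking on decomposable elements $v_{1}\wedge\cdots\wedge v_{p}\otimes s$ that the composites induced by the filtration maps coincide (up to sign) with the Koszul differentials --- is genuinely routine, but it is the only place where content resides, so in a complete write-up it should be carried out or cited rather than merely ``granted.''
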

In the following we give a restatement of \cite[Lemma-(2.1)]{AL15} which we will use as a criterion for vanishing of weight-one Koszul cohomology of line bundles. 
\begin{lemma} 
\label{AL_Lemma}
\cite[Lemma-(2.1)]{AL15}
    Let $L$ be an ample and globally generated line bundle on a smooth projective variety $X$ of dimension $n \geq 2,$ and let $B$ be a line bundle on $X$ such that $L-B$ is ample. Let $r = h^{0}(L)-1$ and  $n \leq q \leq r-1.$ Then if the multiplication maps of global sections:
    $$
        H^{0}(L) \otimes H^{0}(M^{\otimes k}_{L} \otimes (K_{X}+(n-1)L-B)) \overset{\mu_{k}}{\longrightarrow} H^{0}(M^{\otimes k}_{L} \otimes (K_{X}+nL-B)),
       $$
       \begin{equation}
       \label{AL_maps}
        \text{ } \hspace{0.3cm} \text{ for } 0 \leq k \leq q-n
    \end{equation}
    are surjective, then
    \begin{small}
    \begin{equation}
        K_{r-k-n, 1}(X, B, L) = H^{1}(\wedge^{(r-k)-n}M_{L} \otimes B) = H^{1}(\bigwedge^{k+1}M_{L} (K_{X}+(n-1)L-B)) = 0, \hspace{0.5cm} \text{ for all } 0 \leq k \leq q-n.
    \end{equation}  
    \end{small}
    In particular, taking $i = r-k-n,$ $K_{i, 1}(X, B, L) = 0,$ for all $r-1 \geq i \geq r-q$
\end{lemma}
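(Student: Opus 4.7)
The proof follows the standard paradigm for such vanishings: translate the Koszul cohomology into an $H^1$ of a twisted wedge power of $M_L$, then close off by a short induction using the kernel bundle sequence and the hypothesis on the multiplication maps $\mu_k$, anchored by a Kodaira vanishing base case.

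\textbf{Reduction to an $H^1$.} The sampling formula (Theorem \ref{SamplingVectorBundle}) applied with $E = B$, $p = r-k-n$, $q = 1$ realises $K_{r-k-n,1}(X,B,L)$ inside $H^1(\wedge^{p+1} M_L \otimes B)$. Combining the wedge duality $(\wedge^a M_L)^\vee \cong \wedge^{r-a} M_L \otimes L$ (from $\det M_L = L^{-1}$), Serre duality on the $n$-fold $X$, and iterated applications of the wedge kernel sequence
$$0 \to \bigwedge^{i+1} M_L \to \bigwedge^{i+1} V \otimes \mathcal{O}_X \to \bigwedge^i M_L \otimes L \to 0,$$
each step of which trades a wedge factor for a twist by $L$ and drops the cohomological degree by one, one rewrites this cohomology as
$$K_{r-k-n,1}(X,B,L) \;=\; H^1\bigl(\textstyle\bigwedge^{k+1} M_L \otimes (K_X + (n-1)L - B)\bigr).$$
The trivial bundle pieces contributed at each descent step are killed by Kodaira vanishing on line bundles of the form $K_X + \text{(ample)}$.

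\textbf{Tensor-power induction.} Set $F := K_X + (n-1)L - B$. Tensoring the kernel bundle sequence (\ref{kernel_bundle}) with $M_L^{\otimes k} \otimes F$ and taking the long exact sequence yields
$$V \otimes H^0(M_L^{\otimes k} \otimes F) \xrightarrow{\,\mu_k\,} H^0(M_L^{\otimes k} \otimes L \otimes F) \to H^1(M_L^{\otimes(k+1)} \otimes F) \to V \otimes H^1(M_L^{\otimes k} \otimes F).$$
The leftmost arrow is precisely the multiplication $\mu_k$ in the hypothesis; when it is surjective, the connecting map is zero and $H^1(M_L^{\otimes(k+1)} \otimes F)$ injects into $V \otimes H^1(M_L^{\otimes k} \otimes F)$. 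The base case $H^1(F) = H^1(K_X + (n-1)L - B) = 0$ is Kodaira vanishing: since $L - B$ is ample and $n \ge 2$, the divisor $(n-1)L - B = (n-2)L + (L - B)$ is ample. Induction on $k$ using the surjectivity of $\mu_0, \ldots, \mu_{q-n}$ then gives $H^1(M_L^{\otimes(k+1)} \otimes F) = 0$ in the full range $0 \le k \le q-n$.

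\textbf{Wedge extraction and obstacle.} Over characteristic zero the antisymmetrization idempotent exhibits $\wedge^{k+1} M_L$ as a direct summand of $M_L^{\otimes(k+1)}$, so $H^1(\wedge^{k+1} M_L \otimes F)$ is a direct summand of $H^1(M_L^{\otimes(k+1)} \otimes F) = 0$. Combined with the identification from the first step, this yields $K_{r-k-n,1}(X,B,L) = 0$ for $0 \le k \le q-n$; the remaining indices $r - n < i \le r - 1$ are automatic by Green's $K_{p,1}$-theorem for base point free line bundles, so the conclusion $K_{i,1}(X,B,L) = 0$ holds for all $r-q \le i \le r-1$. The genuinely delicate step is the first reduction: the sampling formula naturally produces an $H^1$ of the "wrong" wedge power twisted by $B$, and reconciling this with the symmetric-looking expression involving $K_X + (n-1)L - B$ requires carefully tracking the $n-1$ successive wedge sequences together with the Serre duality flip, checking at each stage that the trivial-bundle contributions vanish. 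Everything downstream of that translation is a routine induction.
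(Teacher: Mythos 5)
The paper offers no proof of this lemma — it is quoted from Aprodu--Lombardi \cite[Lemma 2.1]{AL15} — and your argument is correct and is essentially the proof from that source: the Serre-duality/wedge-duality descent identifying $K_{r-k-n,1}(X,B,L)$ with (a subspace of) $H^{1}(\bigwedge^{k+1}M_{L}\otimes(K_{X}+(n-1)L-B))$, followed by the kernel-bundle induction on tensor powers using the surjectivity of $\mu_{k}$ and the characteristic-zero splitting of $\bigwedge^{k+1}M_{L}$ off $M_{L}^{\otimes(k+1)}$. The only loose point is attributing the range $r-n<i\leq r-1$ to Green's $K_{p,1}$-theorem, which literally applies for trivial coefficients; for general $B$ that range follows instead from the same Serre duality plus Kodaira descent (using only that $L-B$ is ample), so the conclusion stands.
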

\begin{remark}
Taking $B = \mathcal{O}_{X}$ in the above Lemma-(\ref{AL_Lemma}), and applying Definition \ref{Intro_def_$(M_q)$}, one has $L$ satisfies Property-$(m_{q})$ if the maps $\mu_{k}$ in (\ref{AL_maps})
are surjective. \par
Tensoring the short-exact sequence (\ref{kernel_bundle}) by $K_{X}+(n-1)L$ and taking the long-exact sequence of cohomology
\begin{itemize}
\item[\textbf{(1)}] By Kodaira vanishing, $H^{1}(K_{X}+ (n-1)L) = 0,$ so $L$ satisfies $(M_{n})$ if \\ $H^{1}(M_{L} \otimes (K_{X} + (n-1)L)) = 0.$
\item[\textbf{(2)}] For any integer $k$ with $0 \leq k \leq r-n-1$ if a line bundle satisfies $(M_{n+k-1}),$ then it satisfies $( M_{n+k})$ if $H^{1}(M^{\otimes (k+1)}_{L} \otimes (K_{X}+(n-1)L) = 0$
\end{itemize}
\end{remark}
Now we present some methods of proving the surjectivity of the maps $\mu_{k}$ in (\ref{AL_maps}) which we will use throughout this article.
\subsection{Castelnuovo Mumford regularity and surjectivity of global sections}
Here we recall the definition of Castelnuovo Mumford regularity (CM regularity or regularity for short) and summarize its implications. For a more detailed discussion on the topic we refer the reader to \cite[Section (1.8)]{Laz04} and \cite[Chapter 4]{Eis05}.
\begin{definition} \textbf{(CM regularity)} \\
\label{CM_definition}
    Let $X$ be a smooth projective variety of dimension $n$ and let $B$ be an ample and base point free line bundle on $X.$ We say that a coherent sheaf $\mathcal{F}$ on $X$ has \textbf{CM regularity} $\rho$ with respect to $B$ if $\rho$ is the smallest integer $k$ such that we have the vanishing
    \begin{equation}
        H^{i}(\mathcal{F} \otimes (k-i)B) = 0, \hspace{0.5cm} \text{ for all } 1 \leq i \leq n
    \end{equation}  
    We denote the regularity $\rho$ by $\mathrm{reg}_{B}(\mathcal{F})$. A coherent sheaf $\mathcal{F}$ is called $k-$regular with respect to $B$ if $k \geq \mathrm{reg}_{B}(\mathcal{F}).$
\end{definition}
Regarding regularity the most important well-known technique we use is the Castelnuovo lemma below. For a proof and related discussion, please see \cite[Theorem 1.8, Section 1.8]{Laz04} and \cite[Corollary 4.18, Section 4D]{Eis05}.
\begin{lemma} \textbf{(Castelnuovo Lemma)}
\label{Castelnuovo_lemma}
    For any ample and base point free line bundle $B$ and a coherent sheaf $\mathcal{F}$ on a projective variety $X$ such that $\mathcal{F}$ is $m-$regular with respect to $B.$ Then for any integer $k \geq 0$ one has the following
    \begin{itemize}
        \item[(a)] The sheaf $\mathcal{F}((m+k)B)$ is globally generated.
        \item[(b)] The multiplication map $H^{0}(kB) \otimes H^{0}(\mathcal{F}(m B)) \xrightarrow[]{\mu} H^{0}((k+m)B)$ is surjective.
        \item[(c)] $\mathcal{F}$ is $(m+k)-$regular with respect to $B$.
    \end{itemize}
\end{lemma}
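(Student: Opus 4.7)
The plan is to prove all three items of the Castelnuovo Lemma simultaneously by a double induction: an outer induction on the dimension $n = \dim X$, and, for fixed dimension, an inner induction on $k \geq 0$. The base case $n = 0$ is vacuous (a coherent sheaf on a $0$-dimensional scheme is trivially globally generated, multiplication maps reduce to identities, and higher cohomologies vanish). The principal tool is restriction to a divisor $D \in |B|$. Since $B$ is ample and base point free, Bertini supplies a smooth (or at least generically reduced irreducible) member $D \in |B|$. Twisting the short-exact sequence $0 \to \mathcal{O}_X(-B) \to \mathcal{O}_X \to \mathcal{O}_D \to 0$ by $\mathcal{F}((m+k-i)B)$ yields
\begin{equation*}
0 \to \mathcal{F}\bigl((m+k-i-1)B\bigr) \to \mathcal{F}\bigl((m+k-i)B\bigr) \to \mathcal{F}|_D\bigl((m+k-i)B|_D\bigr) \to 0,
\end{equation*}
and its long exact sequence is what drives every step.

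First, I would show that $\mathcal{F}|_D$ is $m$-regular on $D$ with respect to the still ample and base point free line bundle $B|_D$. This follows from the above long exact sequence applied with $k = 0$: in each degree $1 \leq i \leq n-1$, the vanishing of $H^i(\mathcal{F}((m-i)B))$ and $H^{i+1}(\mathcal{F}((m-i-1)B))$ forces the vanishing of $H^i(\mathcal{F}|_D((m-i)B|_D))$. This sets up the outer induction on dimension.

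Next I would prove part (c) by induction on $k$. Assuming $\mathcal{F}$ is $(m+k-1)$-regular with respect to $B$, the long exact sequence produces the four-term sequence
\begin{equation*}
H^{i-1}\bigl(\mathcal{F}|_D((m+k-i)B|_D)\bigr) \to H^i\bigl(\mathcal{F}((m+k-i-1)B)\bigr) \to H^i\bigl(\mathcal{F}((m+k-i)B)\bigr) \to H^i\bigl(\mathcal{F}|_D((m+k-i)B|_D)\bigr).
\end{equation*}
By the outer inductive hypothesis on dimension, both outer groups vanish (since $\mathcal{F}|_D$ is $(m+k-1)$-regular and $(m+k)$-regular), and by the inner inductive hypothesis on $k$ the left-middle term vanishes. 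Therefore the right-middle term vanishes, which is exactly the statement that $\mathcal{F}$ is $(m+k)$-regular.

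For (a), I would follow Mumford's classical argument: to show $\mathcal{F}((m+k)B)$ is globally generated at a point $x \in X$, choose $D \in |B|$ avoiding $x$ (possible since $B$ is base point free), apply the outer inductive hypothesis to get global generation of $\mathcal{F}|_D((m+k)B|_D)$, and lift sections using surjectivity of $H^0(\mathcal{F}((m+k)B)) \to H^0(\mathcal{F}|_D((m+k)B|_D))$, which follows from the vanishing of $H^1(\mathcal{F}((m+k-1)B))$ given by (c). For (b), I would induct on $k$ via the commutative diagram obtained by restricting both factors to $D$, combining the outer inductive hypothesis (surjectivity on $D$) with the lifting provided by the vanishing in (c) applied to the kernel $\mathcal{F}((m+k-1)B)$. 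The main obstacle is bookkeeping: ensuring at each recursive call that the restricted data $(D, B|_D, \mathcal{F}|_D, m)$ still satisfies the hypotheses of the lemma, and that the various vanishings line up so the induction closes. Once the regularity statement (c) is established in the first round, the surjectivity in (b) and global generation in (a) follow cleanly, so the heart of the argument is the careful simultaneous induction on $n$ and $k$.
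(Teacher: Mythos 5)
The paper does not actually prove this lemma: it is quoted from \cite[Theorem 1.8.5, Section 1.8]{Laz04} and \cite[Corollary 4.18]{Eis05}, so your sketch should be measured against Mumford's classical argument, which is indeed the one you are reconstructing. The parts of your sketch dealing with regularity are essentially correct: the restriction sequence, the deduction that $\mathcal{F}\otimes\mathcal{O}_{D}$ is $m$-regular with respect to $B|_{D}$, and the double induction giving (c) are the standard proof, provided you say that $D$ is chosen general in the free linear system $|B|$ so as to avoid the (finitely many) associated points of $\mathcal{F}$ — that, not Bertini smoothness (which is neither needed nor available on a possibly singular $X$), is what keeps the twisted sequence exact on the left. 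The first genuine gap is in (a): if $D$ avoids $x$, then global generation of $\mathcal{F}|_{D}((m+k)B|_{D})$ and lifting of its sections give no control whatsoever on the fibre of $\mathcal{F}((m+k)B)$ at $x$, since the lifted sections are unconstrained off $D$; and choosing $D$ through $x$ instead collides with the requirement that $D$ avoid the associated points of $\mathcal{F}$ (for instance when $x$ is one of them). The standard route is different: deduce (a) from (b) together with Serre vanishing — for $j\gg 0$ the sheaf $\mathcal{F}((m+j)B)$ is globally generated, (b) makes $H^{0}(\mathcal{F}(mB))\otimes H^{0}(jB)\rightarrow H^{0}(\mathcal{F}((m+j)B))$ surjective, and a fibrewise factorization argument, using that $jB$ is a line bundle with a section not vanishing at $x$, forces $H^{0}(\mathcal{F}(mB))\otimes k(x)\rightarrow \mathcal{F}(mB)\otimes k(x)$ to be onto.

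The second gap is in your argument for (b). After restricting "both factors" to $D$, closing the induction requires lifting elements of $H^{0}(kB|_{D})$ (or of $H^{0}(B|_{D})$, once you reduce to $k=1$) back to $H^{0}(kB)$, and nothing in the hypotheses provides this: the regularity assumption is on $\mathcal{F}$, not on $\mathcal{O}_{X}$, and on an irregular variety $H^{1}(\mathcal{O}_{X})\neq 0$ obstructs exactly the map $H^{0}(B)\rightarrow H^{0}(B|_{D})$. The vanishing $H^{1}(\mathcal{F}((m+k-1)B))=0$ coming from (c) lets you lift sections of $\mathcal{F}|_{D}$, but not sections of $B|_{D}$. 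This is the classical delicate point in Mumford's proof, and the usual repairs are either to strengthen the inductive statement so that the factor $H^{0}(B|_{D})$ is replaced by an arbitrary subspace $W\subseteq H^{0}(B|_{D})$ which generates $B|_{D}$, applied to $W=\mathrm{im}\left(H^{0}(X,B)\rightarrow H^{0}(D,B|_{D})\right)$ (which generates because $B$ is globally generated on $X$), or to reduce at the outset to $X=\mathbb{P}^{r}$, $B=\mathcal{O}(1)$ by pushing $\mathcal{F}$ forward under the finite morphism defined by $|B|$, where restriction of $H^{0}(\mathcal{O}(k))$ to a hyperplane is surjective. With one of these repairs, and the associated-points caveat above, your induction closes; as written, the key surjectivity in (b) and the global generation in (a) do not follow.
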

In the following remark we mention an interpretation of regularity in terms of the Betti table.
\begin{remark}
    In particular, part $(b)$ of the above Lemma implies that $L = \ell B$ is projectively normal for all $\ell \geq \mathrm{reg}_{B}(\mathcal{O}_{X}).$ \\
    $\text{ }$ From \cite[Theorem (1.8.26)]{Laz04}, if $h^{0}(\mathcal{F}(-dB)) = 0$ for $d \ll 0,$ $\rho$ is the highest possible weight $j = j_{\max}$ in the minimal graded free resolution of $R = R(\mathcal{F}, B)$ as a $S-$module, where $S = \mathrm{Sym}(H^{0}(B)),$ or in other words, $j = \rho$ is the last row of the Betti table of $R.$ \\
    $\text{ }$ Note that, the index $j$ of rows Betti table of $R$ (weights of $R$) can start from a negative integer. This happens especially if $\rho < 0.$ From the definition of Koszul cohomology Definition \ref{definition_koszul_cohomology}, this cannot happen when $\mathcal{F} = \mathcal{O}_{X}$ ($R = R(B)$). This gives an alternative proof of (3) in the following proposition where we show that in the case $\mathcal{F} = \mathcal{O}_{X}$ one has $\rho \geq 0$.
\end{remark}
Now, we give estimates of regularity of some important line bundles in the following proposition.
\begin{proposition}
\label{regularity_examples}
    For any ample and base point free line bundle $B$ on a smooth projective variety $X$ of dimension $n,$ one has the following
    \begin{itemize}
    \item[\textbf{(1)}] The regularity $\mathrm{reg}_{B}(K_{X})$ of the canonical sheaf satisfies the inequality
    $$
        \mathrm{reg}_{B}(K_{X}) \leq n+1
    $$
    
        \item[\textbf{(2)}]  For any integer $\ell,$ since by definition $\mathrm{reg}_{B}(K_{X}+\ell B) = \mathrm{reg}_{B}(K_{X})-\ell,$ the regularity of $K_{X}+\ell B$ satisfies
    $$
        \mathrm{reg}_{B}(K_{X}+\ell B) \geq (n+1)-\ell
    $$
        \item[\textbf{(3)}] The regularity $\mathrm{reg}_{B}(\mathcal{O}_{X})$ of the structure sheaf satisfies the inequality
    $$
        \mathrm{reg}_{B}(\mathcal{O}_{X}) \geq 0
    $$
    Moreover, from the definition of Koszul cohomology, $K_{i, j}(X, B) = 0$ for any $j < 0,$ and therefore the Betti table of $R(B)$ starts from the row $j = 0$ and ends at $j = \mathrm{reg}_{B}(\mathcal{O}_{X}).$
        \item[\textbf{(4)}]  For any integer $\ell,$ since by definition $\mathrm{reg}_{B}(\ell B) = \mathrm{reg}_{B}(\mathcal{O}_{X})-\ell,$ the regularity $\mathrm{reg}_{B}(\ell B)$ of $\ell B$ satisfies
    $$
        \mathrm{reg}_{B}(\ell B) \geq -\ell
    $$
    \end{itemize}
        In all of the above inequalities, equality occurs if and only if $(X, B) = (\mathbb{P}^{n}, \mathcal{O}(1))$ where $\mathcal{O}(1)$ is the linear system of hyperplanes in $\mathbb{P}^{n}.$
\end{proposition}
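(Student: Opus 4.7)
The plan is to split the four inequalities into two independent pairs via the shift formula $\mathrm{reg}_{B}(\mathcal{F}(\ell B)) = \mathrm{reg}_{B}(\mathcal{F}) - \ell$, which is immediate from Definition \ref{CM_definition}. Parts (1) and (3) are the primitive statements, and (2) and (4) follow from them by twisting. The two primitive inputs draw on different sources: Kodaira vanishing for the canonical sheaf, and the vanishing of Koszul cohomology in negative weights for the structure sheaf.

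For part (1), I would verify directly that $K_{X}$ is $(n+1)$-regular with respect to $B$ by checking the defining vanishings at $k = n+1$: for each $1 \leq i \leq n$ the exponent $n+1-i$ is at least $1$, so $(n+1-i)B$ is ample and Kodaira gives $H^{i}(K_{X} + (n+1-i)B) = 0$. Hence $\mathrm{reg}_{B}(K_{X}) \leq n+1$. Part (2) then reduces, via the shift formula, to the matching reverse inequality $\mathrm{reg}_{B}(K_{X}) \geq n+1$, which follows because for any $k \leq n$ Serre duality yields $H^{n}(K_{X} + (k-n)B) \cong H^{0}((n-k)B)^{\vee}$, and the right-hand side is nonzero ($(n-k)B$ is either $\mathcal{O}_{X}$ or an ample base-point-free line bundle, hence effective). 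No such $k$ can therefore witness the regularity.

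For part (3), I would argue purely formally from the Koszul complex (\ref{Koszul_complex}) with $\mathcal{F} = \mathcal{O}_{X}$: since $R_{j}(\mathcal{O}_{X}, B) = H^{0}(jB) = 0$ for $j < 0$, every term of the complex in weight $j < 0$ vanishes, forcing $K_{i, j}(X, B) = 0$ for all $j < 0$. Because $K_{0, 0}(X, B) = H^{0}(\mathcal{O}_{X}) = \mathbb{C}$ is nonzero, the Betti table of $R(B)$ is nonempty with all rows in $j \geq 0$. Identifying $\mathrm{reg}_{B}(\mathcal{O}_{X})$ with the maximum nonzero row index $j_{\max}$ (as in the remark preceding the proposition) yields $\mathrm{reg}_{B}(\mathcal{O}_{X}) \geq 0$, and part (4) is the shift formula applied to (3).

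The hard part will be the equality characterization. One direction is mechanical: on $(\mathbb{P}^{n}, \mathcal{O}(1))$ the standard line bundle cohomology on projective space makes all four bounds sharp by direct computation. The converse is genuinely delicate only for parts (3) and (4); the argument above shows that (1) and (2) are in fact equalities on every connected smooth projective variety, so for those the extremal claim is only that $(\mathbb{P}^{n}, \mathcal{O}(1))$ realizes the common value $n+1-\ell$. For (3), the equality $\mathrm{reg}_{B}(\mathcal{O}_{X}) = 0$ translates via Serre duality into $H^{j}(K_{X} + (n-j)B) = 0$ for $0 \leq j \leq n-1$; in particular $K_{X} + nB$ is non-effective, a strong adjunction-theoretic constraint, and a Kobayashi--Ochiai style classification is expected to force $(X, B) \cong (\mathbb{P}^{n}, \mathcal{O}(1))$. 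This classification input is the principal technical ingredient.
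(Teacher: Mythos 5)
Your proofs of the four inequalities are correct, and two of them run differently from the paper. Parts (1) and (4) coincide with the paper's argument (Kodaira vanishing, and the shift formula applied to (3)). For (2) the paper simply writes ``(2) follows from (1)'', which via the shift formula only yields the upper bound $\mathrm{reg}_{B}(K_{X}+\ell B)\le (n+1)-\ell$ --- the form actually used later when $K_{X}+(n+1)B$ is declared $0$-regular --- whereas you prove the lower bound as literally stated, by checking through Serre duality that $H^{n}(K_{X}+(k-n)B)\cong H^{0}((n-k)B)^{\vee}\neq 0$ for every $k\le n$; this gives $\mathrm{reg}_{B}(K_{X})=n+1$ on any connected smooth projective variety and in effect repairs the sign discrepancy between the stated inequality and the paper's one-line derivation. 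For (3) the paper argues directly: by Serre duality $H^{n}(-(n+1)B)=H^{0}(K_{X}+(n+1)B)^{\vee}$, and $K_{X}+(n+1)B$, being $0$-regular, is globally generated by Lemma \ref{Castelnuovo_lemma} and hence has a nonzero section, so $\mathcal{O}_{X}$ is not $(-1)$-regular. You instead combine the vanishing $K_{i,j}(X,B)=0$ for $j<0$ with $K_{0,0}(X,B)\neq 0$ and the identification of $\mathrm{reg}_{B}(\mathcal{O}_{X})$ with the last row $j_{\max}$ of the Betti table of $R(B)$; this is exactly the ``alternative proof'' the paper itself advertises in the remark preceding the proposition, and it is sound provided one grants that identification (\cite[Theorem 1.8.26]{Laz04}, whose hypothesis $h^{0}(\mathcal{F}(-dB))=0$ for $d\ll 0$ holds for $\mathcal{F}=\mathcal{O}_{X}$). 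The paper's route is more self-contained, needing only the Castelnuovo lemma; yours is shorter and proves the second sentence of (3) about the shape of the Betti table at the same time.

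The one genuine gap is the closing equality assertion. You rightly observe that your argument for (2) makes equality in (1)--(2) automatic for every connected $(X,B)$, so the ``if and only if'' can only carry content for (3)--(4); but there you reduce $\mathrm{reg}_{B}(\mathcal{O}_{X})=0$ to the vanishing $H^{0}(K_{X}+nB)=0$ and then merely assert that a Kobayashi--Ochiai/Fujita-type classification ``is expected'' to force $(X,B)\cong(\mathbb{P}^{n},\mathcal{O}(1))$; that step is left unproved. To be fair, the paper's own proof is silent on the equality statement, so you are no worse off than the source, but as written this part of your proposal is a claim, not a proof. If you want to keep it, it can be closed with the hyperplane-section method already in the paper: from $0\rightarrow K_{X}+(n-1)B\rightarrow K_{X}+nB\rightarrow (K_{X}+nB)|_{X^{(1)}}\rightarrow 0$ and Kodaira vanishing, $H^{0}(K_{X}+nB)=0$ descends through the stratification to $H^{0}(K_{C}+B_{C})=0$ on a curve section $C$, which by Riemann--Roch forces $g(C)=0$ and $\deg B_{C}=(B^{n})=1$; an ample, base point free $B$ with $(B^{n})=1$ makes $\phi_{B}$ a finite birational morphism onto a linear $\mathbb{P}^{n}$, hence an isomorphism, giving $(X,B)=(\mathbb{P}^{n},\mathcal{O}(1))$.
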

\begin{proof}
    (1) follows from Kodaira vanishing theorem. (2) follows from (1). To show (3) we prove that $\mathcal{O}_{X}$ cannot be $(-1)-$regular with respect to $B.$ This is true because by Serre duality $H^{n}(-(n+1)B) = H^{0}(K_{X}+(n+1)B)$ and since by (2) $K_{X}+(n+1)B$ is $0-$regular with respect to $B$, by Lemma \ref{Castelnuovo_lemma} $K_{X}+(n-1)B$ is globally generated. This implies $H^{0}(K_{X}+(n+1)B) \neq 0.$ (4) follows from (3)
\end{proof}
\subsection{Method of hyperplane sections}
\label{hyperplane_cuts}
Here we introduce the method of hyperplane cuts or hyperplane sections in syzygies.
\begin{definition} \textbf{(Stratification by Hyperplane cuts)} \\
\label{definition_stratification_by_Hyperplane_cuts}
    For any ample and base point free line bundle $L$ on a smooth projective variety $X$ of dimension $n,$ one has $h^{0}(L) \geq n+1.$ From Bertini's theorem, one can choose a linear subspace $(n-1)-$general sections $W = \left\{s_{1}, \cdots, s_{n-1} \right\}$ of $V = H^{0}(L)$ such that one obtains a filtration
    \begin{equation}
    \label{hyperplane_stratification}
        X_{L}^{(n-1)} \subset X_{L}^{(n-2)} \subset \cdots \subset X_{L}^{(1)} \subset X_{L}^{(0)} = X 
    \end{equation}
    of smooth closed subschemes of $X$, where $X_{L}^{(0)} = X, \ X_{L}^{(1)}$ is a smooth member in $|L|,$ for any $0 \leq c \leq n-1$ each $X_{L}^{(c+1)}$ is a smooth member in $|L|_{X_{L}^{(c)}}|$ and each index $c$ of $X_{L}^{(c)}$ denoting the codimension $c = \mathrm{codim}(X_{L}^{(c)}, X)$ of $X_{L}^{(c)}$ in $X.$ We drop the suffix $L$ from $X^{(c)}_{L}$ and write $X^{(c)}$ when the line bundle $L$ is understood from the context. We call the stratification (\ref{hyperplane_stratification}) a \textbf{hyperplane stratification} of $X$ by $L.$ We say that $X^{(c)}$ is the $c^{th}-$\textbf{floor} of the stratification.
    \end{definition}
    Now we mention some properties of the hyperplane stratification that follow immediately from adjunction formula
    \begin{remark}
    \label{adjunction_hyperplane_strata}
        For any integer $1 \leq c \leq n-1,$ by adjunction formula $K_{X^{(c)}} = (K_{X}+cL)|_{X^{(c)}}.$ Thus by Nakai-Moishezon criterion, if $K_{X}$ is nef, each $K_{X^{(c)}}$ is ample and therefore in particular $X^{(c)}$ is a variety of general type for each integer $1 \leq c \leq n-1.$ We also denote the smooth projective curve $X^{(n-1)} \subset X$ by $C$ and the surface of general type $X^{(n-2)}$ by $Y.$ Since $K_{C}$ is ample, the curve $C$ has genus $g = (\Lambda \cdot B^{n-1}) \geq 2,$ where $\Lambda = K_{X}+(n-1)B.$ \par
        Also, if $X$ is a Fano variety of index $\lambda,$ by which we mean that $\lambda$ is the smallest non-negative integer such that $-K_{X} = \lambda H$ with $H$ ample and base point free line bundle on $X.$ Then, by adjunction, all the subvarieties $X^{(c)}$ are Fano varieties if $\lambda H-cL$ is ample. In particular, when $X$ is a Fano variety of index $\lambda,$ taking $L = H$ one has $Y$ is a del Pezzo surface if $\lambda \geq n-1,$ a $K3-$surface if $\lambda = n-2$ and a surface of general type if $\lambda \leq n-3.$ \par
        This shows that the method of hyperplane sections depends on the behavior of the canonical divisor $K_{X}$ and in particular on the Kodaira dimension of the variety.
    \end{remark}
    Philosphically, the method of hyperplane cuts relates the syzygies or the surjectivity of multiplication maps of global sections of vector bundles on a variety with the syzygies or the surjectivity of global sections of vector bundles restricted to a hyperplane section. In this way, one can obtain results on syzygies of higher dimensions using the syzygies of lower dimensions. There are multiple techniques in the method of hyperplane sections. We explore one of them here in the following proposition and explore other techniques for different Kodaira dimensions throughout the article. \\
    The navigation of syzygies between the floors of the stratification becomes easier if the cohomologies of the line bundle and the variety are zero. Thus, we make the following definitions
    \begin{definition}
        We say that a smooth projective variety $X$ of dimension $n$ is \textbf{regular} if $h^{c}(\mathcal{O}_{X}) = 0$ for all $1 \leq c \leq n-1.$ A line bundle $L$ is said to be \textbf{non-special} if $h^{i}(qL) = 0$ for all integers $1 \leq i \leq n$ and $q \geq 1.$
    \end{definition}
    Note that the notion regular variety generalizes the notion of a regular surface. A surface $X$ is called regular if $h^{1}(\mathcal{O}_{X}) = 0.$ While regular varieties include Fano varieties, Calabi-Yau varieties, rational surfaces, Enriques surfaces and many other class of varieties. \par
    We now state the following proposition which demonstrates this navigation and will be used especially in Section \ref{kodaira_negative_infinity}.
    \begin{proposition}
    \label{syzygies_of_floors}
     A non-special projectively normal line bundle $L$ on a regular smooth projective variety $X,$ satisfies Property-$(N_{p})$ if and only if any of the line bundles $L_{X^{(c)}}$ satisfies Property-$(N_{p})$ and $L$ satisfies property-$(M_{q})$ if and only if any of the line bundles $L_{X^{(c)}}$ satisfies property-$(M_{q-c})$ for all $0 \leq c \leq n-1.$
    \end{proposition}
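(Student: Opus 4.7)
The plan is to prove the single-step case (from $X$ to $Y := X^{(1)}$, a smooth member of $|L|$) and iterate down the stratification. The two running hypotheses propagate: the long exact sequences of $0 \to \mathcal{O}_X(-L) \to \mathcal{O}_X \to \mathcal{O}_Y \to 0$ and of its twists by $qL$, combined with Kodaira vanishing $H^i(K_X+L) = 0$ and non-speciality of $L$, force $Y$ to be regular and $L|_Y$ to be non-special; in addition $h^0(L|_Y) = h^0(L) - 1$, which is exactly what produces the index shift $(M_q) \to (M_{q-1})$ under a single restriction.

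By Theorem \ref{SamplingVectorBundle}, regularity of $X$ and non-speciality of $L$ give the clean identification
\begin{equation*}
K_{i,j}(X, L) \;=\; H^1\!\left(\textstyle\bigwedge^{i+1} M_L \otimes (j-1)L\right) \quad \text{for } j \geq 1,
\end{equation*}
and analogously on $Y$. I would compare the two $H^1$'s in two stages. First, from the restriction short exact sequence
\begin{equation*}
0 \to \textstyle\bigwedge^{i+1} M_L \otimes (j-2)L \to \textstyle\bigwedge^{i+1} M_L \otimes (j-1)L \to \textstyle\bigwedge^{i+1}(M_L|_Y) \otimes (j-1)L|_Y \to 0,
\end{equation*}
the desired isomorphism $H^1(\bigwedge^{i+1} M_L \otimes (j-1)L) \cong H^1(\bigwedge^{i+1}(M_L|_Y) \otimes (j-1)L|_Y)$ follows once $H^a$ of the leftmost term vanishes for $a=1,2$. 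Second, the defining section $s \in H^0(L)$ of $Y$ produces a canonical exact sequence $0 \to \mathcal{O}_Y \to M_L|_Y \to M_{L|_Y} \to 0$, and its $(i+1)$-st exterior power $0 \to \bigwedge^i M_{L|_Y} \to \bigwedge^{i+1}(M_L|_Y) \to \bigwedge^{i+1} M_{L|_Y} \to 0$, twisted by $(j-1)L|_Y$ and cohomologized, identifies $H^1(\bigwedge^{i+1}(M_L|_Y) \otimes (j-1)L|_Y)$ with $K_{i,j}(Y, L|_Y)$ provided $H^1$ and $H^2$ of $\bigwedge^i M_{L|_Y} \otimes (j-1)L|_Y$ vanish. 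For the $(M_q)$ direction (i.e.\ $j=1$), the $H^1$ is $K_{i-1,1}(Y, L|_Y)$ at a smaller index already inside the assumed $(M_{q-1})$ range and the $H^2$ follows from non-speciality of $L|_Y$; for $(N_p)$ the analogous higher-weight terms are killed by non-speciality directly, so there is no shift in $p$.

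The main obstacle is the cohomology vanishing $H^a(\bigwedge^{i+1} M_L \otimes (j-2)L) = 0$ for $a = 1, 2$, which is not a one-line consequence of Kodaira vanishing. The remedy is to resolve $\bigwedge^{i+1} M_L$ via its Koszul-type resolution $0 \to \bigwedge^{i+1} M_L \to \bigwedge^{i+1} V \otimes \mathcal{O}_X \to \bigwedge^i V \otimes L \to \cdots \to (i+1)L \to 0$ and walk through it piece by piece, applying non-speciality of $L$ to kill $H^\bullet(kL)$ for $k \geq 1$ and the regularity $h^c(\mathcal{O}_X) = 0$ for $1 \leq c \leq n-1$ to kill the remaining structure-sheaf terms. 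Once the single-step equivalence is established, iterating down the stratification $X = X^{(0)} \supset X^{(1)} \supset \cdots \supset X^{(n-1)}$ and accumulating the unit shift in $q$ at each step yields the full statement $(M_q) \Leftrightarrow (M_{q-c})$ on $X^{(c)}$ and $(N_p) \Leftrightarrow (N_p)$ on $X^{(c)}$.
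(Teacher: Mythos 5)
Your plan has a genuine gap: neither of your two ``running hypotheses'' actually propagates down the stratification, and the auxiliary vanishings you interpose are in several cases equivalent to the vanishing of nonzero Betti numbers. For non-speciality, twisting $0 \to \mathcal{O}_X \to L \to L_Y \to 0$ and using $H^{n-1}(L) = H^{n}(L) = 0$ gives $H^{n-1}(Y, L_Y) \cong H^{n}(X, \mathcal{O}_X)$; the hypothesis ``regular'' only kills $h^{1}, \dots, h^{n-1}(\mathcal{O}_X)$, not $h^{n}(\mathcal{O}_X) = h^{0}(K_X)$, so on any regular variety with $p_g \neq 0$ (a $K3$ surface, where $L_C = K_C$, or a regular surface of general type) the restricted bundle is already special and your induction fails at the first step. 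Worse, your stage-one requirement $H^{1}(\bigwedge^{i+1} M_L \otimes (j-2)L) = 0$ is, at $j = 2$ (exactly the case needed for the $(N_p)$ half), the group $H^{1}(\bigwedge^{i+1}M_L)$, which by Theorem \ref{SamplingVectorBundle} and $h^{1}(\mathcal{O}_X) = 0$ equals $K_{i,1}(X, L)$ --- the linear strand, nonzero throughout the $(N_p)$ range; and at $j = 1$ on a surface the companion group $H^{2}(\bigwedge^{i+1}M_L \otimes (-L))$ involves $H^{2}(-L) \cong H^{0}(K_X + L)^{\vee}$, which non-speciality and regularity do not control. The stage-two conditions $H^{1}, H^{2}(\bigwedge^{i}M_{L_Y} \otimes (j-1)L_Y) = 0$ are likewise (up to correction terms) Koszul groups of $Y$: for $j = 1$ the index you need is $r_Y - q$, one step \emph{outside} the $(M_{q-1})$ window you may assume, in the converse direction of the ``if and only if'' the needed vanishing is part of what is being proved, and on the last floor $Y$ is a curve with $h^{1}(\mathcal{O}_Y) \neq 0$, so $K_{i,1}(Y, L_Y)$ is only the kernel of the map to $\bigwedge^{i+1}H^{0}(L_Y) \otimes H^{1}(\mathcal{O}_Y)$, not the full $H^{1}$ you are comparing.

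The paper's hypotheses are calibrated for a different, essentially algebraic mechanism that never sees these top-degree groups: regularity, non-speciality and Kodaira vanishing give $H^{i}(qL) = 0$ for $1 \leq i \leq n-1$ and \emph{all} $q \in \mathbb{Z}$, hence $R(L)$ is Cohen--Macaulay; projective normality then makes the embedding arithmetically Cohen--Macaulay, so the section cutting $X^{(c+1)}$ out of $X^{(c)}$ is a linear nonzerodivisor on the coordinate ring and all Betti numbers $K_{i,j}$ are preserved on the nose under restriction, the only change being $r \mapsto r - 1$, which is what turns $(M_q)$ into $(M_{q-1})$ while leaving $(N_p)$ untouched. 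If you want a sheaf-theoretic proof you must compare the Koszul groups themselves (for instance via the restriction exact sequence involving Koszul cohomology with coefficients in $\mathcal{O}_X(-L)$, whose vanishing is where the Cohen--Macaulay input enters), rather than asking for raw $H^{1}/H^{2}$ vanishing of twisted exterior powers of $M_L$, which is both unprovable from the stated hypotheses and false in the cases of interest.
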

    \begin{proof}
        It is well known that the ring of sections $R(L)$ is Cohen-Macaulay if and only if 
        $$H^{i}(qL) = 0 \text{ for all } 1 \leq i \leq n-1 \text{ and all integers } q$$
    Since $X$ is a regular smooth projective variety, $h^{i}(\mathcal{O}_{X}) = 0$ for integers $1 \leq i \leq n-1.$ Since $L$ is non-special, that is $h^{i}(qL) = 0$ for all integers $q > 0.$ Then, by Kodaira vanishing theorem $h^{i}(-qL) = 0$ for $1 \leq i \leq n-1$ and therefore we conclude by our assumption that $R(L)$ is Cohen-Macaulay. When $L$ is projectively normal (that is satisfies Property-$(N_{0})$), then $R(L) = S_{X},$ the homogeneous coordinate ring of $X$ by embedding of $L.$ Thus, if $X$ is regular and $L$ is non-special, then the embedding by $|L|$ is arithmetically Cohen-Macaulay. Thus, for any smooth effective divisor $D$ in $|L|,$ the graded Betti numbers of $L$ and $L_{D} = L|_{D}$ are same, that is $K_{i, j}(X, L) \cong K_{i, j}(D, L_{D})$ as vector spaces, for all integers $i, j.$ \par
    Thus, we conclude that $L$ satisfies Property-$(N_{p})$ if and only if $L_{D}$ satisfies Property-$(N_{p}).$ \par
    However, the case for Property-$(M_{q})$ is slight different in nature. Let $r = r(L) = h^{0}(L)-1$ and $r_{D} = r(L_{D}) = h^{0}(L_{D})-1.$ Then by our regularity hypothesis and by the long-exact sequence corresponding to the following short-exact sequence
    $$0 \rightarrow \mathcal{O}_{X} \rightarrow L \rightarrow L_{D} \rightarrow 0$$
    we get $r = r_{D}+1.$ Then,
    \begin{equation}
        K_{r-q, 1}(X, L) \cong K_{r_{D}+1-q, 1}(D, L_{D}) = K_{r_{D}-(q-1), 1}(D, L_{D})
    \end{equation}
    Thus, we conclude that $L$ satisfies Property-$(M_{q})$ of $L$ if and only if $L_{D}$ satisfies Property-$(M_{q-1}).$ Therefore, by induction on $c,$ the proof is complete.
    \end{proof}
    Thus, in the perspective of the above Proposition \ref{syzygies_of_floors} the obstruction to the property-$(M_{q})$ are the obstructions to navigation between floors which are cohomologies of the structure sheaf and the line bundle and the obstruction to the property-$(M_{q})$ of the curve $C.$ In the next sections, we use CM regularity and the techniques below to show the vanishings of these obstructions and introduce techniques to overcome their absence. A great tool during the absence of vanishing of cohomology of $X,$ one uses the following complex.
    \begin{definition}\textbf{(Koszul resolution)}
    In the situation of Definition \ref{definition_stratification_by_Hyperplane_cuts}, one has the Koszul resolution of $C$
    \begin{equation}
    \label{koszul_resolution}
        0 \rightarrow \bigwedge^{n-1} W \otimes \mathcal{O}_{X}(-(n-1)L) \rightarrow \cdots \rightarrow \bigwedge^{2} W \otimes \mathcal{O}_{X}(-2L) \rightarrow W \otimes \mathcal{O}_{X}(-L) \rightarrow \mathcal{I}_{C} \rightarrow 0
    \end{equation}
\end{definition}
    The technique of proving the vanishing of obstruction is easier for multiple line bundles and only requres CM regularity which we explore in the next section. But for other line bundles it is more complicated and depends on the Kodaira dimension of the variety and one uses method of hyperplane sections to navigate between floors and slopes of vector bundles on curves to prove results on syzygies of curves needed here. We explore these in Sections \ref{nef_canonical} and \ref{kodaira_negative_infinity}. \par
    Now we collect some known results to be used to show the vanishing of these obstructions.
\begin{observation} \cite[Observation-(1.2)]{BG00}
\label{[Observation-(1.2)]{BG00}}
For coherent sheaves $E_{1}, \cdots, E_{t}, F$ be coherent sheaves on a projective variety $X.$ Then, the multiplication map
\begin{equation}
H^{0}(E_{1} \otimes \cdots \otimes E_{t}) \otimes H^{0}(F) \overset{\sigma}{\rightarrow} H^{0}(E_{1} \otimes \cdots \otimes E_{t} \otimes F)
\end{equation}
is surjective if all the following maps
\begin{equation}
\begin{cases}
H^{0}(E_{1}) \otimes H^{0}(F) \overset{\sigma_{1}}{\longrightarrow} H^{0}(E_{1} \otimes F) \\
H^{0}(E_{2}) \otimes H^{0}(E_{1} \otimes F) \overset{\sigma_{2}}{\longrightarrow} H^{0}(E_{1} \otimes E_{2} \otimes F) \\
\ddots \hspace{1cm} \ddots \hspace{1cm} \cdots \hspace{1cm} \ddots \hspace{1cm} \ddots \\
H^{0}(E_{t}) \otimes H^{0}(E_{1} \otimes \cdots \otimes E_{t-1} \otimes F) \overset{\sigma_{t}}{\rightarrow} H^{0}(E_{1} \otimes \cdots \otimes E_{t} \otimes F)
\end{cases}
\end{equation}
are surjective.
\end{observation}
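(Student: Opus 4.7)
The plan is to introduce a ``master'' multilinear multiplication map
\[
M \colon H^0(E_1) \otimes \cdots \otimes H^0(E_t) \otimes H^0(F) \longrightarrow H^0(E_1 \otimes \cdots \otimes E_t \otimes F),
\]
sending $s_1 \otimes \cdots \otimes s_t \otimes f$ to the product section $s_1 s_2 \cdots s_t \cdot f$, and then exhibit $M$ as a composition in two different ways---once so that it factors through the target map $\sigma$ we wish to show is surjective, and once so that it factors through the chain $\sigma_1, \ldots, \sigma_t$ whose surjectivity is given by hypothesis.

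First I would note that $M = \sigma \circ (\mu \otimes \mathrm{id}_{H^0(F)})$, where
\[
\mu \colon H^0(E_1) \otimes \cdots \otimes H^0(E_t) \longrightarrow H^0(E_1 \otimes \cdots \otimes E_t)
\]
is the obvious multiplication of global sections. In particular, surjectivity of $M$ will automatically force surjectivity of $\sigma$. Next I would factor $M$ step by step through the given chain: at the $i$-th stage, one applies $\sigma_i$ to the factors $H^0(E_i)$ and $H^0(E_1 \otimes \cdots \otimes E_{i-1} \otimes F)$, leaving $H^0(E_{i+1}), \ldots, H^0(E_t)$ untouched. Because multiplication of sections of coherent sheaves is associative and commutative (equivalently, tensor product of sheaves is symmetric monoidal), this step-by-step composition produces exactly the same map $M$. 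Each individual stage has the form $\mathrm{id} \otimes \sigma_i$ acting on a tensor product of $\mathbb{C}$-vector spaces, and tensoring a surjective linear map of vector spaces with the identity on another vector space preserves surjectivity; so each stage is surjective by hypothesis, and hence the composition $M$ is surjective. Combining this with the first factorization yields surjectivity of $\sigma$.

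There is essentially no obstacle here beyond making the two factorizations unambiguous. Once one observes that both factorizations of $M$ really produce the same map---a direct consequence of the symmetry and associativity of tensor product of coherent sheaves---the conclusion is immediate. No vanishing theorem, regularity statement, or hyperplane-section argument is needed; the content is a purely formal diagram chase, which is precisely why the observation is useful as a building block for the inductive surjectivity arguments that appear later in the paper.
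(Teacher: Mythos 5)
Your proof is correct: factoring the master map $M \colon H^{0}(E_{1}) \otimes \cdots \otimes H^{0}(E_{t}) \otimes H^{0}(F) \to H^{0}(E_{1} \otimes \cdots \otimes E_{t} \otimes F)$ both through $\sigma$ and as the composition of the maps $\mathrm{id} \otimes \sigma_{i}$ (each surjective, since tensoring a surjection of vector spaces with an identity preserves surjectivity) is exactly the standard argument. The paper itself states this observation as a citation to \cite{BG00} without reproducing a proof, and your argument coincides with the usual one given there, so there is nothing to add.
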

\begin{observation} \cite[Observation-(1.3)]{BG00}
\label{[Observation-(1.3)]{BG00}}
Let $X$ be a regular variety (that is a variety such that $q = h^{1}(\mathcal{O}_{X}) = 0$). Let $E$ be a vector bundle and $D$ be a base point free divisor on $X$ such that $H^{1}(E(-D)) = 0.$ Then the surjectivity of the multiplication map
$$
H^{0}(E \otimes \mathcal{O}_{D}) \otimes H^{0}(\mathcal{O}_{D}(D)) \rightarrow H^{0}(E(D) \otimes \mathcal{O}_{D})
$$
implies the surjectivity of
\begin{equation}
H^{0}(E) \otimes H^{0}(\mathcal{O}_{X}(D)) \rightarrow H^{0}(E(D)))
\end{equation}
\end{observation}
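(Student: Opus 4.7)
The plan is to lift sections from $D$ to $X$ via the regularity and vanishing hypotheses, and then to use a short diagram chase to bootstrap the given surjectivity on $D$ to the desired surjectivity on $X$.

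First I would write down the two restriction short exact sequences
$$0 \to \mathcal{O}_X \to \mathcal{O}_X(D) \to \mathcal{O}_D(D) \to 0 \qquad \text{and} \qquad 0 \to E(-D) \to E \to E \otimes \mathcal{O}_D \to 0.$$
Taking global sections and using $H^1(\mathcal{O}_X) = 0$ (regularity) for the first sequence and $H^1(E(-D)) = 0$ (hypothesis) for the second, both restriction maps $H^0(\mathcal{O}_X(D)) \twoheadrightarrow H^0(\mathcal{O}_D(D))$ and $H^0(E) \twoheadrightarrow H^0(E \otimes \mathcal{O}_D)$ are surjective. In particular, every elementary tensor in $H^0(E \otimes \mathcal{O}_D) \otimes H^0(\mathcal{O}_D(D))$ can be lifted to one in $H^0(E) \otimes H^0(\mathcal{O}_X(D))$.

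Next I would pick any section $s \in H^0(E(D))$ and consider its restriction $s|_D \in H^0(E(D) \otimes \mathcal{O}_D)$. By the hypothesized surjectivity on $D$, write $s|_D = \sum_i t_i \cdot u_i$ with $t_i \in H^0(E \otimes \mathcal{O}_D)$ and $u_i \in H^0(\mathcal{O}_D(D))$. Choosing lifts $\tilde{t}_i \in H^0(E)$ and $\tilde{u}_i \in H^0(\mathcal{O}_X(D))$ as above, the element $\sum_i \tilde{t}_i \cdot \tilde{u}_i \in H^0(E(D))$ lies in the image of the multiplication map on $X$, and its restriction to $D$ agrees with $s|_D$. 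Hence $s - \sum_i \tilde{t}_i \tilde{u}_i$ lies in the kernel of the restriction $H^0(E(D)) \to H^0(E(D) \otimes \mathcal{O}_D)$, which by the short exact sequence $0 \to E \to E(D) \to E(D) \otimes \mathcal{O}_D \to 0$ equals the image of $H^0(E) \hookrightarrow H^0(E(D))$.

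Finally, writing $s - \sum_i \tilde{t}_i \tilde{u}_i$ as the image of some $r \in H^0(E)$, I use that $r = r \cdot 1$ where $1 \in H^0(\mathcal{O}_X) \hookrightarrow H^0(\mathcal{O}_X(D))$ denotes the constant section, to conclude that $s = \sum_i \tilde{t}_i \tilde{u}_i + r \cdot 1$ is in the image of the multiplication map on $X$, giving the desired surjectivity. The only real content is the two lifting steps; the mild subtlety — insofar as there is one — is noticing that regularity handles the $\mathcal{O}_X$-factor while $H^1(E(-D)) = 0$ handles the $E$-factor, so that the bilinear multiplication map descends cleanly from $X$ to $D$ and the leftover remainder can be reabsorbed using the constant section of $\mathcal{O}_X(D)$.
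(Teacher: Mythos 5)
Your argument is correct, and it is essentially the standard proof: the paper itself only cites this as \cite[Observation 1.3]{BG00}, and the proof there is the commutative-diagram version of exactly your element chase, using $H^{1}(\mathcal{O}_{X})=0$ and $H^{1}(E(-D))=0$ to lift sections from $D$, and the inclusion $H^{0}(E)\hookrightarrow H^{0}(E(D))$ (multiplication by the canonical section of $\mathcal{O}_{X}(D)$) to absorb the remainder. No gaps; your identification of the kernel of $H^{0}(E(D))\to H^{0}(E(D)\otimes\mathcal{O}_{D})$ with $H^{0}(E)$ and the observation that this image is itself in the image of the multiplication map via $1\in H^{0}(\mathcal{O}_{X})$ are precisely the points that make the chase close up.
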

\begin{lemma} \cite[Lemma-(2.9)]{BG99}
\label{BG99_Lemma_(2.9)}
Let $q$ be a non-negative integer. For globally generated vector bundles $\mathcal{F}_{1}, \cdots, \mathcal{F}_{q},$ a line bundle $R,$ a reduced and irreducible member $Q$ in a complete linear series $|P|$ of an effective line bundle $P$ on a smooth projective variety $X,$ the collection $\mathfrak{F}|Q = \left\{ \mathcal{F}_{1}, \cdots, \mathcal{F}_{q}; Q \right\},$ $M = M_{\mathfrak{\mathcal{F}}|Q}$ and a sheaf $G$ on $Q$ satisfying
\begin{itemize}
\item[\textbf{(i)} ] the vanishing $H^{1}(\mathcal{F}_{i}(-Q)) = 0,$
\item[\textbf{(ii)} ] and the surjectivity of the following multiplication map $$H^{0}(M[\underline{i}, q](R)) \otimes H^{0}(G) \rightarrow H^{0}(M[\underline{i}, q] \otimes G(R))$$ 
for all index subset $(\underline{i}) = \left\{i_{1}, \cdots, i_{q'} \right\} \subseteq (\underline{q}) = \left\{1, 2, \cdots, q \right\},$ 
\end{itemize}
one has the surjectivity of the multiplication map
$$H^{0}(M[\underline{i}, k](R)) \otimes H^{0}(G) \rightarrow H^{0}(M[\underline{i}, k] \otimes G(R)),$$
for all $0 \leq k \leq q'$ and all index subset $(\underline{i}) = \left\{i_{1}, \cdots, i_{q'} \right\} \subseteq (\underline{q}) = \left\{1, 2, \cdots, q \right\}.$ 
\end{lemma}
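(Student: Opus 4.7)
The plan is to argue by descending induction on $k$, from $k = q'$ (which is exactly hypothesis (ii)) down to $k = 0$, with an outer induction on $|(\underline{i})| = q'$ so that the conclusion can be invoked for proper subsets of $(\underline{i})$. The outer base $q' = 0$ is vacuous and the inner base $k = q'$ is hypothesis (ii); what remains is the inductive step from $k+1$ to $k$ for a fixed subset $(\underline{i})$ of size $q'$, where the two products $M[\underline{i}, k]$ and $M[\underline{i}, k+1]$ differ only in their $(k+1)$-th factor, $M_{\mathcal{F}_{i_{k+1}}}$ versus $M_{\mathcal{O}_{Q}(\mathcal{F}_{i_{k+1}})}$.

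The key input is the short exact sequence on $Q$
\begin{equation*}
    0 \longrightarrow H^{0}(\mathcal{F}_{i_{k+1}}(-Q)) \otimes \mathcal{O}_{Q} \longrightarrow M_{\mathcal{F}_{i_{k+1}}}|_{Q} \longrightarrow M_{\mathcal{O}_{Q}(\mathcal{F}_{i_{k+1}})} \longrightarrow 0,
\end{equation*}
which is obtained by comparing the kernel bundle sequences of $\mathcal{F}_{i_{k+1}}$ on $X$ and of $\mathcal{F}_{i_{k+1}}|_{Q}$ on $Q$ via the snake lemma, using hypothesis (i) to identify the kernel of $H^{0}(\mathcal{F}_{i_{k+1}}) \to H^{0}(\mathcal{F}_{i_{k+1}}|_{Q})$ with $H^{0}(\mathcal{F}_{i_{k+1}}(-Q))$. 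Since $G$ is supported on $Q$, the sheaf $M[\underline{i}, k] \otimes G(R)$ is effectively computed on $Q$ with every un-restricted factor $M_{\mathcal{F}_{i_{l}}}$ replaced by $M_{\mathcal{F}_{i_{l}}}|_{Q}$; tensoring the displayed sequence with the remaining (locally free on $Q$) factors and with $G(R)$ then produces a short exact sequence on $Q$ whose middle and right terms are $M[\underline{i}, k] \otimes G(R)$ and $M[\underline{i}, k+1] \otimes G(R)$, and whose left term is $H^{0}(\mathcal{F}_{i_{k+1}}(-Q)) \otimes M[\underline{i}', k] \otimes G(R)$ with $\underline{i}' = \underline{i} \setminus \{i_{k+1}\}$ of size $q' - 1$.

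On the source side, the $X$-level restriction sequence $0 \to M_{\mathcal{F}_{i_{k+1}}}(-Q) \to M_{\mathcal{F}_{i_{k+1}}} \to M_{\mathcal{F}_{i_{k+1}}}|_{Q} \to 0$ tensored with the remaining factors of $M[\underline{i}, k](R)$ produces a parallel filtration relating $H^{0}(M[\underline{i}, k](R))$ and $H^{0}(M[\underline{i}, k+1](R))$. Putting these two filtrations side by side yields a commutative diagram of exact sequences whose vertical arrows are the three multiplication-by-$H^{0}(G)$ maps: the map for $(\underline{i}, k)$, the map for $(\underline{i}, k+1)$, and the analogue for the smaller index set $\underline{i}'$. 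The outer inductive hypothesis supplies surjectivity of the side arrow (at level $\underline{i}'$ of size $q'-1$), the inner inductive hypothesis supplies surjectivity of the map at level $(\underline{i}, k+1)$, and a five-lemma style diagram chase then promotes these to surjectivity at level $(\underline{i}, k)$, completing the induction.

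The main technical obstacle is exactness of the tensored source-side filtration: tensoring the $X$-level restriction sequence with factors already supported on $Q$ can introduce $\operatorname{Tor}^{\mathcal{O}_{X}}_{1}$ contributions (morally, conormal twists of the $\mathcal{O}_{Q}$-supported factors). These are neutralized by reducing as much of the argument as possible to $Q$, where the kernel bundles are locally free and all tensor operations preserve short exactness; the residual cohomological terms get absorbed into the side arrow for $\underline{i}'$, which is precisely what the outer inductive hypothesis controls, so that the diagram chase closes.
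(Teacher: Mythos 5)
You should first note that the paper does not prove this statement at all: it is quoted from \cite{BG99}, so your attempt can only be judged on its own terms, and as written it has a genuine gap. The skeleton is the standard one (outer induction on the size of $(\underline{i})$, descending induction on $k$, and the sequence $0 \to H^{0}(\mathcal{F}_{i_{k+1}}(-Q)) \otimes \mathcal{O}_{Q} \to M_{\mathcal{F}_{i_{k+1}}}|_{Q} \to M_{\mathcal{O}_{Q}(\mathcal{F}_{i_{k+1}})} \to 0$ obtained from hypothesis (i), which does yield the target-side exact sequence you describe, since its cokernel is locally free on $Q$ — your $\operatorname{Tor}$ worry is in fact harmless). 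The gap is in the chase. Your two filtrations are not parallel: on the target side the subobject of $M[\underline{i},k]\otimes G(R)$ is $H^{0}(\mathcal{F}_{i_{k+1}}(-Q))\otimes M[\underline{i}',k]\otimes G(R)$, whereas on the source side at $k=0$ the subobject of $M[\underline{i},0](R)$ is $M[\underline{i},0](R)(-Q)$; the constant sections in $H^{0}(\mathcal{F}_{i_{k+1}}(-Q))\otimes\mathcal{O}_{X}$ do \emph{not} lie in $M_{\mathcal{F}_{i_{k+1}}}$ on $X$ (only their restrictions to $Q$ do), so the commutative ladder you want to chase does not exist as drawn. Even at the steps $k\geq 1$, where both rows live on $Q$, surjectivity of the side arrow (outer induction) and of the level-$(k+1)$ arrow (inner induction) is not enough: a four-lemma chase additionally requires the comparison map $H^{0}(M[\underline{i},k](R)) \to H^{0}(M[\underline{i},k+1](R))$ to be surjective, equivalently one must kill the obstructions in $H^{0}(\mathcal{F}_{i_{k+1}}(-Q))\otimes H^{1}(M[\underline{i}',k](R))$ and, at $k=0$, in $H^{1}(M[\underline{i},0](R)(-Q))$. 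Nothing in (i), (ii) or in either induction provides these vanishings, and your final claim that the ``residual cohomological terms get absorbed into the side arrow'' conflates this $H^{1}$-level lifting obstruction with the $H^{0}$-level multiplication map for $\underline{i}'$ that the outer induction actually controls.

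This is not a presentational slip that a finer chase could repair, because hypotheses (i) and (ii) alone do not imply the conclusion: take $X=\mathbb{P}^{1}$, $\mathcal{F}_{1}=\mathcal{O}(1)$, $Q$ a point of $|\mathcal{O}(1)|$, $R=\mathcal{O}_{X}$, $G=\mathcal{O}_{Q}$. Then $H^{1}(\mathcal{F}_{1}(-Q))=0$, $M_{\mathcal{O}_{Q}(\mathcal{F}_{1})}=0$, and $H^{0}(R)\otimes H^{0}(G)\to H^{0}(G(R))$ is onto, so (i) and (ii) hold for every index subset; yet the asserted surjectivity at $(\underline{i},k)=(\{1\},0)$ fails, since $M_{\mathcal{F}_{1}}\cong\mathcal{O}(-1)$ has no global sections while $H^{0}(M_{\mathcal{F}_{1}}\otimes G(R))\neq 0$. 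So any complete proof must invoke more than the two hypotheses transcribed in this statement — precisely the sort of extra surjectivity or $H^{1}$-vanishing of the twists by $-Q$ that is available in the situations where the lemma is applied in this paper (and is part of how the argument is run in \cite{BG99}) — and that extra input is exactly what your five-lemma step silently assumes. To fix your write-up you would need to add such hypotheses explicitly and verify them at each stage of both inductions, in particular at the $k=0$ stage where the source lives on $X$ and the target on $Q$.
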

\subsection{Some results on Vector bundles on curves}
Here we collect results on vector bundles on curves. Throughout this section, let $C$ be a smooth projective curve of genus $g.$
\begin{definition} (Minimal and maximal slope of vector bundles) \\
    For a vector bundle $E$ on $C$ of rank $\rho = \rho(E) = \mathrm{rank}(E)$ and degree $\delta(E) = \mathrm{deg}(E) := \mathrm{deg}(\bigwedge^{\rho}E),$ one defines the \textbf{slope} of $E$ as $\mu(E) := \delta(E)/\rho(E).$ Also, the \textbf{maximal slope} $\mu^{+}(E)$ and \textbf{minimal slope} $\mu^{-}(E)$ are defined by
    $$\mu^{+}(E) := \mathrm{max}\left\{ \mu(K) \ | \ 0 \rightarrow K \rightarrow E \ \right\}, \ \hspace{0.2cm} \ \mu^{-}(E) := \mathrm{min}\left\{ \mu(Q) \ | \ E \rightarrow Q \rightarrow 0 \ \right\} $$  
\end{definition}
One has $\mu^{-}(E) \leq \mu(E) \leq \mu^{+}(E)$ with equality if and only if $E$ is a semistable vector bundle (see \cite{Huy10} and \cite{Miy87}).
\begin{lemma} \cite[Section 1 and 2 ]{Bu94} \\
\label{[Section (1) and (2) ]{Bu94}}
For vector bundles $E, F$ and $G$ on $C$ we get
\begin{itemize}
    \item[(1)] $\mu^{+}(E \otimes F) = \mu^{+}(E)+\mu^{+}(F)$
    \item[(2)] $\mu^{-}(E \otimes F) = \mu^{-}(E)+\mu^{-}(F)$
    \item[(3)] $\mu^{+}(Sym^{\ell}F) = \ell \cdot \mu^{+}(E)$
    \item[(4)] $\mu^{-}(Sym^{\ell}F) = \ell \cdot \mu^{-}(E)$
    \item[(5)] $\mu^{-}(\bigwedge^{\ell}F) \geq \ell \cdot \mu^{-}(E)$
    \item[(6)] If $\mu^{-}(E) > 2g-2,$ then $h^{1}(C, E) = 0$
    \item[(7)] If $\mu^{-}(E) > 2g-1$ then $E$ is globally generated
    \item[(8)] If $\mu^{-}(E) > 2g,$ then $\mathcal{O}_{\mathbb{P}(E)}(1)$ is very ample
    \item[(9)] If $0 \rightarrow E \rightarrow F \rightarrow G \rightarrow 0$ is a short-exact sequence of vector bundles on $C,$ then $\mu^{-}(G) \geq \mu^{-}(F) \geq \mathrm{min}(\mu^{-}(E), \mu^{-}(G))$
\end{itemize}
\end{lemma}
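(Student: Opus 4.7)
The plan is to prove all nine assertions using the Harder--Narasimhan filtration of each vector bundle as the central tool. Recall that any vector bundle $F$ on $C$ admits a unique filtration $0 = F_0 \subset F_1 \subset \cdots \subset F_k = F$ whose successive quotients $F_i/F_{i-1}$ are semistable with strictly decreasing slopes, so $\mu^+(F) = \mu(F_1)$ and $\mu^-(F) = \mu(F_k/F_{k-1})$. I would organize the proof into three blocks: the tensor/symmetric/wedge assertions (1)--(5), the cohomological assertions (6)--(8), and the short-exact-sequence assertion (9).

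For (1)--(4), I would invoke the classical fact (available in characteristic zero via the Narasimhan--Seshadri correspondence, or via Ramanan--Ramanathan in general) that tensor products and symmetric powers of semistable bundles on $C$ are again semistable. Given this, tensoring the Harder--Narasimhan filtrations of $E$ and $F$ yields a filtration of $E \otimes F$ whose associated graded pieces are semistable with slopes given by sums of the slopes of the pieces; extracting the largest and smallest such sums proves (1) and (2). The analogous argument for $\mathrm{Sym}^\ell F$, filtered by symmetric powers of the HN graded pieces, gives (3) and (4). For (5), the wedge power $\bigwedge^\ell F$ is a direct summand of $F^{\otimes \ell}$ in characteristic zero, so any quotient of $\bigwedge^\ell F$ is also a quotient of $F^{\otimes \ell}$, and (2) yields $\mu^-(\bigwedge^\ell F) \geq \mu^-(F^{\otimes \ell}) = \ell \cdot \mu^-(F)$.

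For (6), I would run the HN filtration of $E$ against the long exact sequence of cohomology: each graded piece is semistable of slope $> 2g-2$, and a semistable bundle $G$ of slope exceeding $2g-2$ has $H^1(G)=0$ because by Serre duality any nonzero map $G \to K_C$ would force $\mu^-(G) \leq \deg K_C = 2g-2$. Induction on the length of the filtration gives $H^1(E) = 0$. Part (7) follows by applying (6) to $E(-p)$ for each $p \in C$, since then $\mu^-(E(-p)) > 2g-2$ and global sections surject onto the fiber. Part (8) follows by applying (6) similarly to $E(-p-q)$ and $E(-2p)$ to separate distinct points and tangent vectors. For (9), the inequality $\mu^-(G) \geq \mu^-(F)$ is immediate, as every quotient of $G$ is a quotient of $F$; the inequality $\mu^-(F) \geq \min(\mu^-(E),\mu^-(G))$ is obtained by taking an arbitrary quotient $F \to Q$, restricting to $E$, and analyzing the induced short exact sequence $0 \to \mathrm{image}(E \to Q) \to Q \to Q' \to 0$ where $Q'$ is a quotient of $G$, so that $\mu(Q)$ is controlled below by the two minimal slopes.

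The main obstacle is the input for (1)--(4): the preservation of semistability under tensor product and symmetric power is a nontrivial theorem and not something one proves from scratch in passing. Everything else in the lemma is then a careful bookkeeping exercise with Harder--Narasimhan filtrations and standard long-exact-sequence arguments in cohomology.
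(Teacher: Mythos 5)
Your proposal is correct, but note that the paper itself gives no proof of this lemma: it is quoted verbatim from Butler \cite[Sections 1 and 2]{Bu94}, and the Harder--Narasimhan filtration arguments you outline (together with the cited theorem that tensor and symmetric powers of semistable bundles on a curve remain semistable in characteristic zero) are exactly the standard arguments underlying Butler's treatment, so your reconstruction matches the source's approach. The only caveat worth recording is that items (3)--(5) as printed mix up $E$ and $F$ (a typo inherited by the paper), and your proof correctly reads them as statements about a single bundle.
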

\begin{remark}
   For example, from Lemma \cite[Section 1 and 2 ]{Bu94} above, we obtain that for $\mu^{-}(E) > 2g-1$ one has a well-defined kernel bundle $M_{E}.$
\end{remark}
\begin{theorem} \cite[Theorem 1.2]{Bu94}
\label{[Theorem-(1.2)]{Bu94}}
Let $E$ be a semistable vector bundle on a curve $C$ of genus $g$ with $\mu(E) \geq 2g.$ Then, $M_{E}$ is semistable and 
$$\mu(M_{E}) = \dfrac{-\mu(E)}{\mu(E)-g} \geq -2.$$
Furthermore, if $E$ is stable and $\mu(E) \geq 2g$ and either $C$ is hyperelliptic or $\Omega^{1}_{C} \hookrightarrow E.$
\end{theorem}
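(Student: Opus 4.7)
The plan is to establish the slope identity by an elementary cohomological computation, then prove semistability by a contradiction argument that transfers a hypothetical destabilizing subsheaf of $M_{E}$ back to one of $E$; the stability refinement then comes from classifying the equality case of that argument. From the defining sequence $0\to M_{E}\to H^{0}(E)\otimes\mathcal{O}_{C}\to E\to 0$ one immediately reads $\delta(M_{E})=-\delta(E)$ and $\rho(M_{E})=h^{0}(E)-\rho(E)$. Since $E$ is semistable with $\mu(E)\geq 2g>2g-2$, Lemma \ref{[Section (1) and (2) ]{Bu94}}(6) forces $H^{1}(E)=0$, and Riemann--Roch gives $h^{0}(E)=\rho(E)\bigl(\mu(E)+1-g\bigr)$. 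Dividing yields $\mu(M_{E})=-\mu(E)/(\mu(E)-g)$, and the bound $\mu(M_{E})\geq -2$ is algebraically equivalent to $\mu(E)\geq 2g$, which is the hypothesis.

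For semistability I would assume for contradiction that $F\subsetneq M_{E}$ is the maximal destabilizing subsheaf, so $F$ is semistable with $\mu(F)>\mu(M_{E})$. Because $F\subset H^{0}(E)\otimes\mathcal{O}_{C}$, the snake lemma applied to $F\hookrightarrow M_{E}\hookrightarrow H^{0}(E)\otimes\mathcal{O}_{C}$ and the evaluation produces a short exact sequence $0\to M_{E}/F\to \bigl(H^{0}(E)\otimes\mathcal{O}_{C}\bigr)/F\to E\to 0$. The strategy is to convert this destabilization into one for $E$: dualize the sequence and twist by a sufficiently positive line bundle so that the evaluation map on $M_{E}^{\vee}$ becomes surjective on sections; the linear algebra of the dualized diagram then forces a subsheaf of $E^{\vee}$, equivalently a quotient of $E$, whose slope contradicts the semistability of $E$. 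Equivalently, one applies the Lazarsfeld--Mukai construction to produce from $F^{\vee}$ a coherent subsheaf of $E$ of slope strictly larger than $\mu(E)$, which is again forbidden.

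For the final assertion one refines Step~2 by running it with non-strict inequalities: assume $E$ is stable but $M_{E}$ is only semistable, so some subbundle $F\subset M_{E}$ realizes equality $\mu(F)=\mu(M_{E})$. Stability of $E$ then forces the intermediate subsheaf of $E$ produced by the transfer to equal $E$ itself, which rigidifies $(C,E)$. Two boundary configurations survive: either $C$ carries a pencil of degree $2$, i.e.\ $C$ is hyperelliptic, or $\Omega^{1}_{C}$ embeds into $E$ via the natural contraction pairing $H^{0}(E)\otimes H^{0}(K_{C})\to H^{0}(E\otimes K_{C})$. The main obstacle is Step~2: slope arithmetic alone is insufficient to control how a destabilizing subbundle of $M_{E}$ interacts with the evaluation map, and one really needs the Lazarsfeld--Mukai-type transfer of destabilization from $M_{E}$ back to $E$. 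The subsequent classification of the equality cases into the hyperelliptic/$\Omega^{1}_{C}\hookrightarrow E$ dichotomy in Step~3 is similarly delicate and typically requires the Clifford-index geometry of $C$ rather than pure slope bookkeeping.
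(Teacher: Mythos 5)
Before comparing, note that the paper does not prove this statement at all: it is quoted (in fact in slightly truncated form) from Butler \cite{Bu94}, so your attempt can only be measured against Butler's original argument. The part of your proposal that is actually carried out, the slope identity, is correct and complete: from the kernel-bundle sequence one gets $\deg M_{E}=-\deg E$, semistability together with $\mu(E)\ge 2g>2g-2$ gives $H^{1}(E)=0$, Riemann--Roch then gives $\mathrm{rank}\,M_{E}=h^{0}(E)-\mathrm{rank}(E)=\mathrm{rank}(E)\bigl(\mu(E)-g\bigr)$, and the equivalence $\mu(M_{E})\ge-2\iff\mu(E)\ge 2g$ is elementary.

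The genuine gap is your Step 2, and you flag it yourself. There is no formal ``transfer'' of a destabilizing subsheaf $F\subset M_{E}$ to a destabilizing sub- or quotient object of $E$: if such a purely diagrammatic argument existed, it would prove semistability of $M_{E}$ for every globally generated semistable $E$, with the hypothesis $\mu(E)\ge 2g$ needed only for the inequality $\ge-2$; but that hypothesis enters the semistability proof itself. Butler's actual argument takes a quotient bundle $M_{E}\twoheadrightarrow N$ computing $\mu^{-}(M_{E})$, dualizes the evaluation sequence to $0\to E^{\vee}\to H^{0}(E)^{\vee}\otimes\mathcal{O}_{C}\to M_{E}^{\vee}\to 0$, and then performs a careful count of global sections of $N^{\vee}$ (a subsheaf of the globally generated bundle $M_{E}^{\vee}$) against its rank and degree; the inequality $\mu(E)\ge 2g$ is exactly what closes that numerical estimate. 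Your proposed substitutes do not supply this: twisting by a ``sufficiently positive'' line bundle shifts all slopes equally and cannot detect the threshold $2g$, and invoking ``the Lazarsfeld--Mukai construction'' to manufacture a subsheaf of $E$ of slope $>\mu(E)$ is precisely the assertion to be proved, not an argument. The ``Furthermore'' clause is likewise only asserted; the hyperelliptic versus $\Omega^{1}_{C}\hookrightarrow E$ dichotomy comes out of the equality analysis in Butler's section count, not from a general ``rigidification'' principle (and you should also have noticed that the statement as reproduced here is incomplete --- the conclusion that $M_{E}$ is stable outside the listed exceptions has been dropped --- so as written there is no precise claim for your Step 3 to establish).
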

\begin{corollary} \cite[Corollary 3.7]{Miy87}
\label{[Corollary-(3.7)]{Miy87}}
Let $E$ and $F$ be semistable vector bundles on a curve $C.$ Then the vector bundles $E \otimes F$ and $\underline{\mathrm{Hom}}_{X}(E, F)$ are both semistable.
\end{corollary}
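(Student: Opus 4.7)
The plan is to invoke the Narasimhan–Seshadri correspondence, which identifies semistable vector bundles of degree zero on a smooth complex projective curve with unitary representations of $\pi_1(C)$; since the paper works over $\mathbb{C}$ this is available for free. The result then follows because unitarity is preserved under tensor products and duals.

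First I would reduce to the degree zero case. Semistability of a bundle $E$ is unchanged under twisting by a line bundle $L$, because $\mu(E\otimes L)=\mu(E)+\deg L$ and every subsheaf of $E\otimes L$ arises as $F\otimes L$ for a unique subsheaf $F\subset E$ with the slope shifted by the same constant. Passing to a finite étale cover if needed so that fractional line bundles become honest line bundles (or, cleaner, working in $K$-theory with $\mathbb{Q}$-line bundles), I can replace $E,F$ by semistable bundles of degree $0$, and the formation of $E\otimes F$ and $\underline{\mathrm{Hom}}(E,F)$ commutes with such twists up to another degree $0$ line bundle twist; so proving the statement in degree $0$ is enough.

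Next, I would apply Narasimhan–Seshadri: a degree zero vector bundle on $C$ is semistable iff it is associated to a unitary representation $\rho\colon\pi_1(C)\to U(r)$ (polystable iff the representation is completely reducible, stable iff it is irreducible). Let $\rho_E,\rho_F$ be the representations corresponding to $E,F$. Then $\rho_E\otimes\rho_F$ and $\rho_E^{\vee}\otimes\rho_F$ are again unitary (tensor products and duals of unitary operators are unitary), and under the correspondence they recover $E\otimes F$ and $E^{\vee}\otimes F=\underline{\mathrm{Hom}}(E,F)$. Hence both bundles come from unitary representations and are therefore semistable.

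The main obstacle is that the entire argument rests on the deep transcendental Narasimhan–Seshadri theorem. If one wanted a purely algebraic proof (working also in good positive characteristic, where Miyaoka's original argument lives), the key step would be to show $\mu^{-}(E\otimes F)=\mu^{-}(E)+\mu^{-}(F)$ for semistable $E,F$. This can be attacked via the Harder–Narasimhan filtration: suppose a maximal destabilizing quotient $E\otimes F\twoheadrightarrow Q$ has $\mu(Q)<\mu(E)+\mu(F)$, and derive a contradiction by pulling back to $\mathbb{P}(F^{\vee})$ or by considering the induced map $E\to\underline{\mathrm{Hom}}(F,Q)$ and analyzing its HN factors. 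The delicate point is controlling these HN factors without circularity, since the very fact one wants to prove is that tensor products of semistables are semistable — this is why the standard references either appeal to Narasimhan–Seshadri in characteristic zero or to a separate, substantial argument (Ramanan–Ramanathan) in positive characteristic.
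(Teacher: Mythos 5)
The paper does not prove this statement at all; it is quoted verbatim from Miyaoka \cite{Miy87}, whose own argument runs through his characterization of semistability in terms of nefness of the normalized hyperplane class on $\mathbb{P}(E)$. So your transcendental route via Narasimhan--Seshadri is a genuinely different (and in characteristic zero perfectly legitimate) way to establish the background fact. However, as written there is a real gap: you assert that a degree-zero bundle is \emph{semistable} if and only if it comes from a unitary representation of $\pi_1(C)$. The correct statement is that the flat unitary bundles are exactly the \emph{polystable} ones of degree zero (stable $\leftrightarrow$ irreducible unitary). Your own parenthetical already betrays the problem: unitary representations are completely reducible, so "semistable $\Leftrightarrow$ unitary" together with "polystable $\Leftrightarrow$ completely reducible" would force semistable $=$ polystable, which is false --- e.g.\ the nonsplit extension of $\mathcal{O}_C$ by $\mathcal{O}_C$ on an elliptic curve is semistable of degree $0$ but is not flat unitary. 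Consequently the step "$E$ semistable $\Rightarrow$ $\rho_E$ unitary" fails for non-polystable $E$, and the argument as stated does not cover the general semistable case.

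The repair is standard and short, and you should include it: take Jordan--H\"older filtrations of $E$ and $F$ with stable graded pieces $E_i$, $F_j$ of slopes $\mu(E)$, $\mu(F)$. Then $E\otimes F$ carries an induced filtration whose graded pieces are the $E_i\otimes F_j$, all of slope $\mu(E)+\mu(F)$; since an iterated extension of semistable sheaves of equal slope is semistable, it suffices to treat stable $E$, $F$, where Narasimhan--Seshadri (after your degree-zero reduction) applies and gives even polystability of the tensor product. For the degree-zero reduction itself you should also say explicitly why passing to a finite cover is harmless: in characteristic zero, pullback along a finite morphism of smooth curves preserves semistability and, conversely, semistability descends (pass to a Galois cover and use uniqueness of the Harder--Narasimhan filtration to see it is Galois-invariant); this is exactly the point that breaks in characteristic $p$, which is why Miyaoka and Ramanan--Ramanathan argue differently. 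Finally, note that $\underline{\mathrm{Hom}}(E,F)\cong E^{\vee}\otimes F$ and that $E^{\vee}$ is semistable whenever $E$ is, so the Hom statement follows from the tensor statement without further use of the correspondence. With these two corrections (polystable vs.\ semistable, plus the descent remark) your proof is complete.
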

\begin{proposition} \cite[Proposition 2.2]{Bu94}
\label{Proposition-(2.2)_Bu94}
    Let $E$ and $F$ be vector bundles on the curve $C$ with
    \begin{align*}
        \mu^{-}(F) > 2g, \\
        \mu^{-}(F) > 2g+\mathrm{rank}(E)(2g-\mu^{-}(E))-2h^{1}(C, E)
    \end{align*}
    then, the multiplication map $H^{0}(E) \otimes H^{0}(F) \rightarrow H^{0}(E \otimes F)$ is surjective. Moreover, if $F$ is stable, then one can loosen the condition-(1) to $\mu(F) \geq 2g.$
\end{proposition}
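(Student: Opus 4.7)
The plan is to reduce surjectivity of the multiplication map to a cohomology vanishing on a tensor product with the syzygy bundle of $F$, and then to control the relevant slope via Butler's Theorem~\ref{[Theorem-(1.2)]{Bu94}} combined with the Harder--Narasimhan filtration of $F$.

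Since $\mu^{-}(F) > 2g > 2g-1$, Lemma~\ref{[Section (1) and (2) ]{Bu94}}(7) gives that $F$ is globally generated, so we have the syzygy bundle exact sequence
\[ 0 \to M_{F} \to H^{0}(F) \otimes \mathcal{O}_{C} \to F \to 0. \]
Tensoring with $E$ and taking the long exact sequence of cohomology yields
\[ H^{0}(F) \otimes H^{0}(E) \xrightarrow{\mu} H^{0}(F \otimes E) \xrightarrow{\delta} H^{1}(M_{F} \otimes E) \xrightarrow{\epsilon} H^{0}(F) \otimes H^{1}(E), \]
so that $\mathrm{coker}(\mu) = \ker(\epsilon)$. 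A clean sufficient condition for surjectivity of $\mu$ is therefore $H^{1}(M_{F} \otimes E) = 0$.

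To secure this, by Lemma~\ref{[Section (1) and (2) ]{Bu94}}(6) it is enough to show $\mu^{-}(M_{F} \otimes E) > 2g - 2$, and by part~(2) of the same lemma the problem reduces to bounding $\mu^{-}(M_{F})$ from below. I would use the Harder--Narasimhan filtration $0 = F_{0} \subset F_{1} \subset \cdots \subset F_{k} = F$, whose semistable graded pieces $G_{i} = F_{i}/F_{i-1}$ all have slope $\geq \mu^{-}(F) > 2g$. Item~(6) gives $H^{1}(G_{i}) = 0$, and inductively $H^{1}(F_{i}) = 0$, so the HN filtration lifts via the snake lemma to a filtration of $M_{F}$ with graded pieces $M_{G_{i}}$. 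Butler's Theorem~\ref{[Theorem-(1.2)]{Bu94}} then makes each $M_{G_{i}}$ semistable of slope $\phi(\mu(G_{i}))$, where $\phi(x) = -x/(x-g)$. Since $\phi$ is increasing on $(g, \infty)$, the smallest slope among the graded pieces is attained at $\mu^{-}(F)$, and iterating Lemma~\ref{[Section (1) and (2) ]{Bu94}}(9) gives
\[ \mu^{-}(M_{F}) \;\geq\; -\frac{\mu^{-}(F)}{\mu^{-}(F) - g}. \]

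The main obstacle is that this clean slope-vanishing cannot be secured by slopes alone once $\mu^{-}(E)$ falls much below $2g-1$; the correction term $-2h^{1}(C,E)$ in the second hypothesis is present precisely to bypass this. To handle it, one must instead argue directly that the connecting map $\epsilon$ is injective, rather than insist on $H^{1}(M_{F} \otimes E) = 0$. I would combine the Riemann--Roch identity
\[ \chi(M_{F} \otimes E) \;=\; \mathrm{rank}(E)\,\deg(M_{F}) + \bigl(h^{0}(F) - \mathrm{rank}(F)\bigr)\,\chi(E) \]
with the slope bound on $\mu^{-}(M_{F})$ above to produce an explicit upper bound on $h^{1}(M_{F} \otimes E)$, and then check that the second hypothesis is engineered to ensure this upper bound does not exceed the codimension of $\mathrm{Im}(\epsilon)$ inside $H^{0}(F) \otimes H^{1}(E)$, whence $\ker(\epsilon) = 0$. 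For the \emph{moreover} clause, if $F$ is stable then $\mu^{-}(F) = \mu(F)$, and Butler's Theorem~\ref{[Theorem-(1.2)]{Bu94}} already applies at the boundary $\mu(F) = 2g$, so the entire argument goes through with condition~(1) relaxed to $\mu(F) \geq 2g$.
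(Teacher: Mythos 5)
The first half of your plan is sound: the reduction to the connecting map $\epsilon\colon H^{1}(M_{F}\otimes E)\to H^{0}(F)\otimes H^{1}(E)$, and the lifting of the Harder--Narasimhan filtration of $F$ to a filtration of $M_{F}$ with graded pieces $M_{G_{i}}$, which together with Theorem \ref{[Theorem-(1.2)]{Bu94}} and Lemma \ref{[Section (1) and (2) ]{Bu94}}(9) gives $\mu^{-}(M_{F})\geq -\mu^{-}(F)/(\mu^{-}(F)-g)$. But, as you yourself note, this slope route only yields $H^{1}(M_{F}\otimes E)=0$ when $\mu^{-}(E)>2g-2+\mu^{-}(F)/(\mu^{-}(F)-g)$, i.e.\ essentially $\mu^{-}(E)>2g-1$, which the hypotheses do not provide; and your fallback is not a proof. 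Indeed, from the same long exact sequence one has $\dim\ker\epsilon=h^{0}(M_{F}\otimes E)+h^{0}(E\otimes F)-h^{0}(E)h^{0}(F)$, so the inequality ``$h^{1}(M_{F}\otimes E)$ does not exceed the codimension of $\operatorname{Im}(\epsilon)$'' is literally equivalent to the surjectivity you are trying to prove. Riemann--Roch only computes $\chi(M_{F}\otimes E)$, and you offer no independent bound on $h^{0}(M_{F}\otimes E)$; so the entire case in which the term $\mathrm{rank}(E)(2g-\mu^{-}(E))-2h^{1}(E)$ does any work is left unproved, and the argument as sketched is circular.

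Moreover, no argument using only the two displayed inequalities can close this gap, because the statement as quoted omits Butler's positivity hypothesis on $E$ (global generation), which is exactly how the proposition is used later in this paper (e.g.\ with $E=B_{C}$). Concretely: let $E$ be a general line bundle of degree $g$ on a curve of genus $g\geq 1$, so $h^{0}(E)=1$, $h^{1}(E)=0$, and its unique section vanishes on a divisor $D$ of degree $g$; let $F$ be a line bundle of degree $3g+1$. Both inequalities hold ($3g+1>2g$ and $3g+1>2g+(2g-g)-0=3g$), yet the image of $H^{0}(E)\otimes H^{0}(F)\to H^{0}(E\otimes F)$ is $s_{D}\cdot H^{0}(F)$, of dimension $2g+2<3g+2=h^{0}(E\otimes F)$. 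The argument that actually works goes through the syzygy bundle of $E$, not of $F$: global generation of $E$ gives $0\to M_{E}\to H^{0}(E)\otimes\mathcal{O}_{C}\to E\to 0$, surjectivity follows from the vanishing $H^{1}(M_{E}\otimes F)=0$, and this becomes a slope computation once one has a lower bound for $\mu^{-}(M_{E})$ of a globally generated bundle; a Clifford-type estimate in that bound is precisely the source of the $-2h^{1}(E)$ term (with its factor $2$). Your proposal never uses any such property of $E$, so it cannot be completed along the lines indicated.
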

\section{Castelnuovo Mumford Regularity and the $(M_{q})$ property}
In this section, we use the Castelnuovo Mumford regularity of the canonical bundle $K_{X}$ of a smooth projective variety to obtain results on the syzygies of multiples $\ell B$ of an ample and base point free line bundle $B.$ We first recall the definition of Castelnuovo Mumford regularity (CM regularity) below
\subsection{Property \( (M_{q}) \) for multiple line bundles}  $\text{ }$ \\
    We start out by proving some basic lemmas about the CM regularity of the syzygy bundle and tensor product of syzygy bundles with other vector bundles.
\begin{lemma}
\label{regularity_of_Syzygy_Bundle_multiple_line_bundles}
Let $B$ be an ample and base point free line bundle on a smooth projective variety $X.$ Then,
\begin{itemize}
\item[\textbf{(i)} ]  With respect to $B,$ the vector bundle $M_{mB} \otimes \mathcal{E}$ is $(\rho+1)-$regular if the sheaf $\mathcal{E}$ is $\rho-$regular. In particular, $$\mathrm{reg}_{B}(M_{mB} \otimes \mathcal{E}) \leq 1+\mathrm{reg}_{B}(\mathcal{E}).$$
\item[\textbf{(ii)} ]  Moreover, for the non-negative integer vector $\vec{m} = (m_{1}, \cdots, m_{k}),$ with respect to the line bundle $B,$ the bundle $M_{\vec{m}B}(\mathcal{E}) = \bigotimes^{k}_{i = 1}M_{m_{i}B} \otimes \mathcal{E}$ is $(\rho+k)-$regular if the sheaf $\mathcal{E}$ is $\rho-$regular. In particular, 
$$\mathrm{reg}_{B}(M_{\vec{m}B}(\mathcal{E})) \leq k+\mathrm{reg}_{B}(\mathcal{E})$$
\item[\textbf{(iii)} ] For the non-negative integer vector $\vec{m} = (m_{1}, \cdots, m_{k}),$ the vector bundle $M_{\vec{m}B}(K_{X}+\ell B)$ is $k-$regular, with respect to $B,$ for all $\ell \geq \mathrm{reg}_{B}(K_{X})$. In other words, 
\begin{equation}
    \mathrm{reg}_{B}(M_{\vec{m}B}(K_{X}+\ell B)) \leq k \ , \hspace{0.5cm} \text{ for all } \ell \geq \mathrm{reg}_{B}(K_{X}).
\end{equation}
In particular, 
\begin{equation}
    \mathrm{reg}_{B}(M_{\vec{m}B}(K_{X}+\ell B)) \leq k \ , \hspace{0.5cm} \text{ for all } \ell \geq n+1.
\end{equation}
\end{itemize}
\end{lemma}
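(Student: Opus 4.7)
The plan is to reduce everything to part (i) and then iterate. For part (i), the natural tool is the defining sequence
\begin{equation*}
0 \to M_{mB} \to H^0(mB)\otimes\mathcal{O}_X \to mB \to 0
\end{equation*}
tensored by $\mathcal{E}((\rho+1-i)B)$. Writing down the long exact sequence, I need to show
\begin{equation*}
H^i\bigl(M_{mB}\otimes\mathcal{E}((\rho+1-i)B)\bigr)=0 \quad \text{for } 1\le i\le n.
\end{equation*}
For $i\ge 2$, the two neighboring terms are $H^{i-1}(\mathcal{E}((\rho+1-i+m)B))$ and $H^0(mB)\otimes H^i(\mathcal{E}((\rho+1-i)B))$. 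Both vanish: the right one because Castelnuovo's Lemma \ref{Castelnuovo_lemma}(c) upgrades $\rho$-regularity of $\mathcal{E}$ to $(\rho+1)$-regularity (so $H^i(\mathcal{E}((\rho+1-i)B))=0$), and the left one because $m\ge 0$ and $\mathcal{E}$ being $\rho$-regular gives $H^{i-1}(\mathcal{E}((\rho-(i-1)+t)B))=0$ for all $t\ge 0$ by the same upgrading. For $i=1$, the obstruction becomes the cokernel of the multiplication map
\begin{equation*}
H^0(mB)\otimes H^0(\mathcal{E}(\rho B)) \longrightarrow H^0(\mathcal{E}((\rho+m)B)),
\end{equation*}
and surjectivity here is exactly Lemma \ref{Castelnuovo_lemma}(b). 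This proves part (i).

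For part (ii), I would proceed by induction on $k$. Write $M_{\vec m B}(\mathcal{E}) = M_{m_1 B}\otimes\bigl(M_{(m_2,\dots,m_k)B}(\mathcal{E})\bigr)$. By induction, the inner tensor product is $(\rho+k-1)$-regular, and then one application of part (i) (with the inner tensor product playing the role of $\mathcal{E}$) bumps the regularity up by one, giving the bound $\rho+k$.

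Part (iii) is then an immediate specialization: set $\mathcal{E}=K_X+\ell B$. By definition $\mathrm{reg}_B(K_X+\ell B)=\mathrm{reg}_B(K_X)-\ell$, so the hypothesis $\ell\ge\mathrm{reg}_B(K_X)$ makes $\mathcal{E}$ be $0$-regular; applying part (ii) with $\rho=0$ gives the desired bound $k$. The final inequality $\ell\ge n+1$ is Proposition \ref{regularity_examples}(1), which says $\mathrm{reg}_B(K_X)\le n+1$ via Kodaira vanishing.

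The proof is essentially a clean bookkeeping exercise; there is no real obstacle. The only subtlety worth flagging is the $i=1$ step in part (i), where one must remember that Castelnuovo's lemma supplies surjectivity of multiplication on global sections (not just global generation), since this is precisely what is needed to kill the $H^1$ contribution from $M_{mB}$. Once this is in place, the induction for (ii) and the specialization for (iii) are formal.
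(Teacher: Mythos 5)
Your proposal is correct and follows essentially the same route as the paper: tensor the kernel bundle sequence for $mB$ with $\mathcal{E}((\rho+1-i)B)$, kill the flanking cohomology terms via Castelnuovo's Lemma \ref{Castelnuovo_lemma}(c), handle $i=1$ via the surjectivity in Lemma \ref{Castelnuovo_lemma}(b) (together with the vanishing of $H^{1}(\mathcal{E}(\rho B))$), then induct on $k$ for (ii) and specialize $\mathcal{E}=K_{X}+\ell B$ with Proposition \ref{regularity_examples} for (iii). The only cosmetic difference is that you peel off $M_{m_{1}B}$ in the induction while the paper peels off $M_{m_{k}B}$, which changes nothing.
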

\begin{proof}
    First we prove Part$-(i).$ It is sufficient to prove the first part of $(i)$ only. Supposing $\mathcal{E}$ is $\rho-$regular we tensor the Kernel Bundle sequence (\ref{kernel_bundle}) for $\mathcal{F} = mB$ with $\mathcal{E} \otimes (\rho+1-i)B$ and take the long-exact sequence of cohomology. Then, for $i = 1$ we have the long-exact sequence
    $$H^{0}(mB) \otimes H^{0}(\mathcal{E}(\rho B)) \xrightarrow[\hspace{1cm}]{\mu^{0, 0}_{mB, \mathcal{E}(\rho B)}} H^{0}(\mathcal{E} ((m+\rho)B))$$
    $$ \rightarrow H^{1}(M_{mB} \otimes \mathcal{E}((\rho+1-1)B)) \rightarrow H^{0}(mB) \otimes H^{1}(\mathcal{E}((\rho+1-1)B))$$
    and for $i \geq 2,$ one has the long-exact sequence
    $$H^{0}(mB) \otimes H^{i-1}(\mathcal{E}((\rho-(i-1))B)) \xrightarrow[\hspace{1cm}]{\mu^{0, i-1}_{mB, \mathcal{E}(\rho B)}} H^{i-1}((\mathcal{E}((\rho+m-(i-1))B)) $$
    $$ \rightarrow H^{i}(M_{mB} \otimes \mathcal{E}((\rho+1-i)B)) \rightarrow H^{0}(mB) \otimes H^{i}(\mathcal{E}((\rho+1-i)B))$$
    Since $\mathcal{E}$ is $\rho-$regular, $H^{i}(\mathcal{E}((\rho+1-i)B)) = 0$ for all integers $i \geq 1$ and the map $\mu_{mB, \mathcal{E}(\rho B)}^{0, 0}$ is surjective. Therefore, from the above long-exact sequences one has $H^{i}(M_{mB} \otimes \mathcal{E}((\rho+1-i)B)) = 0$ for all $i \geq 1$ which implies that $M_{mB} \otimes \mathcal{E}$ is $(\rho+1)-$regular. \\
    Now, we prove Part$-(ii).$ This part follows from induction. The $k = 1$ case follows from Part-$(i).$ Now we suppose that $k > 1$ and our induction hypothesis is that $M_{\vec{m}B}[k-1](\mathcal{E})$ is \small{$(\rho+k-1)-$}regular. Then putting $m = m_{k}$ and replacing $\mathcal{E}$ by $M_{\vec{m}B}[k-1](\mathcal{E})$ in Part-(i), we get $M_{\vec{m}B}(\mathcal{E}) = M_{mB} \otimes (M_{\vec{m}B}[k-1](\mathcal{E}))$ is \small{$((\rho+k-1)+1)-$}regular. This completes the proof. \\
    Finally, we prove Part$-(iii).$ This follows from Part$-(ii)$ by putting $\mathcal{E} = K_{X}+\ell B$ for $\ell \geq \mathrm{reg}(K_{X}),$  since by Proposition \ref{regularity_examples} (2) $K_{X}+\ell B$ is $0-$regular for $\ell \geq \mathrm{reg}_{B}(K_{X}).$ The last statement follows since by Proposition \ref{regularity_examples} (1) $\mathrm{reg}_{B}(K_{X}) \leq n+1.$
\end{proof}
In this section we give a general lower bound $\ell^{\mathrm{ceil}}_{q}$ on $\ell$ such that for any ample and base point free line bundle $B$ on an arbitrary smooth projective variety $X,$ the multiple line bundle $L = \ell B$ satisfies Property-$(M_{q}).$ Our result is analogous to \cite[Theorem 1.3]{BG99} by \textit{P. Bangere} and \textit{F. J. Gallego} but unlike the bound for property-$(N_{p})$ our bound decreases with increasing dimension.
\begin{theorem}
\label{general_vanishing_multiple_line_bundles}
    Let $X$ be a smooth projective variety of dimension $n$ and let $B$ be an ample and base point free line bundle on $X.$ \\
    Then, for any positive integer vector $\vec{m} = (m_{1}, \cdots, m_{q-n+1})$ one has the vanishing
    \begin{equation}
        H^{1}(M_{\vec{m}B} \otimes (K_{X}+\ell B)) = 0,  \hspace{0.5cm} \text{ provided } 
                    \ell \geq \ell^{\ast}_{q}(B)
    \end{equation}
    where $\ell^{\ast}_{q}(B) = q-n+\mathrm{reg}_{B}(K_{X}).$ In particular,
    \begin{equation}
            H^{1}(M_{\vec{m}B} \otimes (K_{X}+\ell B)) = 0,  \hspace{0.5cm} \text{ provided } \ell \geq q+1
    \end{equation}
    In particular, the multiple line bundle $L = \ell B$ satisfies Property-$(m_{q})$ if $\ell \geq \ell^{\mathrm{reg}}_{q}(B)$ where $\ell^{\mathrm{reg}}_{q}(B) = \ceil[\bigg]{\dfrac{q-n+\mathrm{reg}_{B}(K_{X})}{n-1}}$ and it satisfies Property-$(m_{q})$ if $\ell \geq \ell^{\mathrm{reg}}_{q}(B)$ where $\ell^{\mathrm{reg}}_{q}(B) = \max \left(\rho, \ceil[\bigg]{\dfrac{q-n+\mathrm{reg}_{B}(K_{X})}{n-1}} \right)$  \\
\end{theorem}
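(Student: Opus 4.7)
The plan is to deduce the theorem from the regularity bound on $M_{\vec{m}B}(K_{X}+\ell B)$ established in Lemma \ref{regularity_of_Syzygy_Bundle_multiple_line_bundles}(iii), and then convert the resulting cohomology vanishing into Property-$(m_{q})$ via the criterion of Lemma \ref{AL_Lemma}.

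First, I would apply Lemma \ref{regularity_of_Syzygy_Bundle_multiple_line_bundles}(iii) to a positive integer vector $\vec{m}$ of length $q-n+1$. This gives $\mathrm{reg}_{B}(M_{\vec{m}B}(K_{X}+\ell' B)) \leq q-n+1$ whenever $\ell' \geq \mathrm{reg}_{B}(K_{X})$. Unwinding the definition of $(q-n+1)$-regularity with respect to $B$, the vanishing at $i=1$ reads
\begin{equation*}
H^{1}\bigl(M_{\vec{m}B}(K_{X}+\ell' B) \otimes (q-n)B\bigr) = H^{1}\bigl(M_{\vec{m}B}(K_{X}+(\ell'+q-n)B)\bigr) = 0.
\end{equation*}
Setting $\ell = \ell'+q-n$, this yields the desired vanishing for all $\ell \geq q-n+\mathrm{reg}_{B}(K_{X}) = \ell^{\ast}_{q}(B)$. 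The particular case $\ell \geq q+1$ follows by substituting the universal estimate $\mathrm{reg}_{B}(K_{X}) \leq n+1$ from Proposition \ref{regularity_examples}(1).

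To deduce Property-$(m_{q})$ for $L = \ell B$, I would use the remark after Lemma \ref{AL_Lemma} (taking the auxiliary parameter in that lemma to be $\mathcal{O}_{X}$ and the polarization there to be $L = \ell B$): it suffices that the maps $\mu_{k}$ be surjective for $0 \leq k \leq q-n$. Tensoring the kernel bundle sequence of $\ell B$ with $M_{\ell B}^{\otimes k} \otimes (K_{X}+(n-1)\ell B)$ and passing to cohomology reduces the surjectivity of $\mu_{k}$ to the vanishing $H^{1}(M_{\ell B}^{\otimes (k+1)} \otimes (K_{X}+(n-1)\ell B)) = 0$. This is exactly the general vanishing above with $\vec{m} = (\ell,\ldots,\ell)$ of length $k+1 \leq q-n+1$ and with $(n-1)\ell$ playing the role of the twisting parameter $\ell$. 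The worst constraint is at $k = q-n$, and requires $(n-1)\ell \geq q-n+\mathrm{reg}_{B}(K_{X})$; solving and rounding up yields $\ell \geq \ell^{\mathrm{reg}}_{q}(B)$. For Property-$(M_{q})$, one additionally requires projective normality of $\ell B$, which follows from Castelnuovo's Lemma \ref{Castelnuovo_lemma}(b) as soon as $\ell \geq \rho = \mathrm{reg}_{B}(\mathcal{O}_{X})$, giving the combined bound $\ell \geq \max(\rho,\ell^{\mathrm{reg}}_{q}(B))$.

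The argument presents no serious obstacle beyond careful bookkeeping, since the real work has been done in Lemma \ref{regularity_of_Syzygy_Bundle_multiple_line_bundles}. The delicate point is keeping track of the interplay between the length of $\vec{m}$ (which controls how many units of $B$ one may absorb into the twist via regularity) and the factor of $n-1$ coming from $K_{X}+(n-1)L$ in the criterion of Lemma \ref{AL_Lemma}; it is this factor of $n-1$ in the denominator that makes the bound $\ell^{\mathrm{reg}}_{q}(B)$ improve with the dimension, which is the main virtue of the theorem compared with the analogous bounds for Property-$(N_{p})$.
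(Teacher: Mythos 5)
Your proposal is correct, and it reaches the key vanishing by a more direct route than the paper. The paper proves $H^{1}(M_{\vec{m}B}\otimes(K_{X}+\ell B))=0$ by induction on $q$: the base case $q=n$ uses the $0$-regularity of $K_{X}+\ell B$, and the inductive step tensors the kernel bundle sequence for $m_{q-n+1}B$ with $M_{\vec{m}B}[q-n](K_{X}+\ell B)$, reducing to the surjectivity of a multiplication map which is then supplied by Lemma \ref{regularity_of_Syzygy_Bundle_multiple_line_bundles} together with Lemma \ref{Castelnuovo_lemma}(b). You instead apply Lemma \ref{regularity_of_Syzygy_Bundle_multiple_line_bundles}(iii) to the full-length vector $(m_{1},\dots,m_{q-n+1})$ and read off the $H^{1}$ vanishing after twisting by $(q-n)B$ straight from the regularity bound $\mathrm{reg}_{B}(M_{\vec{m}B}(K_{X}+\ell'B))\leq q-n+1$; since the inductive work is already contained in the proof of that lemma, your shortcut is legitimate and in fact streamlines the paper's argument (the only implicit ingredient is the propagation statement, Lemma \ref{Castelnuovo_lemma}(c), needed to pass from ``regularity $\leq q-n+1$'' to the vanishing of $H^{1}$ at the level $q-n+1$, which you should cite rather than attribute to the bare definition). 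The second half of your argument --- deducing Property-$(m_{q})$ from the remark after Lemma \ref{AL_Lemma} by taking $\vec{m}=(\ell,\dots,\ell)$ with the twist $(n-1)\ell$, handling the worst case $k=q-n$, and adding projective normality via $\ell\geq\rho$ for Property-$(M_{q})$ --- is exactly the paper's deduction, and your bookkeeping of the bounds $\ell^{\ast}_{q}(B)$, $\ell\geq q+1$, and $\ell^{\mathrm{reg}}_{q}(B)$ is accurate.
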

\begin{proof}
        We prove this using induction on $q.$ The proof is divided into three steps below. \\
     
            \textbf{(i)} \ \textbf{\underline{Base step} : } First we treat the case $q = n.$ That is, we show that given the conditions on $B$ mentioned above,
            $$H^{1}(M_{mB} \otimes (K_{X}+\ell B)) =$$ $$\mathrm{Coker} \left[ H^{0}(mB) \otimes H^{0}(K_{X}+\ell B) \rightarrow H^{0}(K_{X}+(\ell+m)B) \right] = 0$$
            If $\ell \geq \ell_{n} = \mathrm{reg}_{B}(K_{X}),$ then $K_{X}+\ell B$ is $0-$regular with respect to $B$ and therefore the following multiplication map of global sections
            $$H^{0}(mB) \otimes H^{0}(K_{X}+\ell B) \rightarrow H^{0}(K_{X}+(\ell+m)B)$$
            is surjective. \\
            \textbf{(ii)} \ \textbf{\underline{Induction hypothesis} : } Let $B$ be an ample and base point free line bundle on $X$
            Suppose that, for any $0 \leq k' \leq (q-1)-n$ and any positive integer vector $\vec{m} = (m_{1}, \cdots, m_{k'+1})$ one has the vanishing
            \begin{small}
            \begin{equation}
                H^{1} (M_{\vec{m}B} \otimes (K_{Y}+\ell' B)) = 0 \hspace{0.5cm} \text{ provided } \ell' \geq \ell^{\ast}_{q-1}(B)
            \end{equation}
            \end{small}
            \textbf{(iii)} \ \textbf{\underline{Inductive step} : } Let $m = m_{q-n+1}$ and $\vec{m} = (m_{1}, \cdots, m_{q-n+1}).$ Then, upon tensoring the short-exact sequence (\ref{kernel_bundle}) for $\mathcal{F} = mB$ with \\ \small{$M_{B}[\vec{m}-m \vec{e}_{q-n+1}](K_{X}+\ell B),$} and taking long-exact sequence of cohomology one obtains \\
            \small{
                $$H^{0}(mB) \otimes H^{0}(M_{\vec{m}B}[q-n](K_{X}+\ell B)) \overset{\mu_{m, q-n}}{\xrightarrow{\hspace{1cm}}} H^{0}(M_{\vec{m}B}[q-n](K_{X}+(\ell+m) B))$$ }
                \begin{equation}
                    \rightarrow H^{1}(M_{\vec{m}B}(K_{X}+\ell B)) \longrightarrow H^{0}(mB) \otimes H^{1}(M_{\vec{m}B}[q-n](K_{X}+\ell B))
                \end{equation}
            Since $\ell \geq \ell^{\ast}_{q}(B) \geq \ell^{\ast}_{q-1}(B),$ by induction hypothesis, \small{$H^{1}(M_{\vec{m}B}[q-n](K_{X}+\ell B)) = 0$} \\ and therefore, the vanishing $H^{1}(M_{\vec{m}B}(K_{X}+\ell B)) = 0$ is equivalent to the surjectivity of $\mu_{m, q-n}$ above. This surjectivity follows because from Lemma \ref{regularity_of_Syzygy_Bundle_multiple_line_bundles} one has 
            $$\mathrm{reg}_{B}(M_{\vec{m}B}[q-n](K_{X}+w B)) \leq q-n \hspace{0.1cm} \text{ for } w \geq \mathrm{reg}_{B}(K_{X}).$$
            Hence, the proof of the vanishing is complete. \\
            For the result on Properties$-(m_{q})$ we put $m_{i} = \ell$ for all $1 \leq i \leq q-n+1$ and replacing $\ell$ by $(n-1)\ell$ For property$-(M_{q})$ use the fact that $L = \ell B$ is projectively normal for all $\ell \geq \rho$ which follows from Lemma \ref{Castelnuovo_lemma}.
    \end{proof}
    Now we are ready to prove the Theorem in 
\textit{Proof of Theorem \ref{CM_theorem}.} For any ample and base point free line bundle $B,$ on a smooth projective variety $X$ of dimension $n,$ the Castelnuovo Mumford regularity of the canonical line bundle $K_{X}$ with respect to $B$ is $\mathrm{reg}_{B}(K_{X}) \leq n+1.$ In other words, $K_{X}+\ell B$ is $0-$regular for all $\ell \geq \mathrm{reg}(K_{X})$. In particular, $K_{X}+\ell B$ is $0-$regular for all $\ell \geq n+1.$ \\
    The proof follows from Theorem \ref{general_vanishing_multiple_line_bundles} by putting $m_{i} = \ell$ for all $1 \leq i \leq q-n+1$ and replacing $\ell$ by $(n-1)\ell.$
\subsection{Property-$(M_{2})$ of multiple line bundles on surfaces} $\text{ }$ \\
\label{M2}
We finish this section with a result on surjectivity of certain multiplication maps on all surfaces which improves our bound of Theorem \ref{CM_theorem} for Property-$(M_{2})$ in presence of more sections of the line bundle $B$ using the method of hyperplane cuts.
\begin{theorem}
\label{surj_mult_surf_maps}
Let $X$ be a surface. Let $B$ be an ample and base point free line bundle on $X$ and let $L$ be a line bundle on $X$ such that either $(L \cdot B) > 2(B^{2})$ or $(L \cdot B) = 2(B^{2})$ with $h^{0}(B) \geq 4.$ Then, \\
The following map is surjective
\begin{equation}
H^{0}(B) \otimes H^{0}(K_{X}+L)  \xrightarrow[\hspace{1cm}]{\alpha} H^{0}(K_{X}+L+B)
\end{equation}
Moreover, one has the following vanishing
\begin{equation}
\label{kernel_vanishing_for_M_2}
H^{1}(M_{mB} \otimes (K_{X}+L)) = 0 \text{ for integers } m \geq 1 
\end{equation}
In particular, the line bundle $L = \ell B$ satisfies property-$(m_{2})$ for all integers $\ell \geq 3$ or $\ell \geq 2, h^{0}(B) \geq 4.$ Moreover, it satisfies Property-$(M_{2})$ if $\ell \geq \max(\rho, 3)$ or $\ell \geq \max(\rho, 2), h^{0}(B) \geq 4.$
\end{theorem}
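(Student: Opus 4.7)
The plan is to reduce the surjectivity of $\alpha$ to a multiplication map on a smooth curve $C \in |B|$ via the hyperplane section technique. Picking a smooth $C \in |B|$ by Bertini and using adjunction, I get the short exact sequence
\[
0 \to K_X + L \to K_X + L + B \to K_C + L|_C \to 0,
\]
which fits into a commuting square relating $\alpha$ to the curve-level multiplication
\[
\alpha_C:\; H^0(\mathcal{O}_C(B)) \otimes H^0(K_C + (L-B)|_C) \to H^0(K_C + L|_C).
\]

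The surjectivity of $\alpha_C$ is the heart of the argument. When $(L \cdot B) > 2(B^2)$, the divisor $L|_C - 2B|_C$ has strictly positive degree, so Serre duality on $C$ gives $H^1(K_C + L|_C - 2B|_C) = 0$, and Castelnuovo's base-point-free pencil trick (using that $B|_C$ is base-point-free on $C$) yields the desired surjectivity. In the boundary case $(L \cdot B) = 2(B^2)$ with $h^0(B) \geq 4$, the naive pencil-trick obstruction $H^0((2B-L)|_C)$ can be one-dimensional; however the sequence $0 \to \mathcal{O}_X \to B \to \mathcal{O}_C(B) \to 0$ yields $h^0(\mathcal{O}_C(B)) \geq 3$, giving an extra section beyond a pencil that, via a Green/Mukai-type sharpening, absorbs the obstruction.

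Lifting $\alpha_C$-surjectivity to $\alpha$-surjectivity is a diagram chase: given $\sigma \in H^0(K_X+L+B)$, I decompose $\sigma|_C = \sum \bar{s}_i \bar{t}_i$ via $\alpha_C$, lift each $\bar{s}_i$ and $\bar{t}_i$ to $X$ through the relevant restriction sequences (the cokernels being controlled by our numerical hypotheses against $B$), and use that the residue $\sigma - \sum s_i t_i$ vanishes on $C$, hence is divisible by the defining section of $C$ and so lies in $\operatorname{Im}\alpha$. For the vanishing $H^1(M_{mB}(K_X+L)) = 0$ with $m \geq 1$, the twisted kernel-bundle sequence yields
\[
H^0(mB) \otimes H^0(K_X+L) \xrightarrow{\alpha_m} H^0(K_X+L+mB) \to H^1(M_{mB}(K_X+L)) \to H^0(mB) \otimes H^1(K_X+L),
\]
so it suffices to show surjectivity of $\alpha_m$ and vanishing of $H^1(K_X + L)$. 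The former follows by iterating the $m=1$ case via Observation \ref{[Observation-(1.2)]{BG00}} (the hypotheses persisting when $L$ is replaced by $L + kB$), while the latter is read off the hyperplane section long exact sequence combined with $H^1(K_C + L|_C) = 0$ (a consequence of $\deg L|_C > 0$) and a positivity-based vanishing for $H^1(K_X + L + B)$. The $(m_2)$ and $(M_2)$ statements for $L = \ell B$ then follow by specializing $\ell B^2 > 2B^2$ or $\ell B^2 = 2B^2$, combined with projective normality for $\ell \geq \rho$ from Lemma \ref{Castelnuovo_lemma}.

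The main obstacle is the boundary case $(L \cdot B) = 2(B^2)$: there the standard Castelnuovo--Mumford regularity argument collapses because $2B - L$ can be numerically trivial, and the curve-level surjectivity genuinely requires the sharper pencil-trick variant made possible by the $h^0(B) \geq 4$ hypothesis, which is the unique place where that extra hypothesis is indispensable.
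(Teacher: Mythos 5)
Your reduction to a smooth curve $C\in|B|$ and your curve-level computation (degree of $(L-2B)|_C$, Serre duality, pencil trick, with the obstruction $h^0((2B-L)|_C)\le 1$ in the boundary case) are in the same spirit as the paper's argument, but the way you return from $C$ to $X$ has a genuine gap. You prove surjectivity of $\alpha_C$ with the \emph{full} spaces $H^0(\mathcal{O}_C(B))$ and $H^0(\mathcal{O}_C(K_X+L))$ and then propose to lift the factors $\bar{s}_i,\bar{t}_i$ to $X$, asserting that the relevant cokernels are ``controlled by our numerical hypotheses.'' They are not: the cokernel of $H^0(X,B)\to H^0(C,\mathcal{O}_C(B))$ is governed by $H^1(\mathcal{O}_X)$, and that of $H^0(X,K_X+L)\to H^0(C,\mathcal{O}_C(K_X+L))$ by $H^1(K_X+L-B)$; neither has anything to do with the inequalities on $(L\cdot B)$. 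The theorem is stated for an arbitrary surface and is later applied in the nef-canonical-bundle setting of Section \ref{nef_canonical}, which includes irregular surfaces (abelian, bielliptic), where $H^1(\mathcal{O}_X)\neq 0$ and your lifting of the $B$-factor genuinely fails. The paper's proof is organized precisely to avoid any lifting: it never uses $H^0(\mathcal{O}_C(B))$, but rather the subspace $W=H^0(B)/H^0(\mathcal{O}_X)$ of restricted global sections, which is base point free on $C$ of dimension $h^0(B)-1$, and it applies the generalized base-point-free pencil trick (Green's $H^0$-lemma) to the pair $(W,\,\mathcal{O}_C(K_X+L))$; the resulting criterion $h^1(\mathcal{O}_C(K_X+L-B))\le \dim W-2=h^0(B)-3$ is exactly where $h^0(B)\ge 4$ enters in the boundary case $(L\cdot B)=2(B^2)$. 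Your appeal to a ``Green/Mukai-type sharpening'' with $h^0(\mathcal{O}_C(B))\ge 3$ is the right idea at the curve level, but it does not repair the lifting problem; the fix is to run Green's lemma on the subspace $W$ from the start, as the paper does. (For the other factor, both you and the paper implicitly use surjectivity of $H^0(X,K_X+L)\to H^0(\mathcal{O}_C(K_X+L))$; this is harmless in the intended applications, where $L=\ell B$ with $\ell\ge 2$ and Kodaira vanishing applies, but it is the $H^1(\mathcal{O}_X)$ obstruction on the $B$-side that cannot be waved away.)

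A secondary issue: to pass from surjectivity of $\alpha_m$ to the vanishing (\ref{kernel_vanishing_for_M_2}) one also needs $H^1(K_X+L)=0$, and your proposed derivation of it (from $H^1(K_C+L|_C)=0$ together with an unproved ``positivity-based'' vanishing of $H^1(K_X+L+B)$) is circular as written; the paper simply invokes Kodaira vanishing at this point, which is legitimate for $L=\ell B$ and should be stated as such rather than re-derived. Your iteration in $m$ via the multiplication-map observation of Bangere and Gallego \cite{BG00} and the passage to $(m_2)$ and $(M_2)$ via Lemma \ref{Castelnuovo_lemma} are fine.
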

\begin{proof}
Since $B$ is base point free, by Bertini's theorem there is a smooth integral curve $C$ in $|B|,$ and let $W$ be the global sections of $B$ that do not vanish on $C;$ i.e. $W := H^{0}(B)/H^{0}(\mathcal{O}_{X}).$ Then, by the following commutative diagram
\small{
$$
\begin{tikzcd}[row sep = small, column sep = small]
H^{0}(K_{X}+ L) \otimes H^{0}(\mathcal{O}_{X}) \ar[hook]{r} \ar{d}{ } & H^{0}(K_{X}+ L) \otimes H^{0}(B) \ar[twoheadrightarrow]{r} \ar{d}{\alpha} & H^{0}(K_{X}+ L) \otimes W \ar{d}{ } \\
H^{0}(K_{X}+ L)  \ar[hook]{r} & H^{0}(K_{X}+L+B) \ar[twoheadrightarrow]{r} & H^{0} \left( (K_{X}+L+B) \otimes \mathcal{O}_{C} \right)
\end{tikzcd}$$
}
where the top horizontal row is a short-exact sequence by construction and the bottom horizontal row is a short-exact sequence by Kodaira vanishing theorem, the left-vertical map is trivially surjective trivially and $\alpha$ is surjective if and only if the right-vertical map is surjective and by base point free pencil trick this happens if
\begin{equation}
\label{basepointfreepenciltrick1}
h^{1}( \mathcal{O}_{C}(K_{X}+L-B)) \leq \mathrm{dim}(W)-2 = h^{0}(B)-3
\end{equation}
but
$h^{1}( \mathcal{O}_{C}(K_{X}+L-B)) = h^{1}[ K_{C}+((L-2B) \otimes \mathcal{O}_{C})) ] = 0,$ when $(L-2B \cdot B) = \mathrm{deg}((L-2B) \otimes \mathcal{O}_{C}) > 0,$
by Kodaira vanishing theorem and the right-hand-side of the inequality-(\ref{basepointfreepenciltrick1}) is $\geq 0,$ since $B$ is ample and base point free imples that the map $\phi_{B} : X \rightarrow |B|$ defined by $|B
|$ is a finite map onto its image implies $h^{0}(B) \geq 3$. (Here, We could just use the fact that $W$ is base point free on $C$ and hence $\mathrm{dim}(W) \geq 2.$) If $((L-2B) \cdot B) = 0,$ we need $h^{0}(B) \geq 4.$ This proves the surjectivity for $\alpha.$ \\
Now the surjectivity of $\alpha$ and the Observation \ref{[Observation-(1.2)]{BG00}} mentioned above, one has that the map
$$H^{0}(m B) \otimes H^{0}(K_{X}+L) \rightarrow H^{0}(K_{X}+L+mB)$$
is surjective for all $m \geq 1$ and for $((L-2B) \cdot B) > 0$ or $((L-2B) \cdot B) = 0$ with $h^{0}(B) \geq 4.$ This surjectivity is equivalent to (\ref{kernel_vanishing_for_M_2}) from Kodaira vanishing theorem and the long-exact sequence of cohomology of the short exact sequence (\ref{kernel_bundle}) for $m B.$ \\
The particular case is straightforward when we put $L = \ell B.$
\end{proof}
\section{Syzygies of varieties with nef canonical bundle}
\label{nef_canonical}
In this section, we study syzygies of adjoint linear series on smooth projective varieties with nef canonical bundle. Throughout this section, we fix the notation $\mathcal{L}_{\ell}$ for the adjoint linear series $\mathcal{L}_{\ell} = K_{X}+\ell B$ for $X$ a smooth projective variety with nef $K_{X}$ and $B$ an ample and base point free line bundle on $X$ for any positive integer $\ell.$ But first we show the following Lemma which will be used in the proof of the Theorems \ref{theorem_$(M_q)$nef_canonical_surface} and \ref{nef_general_theorem}.
\begin{lemma} 
\label{Lemma_nef_canonical}
Let $X$ be a smooth projective variety. Then the following is true.
    \begin{itemize}
    \item[(a)] For any ample line bundle $A$ on $X$ with $dA-K_{X}$ nef, one has $H^{i}(aA+N) = 0$ for any integer $a \geq d+1$ and any nef line bundle $N$ on $X.$
    \item[(b)] Suppose $X$ has nef canonical bundle $K_{X}$ and $d \geq 1$ be a rational number. For any ample and base point free line bundle $B$ on $X$ and $\Lambda = K_{X}+(n-1)B$ one has $(\Lambda \cdot B^{n-1} ) \geq 2, \text{ and } (B^{n}) \geq 2$ and for any smooth curve $C$ in $|B|$ has genus $g \geq 2.$
    \item[(c)] Suppose $X$ is a variety with nef canonical bundle $K_{X},$ $d \geq 1$ be a rational number and $B$ be an ample and base point free line bundle on $X$ such that $d(B^{n}) \geq (K_{X} \cdot B^{n-1}).$ If $H^{i}((m+n-2)B) = H^{i}((m+n-2)B_{X^{(1)}}) = \cdots = H^{i}((m+n-2)B_{Y}) =  0$ for some integers $m \geq d+1$ and $i \geq 1$ then $H^{i}(\ell B_{X^{(c)}}) = 0$ for all $\ell \geq m+n-1$ and all $0 \leq c \leq n-1.$
    \end{itemize}
\end{lemma}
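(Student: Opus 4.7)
The plan for part (a) is to reduce immediately to Kodaira vanishing. Rewrite
\[
    aA + N - K_X = (a-d)A + (dA - K_X) + N.
\]
Since $a$ is an integer, $d$ is rational, and $a \geq d+1$, one has $a - d \geq 1$, so $(a-d)A$ is ample; the remaining two summands are nef by hypothesis. Hence $aA + N - K_X$ is ample, and Kodaira vanishing yields $H^i(aA + N) = 0$ for $i \geq 1$.

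For part (b), I would treat the three inequalities in sequence, with $(B^n) \geq 2$ as the main step. Suppose for contradiction that $(B^n) = 1$. On the curve $C = X^{(n-1)}$ arising from the hyperplane stratification, $B_C$ is a base point free line bundle of degree $(B^n) = 1$; positive degree together with base point freeness forces $h^0(B_C) \geq 2$, exhibiting $C$ as a rational curve. In particular $g(C) = 0$, and iterated adjunction $K_C = (K_X + (n-1)B)|_C$ yields
\[
    -2 = \deg K_C = K_X \cdot B^{n-1} + (n-1)(B^n).
\]
Since $K_X$ is nef and $B$ ample, $K_X \cdot B^{n-1} \geq 0$, so $(n-1)(B^n) \leq -2$, contradicting $(B^n) \geq 1$ and $n \geq 2$. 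The remaining two inequalities then follow formally: $\Lambda \cdot B^{n-1} = K_X \cdot B^{n-1} + (n-1)(B^n) \geq 2(n-1) \geq 2$, and $2g - 2 = \Lambda \cdot B^{n-1} \geq 2$ gives $g \geq 2$.

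For part (c), my plan is a double induction: primarily on $\ell \geq m + n - 1$, and for each fixed $\ell$ downward on the codimension $c \in \{n-1, n-2, \ldots, 0\}$. The base of the downward induction is the curve $C = X^{(n-1)}$: combining $m \geq d + 1$ with the hypothesis $d(B^n) \geq K_X \cdot B^{n-1}$ and using $(B^n) \geq 2$ from part (b), one computes
\[
    \deg(\ell B_C) - (2g - 2) \geq (\ell - d - n + 1)(B^n) \geq (B^n) \geq 2,
\]
so $\deg(\ell B_C) \geq 2g$; hence $H^1(\ell B_C) = 0$ by nonspeciality, while $H^i(\ell B_C) = 0$ for $i \geq 2$ is trivial on a curve. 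For the inductive step from codimension $c + 1$ to $c$, I would tensor the defining sequence of $X^{(c+1)}$ in $X^{(c)}$ by $\ell B_{X^{(c)}}$ to obtain
\[
    0 \to (\ell - 1) B_{X^{(c)}} \to \ell B_{X^{(c)}} \to \ell B_{X^{(c+1)}} \to 0,
\]
and inspect the long exact sequence. The rightmost $H^i$ vanishes by the downward induction on $c$; the leftmost vanishes either by the hypothesis of the lemma at the base case $\ell = m + n - 1$ (where $\ell - 1 = m + n - 2$) or by the outer induction on $\ell$ otherwise.

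The main obstacle is the bookkeeping in part (c): one must verify that the hypothesis $H^i((m+n-2)B_{X^{(c)}}) = 0$ supplied for $0 \leq c \leq n - 2$ interacts correctly with the two layers of induction---in particular that it exactly seeds the outer induction at $\ell = m + n - 1$. Parts (a) and (b) are comparatively routine: (a) is essentially a reformulation of Kodaira vanishing, and (b) turns on the single geometric observation that $(B^n) = 1$ forces $X^{(n-1)} \cong \mathbb{P}^1$, which is incompatible with the nefness of $K_X$.
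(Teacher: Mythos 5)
Your proposal is correct and follows essentially the same route as the paper: part (a) uses the identical decomposition $aA+N=K_{X}+(a-d)A+(dA-K_{X})+N$ (the paper quotes Kawamata--Viehweg where you quote Kodaira, an immaterial difference since the twist is an integral ample class), part (b) uses the same ingredients (adjunction, nefness of $K_{X}$, and the fact that a base point free degree-one pencil forces $C\cong\mathbb{P}^{1}$), merely deducing $(B^{n})\geq 2$ before rather than after $g\geq 2$, and part (c) is the same double induction on $\ell$ and on the floors of the stratification, seeded on the curve by the same degree estimate and at $\ell=m+n-1$ by the lemma's hypothesis. No gaps.
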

\begin{proof}
First, we give a proof of (a). Clearly, $H^{i}(aA+N) = H^{i}(K_{X}+(a-d)A+(dA-K_{X})+N)=0$ by Kawamata-Viehweg vanishing for any $i \geq 1, a \geq d+1.$ This completes the proof. \par
     Since $B$ is base point free in (b), (c) and (d), as mentioned in (\ref{hyperplane_stratification}) by Bertini's theorem, we can choose the hyperplane stratification by sections of $B$
    $$C = X^{(n-1)} \subset Y = X^{(n-2)} \subset \cdots \subset X^{(1)} \subset X^{(0)} = X$$
    where $C$ is a smooth curve and $Y$ is a smooth surface. \par
    Now, we give a proof of (b). By adjunction formula we have $K_{C} = (K_{X}+(n-1)B)|_{C}$ is ample since $B$ is ample and $K_{X}$ is nef. Therefore, $g \geq 2$ which implies $(B^{n-1} \cdot \Lambda) = 2g-2 \geq 2.$ Let $L = B_{C}$ and $m = h^{0}(D)-1.$ If $(B^{n}) = 1,$ then the linear system $|L|$ is base point free and since $(B^{n}) = 1,$ $|L|$ gives a degree one map $\phi_{L} : C \rightarrow \mathbb{P}^{m},$ whose image $C'$ is also a degree one curve in $\mathbb{P}^{m}$. Thus, both $C'$ and $C$ are isomorphic and rational which contradicts our result $g \geq 2.$ \par
    Finally, we prove (c). Let $D = \ell B_{C}.$ Then $$\mathrm{deg}(D) = (\ell-n+1)(B^{n})+(n-1)(B^{n}) \geq m(B^{n})+(n-1)(B^{n})$$ $$ \geq (B^{n})+d(B^{n})+(n-1)(B^{n})  > (K_{X} \cdot B^{n-1})+(n-1)(B^{n}) = (\Lambda \cdot B^{n-1}) \geq 2g-2$$
    since $\ell \geq m+n-1.$ Therefore, $\mathrm{deg}(K_{C}-D) < 0$ and thus $H^{1}(D) = H^{0}(K_{C}-D) =  0.$ Now, tensoring the short-exact sequence
    $$0 \rightarrow \mathcal{O}_{X^{(c-1)}}(-B) \rightarrow \mathcal{O}_{X^{(c-1)}} \rightarrow \mathcal{O}_{X^{(c)}} \rightarrow 0$$
    with $\ell B$ and taking the long-exact sequence of cohomology, we get
    \begin{equation}
    \label{cohomology_induction_nef_surface}
        \cdots \rightarrow H^{i}((\ell-1)B_{X^{(k-1)}}) \rightarrow H^{i}(\ell B_{X^{(k-1)}}) \rightarrow H^{i}(\ell B_{X^{(k)}}) \rightarrow \cdots
    \end{equation}
    for all $k \geq 1.$ We take the  vanishing $H^{i}((\ell-1)B_{X^{(k-1)}}) = 0$ as our induction hypothesis where the first step in this hypothesis $H^{i}((m+n-2)B_{X^{(k-1)}}) = 0$ is included in the assumption of our lemma. Thus (c) is true from (\ref{cohomology_induction_nef_surface}) by our induction hypothesis and since $H^{j}(\ell B_{C}) = H^{j}(D) = 0$ for any $j \geq 1.$
\end{proof}
\subsection{Surfaces with nef canonical bundle}
In this section, we study conditions for Property$-(M_{q})$ of adjoint line bundles on surfaces with nef canonical bundle. \\
\subsubsection{General results}
We start by proving a theorem on Property-$(M_{2})$ for adjoint linear series on surfaces with nef canonical bundle.
\begin{theorem}
\label{$(M_{2})$surf_nef_bundle}
    Let $X$ be a surface with nef canonical bundle. Let $B$ be an ample and base point free line bundle on $X$ such that the line bundle $\Lambda = K_{X}+B$ is also base point free. Let $\ell, m, k$ be integers and $d \geq 1$ be a rational number. Then, for any nef line bundle $N$ on $X,$ the line bundles $\mathcal{L}_{\ell} = K_{X}+\ell B, \mathcal{L}_{\ell'} = K_{X}+\ell'B$ satisfy the vanishing
    \begin{equation}
    \label{$(M_2)$nef_canonical_surface}
        H^{1}(M_{\mathcal{L}_{\ell}} \otimes (K_{X}+m\mathcal{L}_{\ell'}+N)) = 0 \hspace{0.5cm} \text{ for integer } m \geq 1
    \end{equation}
    provided $m\ell'+\ell \geq d+4$ when $(dB-K_{X} \cdot \Lambda) \geq 0$ or $m\ell'+\ell \geq d+3$ if $(dB-K_{X} \cdot B) > 0$ when $(dB-K_{X} \cdot \Lambda) > 0$ when $(dB-K_{X} \cdot \Lambda) \geq 0$ with $h^{0}(\Lambda) \geq 4.$ \\
    In particular, $\mathcal{L}_{\ell}$ satisfies Property$-(m_{2})$ if $\ell \geq \dfrac{d+3}{2}$ when $(dB-K_{X} \cdot \Lambda) > 0$ or if $\ell \geq \dfrac{d+4}{2}$ or when $(dB-K_{X} \cdot \Lambda) \geq 0.$ \\
    Moreover, $L$ satisfies Property-$(M_{2})$ if
    $\ell \geq \max \left( 3, \dfrac{d+3}{2} \right)$ when $(dB-K_{X} \cdot \Lambda) > 0$ or if  $\ell \geq \max \left( 3, \dfrac{d+4}{2} \right)$ when $(dB-K_{X} \cdot \Lambda) \geq 0.$
\end{theorem}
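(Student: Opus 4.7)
The plan is to reformulate the desired $H^1$-vanishing as the surjectivity of a multiplication map of global sections, and then to prove that surjectivity by the hyperplane-section diagram chase used in the proof of Theorem \ref{surj_mult_surf_maps}, adapted to the base-point-free line bundle $\Lambda$.

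\textbf{Reduction to a multiplication map.} Tensor the syzygy-bundle sequence (\ref{kernel_bundle}) for $\mathcal{L}_{\ell}$ with $F := K_X + m\mathcal{L}_{\ell'} + N$ and take the long exact sequence of cohomology. Since $F - K_X = mK_X + m\ell' B + N$ is the sum of a nef, an ample, and a nef line bundle (for $m \geq 1$), hence ample, Kawamata--Viehweg gives $H^1(F) = 0$. Therefore the required vanishing $H^1(M_{\mathcal{L}_{\ell}} \otimes F) = 0$ is equivalent to the surjectivity of
\[
\mu \colon H^0(\mathcal{L}_{\ell}) \otimes H^0(F) \longrightarrow H^0(\mathcal{L}_{\ell} + F).
\]

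\textbf{Diagram chase via $\Lambda$ and $B$.} Write $\mathcal{L}_{\ell} = \Lambda + (\ell-1)B$. Observation \ref{[Observation-(1.2)]{BG00}} reduces the surjectivity of $\mu$ to the surjectivity of the single $\Lambda$-step $H^0(\Lambda) \otimes H^0(F) \to H^0(F + \Lambda)$, together with $\ell - 1$ successive $B$-steps $H^0(B) \otimes H^0(F + \Lambda + kB) \to H^0(F + \Lambda + (k+1)B)$, for $0 \leq k \leq \ell - 2$. Each $B$-step follows directly from Theorem \ref{surj_mult_surf_maps}, after checking that the required intersection condition $((L_{\mathrm{twist}} - 2B) \cdot B) > 0$ is automatic from the positivity of $K_X$, $B$, and $N$ once $m\ell' + k \geq 2$. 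The $\Lambda$-step is handled by the same diagram-chase method but with $B$ replaced by $\Lambda$: pick a smooth $C \in |\Lambda|$ by Bertini, form the commutative diagram of Theorem \ref{surj_mult_surf_maps}, and apply the base-point-free pencil trick on $C$. Using $K_C = \Lambda|_C$ (adjunction) and Serre duality, the required bound $h^1((F - \Lambda)|_C) \leq h^0(\Lambda) - 3$ (strict case), or $\leq h^0(\Lambda) - 2$ (non-strict case, with $h^0(\Lambda) \geq 4$), translates into a degree inequality on $C$ that unpacks precisely into $m\ell' + \ell \geq d + 3$ paired with $(dB - K_X \cdot \Lambda) > 0$, or $m\ell' + \ell \geq d + 4$ paired with $(dB - K_X \cdot \Lambda) \geq 0$.

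\textbf{Particular cases and main obstacle.} Specializing to $m = 1$, $\ell' = \ell$, $N = \mathcal{O}_X$ converts the combined hypothesis into $2\ell \geq d + 3$ or $2\ell \geq d + 4$, giving the effective bounds $\ell \geq (d+3)/2$ and $\ell \geq (d+4)/2$ for Property-$(m_2)$ of $\mathcal{L}_{\ell}$. For Property-$(M_2)$ one additionally needs projective normality of $\mathcal{L}_{\ell}$, which holds for $\ell \geq 3$ by the normal-generation results proved in the Appendix. The main technical hurdle is the $\Lambda$-step: one must carefully translate the Serre-dual $h^1$-estimate on $C$ into the clean numerical hypothesis on $(dB - K_X) \cdot \Lambda$, and correctly partition the ``$+3$ versus $+4$'' discrepancy between strict and non-strict cases according to whether the pencil $W_{\Lambda}$ carries an extra section (i.e.\ $h^0(\Lambda) \geq 4$).
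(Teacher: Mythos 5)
Your overall strategy (reduce to surjectivity of the multiplication map, split via Observation \ref{[Observation-(1.2)]{BG00}} into a $B$-part handled by Theorem \ref{surj_mult_surf_maps} and a $\Lambda$-part handled by restriction to $C'\in|\Lambda|$ plus the base point free pencil trick) is the paper's strategy, but there is a genuine gap in how you order the factorization, and it breaks the numerics. You write $\mathcal{L}_{\ell}=\Lambda+(\ell-1)B$ and apply the $\Lambda$-step \emph{first}, i.e.\ to $H^{0}(\Lambda)\otimes H^{0}(F)\to H^{0}(F+\Lambda)$ with $F=K_{X}+m\mathcal{L}_{\ell'}+N$, and only afterwards stack the $(\ell-1)$ $B$-steps on top of $F+\Lambda$. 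With that ordering the pencil-trick condition $h^{1}((F-\Lambda)|_{C'})\leq h^{0}(\Lambda)-3$ is a condition on $F-\Lambda=mK_{X}+(m\ell'-1)B+N$, whose degree on $C'$ involves only $m$ and $m\ell'$, not $\ell$ at all; for instance with $m=1$, $\ell'=\ell$, $N=\mathcal{O}_{X}$ it reads $((\ell-2)B-K_{X}\cdot\Lambda)>0$, which via $dB-K_{X}$ nef forces roughly $\ell\geq d+3$, not $2\ell\geq d+3$. So your claim that the degree inequality ``unpacks precisely into $m\ell'+\ell\geq d+3$ (resp.\ $d+4$)'' is false for your ordering, and your argument cannot reach the stated hypothesis nor the bound $\ell\geq\tfrac{d+3}{2}$ for Property-$(m_{2})$ (take $m=1$, $\ell'=2$, $\ell$ large, $d=1$, $N=\mathcal{O}_{X}$ on a minimal surface of general type: your $\Lambda$-step condition fails although $m\ell'+\ell\geq d+4$ holds).

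The fix is exactly the paper's ordering: perform the $B$-steps on $F$ first (these are the maps $H^{0}(B)\otimes H^{0}(K_{X}+m\mathcal{L}_{\ell'}+kB+N)\to H^{0}(K_{X}+m\mathcal{L}_{\ell'}+(k+1)B+N)$, which follow from Theorem \ref{surj_mult_surf_maps}), and apply the $\Lambda$-step \emph{last}, to $G'=K_{X}+m\mathcal{L}_{\ell'}+(\ell-1)B+N$. Then $G'-\Lambda=K_{X}+(m\ell'+\ell-2)B+N'$ with $N'=(m-1)K_{X}+N$, and the pencil-trick computation on $C'\in|\Lambda|$ gives
\begin{equation*}
\deg\bigl((G'-\Lambda)|_{C'}\bigr)=(2g'-2)+(dB-K_{X}\cdot\Lambda)+(m\ell'+\ell-3-d)(B\cdot\Lambda)+(N'\cdot\Lambda),
\end{equation*}
which is where the threshold $m\ell'+\ell\geq d+3$ (when $(dB-K_{X}\cdot\Lambda)>0$) versus $d+4$ (when only $\geq 0$) actually comes from. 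Your remaining points (the reduction to surjectivity via Kawamata--Viehweg, the $B$-steps, the specialization $m=1$, $\ell'=\ell$ for the ``in particular'' statement, and invoking normal generation for $\ell\geq 3$ to pass from $(m_{2})$ to $(M_{2})$) are fine once this reordering is made.
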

\begin{proof}
    The vanishing (\ref{$(M_2)$nef_canonical_surface}) is equivalent to the surjectivity
    $$H^{0}(\mathcal{L}_{\ell}) \otimes H^{0}(K_{X}+m\mathcal{L}_{\ell'}+N) \xrightarrow[\hspace{0.6cm}]{\alpha} H^{0}(K_{X}+m\mathcal{L}_{\ell'}+\mathcal{L}_{\ell}+N)$$
    By Observation \ref{[Observation-(1.2)]{BG00}} to prove the surjectivity of $\alpha$ it is enough to prove the surjectivity of
    \begin{equation}
        H^{0}(B) \otimes H^{0}(K_{X}+m\mathcal{L}_{\ell'}+N) \xrightarrow[\hspace{0.6cm}]{\beta} H^{0}(K_{X}+m\mathcal{L}_{\ell'}+B+N)
    \end{equation}
    and
    \begin{equation}
        H^{0}(\Lambda) \otimes H^{0}(K_{X}+m\mathcal{L}_{\ell'}+(\ell-1)B+N) \xrightarrow[\hspace{0.6cm}]{\gamma} H^{0}(K_{X}+m\mathcal{L}_{\ell'}+\mathcal{L}_{\ell}+N)
    \end{equation}
    The surjectivity of the map $\beta$ follows from Theorem \ref{surj_mult_surf_maps}. Now we prove the surjectivity of the map $\gamma.$ For this, let $G' = K_{X}+m\mathcal{L}_{\ell'}+(\ell-1)B+N$ we choose a smooth curve $C'$ in $|K_{X}+B|$ which exists by Bertini's theorem. Then, from the diagram
    \begin{small}
    $$
        \begin{tikzcd}[row sep = large, column sep = large]
             H^{0}(G') \otimes H^{0}(\mathcal{O}_{X}) \arrow[r, hook] \arrow[d, twoheadrightarrow] &  H^{0}(G') \otimes H^{0}(\Lambda) \arrow[r, twoheadrightarrow] \arrow[d, "\gamma"] & H^{0}(G') \otimes W' \arrow[d, "\gamma' "] \\
             H^{0}(G') \arrow[r, hook] & H^{0}(G'+\Lambda) \arrow[r, twoheadrightarrow] & H^{0}((G'+\Lambda)|_{C'})
        \end{tikzcd}
    $$
    \end{small}
    Since the left vertical map is surjective, $\gamma$ is surjective if and only if the map $\gamma'$ is surjective. Finally, by the base point free pencil trick $\gamma'$ is surjective if
    $$h^{1}((G'-\Lambda)_{C'}) \leq \mathrm{dim}(W')-2 = h^{0}(\Lambda)-3$$
    Since $B$ is ample and $K_{X}$ is nef, $\Lambda = K_{X}+B$ is also ample. Since $\Lambda$ is ample and base point free, $h^{0}(\Lambda) \geq 3.$ Also, we suppose that $N' = (m-1)K_{X}+N.$ \par
    Now, $\mathrm{deg}((G'-\Lambda)_{C'}) = (K_{X}+(m\ell'+\ell-2)B+N' \cdot \Lambda)$
    \begin{small}
    $$ = (K_{X}+\Lambda \cdot \Lambda)+(dB-K_{X} \cdot \Lambda)+(\ell+m\ell'-3-d)(B \cdot \Lambda)+(N' \cdot \Lambda)$$ 
    $$= 2g-2+(dB-K_{X} \cdot \Lambda)+(\ell+m\ell'-3-d)(B \cdot \Lambda)+(N' \cdot \Lambda)$$
    \end{small}
    By our assumption on $B,$ $\mathrm{deg}((G-\Lambda)_{C'}) > 0$ if $\ell+m\ell' \geq d+3$ when $(dB-K_{X} \cdot B) > 0$ or if $\ell+m\ell' \geq 4+d$ and $(dB-K_{X} \cdot B) \geq 0.$ This proves the result for Property-$(m_{2}).$
    By \cite{BL25}, $K_{X}+\ell B$ is projectively normal for $\ell \geq 3.$ This completes the proof by showing the result for Property-$(M_{2})$.
\end{proof}
Next, we show conditions for property-$(M_{q})$ of adjoint linear series on these surfaces for $q \geq 3$ in Theorem \ref{theorem_$(M_q)$nef_canonical_surface}.
\begin{theorem}
\label{theorem_$(M_q)$nef_canonical_surface}
    Let $X$ be a surface with nef canonical bundle. Let $d \geq 1$ be a rational number and $B$ be an ample and base point free line bundle on $X$ such that $dB-K_{X}$ is nef. Let $\ell, k$ be integers and the line bundle $\Lambda = K_{X}+B$ is base point free. Then, for any nef line bundle $N$ on $X,$ the line bundles $\mathcal{L}_{\ell} = K_{X}+\ell B$ and $\mathcal{L}_{\ell'} = K_{X}+\ell' B$ satisfy the vanishing
    \begin{equation}
    \label{$(M_q)$nef_canonical_surface}
        H^{1}(M^{\otimes (k+1)}_{\mathcal{L}_{\ell}} \otimes (K_{X}+m\mathcal{L}_{\ell'}+N)) = 0 \hspace{0.5cm} \text{ for } 0 \leq k \leq q-2 \text{ and all integers } m \geq 1
    \end{equation}
    provided $\ell$ and $\ell'$ are integers satisfying
    \begin{small}
    \begin{equation}
    \label{nef_surface_conditions}
        \ell \geq d +2, \ell' \geq 2+\floor[\bigg]{\dfrac{2q+1}{(B^{2})}}
    \end{equation}
    \end{small}
    Moreover, $\mathcal{L}_{\ell}$ satisfies Property$-(M_{q})$ provided
    \begin{equation}
        \ell \geq  \max \left(d+2, 2+\floor[\bigg]{\dfrac{2q+1}{(B^{2})}} \right)
    \end{equation}
    and therefore $\mathcal{L}_{\ell}$ satisfies Property$-(M_{q})$ for all $\ell \geq \max (d+2, q+2).$ \\
    In particular, if  $d = 1$ or equivalently if $B-K_{X}$ is nef, then $\mathcal{L}_{\ell}$ satisfies Property$-(M_{q})$ for all $\ell \geq q+2.$
\end{theorem}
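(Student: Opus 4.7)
The strategy is induction on $k$ for $0 \le k \le q-2$, combining a long-exact sequence argument with the method of hyperplane sections and slope inequalities on a smooth curve $C \in |B|$. The base case $k = 0$ is precisely Theorem \ref{$(M_{2})$surf_nef_bundle}: under $\ell \ge d+2$, the hypothesis $m\ell' + \ell \ge d+3$ there is automatic for any $m, \ell' \ge 1$, so $H^{1}(M_{\mathcal{L}_{\ell}}(F)) = 0$ with $F = K_{X} + m\mathcal{L}_{\ell'} + N$.

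For the step $k \to k+1$, tensor the kernel bundle sequence of $\mathcal{L}_{\ell}$ with $M_{\mathcal{L}_{\ell}}^{\otimes k}(F)$; using the inductive hypothesis $H^{1}(M_{\mathcal{L}_{\ell}}^{\otimes k}(F)) = 0$, the vanishing $H^{1}(M_{\mathcal{L}_{\ell}}^{\otimes (k+1)}(F)) = 0$ is equivalent to surjectivity of
\[
\alpha_{k} \colon H^{0}(\mathcal{L}_{\ell}) \otimes H^{0}(M_{\mathcal{L}_{\ell}}^{\otimes k}(F)) \longrightarrow H^{0}(M_{\mathcal{L}_{\ell}}^{\otimes k}(F + \mathcal{L}_{\ell})).
\]
Choose a smooth curve $C \in |B|$ via Bertini. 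By Observation \ref{[Observation-(1.3)]{BG00}}, surjectivity of $\alpha_{k}$ reduces to the analogous multiplication on $C$ together with the auxiliary vanishing $H^{1}(M_{\mathcal{L}_{\ell}}^{\otimes k}(F - B)) = 0$; the latter is established by a parallel induction in which the $-B$ twist is absorbed into the nef summand $N$ (or equivalently as a one-unit decrement of $\ell'$). Because $H^{1}(\mathcal{L}_{\ell} - B) = 0$ by Lemma \ref{Lemma_nef_canonical}(a) (using $\ell - 1 \ge d + 1$), the restriction $M_{\mathcal{L}_{\ell}}|_{C}$ is an extension of $M_{\mathcal{L}_{\ell}|_{C}}$ by a trivial bundle, so slope bounds transfer via Lemma \ref{[Section (1) and (2) ]{Bu94}}(9).

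On $C$ (of genus $g \ge 2$ by Lemma \ref{Lemma_nef_canonical}(b)), Butler's Theorem \ref{[Theorem-(1.2)]{Bu94}} yields $\mu(M_{\mathcal{L}_{\ell}|_{C}}) \ge -2$ because $\deg \mathcal{L}_{\ell}|_{C} = 2g - 2 + (\ell-1)(B^{2}) \ge 2g$. Lemma \ref{[Section (1) and (2) ]{Bu94}}(2) then gives
\[
\mu^{-}\bigl(M_{\mathcal{L}_{\ell}|_{C}}^{\otimes k} \otimes F|_{C}\bigr) \ge -2k + \deg F|_{C},
\]
and Proposition \ref{Proposition-(2.2)_Bu94}, applied with the stable line bundle $\mathcal{L}_{\ell}|_{C}$ in the $F$-role, reduces the surjectivity on $C$ to the inequality $\mu^{-}(M_{\mathcal{L}_{\ell}|_{C}}^{\otimes k} \otimes F|_{C}) \ge 2g$; the rank-one and $h^{1} = 0$ properties of $\mathcal{L}_{\ell}|_{C}$ make condition (2) of that Proposition automatic once condition (1) holds. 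This unravels to $(\ell' - 2)(B^{2}) \ge 2k + O(1)$, which for $0 \le k \le q-2$ becomes precisely $\ell' \ge 2 + \lfloor (2q+1)/(B^{2}) \rfloor$ in the worst case.

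The Property-$(M_{q})$ conclusion then follows from Lemma \ref{AL_Lemma} applied with $n = 2$ and auxiliary line bundle $\mathcal{O}_{X}$ (in the role of $B$ there), by specializing $m = 1, \ell' = \ell, N = 0$ in (\ref{$(M_q)$nef_canonical_surface}); projective normality of $\mathcal{L}_{\ell}$ for $\ell \ge 3$ from \cite{BL25} upgrades $(m_{q})$ to $(M_{q})$, and the bound $\ell \ge \max(d+2, q+2)$ follows since $2 + \lfloor(2q+1)/(B^{2})\rfloor \le q+2$ whenever $(B^{2}) \ge 2$, with the $d=1$ special case immediate. The main obstacle will be controlling the auxiliary vanishing $H^{1}(M_{\mathcal{L}_{\ell}}^{\otimes k}(F - B)) = 0$ across the inductive tower so that the iterated shifts $F \mapsto F - B$ do not erode the slack in the $\ell'$-bound beyond what the floor function $\lfloor (2q+1)/(B^{2}) \rfloor$ provides; this requires pinning down the exact constants in the slope calculation rather than working with the schematic $O(1)$ above.
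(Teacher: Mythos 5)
There is a genuine structural gap in your reduction step. You propose to prove surjectivity of $\alpha_{k}\colon H^{0}(\mathcal{L}_{\ell})\otimes H^{0}(M^{\otimes k}_{\mathcal{L}_{\ell}}(F))\to H^{0}(M^{\otimes k}_{\mathcal{L}_{\ell}}(F+\mathcal{L}_{\ell}))$ by restricting to a single curve $C\in|B|$ via Observation \ref{[Observation-(1.3)]{BG00}} with auxiliary vanishing $H^{1}(M^{\otimes k}_{\mathcal{L}_{\ell}}(F-B))=0$. But that observation only lifts multiplication by $H^{0}(\mathcal{O}_{X}(D))$ for the \emph{same} divisor $D$ to which you restrict: the diagram works because sections of $E(D)$ vanishing on $D$ are exactly products with the section cutting out $D$. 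Since here the multiplying series is $|\mathcal{L}_{\ell}|$ and not $|B|$, restricting to $C\in|B|$ does not control $\alpha_{k}$; the twist $\mathcal{L}_{\ell}-B=K_{X}+(\ell-1)B$ is not a multiple of the restricting divisor, so the descent fails. The paper avoids this by first invoking Observation \ref{[Observation-(1.2)]{BG00}} to factor the $H^{0}(\mathcal{L}_{\ell})$-multiplication through the decomposition $\mathcal{L}_{\ell}=(\ell-1)B+\Lambda$, producing two maps: a $\beta$-map (multiplication by $H^{0}(B)$, handled on $C\in|B|$ exactly as in your slope computation) and a $\gamma$-map (multiplication by $H^{0}(\Lambda)$, handled on a second curve $C'\in|\Lambda|$). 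Your proposal never uses the hypothesis that $\Lambda=K_{X}+B$ is base point free, never chooses $C'$, and never produces the $\gamma$-type estimate; this is precisely where the nefness of $dB-K_{X}$ and the bound $\ell\geq d+2$ enter (via $H^{1}(\mathcal{L}_{\ell}-\Lambda)=H^{1}((\ell-1)B)=0$ from Lemma \ref{Lemma_nef_canonical}(a) and the degree estimate on $C'$ yielding the condition $w(B\cdot\Lambda)\geq 2q+1$ with $w=m\ell'+\ell-d-2$). Consequently your ``worst case'' inequality $(\ell'-2)(B^{2})\geq 2k+O(1)$ captures only half of the paper's numerics, and the missing half is not a matter of pinning down constants but of an entire second surjectivity statement.

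Two smaller points. First, in your application of Proposition \ref{Proposition-(2.2)_Bu94} you place $\mathcal{L}_{\ell}|_{C}$ in the $F$-role; as written, condition (2) of that proposition then involves the rank and $h^{1}$ of the large bundle $M^{\otimes k}_{\mathcal{L}_{\ell}|_{C}}\otimes F|_{C}$ in the $E$-role, not of $\mathcal{L}_{\ell}|_{C}$, so the claim that (2) is automatic needs the roles swapped (the paper instead takes $E$ to be the restricted polarization $B_{C}$, resp.\ $\Lambda_{C'}$, and $F$ the twisted kernel power). Second, your base case and the final deduction of Property-$(M_{q})$ (via Lemma \ref{AL_Lemma} with $n=2$, $m=1$, $\ell'=\ell$, $N=0$, plus projective normality from \cite{BL25} and the comparison $2+\floor{(2q+1)/(B^{2})}\leq q+2$ when $(B^{2})\geq 2$) are consistent with the paper; the failure is confined to, but fatal in, the inductive step.
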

\begin{proof}
    To show this vanishing it is enough to prove the surjectivity
    \begin{equation}
        H^{0}(\mathcal{L}_{\ell}) \otimes H^{0}(M^{\otimes k}_{\mathcal{L}_{\ell}} \otimes (K_{X}+m\mathcal{L}_{\ell'}+N)) \rightarrow H^{0}(M^{\otimes k}_{\mathcal{L}_{\ell}} \otimes (K_{X}+m\mathcal{L}_{\ell'}+\mathcal{L}_{\ell}+N)), 
    \end{equation}
    $$\hspace{0.1cm} \text{ for }  0 \leq k \leq q-2 \text{ and for all } m \geq 1$$
    We proceed by induction on $k$. By Bertini's theorem, there are smooth curves $C$ and $C'$ of genus $g$ and $g'$ in the linear series $|B|$ and $|K_{X}+B|$ respectively. Since $B$ and $\Lambda = K_{X}+B$ are ample and base point free and $K_{X}$ is nef, as we mention in Lemma \ref{Lemma_nef_canonical} (b) both $K_{C}$ and $K_{C'}$ are ample and $g, g', (B \cdot \Lambda) = 2g-2, (B^{2})$ all are $ \geq 2.$ \\
    We now take the following vanishing as our first induction hypothesis
    \begin{equation}
    \label{induction_hypothesis_surface_nef_K}
        H^{1}(M^{\otimes (k'+1)}_{\mathcal{L}_{\ell}} \otimes (K_{X}+m\mathcal{L}_{\ell'}+P)) = 0 \hspace{0.5cm} \text{ for } 0 \leq k' \leq k-1 \text{ and all integers } m \geq 1
    \end{equation}
    where the beginning $k' = 0$ in (\ref{induction_hypothesis_surface_nef_K}) follows from Theorem \ref{$(M_{2})$surf_nef_bundle}. 
    By Observation \ref{[Observation-(1.2)]{BG00}} to prove the surjectivity of $\alpha$ it is enough to show the surjectivity of
    $$
        H^{0}(B) \otimes H^{0}(M^{\otimes k}_{\mathcal{L}_{\ell}} \otimes (K_{X}+m\mathcal{L}_{\ell'}+N)) \overset{\beta}{\longrightarrow} H^{0}(M^{\otimes k}_{\mathcal{L}_{\ell}} \otimes (K_{X}+m\mathcal{L}_{\ell'}+B+N))
    $$
    and 
    $$
       H^{0}(\Lambda) \otimes H^{0}(M^{\otimes k}_{\mathcal{L}_{\ell}} \otimes (K_{X}+m\mathcal{L}_{\ell'}+(\ell-1)B+N)  \overset{\gamma}{\longrightarrow} H^{0}(M^{\otimes k}_{\mathcal{L}_{\ell}} \otimes (K_{X}+m\mathcal{L}_{\ell'}+\Lambda+(\ell-1)B+N))
    $$
    Our first step is to show the surjectivity of the map $\beta.$ \\
    \underline{\textbf{Step-1}} \hspace{0.1cm} 
    By our induction hypothesis on $k$, 
    $$H^{1}(M^{\otimes k}_{\mathcal{L}_{\ell}} \otimes (K_{X}+m\mathcal{L}_{\ell'}+N-B)) = H^{1}(M^{\otimes k}_{\mathcal{L}_{\ell}} \otimes (K_{X}+\mathcal{L}_{m\ell'-1}+N') = 0,$$ 
    for $0 \leq k \leq q-2$,  where $\mathcal{L}_{m\ell'-1} = K_{X}+(m\ell'-1)B$ is ample and base point free and $N' = (m-1)K_{X}+N$ is nef, since $\ell' \geq 2+\floor[\bigg]{\dfrac{2q+1}{(B^{2})}}$ one has $$m\ell'-1 \geq \ell'-1 \geq \left(2+\floor[\bigg]{\dfrac{2q+1}{(B^{2})}} \right)-1 \geq \left(2+\floor[\bigg]{\dfrac{2q+1}{(B^{2})}-\dfrac{2}{(B^{2})}} \right) = \left( 2+\floor[\bigg]{\dfrac{2(q-1)+1}{(B^{2})}} \right)$$
    the beginning $k = 0$ is true by Theorem \ref{$(M_2)$nef_canonical_surface} since
    \begin{equation}
    \label{nef_surface_condition_1}
        \ell+(m\ell'-1) \geq d+4
    \end{equation}
    Therefore, by Observation \ref{[Observation-(1.3)]{BG00}}, $\beta$ is surjective if and only if its restriction
    $$
        H^{0}(B_{C}) \otimes H^{0}(M^{\otimes k}_{\mathcal{L}_{\ell}} \otimes (K_{X}+m\mathcal{L}_{\ell'}+N) |_{C}) \overset{\beta'}{\longrightarrow} H^{0}(M^{\otimes k}_{\mathcal{L}_{\ell}} \otimes (K_{X}+m\mathcal{L}_{\ell'}+B+N)|_{C})
    $$
    is surjective, which by the application $(K_{X}+m\mathcal{L}_{\ell'}+N)|_{C} = K_{C}+(\mathcal{L}_{m\ell'-1})_{C}+N'_{C}$ of adjunction $K_{C} = (K_{X}+B)_{C}$ is
    $$
        H^{0}(B_{C}) \otimes H^{0}(M^{\otimes k}_{\mathcal{L}_{\ell}} \otimes  (K_{C}+(\mathcal{L}_{m\ell'-1})_{C}+N'_{C})) \overset{\beta'}{\longrightarrow} H^{0}(M^{\otimes k}_{\mathcal{L}_{\ell}} \otimes (K_{C}+(\mathcal{L}_{m\ell'-1})_{C}+B_{C}+N'_{C}))
    $$
    and since $\ell \geq 2,$ by Kodaira vanishing $H^{1}(\mathcal{L}_{\ell}-B) = H^{1}(K_{X}+(\ell-1)B) = 0,$ $\beta'$ is surjective by Lemma \ref{BG99_Lemma_(2.9)} if
    $$
        H^{0}(B_{C}) \otimes H^{0}(M^{\otimes k}_{(\mathcal{L}_{\ell})_{C}} \otimes  (K_{C}+(\mathcal{L}_{m\ell'-1})_{C}+N'_{C}))  \overset{\beta^{(1)}}{\longrightarrow} H^{0}(M^{\otimes k}_{(\mathcal{L}_{\ell})_{C}} \otimes (K_{C}+(\mathcal{L}_{m\ell'-1})_{C}+B_{C}+N'_{C})))
    $$
    is surjective. Let $E = B_{C}, F = M^{\otimes k}_{(\mathcal{L}_{\ell})_{C}} \otimes  (K_{C}+(\mathcal{L}_{m\ell'-1})_{C}+N'_{C}).$ By adjunction formula $K_{C} = (K_{X}+B)_{C},$ $\mathrm{deg}((\mathcal{L}_{\ell})_{C}) = (K_{X}+B+(\ell-1)B \cdot B) = 2g-2+(\ell-1)(B^{2})$ and $ \mathrm{deg}(K_{C}+(\mathcal{L}_{m\ell'-1})_{C}+N'_{C}) = 2g-2+(m\ell'-1)(B^{2})+(K_{X} \cdot B)+(N' \cdot C).$ Since $\ell \geq d+2 \geq 3,$ we have $(\ell-1)(B^{2}) \geq 2$ and $M_{L_{C}}$ is semistable. Moreover, by Proposition \ref{Proposition-(2.2)_Bu94} the map $\beta^{(1)}$ is surjective since $E = B_{C}$ is globally generated and
    \begin{align}
        \mu(F) > 2g \\
        \mu(F) > 2g+\mathrm{rank}(E)(2g-\mu(E))-2h^{1}(E)
    \end{align}
    Indeed,
    $$
         \mu(F) \geq -2k+2g-2+(m\ell'-1)(B^{2})+(K_{X} \cdot B)+(N' \cdot C)$$ 
         $$= 2g+(m\ell'-2)(B^{2})-2(k+1)+(2g-2)+(N' \cdot C) > 2g$$
      and
      $$
         \mu(F) \geq 2g+(m\ell'-2)(B^{2})-2(k+1)+(2g-2)+(N' \cdot C)
    $$
      $$> 2g+\mathrm{rank}(E)(2g-\mu(E))-2h^{1}(E) = 2g+(2g-(B^{2}))-2h^{1}(B_{C}) $$
     since 
     \begin{equation}
     \label{nef_surface_condition_2}
         m\ell' \geq 1+\ceil[\bigg]{\dfrac{2q+1}{(B^{2})}} 
     \end{equation}
    $2g-2 \geq 2$ and $0 \leq k \leq q-2$. This proves the surjectivity of $\beta.$ Now we prove the surjectivity of the map $\gamma$ in our next step. \\ 
    \underline{\textbf{Step-2}} \hspace{0.1cm}
    For this step, let $w = m\ell'+\ell-d-2.$ By our induction hypothesis on $k$, 
    $$H^{1}((M^{\otimes k}_{\mathcal{L}_{\ell}} \otimes (K_{X}+m\mathcal{L}_{\ell'}+(\ell-1)B+N-\Lambda)) = H^{1}(M^{\otimes k}_{\mathcal{L}_{\ell}} \otimes (K_{X}+\mathcal{L}_{w}+N''))= 0,$$
    since
    $$\ell+w \geq d+3 \text{ when } (dB-K_{X} \cdot \Lambda) > 0 \text{ or } \ell+w \geq d+4 \text{ otherwise, }$$ which is equivalent to
    \begin{equation}
    \label{nef_surface_condition_3}
        m\ell'+2\ell \geq 2d+5 \text{ if } (dB-K_{X} \cdot \Lambda) > 0 \text{ and } m\ell'+2\ell \geq 2d+6 \text{ otherwise}
    \end{equation}
    where $N'' = (dB-K_{X})+N'$ is nef since both $N', dB-K_{X}$ are nef. \\
    Therefore, $\gamma$ is surjective if and only if its restriction
    $$
        H^{0}(\Lambda_{C'}) \otimes H^{0}(M^{\otimes k}_{\mathcal{L}_{\ell}} \otimes (K_{X}+m\mathcal{L}_{\ell'}+(\ell-1)B+N) |_{C'}) \overset{\gamma'}{\longrightarrow} H^{0}(M^{\otimes k}_{\mathcal{L}_{\ell}} \otimes (K_{X}+m\mathcal{L}_{\ell'}+(\ell-1)B+\Lambda+N)|_{C'})
    $$
    is surjective, which by the application $(K_{X}+m\mathcal{L}_{\ell'}+(\ell-1)B+N)|_{C'} = K_{C'}+(\mathcal{L}_{w})_{C'}+N''_{C'}$ of adjunction $K_{C'} = (K_{X}+\Lambda)_{C'}$ is
    $$
        H^{0}(\Lambda_{C'}) \otimes H^{0}(M^{\otimes k}_{\mathcal{L}_{\ell}} \otimes  (K_{C'}+(\mathcal{L}_{w})_{C'}+N''_{C'})) \xrightarrow[\hspace{0.5cm}]{\gamma'} H^{0}(M^{\otimes k}_{\mathcal{L}_{\ell}} \otimes  (K_{C'}+(\mathcal{L}_{w})_{C'}+\Lambda_{C'}+N''_{C'}))
    $$
    Now, taking $A = B, a = \ell-1$ in Lemma \ref{Lemma_nef_canonical} (a) we have $H^{1}(\mathcal{L}_{\ell}-\Lambda) = H^{1}((\ell-1)B) = 0$ as 
    \begin{equation}
    \label{nef_surface_condition_4}
        \ell \geq d+2.
    \end{equation}
    Thus, using Lemma \ref{BG99_Lemma_(2.9)} we conclude that $\gamma'$ is surjective  if
    \begin{equation}
         H^{0}(\Lambda_{C'}) \otimes H^{0}(M^{\otimes k}_{(\mathcal{L}_{\ell})_{C'}} \otimes  (K_{C'}+(\mathcal{L}_{w})_{C'}+N''_{C'})) \xrightarrow[\hspace{0.5cm}]{\gamma^{(1)}} H^{0}(M^{\otimes k}_{(\mathcal{L}_{\ell})_{C'}} \otimes (K_{C'}+(\mathcal{L}_{w})_{C'}+\Lambda_{C'}+N''_{C'}))
    \end{equation}
    is surjective. \\
    Let $E' = \Lambda_{C'}, F' = M^{\otimes k}_{(\mathcal{L}_{\ell})_{C'}} \otimes  (K_{C'}+(\mathcal{L}_{w})_{C'}+N''_{C'}).$ By adjunction formula $K_{C'} = (K_{X}+\Lambda)_{C'},$ 
    $$\mathrm{deg}((\mathcal{L}_{\ell})_{C'}) = 2g'-2+(\ell-d-1)(B \cdot \Lambda)+(dB-K_{X} \cdot \Lambda) \geq 2g'$$ 
    since $\ell \geq d+2$ and $(B \cdot \Lambda) \geq 2.$ \\
    Therefore $M_{(\mathcal{L}_{\ell})_{C'}}$ is semistable and 
    $$ \mathrm{deg}(K_{C'}+(\mathcal{L}_{w})_{C'}+N''_{C'}) = (2g'-2)+w(B \cdot \Lambda)+(K_{X} \cdot \Lambda)+(N'' \cdot \Lambda).$$
    Moreover, by Proposition \ref{Proposition-(2.2)_Bu94} the map $\gamma^{(1)}$ is surjective since $E' = \Lambda_{C'}$ is globally generated and
    \begin{align}
        \mu(F') > 2g' \\
        \mu(F') > 2g'+\mathrm{rank}(E')(2g'-\mu(E'))-2h^{1}(E')
    \end{align}
    Indeed,
    $$
         \mu(F') \geq -2k+(2g'-2)+w(B \cdot \Lambda)+(K_{X} \cdot \Lambda)+(N'' \cdot \Lambda) > 2g' $$
      since $w(B \cdot \Lambda) \geq (2q-2)+1 > 2k+2$ and
      $$
         \mu(F') \geq -2k+(2g'-2)+w(B \cdot \Lambda)+(K_{X} \cdot \Lambda)+(N'' \cdot \Lambda) > 2g'+\mathrm{rank}(E')(2g-\mu(E'))-2h^{1}(E')
    $$
      $$= 2g'+(2g'-2-(\Lambda^{2}))+2-2h^{1}(\Lambda_{C'}) =  2g'+(K_{X} \cdot \Lambda)+2-2h^{1}(\Lambda_{C'})$$
      since 
      \begin{equation}
      \label{nef_surface_condition_5}
          w(B \cdot \Lambda) \geq 2q+1.
      \end{equation}
      This proves the surjectivity of $\gamma.$ The conditions are obtained by comparing (\ref{nef_surface_condition_1}), (\ref{nef_surface_condition_2}), (\ref{nef_surface_condition_3}), (\ref{nef_surface_condition_4}) and (\ref{nef_surface_condition_5}). \\ 
      The condition for Property-$(M_{q})$ follows by taking $\ell' = \ell, m = 1.$ Since $(B^{2}) \geq 2,$ $\ell \geq 2+\floor[\bigg]{\dfrac{2q+1}{2}} = q+2 \geq 2+\floor[\bigg]{\dfrac{2q+1}{(B^{2})}}.$ Therefore, one can use the larger bound $\ell \geq q+2$ instead of $\ell \geq 2+\floor[\bigg]{\dfrac{2q+1}{(B^{2})}}.$  The line bundle $\mathcal{L}_{\ell}$ is normally generated for $\ell \geq 3$ and therefore we get Property-$(M_{q})$ for the mentioned bounds.
\end{proof}
We mention a corollary of the above theorem which we will use in Section \ref{Section_Varieties_nef_bundle} as a base step for induction.
\begin{corollary}
    For any ample and base point free line bundle $B$ on a regular surface $X$ with nef canonical bundle with $\Lambda = K_{X}+B$ base point free, $d \geq 1$ a rational number and $dB-K_{X}$ nef. Let $q, m$ be integers satisfying $q \geq 2, m \geq 1.$
    Then, for integers $\ell, \ell', k$ the following are true.
    \begin{itemize}
    \item[\textbf{(1)}] One has the cohomology vanishing
    \begin{equation}
        H^{1}(M^{\otimes (k+1)}_{\mathcal{L}_{\ell}} \otimes (K_{X}+m\mathcal{L}_{\ell'}+N)) = 0, \hspace{0.5cm} \text{ for } 0 \leq k \leq q-2
    \end{equation}
    provided $\ell \geq d+2, m\ell' \geq 2+\floor[\bigg]{\dfrac{2q+1}{(B^{2})}}$
        \item[\textbf{(2)}] The multiplication map
    $$
        H^{0}(B) \otimes H^{0}(M^{\otimes k}_{\mathcal{L}_{\ell}} \otimes (K_{X}+m\mathcal{L}_{\ell'}+N)) \xrightarrow[]{\beta} H^{0}(M^{\otimes k}_{\mathcal{L}_{\ell}} \otimes (K_{X}+m\mathcal{L}_{\ell'}+B+N))
    $$
    is surjective for $0 \leq k \leq q-2$ provided $\ell \geq d+2, m\ell' \geq 2+\floor[\bigg]{\dfrac{2q+1}{(B^{2})}}$
    \item[\textbf{(3)}] The multiplication map
    \begin{small}
    $$
     H^{0}(K_{X}+B) \otimes H^{0}(M^{\otimes k}_{\mathcal{L}_{\ell}} \otimes (K_{X}+m\mathcal{L}_{\ell'}+(\ell-\lambda)B+N)) \xrightarrow[]{\gamma} H^{0}(M^{\otimes k}_{\mathcal{L}_{\ell}} \otimes (K_{X}+m\mathcal{L}_{\ell'}+\mathcal{L}_{\ell-\lambda+1}+N))
    $$
    \end{small}
    is surjective for $0 \leq k \leq q-2$ provided $\ell \geq d+2, m\ell'+\ell \geq \lambda+2+\floor[\bigg]{\dfrac{2q+1}{(B^{2})}}$
 \end{itemize}
\end{corollary}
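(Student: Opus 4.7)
The plan is to deduce this corollary directly from the machinery developed in the proof of Theorem \ref{theorem_$(M_q)$nef_canonical_surface} by isolating the two surjectivity inputs used there and reorganizing them around a parameter $\lambda$. Parts (2) and (3) are essentially Steps 1 and 2 of that proof stated as self-contained assertions, while part (1) follows by gluing them through Observation \ref{[Observation-(1.2)]{BG00}}.

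First I would prove (2). Assuming by downward induction on $q$ the vanishing $H^{1}(M^{\otimes k}_{\mathcal{L}_{\ell}} \otimes (K_{X}+m\mathcal{L}_{\ell'}-B+N)) = 0$ (whose hypothesis class closes under decreasing $q$ because $m\ell' - 1$ still satisfies the required floor bound), Observation \ref{[Observation-(1.3)]{BG00}}, which needs regularity of $X$, lets me restrict $\beta$ to a smooth curve $C \in |B|$ (existing by Bertini). Lemma \ref{BG99_Lemma_(2.9)} then replaces $M_{\mathcal{L}_\ell}$ by $M_{(\mathcal{L}_\ell)_C}$, provided $H^1(\mathcal{L}_\ell - B) = H^1(K_X + (\ell-1)B) = 0$, which is Kodaira vanishing since $\ell \geq d+2 \geq 2$. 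Because $\mathrm{deg}((\mathcal{L}_\ell)_C) = 2g-2 + (\ell-1)(B^2) \geq 2g$, the bundle $M_{(\mathcal{L}_\ell)_C}$ is semistable, and Proposition \ref{Proposition-(2.2)_Bu94} closes the argument once the two slope inequalities are met; the computation already done in Step 1 of the theorem shows these are equivalent to $m\ell' \geq 2 + \floor[\big]{(2q+1)/(B^2)}$.

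Next I would prove (3). This mirrors Step 2 of the theorem but with the shift parameter $\lambda$. Set $w = m\ell' + \ell - \lambda - 1$ so that $(K_X + m\mathcal{L}_{\ell'} + (\ell-\lambda)B + N)|_{C'} = K_{C'} + (\mathcal{L}_{w})_{C'} + N''_{C'}$ by adjunction on a smooth $C' \in |\Lambda|$, where $N'' = (dB - K_X) + (m-1)K_X + N$ is nef. The restriction argument uses Observation \ref{[Observation-(1.3)]{BG00}} together with $H^1(\mathcal{L}_\ell - \Lambda) = H^1((\ell-1)B) = 0$ from Lemma \ref{Lemma_nef_canonical}(a) with $a = \ell-1 \geq d+1$. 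Lemma \ref{BG99_Lemma_(2.9)} again passes to $M_{(\mathcal{L}_\ell)_{C'}}$, which is semistable because $\mathrm{deg}((\mathcal{L}_\ell)_{C'}) \geq 2g'$ when $\ell \geq d+2$, and Proposition \ref{Proposition-(2.2)_Bu94} concludes as soon as $w(B \cdot \Lambda) \geq 2q+1$, i.e.\ $m\ell' + \ell \geq \lambda + 2 + \floor[\big]{(2q+1)/(B^2)}$.

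Finally, (1) follows by inducting on $k$ and fusing (2) with (3). The desired vanishing is equivalent to surjectivity of the map
\begin{equation*}
H^{0}(\mathcal{L}_{\ell}) \otimes H^{0}(M^{\otimes k}_{\mathcal{L}_{\ell}} \otimes (K_{X}+m\mathcal{L}_{\ell'}+N)) \longrightarrow H^{0}(M^{\otimes k}_{\mathcal{L}_{\ell}} \otimes (K_{X}+m\mathcal{L}_{\ell'}+\mathcal{L}_{\ell}+N)).
\end{equation*}
Writing $\mathcal{L}_\ell = \Lambda + (\ell-1)B$ and factoring the section algebra in that order, Observation \ref{[Observation-(1.2)]{BG00}} reduces this to the surjectivity of one copy of the $\gamma$-map with $\lambda = 1$ (handled by (3)) followed by $\ell - 1$ copies of the $\beta$-map (handled by (2)), each applied with the appropriate nef twist replacing $N$. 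The main obstacle is purely bookkeeping: one must verify at each stage that the accumulated nef summand keeps $N + (m-1)K_X + (\text{excess } B\text{-terms})$ nef so that (2) and (3) remain applicable, and that the required antecedent vanishings $H^1(M^{\otimes k}_{\mathcal{L}_\ell} \otimes (\cdots - B))$ and $H^1(M^{\otimes k}_{\mathcal{L}_\ell}\otimes (\cdots - \Lambda))$ come from lower $k$ via the same inductive hypothesis, which is exactly the structure used in the original proof of Theorem \ref{theorem_$(M_q)$nef_canonical_surface}.
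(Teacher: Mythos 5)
Your proposal is correct and takes essentially the paper's own route: the paper proves the corollary by absorbing $(m-1)K_{X}+N$ into a nef twist $N'$ and rerunning Theorem \ref{theorem_$(M_q)$nef_canonical_surface} with $\ell'$ replaced by $m\ell'$ (for (1), (2)) and by $m\ell'+\ell-\lambda$ (for (3)), whose Steps 1 and 2 are exactly your parts (2) and (3), glued through Observation \ref{[Observation-(1.2)]{BG00}} for part (1). The only slip is bookkeeping: with your choice $N''=(dB-K_{X})+(m-1)K_{X}+N$ the adjunction decomposition forces $w=m\ell'+\ell-\lambda-d-1$ (alternatively keep $w=m\ell'+\ell-\lambda-1$ and omit $dB-K_{X}$ from $N''$), a harmless adjustment that does not change the structure or the stated numerical hypotheses.
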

\begin{proof}
    Setting $N' = (m-1)K_{X}+N$ we get $K_{X}+m\mathcal{L}_{\ell'}+N = K_{X}+\mathcal{L}_{m\ell'}+N'$ and $K_{X}+m\mathcal{L}_{\ell'}+(\ell-\lambda)B+N = K_{X}+\mathcal{L}_{m\ell'+\ell-\lambda}+N.'$ Now we apply the statement the Theorem \ref{theorem_$(M_q)$nef_canonical_surface} above with $\ell'$ replaced with $m\ell'$ and argue similar to the proof of Theorem \ref{theorem_$(M_q)$nef_canonical_surface} to get (1) and (2). To get (3) we use the statement of Theorem with $\ell'$ replaced with $m\ell'+\ell-\lambda.$
\end{proof}
\subsubsection{K3-surfaces}
\label{K3}
Let $L$ be a line bundle on a $K3-$surface $X$. Since $K_{X} \sim 0,$ by Koszul duality one has $K_{r-q, 1}(X, L) = K_{q-2, 2}(X, L)$ and therefore property-$(M_{q})$ is the same as property-$(N_{q-2}).$ In this case, the section ring $R(L)$ is Gorenstein and therefore the Betti table has this symmetry. \\

\subsubsection{Enriques surfaces}
\label{Enriques}
In this section, by an Enriques surface we mean a regular ($h^{1}(\mathcal{O}_{X}) = 0$) surface whose canonical divisor $K_{X}$ is numerically trivial. Thus the results of this section are valid for $K3-$surfaces as well. \\
Let $L$ be a line bundle on an Enriques surface $X.$ Since $K_{X} \equiv 0$ the section ring $R(L)$ is $\mathbb{Q}-$Gorenstein but not Gorenstein in general. \\
However, one gets the following variant of \cite[Theorem 2.2]{BG99} which follows along the same lines as the proof of \textit{Gallego} and \textit{Purnaprajna} as one replaces $L$ by $K_{X}+L$ or $L'$ by $K_{X}+L'.$ The very ampleness follows since they prove that $L$ is projectively normal. Because their technique is dependent on intersection numbers and $K_{X}$ is numerically trivial, $K_{X}$ has no effect in the final intersection number computations and the proof works in the presence of $K_{X}$ as well. Similarly, all results of \cite[Section 2]{BG99} have a corresponding variant for Property-$(M_{q}).$ \\
We mention the results here without proof.
\begin{theorem} (compare with \cite[Theorem 2.2]{BG99})
\label{(M_2)Enriques}
    Let $X$ be an Enriques surface. Let $B_{1}, B'_{1}, B_{2},$ and $ B'_{2}$ be ample and base point free line bundles on $X$ such that $B_{i} \equiv B'_{i}$ for $i = 1, 2$ and the following condition $(\ref{enriques_condition})$ is satisfied
    \begin{equation}
    \label{enriques_condition}
         \overset{- \ - \ -}{\hspace{2cm}}\begin{cases}
        \text{ either } & (B_{1} \cdot B_{2}) \geq 4, (B^{2}_{i}) \geq 6 \text{ for } i = 1, 2 \\
        \text{ or } & (B_{1} \cdot B_{2}) \geq 5
    \end{cases}
    \end{equation}
    Let $r, s, k, l$ be integers satisfying $r, s, k \geq 1$ and $l \geq 0$. Let $L = rB_{1}+sB_{2}$ and $L' = kB'_{1}+lB'_{2}.$ Then, the multiplication maps
    \begin{eqnarray}
        H^{0}(L) \otimes H^{0}(K_{X}+L') \xrightarrow{\alpha} H^{0}(K_{X}+L+L') \\
        H^{0}(K_{X}+L) \otimes H^{0}(L') \xrightarrow{\beta} H^{0}(K_{X}+L+L')
    \end{eqnarray}
    are surjective and one has the following vanishings
    \begin{equation}
        H^{1}(M_{L} \otimes (K_{X}+L')) = H^{1}(M_{L'} \otimes (K_{X}+L)) = 0
    \end{equation}
    In particular, $L$ is very ample and satisfies Property-$(M_{2}).$
\end{theorem}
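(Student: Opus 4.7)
The plan is to emulate the proof of \cite[Theorem 2.2]{BG99} verbatim while replacing the relevant occurrence of $L$ (or $L'$) by its $K_{X}$-twist. The entire Gallego--Purnaprajna machinery is driven by intersection numbers on smooth curves in the hyperplane stratification by $B_{1}$ and $B_{2}$, and since $K_{X} \equiv 0$ on an Enriques surface, every such intersection number is unaffected by the presence of $K_{X}$; the arithmetic inputs coming from the two numerical alternatives in (\ref{enriques_condition}) are therefore exactly the ones needed here.

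First I would reduce the two cohomology vanishings to the surjectivity statements for $\alpha$ and $\beta$. Tensoring the kernel bundle sequence (\ref{kernel_bundle}) for $L$ with $K_{X}+L'$ and taking cohomology, the vanishing $H^{1}(M_{L} \otimes (K_{X}+L')) = 0$ is equivalent to the surjectivity of $\alpha$, using $H^{1}(K_{X}+L') = 0$ by Kawamata--Viehweg since $L'$ is nef and big. The vanishing involving $M_{L'}$ and the surjectivity of $\beta$ are then handled symmetrically by interchanging the roles of $L$ and $L'$; the hypothesis $B_{i} \equiv B'_{i}$ guarantees that the two situations carry identical numerical data.

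Next I would split $\alpha$ via Observation \ref{[Observation-(1.2)]{BG00}} as a chain of $r+s$ multiplications by $B_{1}$ and $B_{2}$, each of the form
\[
H^{0}(B_{i}) \otimes H^{0}(K_{X}+L'+M) \longrightarrow H^{0}(K_{X}+L'+M+B_{i}),
\]
where $M$ denotes the partial sum already absorbed. For each step, pick a smooth curve $C \in |B_{i}|$ by Bertini and apply the base-point-free pencil trick to the restriction. The commutative diagram whose bottom row is
\[
0 \to K_{X}+L'+M \to K_{X}+L'+M+B_{i} \to (K_{X}+L'+M+B_{i})|_{C} \to 0
\]
(exact on global sections by Kawamata--Viehweg applied to $K_{X}+L'+M$) reduces the surjectivity to
\[
h^{1}\bigl(C,\,(K_{X}+L'+M-B_{i})|_{C}\bigr) \leq h^{0}(B_{i})-3,
\]
and because $K_{X} \equiv 0$ this inequality depends only on intersection numbers of $L', M, B_{i}$, reproducing verbatim the estimates of \cite[Theorem 2.2]{BG99}.

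The main obstacle will be the earliest steps of the cascade, when $M$ is still small: one must either exploit $(B_{i}^{2}) \geq 6$ (to secure enough sections on $C$ for the pencil trick) together with $(B_{1} \cdot B_{2}) \geq 4$ (to push the degree on $C$ past $2g_{C}-2$), or rely on the stronger $(B_{1} \cdot B_{2}) \geq 5$ which covers both needs at once. This dichotomy is precisely (\ref{enriques_condition}). Once these base cases are cleared, the inductive steps are automatic since each additional $B_{i}$ factor only raises the relevant degree on $C$. Finally, property $(m_{2})$ of $L$ is obtained by specialising to $L' = L$ (with $B'_{i} = B_{i}$), and projective normality -- hence Property-$(M_{2})$ and very ampleness -- is recovered by running the same chain of surjectivities after choosing $L'$ in the numerical class of $mL$ for each $m \geq 1$ (which is available inside the given numerical classes because $K_{X}$ is $2$-torsion on an Enriques surface).
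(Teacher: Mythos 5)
Your main line of argument --- reducing the two vanishings to the surjectivity of $\alpha$ and $\beta$ via the kernel bundle sequence, splitting each multiplication into a cascade of multiplications by $B_{1}$ and $B_{2}$, and controlling every step by the base point free pencil trick on smooth curves in $|B_{i}|$, with the observation that $K_{X} \equiv 0$ leaves all of the intersection-theoretic estimates of \cite[Theorem 2.2]{BG99} unchanged --- is exactly the route intended here; the result is stated without proof precisely because the Gallego--Purnaprajna argument carries over verbatim under the twist $L \mapsto K_{X}+L$. (One small caution: the hypotheses on $L$ and $L'$ are not symmetric, since $l$ may be $0$, so the second vanishing is not literally obtained by ``interchanging roles''; it is, however, covered by the same asymmetric hypotheses in the original proof you are emulating.)

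The genuine gap is in your last step, where you claim to recover projective normality of $L$ (needed both for very ampleness and for upgrading $(m_{2})$ to $(M_{2})$) by ``running the same chain of surjectivities after choosing $L'$ in the numerical class of $mL$.'' Normal generation requires the literal maps $H^{0}(L) \otimes H^{0}(mL) \rightarrow H^{0}((m+1)L)$ (equivalently $H^{1}(M_{L} \otimes mL) = 0$) for every $m \geq 1$, so from the twisted statement you would need $K_{X}+L' = mL$ in $\mathrm{Pic}(X)$, not merely $L' \equiv mL$; that forces $L' = mL+K_{X}$. Writing $L' = kB'_{1}+lB'_{2}$ with $B'_{i} \in \left\{ B_{i}, B_{i}+K_{X} \right\}$ gives $L' = mrB_{1}+msB_{2}+(\epsilon_{1}mr+\epsilon_{2}ms)K_{X}$ with $\epsilon_{i} \in \left\{0, 1\right\}$, and the requirement $\epsilon_{1}mr+\epsilon_{2}ms \equiv 1 \pmod{2}$ is unsolvable whenever $m$ is even (and for all $m$ when $r$ and $s$ are both even); moreover, even when the parity works out, $B_{i}+K_{X}$ is ample but not known to be base point free, which your cascade requires. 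The correct resolution, and the one used here, is that normal generation of $L = rB_{1}+sB_{2}$ under the hypotheses (\ref{enriques_condition}) is exactly the untwisted content of \cite[Theorem 2.2]{BG99} itself, so projective normality and very ampleness should simply be quoted from there rather than bootstrapped out of the $K_{X}$-twisted surjectivities.
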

One now gets the following corollary which follows from \cite[Lemma 2.7]{BG99} and Theorem \ref{(M_2)Enriques}.
\begin{corollary}
    Let $X$ be an Enriques surface and $A_{i}$ be an ample line bundle on $X$ for all integers $1 \leq i \leq t.$ Let $L = K_{X}+A_{1}+ \cdots+A_{t}.$ If $t \geq 4,$ then $L$ satisfies Property-$(M_{2}).$
\end{corollary}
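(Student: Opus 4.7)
The plan is to reduce the statement to a direct application of Theorem \ref{(M_2)Enriques} by producing a suitable decomposition of $L$ of the form $B_1+B_2$, where $B_1,B_1',B_2,B_2'$ are ample, base point free, numerically equivalent in pairs, and satisfy the numerical hypothesis $(\ref{enriques_condition})$. The numerical triviality of $K_X$ on an Enriques surface is what allows us to absorb $K_X$ without disturbing the intersection-theoretic data.

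First, since $t \geq 4$, I would split the index set $\{1,\dots,t\}$ into two blocks each containing at least two indices, say $\{1,2\}$ and $\{3,\dots,t\}$, and set $D_1:=A_1+A_2$ and $D_2:=A_3+\cdots+A_t$. On an Enriques surface the N\'eron--Severi lattice (modulo $2$-torsion) is even, hence $(A_i^{\,2})\geq 2$ for every ample $A_i$; the Hodge index theorem then forces $(A_i\cdot A_j)\geq 2$ for any two ample line bundles. Consequently
\[
(D_i^{\,2})\geq 2+2(A_{i_1}\cdot A_{i_2})+2 \geq 8, \qquad (D_1\cdot D_2)\geq 4\cdot 2 = 8,
\]
so in particular $(D_i^{\,2})\geq 6$ and $(D_1\cdot D_2)\geq 4$, which is the first alternative in $(\ref{enriques_condition})$. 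Ampleness of each $D_i$ is clear; the base-point-freeness of $D_1$, $D_2$ (and of their numerical translates by $K_X$) is exactly the content of \cite[Lemma 2.7]{BG99}, which supplies the Reider-type input special to Enriques surfaces.

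Next, to match $L = K_X + A_1+\cdots+A_t$ with the shape $rB_1 + sB_2$ required by Theorem \ref{(M_2)Enriques}, I set
\[
B_1 := K_X + D_1, \qquad B_2 := D_2, \qquad B_1' := D_1, \qquad B_2' := D_2.
\]
Then $B_i\equiv B_i'$ (as $K_X\equiv 0$), the identity $L=B_1+B_2$ holds, and both $B_i$ and $B_i'$ are ample and base point free by the previous step; the numerical conditions transfer verbatim. Applying Theorem \ref{(M_2)Enriques} with $r=s=1$ and a choice of $L' = k B_1' + l B_2'$ with $k\geq 1$, $l\geq 0$ (any such choice works, because the theorem's vanishing is the one we want on $L$ itself) yields $H^1(M_L\otimes K_X)=0$, i.e.\ property $(M_2)$ for $L$, together with the projective normality/very-ampleness needed to upgrade $(m_2)$ to $(M_2)$.

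The main obstacle in this approach is the base-point-freeness of the pieces $B_i$ (equivalently of $D_i$ and $K_X + D_i$) on an Enriques surface: sums of ample divisors on an arbitrary surface need not be base point free, and one really does need an Enriques-specific Reider-type statement to guarantee it. This is precisely the role played by \cite[Lemma 2.7]{BG99}, and once it is invoked the remainder of the argument is a purely bookkeeping application of Theorem \ref{(M_2)Enriques}.
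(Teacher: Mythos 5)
Your proposal is correct and is essentially the paper's own argument: the corollary is obtained there exactly by combining \cite[Lemma 2.7]{BG99} (base point freeness of sums of two ample line bundles on an Enriques surface, which also covers $K_{X}+D_{1}=(K_{X}+A_{1})+A_{2}$) with Theorem \ref{(M_2)Enriques}, applied to a two-block grouping of the $A_{i}$ with the $K_{X}$ absorbed into one block, just as you do. One cosmetic correction: the weight-one vanishing relevant to $(M_{2})$ on a surface is $H^{1}(M_{L}\otimes(K_{X}+L))=0$, not $H^{1}(M_{L}\otimes K_{X})=0$, and it is not produced by an arbitrary choice of $k,l$ with your $B_{1}'=D_{1}$; either invoke the theorem's ``in particular'' clause directly, or take $B_{i}'=B_{i}$ and $k=r=1$, $l=s=1$, so that $L'=L$ and the stated vanishing is literally the one needed.
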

Similarly one has the following variant of \cite[Theorem 2.11]{BG99}.
\begin{theorem} (compare with \cite[Theorem 2.11]{BG99})
    Let $X$ be an Enriques surface. Let $B_{i} \equiv B'_{i}$ be ample and base point free divisors on $X$ for $i = 1, 2,$ with $(B_{1} \cdot B_{2}) \geq 6.$ Let $L = rB_{1}+sB_{2}$ and $L' = kB'_{1}+lB'_{2}.$ If $k, l, r, s \geq 1,$ then
    \begin{equation}
        H^{1}(M^{\otimes 2}_{L} \otimes (K_{X}+L')) = 0
    \end{equation}
    In particular, $L$ satisfies Property-$(M_{3}).$
\end{theorem}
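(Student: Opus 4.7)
The plan is to adapt the scheme of \cite[Theorem 2.11]{BG99} to the $(M_3)$ setting, following exactly the same bootstrapping strategy used earlier in this paper in the proofs of Theorem \ref{theorem_$(M_q)$nef_canonical_surface} and Theorem \ref{$(M_{2})$surf_nef_bundle}. The implication "vanishing $\Rightarrow$ Property-$(M_3)$" is the content of the remark following Lemma \ref{AL_Lemma}, applied with $n=2$, $k=1$ and $L' = L$: Property-$(M_2)$ for $L$ is delivered by Theorem \ref{(M_2)Enriques} (under the weaker hypothesis $(B_1 \cdot B_2) \geq 5$, which is implied here), and the additional vanishing $H^1(M_L^{\otimes 2}(K_X + L)) = 0$ upgrades $(M_2)$ to $(M_3)$. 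The flexibility of allowing $L'$ instead of $L$ in the twist is useful in the inductive step since $K_X \equiv 0$ makes $K_X + L'$ numerically identical to $L$.

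First I would tensor the kernel bundle sequence (\ref{kernel_bundle}) for $L$ by $M_L(K_X + L')$ and examine the long exact sequence of cohomology. Since $H^1(M_L(K_X + L')) = 0$ by Theorem \ref{(M_2)Enriques} (applied with the roles of $L$ and $L'$ swapped, which is permitted by the symmetry of its statement in $B_i, B_i'$), the desired vanishing $H^1(M_L^{\otimes 2}(K_X + L')) = 0$ is equivalent to surjectivity of $H^0(L) \otimes H^0(M_L(K_X + L')) \to H^0(M_L(K_X + L + L'))$. Then, using Observation \ref{[Observation-(1.2)]{BG00}} together with the decomposition $L = rB_1 + sB_2$ and $L' = kB_1' + lB_2'$, it suffices to prove, for each $i \in \{1,2\}$ and for each intermediate nef line bundle $P$ appearing in the telescoping, the surjectivity of $H^0(B_i) \otimes H^0(M_L(K_X + P)) \to H^0(M_L(K_X + P + B_i))$ and the analogous map for $B_i'$ in place of $B_i$.

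Next, for each fixed $i$, I would choose by Bertini a smooth integral curve $C_i \in |B_i|$ of genus $g_i$; since $K_X \equiv 0$, adjunction gives $2g_i - 2 = (B_i^2)$. By Observation \ref{[Observation-(1.3)]{BG00}}, the multiplication map reduces to its restriction to $C_i$ provided $H^1(M_L(K_X + P - B_i)) = 0$, which is once more an instance of Theorem \ref{(M_2)Enriques} since $P - B_i$ still has the required nef-combination form. Lemma \ref{BG99_Lemma_(2.9)} then allows replacing $M_L$ by the restricted syzygy bundle $M_{L|_{C_i}}$. Since $\deg(L|_{C_i}) = r(B_1 \cdot B_i) + s(B_2 \cdot B_i) \geq 2g_i$, the bundle $M_{L|_{C_i}}$ is semistable by Theorem \ref{[Theorem-(1.2)]{Bu94}}, and its tensor powers remain semistable by Corollary \ref{[Corollary-(3.7)]{Miy87}}, so the remaining surjectivity on $C_i$ is governed by Proposition \ref{Proposition-(2.2)_Bu94}.

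The main obstacle will be verifying the slope inequalities on $C_i$. With $E$ equal to $B_i|_{C_i}$ (or the cross-term $B_j|_{C_i}$) and $F = M_{L|_{C_i}} \otimes (K_{C_i} + \text{rest})$, Proposition \ref{Proposition-(2.2)_Bu94} demands $\mu(F) > 2g_i$ and $\mu(F) > 2g_i + \rank(E)(2g_i - \mu(E)) - 2h^1(E)$. Plugging in $\mu(M_{L|_{C_i}}) = -\deg(L|_{C_i})/(\deg(L|_{C_i}) - g_i)$ from Theorem \ref{[Theorem-(1.2)]{Bu94}} and $(B_i^2) = 2g_i - 2$, a direct computation shows that the binding constraint originates from the cross term and is precisely $(B_1 \cdot B_2) \geq 6$, one unit stronger than the $(M_2)$-hypothesis (\ref{enriques_condition}). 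Finally, since $B_i \equiv B_i'$ and every quantity entering the slope inequalities is a numerical invariant, the computation for the $B_i'$-multiplications is identical, so no separate analysis is required.
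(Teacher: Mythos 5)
Your proposal and the paper part ways at the very first step, because the paper gives no proof of this statement at all: in Section \ref{Enriques} it is stated explicitly ``without proof'' as a variant of \cite[Theorem 2.11]{BG99}, the justification being that the Gallego--Purnaprajna argument depends only on intersection numbers, so on an Enriques surface, where $K_{X} \equiv 0$, replacing the twist $L'$ by $K_{X}+L'$ changes none of the numerical estimates. Your route instead reconstructs the vanishing inside the paper's own toolkit: kernel bundle sequence plus Theorem \ref{(M_2)Enriques} to reduce $H^{1}(M_{L}^{\otimes 2}(K_{X}+L')) = 0$ to surjectivity of a multiplication map, Observations \ref{[Observation-(1.2)]{BG00}} and \ref{[Observation-(1.3)]{BG00}} to telescope over the $B_{i}$ and restrict to curves, Lemma \ref{BG99_Lemma_(2.9)} to pass to $M_{L|_{C_{i}}}$, and Butler's results (Theorem \ref{[Theorem-(1.2)]{Bu94}}, Proposition \ref{Proposition-(2.2)_Bu94}) on the curve --- i.e.\ exactly the scheme the paper runs in Theorems \ref{surj_mult_surf_maps} and \ref{theorem_$(M_q)$nef_canonical_surface}, which is also essentially the original BG99 argument. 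What the paper's transfer argument buys is brevity and an explanation of why no hypotheses change; what your route buys is a self-contained proof. Your deduction of $(M_{3})$ from the vanishing, via the remark after Lemma \ref{AL_Lemma} with $n=2$, $k=1$, $L'=L$, and projective normality from Theorem \ref{(M_2)Enriques}, is correct.

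Two caveats if you write this out. First, the slope inequalities are asserted rather than checked, and the attribution of the binding constraint to a ``cross term'' is off: in the reduction via Observation \ref{[Observation-(1.3)]{BG00}} the curve is cut from the same system $|B_{i}|$ one multiplies by, so the curve-level factor is always $E = B_{i}|_{C_{i}}$ of degree $(B_{i}^{2}) = 2g_{i}-2$; the hypothesis $(B_{1} \cdot B_{2}) \geq 6$ enters through the degree of the twist, $\deg((K_{X}+L')|_{C_{i}}) \geq (B_{i}^{2})+6 = 2g_{i}+4$, which together with $\mu(M_{L|_{C_{i}}}) > -2$ (valid since $\deg(L|_{C_{i}}) \geq 2g_{i}+4 > 2g_{i}$) beats the threshold $2g_{i}+2-2h^{1}(B_{i}|_{C_{i}})$ of Proposition \ref{Proposition-(2.2)_Bu94} in the worst case $h^{1}(B_{i}|_{C_{i}}) = h^{0}(K_{X}|_{C_{i}}) = 0$; with only $(B_{1} \cdot B_{2}) \geq 5$ that inequality can fail, so the constant $6$ is indeed sharp for this argument, but for the reason above. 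Second, the intermediate twists $P-B_{i}$ are only \emph{numerically} of the form covered by Theorem \ref{(M_2)Enriques} (they may differ by $K_{X}$ or by $B_{i}-B_{i}'$), so you must either invoke the numerically flexible formulations as in \cite{BG99} or carry the numerically trivial line bundles along explicitly; this is harmless but should be said.
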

The following corollary is a variant of \cite[Corollary 2.12]{BG99}
\begin{corollary}(compare with \cite[Corollary 2.12]{BG99}) 
    Let $A, A_{i}$ be ample line bundles on an Enriques surface $X$ for $i = 1, 2, \cdots, t$. Let $L = K_{X}+A_{1}+ \cdots +A_{t}$ and $L' = K_{X}+\ell A.$ Then $L$ and $L'$ satisfy Property-($M_{3}$) for $t \geq 5$ and $\ell \geq 4$ respectively.
\end{corollary}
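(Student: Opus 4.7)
The plan is to apply the preceding theorem after expressing $L$ and $L'$ in the required form $rB_{1}+sB_{2}$ with $B_{1},B_{2}$ ample and base point free divisors and $(B_{1}\cdot B_{2})\geq 6$. In both cases I would exploit that $K_{X}$ is $2$-torsion on an Enriques surface, so the canonical summand can be absorbed into a torsion twist of one of the $B_{i}$'s without disturbing the numerical equivalence classes that the hypothesis of the preceding theorem controls.

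For $L' = K_{X}+\ell A$ with $\ell\geq 4$, I would take $B_{1}$ and $B_{2}$ numerically equivalent to $2A$ and $(\ell-2)A$ respectively, choosing the torsion so that $L' = B_{1}+B_{2}$ as line bundles. Any ample line bundle $A$ on an Enriques surface satisfies $(A^{2})\geq 2$, and a Reider-type argument shows that the positive multiples $cA$ with $c\geq 2$ are base point free. The required intersection bound is then $(B_{1}\cdot B_{2}) = 2(\ell-2)(A^{2}) \geq 8$, and the preceding theorem delivers Property-$(M_{3})$ for $L'$.

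For $L = K_{X}+A_{1}+\cdots+A_{t}$ with $t\geq 5$, I would split the indices into two blocks of sizes $2$ and $t-2\geq 3$, and let $B_{1}$, $B_{2}$ be line bundles numerically equivalent to $A_{1}+A_{2}$ and $A_{3}+\cdots+A_{t}$ respectively, with the $K_{X}$ absorbed as a torsion twist so that $L=B_{1}+B_{2}$. Each $B_{i}$ is numerically a sum of at least two ample classes, giving $(B_{i}^{2})\geq 6$, and Reider's theorem then yields base point freeness. The cross term $(B_{1}\cdot B_{2})$ is a sum of at least $2\cdot 3=6$ pairwise intersections $(A_{i}\cdot A_{j})$, each $\geq 1$ by ampleness, so $(B_{1}\cdot B_{2})\geq 6$, and the theorem applies.

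The step I expect to be the main obstacle is the base-point-freeness verification: ampleness alone does not imply base point freeness on an Enriques surface, and one must invoke Reider's criterion to exclude an obstructing $(-2)$-curve with low intersection against $B_{i}$. With $(B_{i}^{2})\geq 6$ and $B_{i}$ nef, the remaining numerical configurations form a short, explicit list, which can be ruled out by a standard case analysis using the classification of $(-2)$-curves on Enriques surfaces. Once base point freeness is secured, both claims follow directly from the preceding theorem.
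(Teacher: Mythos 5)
Your derivation is correct and is essentially the route the paper intends: exactly as in \cite[Corollary 2.12]{BG99}, one writes each bundle as a sum of two ample, base point free pieces (absorbing the $2$-torsion $K_{X}$ into one summand), checks $(B_{1}\cdot B_{2})\geq 6$, and invokes the preceding theorem with $r=s=1$. The only remark is that the Reider step is simpler than you anticipate: each $B_{i}$ is numerically a sum of at least two ample classes, so $(B_{i}\cdot E)\geq 2$ for every curve $E$, which rules out all of Reider's exceptional configurations at once, with no need for any case analysis of $(-2)$-curves.
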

The following theorem is a variant of \cite[Theorem 2.14]{BG99}. 
\begin{theorem}
    Suppose $X$ is an Enriques surface. Let $B$ be an ample and base point free line bundle on $X$ with $(B^{2}) \geq 6$ and let $N, M$ be line bundles numerically equivalent to zero (i.e. they are trivial or equal to $K_{X}$). Let $L = \ell B+N$ and $L' = \ell'B+M$ for integers $\ell$ and $\ell'.$ Then, one the vanishing
    \begin{equation}
        H^{1}(M^{\otimes k+1}_{L} \otimes (K_{X}+L')) = 0 \hspace{1cm} \text{ for } 0\leq k \leq q-2 \text{ and } \ell, \ell' \geq q-1
    \end{equation}
    In particular, $L$ satisfies Property-($M_{q}$).
\end{theorem}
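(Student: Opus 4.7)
The plan is to induct on $q$ (equivalently on $k$), mimicking the proof of Theorem \ref{theorem_$(M_q)$nef_canonical_surface} but exploiting that $K_X \equiv 0$ on an Enriques surface so that cohomological vanishings and intersection numbers are insensitive to the twists by the numerically trivial line bundles $N$ and $M$. The base case $q = 2$ (i.e.\ $k = 0$) is exactly Theorem \ref{(M_2)Enriques} with $B_1 = B_2 = B$, whose hypothesis $(B^2) \geq 6$ is in force. For the inductive step, Theorem \ref{SamplingVectorBundle} reduces the desired vanishing to the surjectivity of
$$
H^0(L) \otimes H^0\bigl(M_L^{\otimes k}(K_X + L')\bigr) \longrightarrow H^0\bigl(M_L^{\otimes k}(K_X + L + L')\bigr).
$$

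By Bertini pick a smooth irreducible curve $C \in |B|$; adjunction gives $K_C = (K_X+B)|_C$ and $g(C) = 1 + \frac{1}{2}(B^2) \geq 4$. The inductive hypothesis applied to $(L,\, L'-B)$ — valid because $L' - B \equiv (\ell'-1)B$ with $\ell'-1 \geq (q-1)-1$ — yields the auxiliary vanishing $H^1(M_L^{\otimes k}(K_X + L' - B)) = 0$. Observation \ref{[Observation-(1.3)]{BG00}} then lets us pass from $X$ to the restriction on $C$, and Kodaira vanishing $H^1(L - B) = H^1((\ell-1)B + N) = 0$ combined with Lemma \ref{BG99_Lemma_(2.9)} allows replacing $M_L|_C$ by $M_{L|_C}$. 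We are thus reduced to the curve-level surjectivity
$$
H^0(B|_C) \otimes H^0\bigl(M_{L|_C}^{\otimes k}\otimes (K_X+L')|_C\bigr) \longrightarrow H^0\bigl(M_{L|_C}^{\otimes k}\otimes (K_X + L + L')|_C\bigr).
$$

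On $C$ set $E := B|_C$ and $F := M_{L|_C}^{\otimes k}\otimes (K_X+L')|_C$. Since $K_X, N, M \equiv 0$, all degrees on $C$ are independent of these twists: $\deg(L|_C) = \ell(B^2) \geq 2g$ once $\ell \geq q - 1 \geq 2$, so by Theorem \ref{[Theorem-(1.2)]{Bu94}} the bundle $M_{L|_C}$ is semistable with $\mu(M_{L|_C}) \geq -2$, whence $\mu(F) \geq -2k + \ell'(B^2)$. I then verify the two slope conditions of Proposition \ref{Proposition-(2.2)_Bu94} — namely $\mu(F) > 2g$ and $\mu(F) > 2g + \mathrm{rank}(E)(2g - \mu(E)) - 2h^1(E)$ — which, using $2g = 2 + (B^2)$ and $\mu(E) = (B^2)$, reduce to inequalities of shape $(\ell'-1)(B^2) > 2k + O(1)$. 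These hold for $\ell' \geq q - 1 \geq k + 1$ thanks to the standing hypothesis $(B^2) \geq 6$, completing the induction and giving Property-$(m_q)$. Projective normality of $L$, needed to upgrade $(m_q)$ to $(M_q)$, follows from Theorem \ref{(M_2)Enriques}.

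The main obstacle will be checking the second slope inequality with the tight bound $\ell, \ell' \geq q - 1$ rather than something larger: the $-2k$ loss from the $k$-fold tensor product must be absorbed by the gain $\ell'(B^2)$, and the hypothesis $(B^2) \geq 6$ is precisely what makes the trade-off work. A secondary subtlety is that $N$ and $M$ can each be either $\mathcal{O}_X$ or $K_X$, so one must confirm that Kodaira vanishing and the adjunction computation $K_C = (K_X+B)|_C$ remain usable in their presence; both survive because $K_X \equiv 0$ numerically and the underlying $\mathbb{Z}$-multiple of $B$ is genuinely ample and base point free.
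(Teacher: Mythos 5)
The paper gives no argument for this statement at all: it is one of the Enriques results announced ``without proof,'' with the remark immediately afterwards that it follows from \cite[Theorem 2.14]{BG99} by taking $N' = K_{X}+M$ in Gallego--Purnaprajna's notation, the point being that their proof only sees intersection numbers and $K_{X} \equiv 0$. Your plan --- induction on $q$, restriction to a smooth $C \in |B|$, passage from $M_{L}|_{C}$ to $M_{L|_{C}}$ via Lemma \ref{BG99_Lemma_(2.9)}, and Butler's semistability plus Proposition \ref{Proposition-(2.2)_Bu94} on $C$ --- is a reasonable reconstruction of that kind of argument and of the template used elsewhere in the paper. However, as written it has concrete gaps.

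First, the base case is not ``exactly'' Theorem \ref{(M_2)Enriques}: that theorem requires $L = rB_{1}+sB_{2}$ with $r, s \geq 1$ and all four auxiliary bundles ample \emph{and base point free}, so it covers neither $\ell = 1$ (permitted at $q=2$, since the stated bound is $\ell \geq q-1$) nor, without an extra argument, $L = \ell B + K_{X}$ (you would need, say, $(\ell-1)B+K_{X}$ or $B+K_{X}$ to be base point free, which is not among your hypotheses). Second, the same issue recurs in the inductive step: the map you must make surjective is multiplication by $H^{0}(L)$ with $L = \ell B + N$, and your reduction silently replaces it by multiplication by $H^{0}(B|_{C})$ --- indeed your displayed curve-level map has source twisted by $B|_{C}$ while the target jumps by $L$. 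Factoring the $H^{0}(L)$-multiplication into base-point-free steps is precisely where the numerically trivial twist bites, because $h^{0}(K_{X}) = 0$ and $B+K_{X}$ is not assumed base point free; one needs the Enriques-specific fact that base point freeness of big and nef bundles is a numerical condition (so $B+K_{X}$ is again ample and base point free), or some reorganization of the argument, and your sketch says nothing about this. Third, the slope bookkeeping you actually wrote, $\mu(F) \geq -2k + \ell'(B^{2})$ against the requirement $\mu(F) > 2g + 2 = (B^{2})+4$ (note $h^{1}(B|_{C}) = 0$ here), gives $(\ell'-1)(B^{2}) > 2k+4$, which \emph{fails} at the boundary case $q = 3$, $k = 1$, $\ell = \ell' = 2$, $(B^{2}) = 6$: it reads $6 > 6$. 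To close that case you must use Butler's exact slope $\mu(M_{L|_{C}}) = -\deg(L|_{C})/(\deg(L|_{C})-g) = -3/2$ rather than the crude bound $-2$ (then $\mu(F) = 10.5 > 10$). So the trade-off does not work with the inequalities as stated, although it can be repaired; together with the two points above, the proposal as it stands does not yet prove the theorem in the stated range.
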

\begin{remark}
    In fact, the above Theorem follows from \cite[Theorem 2.14]{BG99} if one takes $N' = K_{X}+M$ in the notation of \textit{Gallego} and \textit{Purnaprajna}. Notice that with the extra condition $(B^{2}) \geq 6$ one is able to improve the bound $\ell \geq q+1$ given by Theorem \ref{CM_theorem}.
\end{remark}
Now, we close this section by stating a variant of \cite[Corollary 2.15]{BG99}
\begin{corollary}
    Suppose $X$ be an Enriques surface, let $A$ be an ample line bundle and let $B$ be an ample and base point free line bundle on $X.$ Then, $L = K_{X}+\ell B$ satisfies Property-$(M_{q})$ and $L' = K_{X}+\ell' A$ satisfies Property-$(M_{q})$ for $\ell \geq  \geq \text{ and } \ell' \geq 2q-2.$
\end{corollary}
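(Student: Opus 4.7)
The plan is to deduce both statements from the immediately preceding variant of \cite[Theorem 2.14]{BG99}, exploiting that $K_X$ is numerically trivial on an Enriques surface so that every $K_X$-twist can be absorbed into the $N$ or $M$ slot of that theorem.

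For the first claim about $L = K_X + \ell B$, I would apply the preceding theorem with $N = K_X$ (and $L' = L$, so $M = K_X$ as well): since $K_X \equiv 0$, the line bundle $L$ genuinely has the form $\ell B + N$ demanded by the hypothesis, so one obtains
\[
H^{1}(M_{L}^{\otimes k+1} \otimes (K_X + L)) \;=\; 0, \qquad 0 \leq k \leq q-2,
\]
whenever $\ell \geq q-1$. Combined with normal generation of $K_X + \ell B$ past the usual threshold (e.g.\ Theorem \ref{CM_theorem} or \cite[BL25]{}-type bounds already in use in the section), this upgrades $(m_q)$ to property-$(M_q)$.

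For the second claim about $L' = K_X + \ell' A$ with $A$ only ample, the strategy is to reduce to the base-point-free setting of the preceding theorem by replacing $B$ with $2A$. Recall the classical fact that $|2A|$ is base-point-free for any ample $A$ on an Enriques surface, and $((2A)^2) = 4(A^2) \geq 6$ is guaranteed once $(A^2) \geq 2$, which holds for ample classes on Enriques surfaces. I would then write
\[
\ell' A \;=\; (q-1)\cdot (2A) \;+\; (\ell' - 2q + 2)\,A,
\]
where the hypothesis $\ell' \geq 2q-2$ makes the residual summand effective. The piece $K_X + (q-1)(2A)$ is handled by the preceding theorem applied with $B$ replaced by $2A$ (and $N = K_X$), yielding property-$(M_q)$ for that bundle; the extra tensor factor $(\ell'-2q+2)A$ is then absorbed by iteratively multiplying by $H^0(A)$ and invoking Observation \ref{[Observation-(1.2)]{BG00}} to factor the required surjectivity, with each step of surjectivity controlled by Castelnuovo's Lemma \ref{Castelnuovo_lemma} applied to $2A$ once the relevant sheaf is $0$-regular with respect to $2A$.

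The main obstacle I anticipate is cleanly handling the residual ample summand $(\ell'-2q+2)A$ when $\ell'$ is odd, since $A$ itself need not be base-point-free. This is overcome by Observation \ref{[Observation-(1.2)]{BG00}} combined with Lemma \ref{BG99_Lemma_(2.9)}: the chain of multiplication maps needed to transfer the $H^1$-vanishing from $K_X + (q-1)(2A)$ up to $K_X + \ell' A$ factors through tensor products with $H^0(A)$, and each factor is a surjective multiplication against a sheaf of Castelnuovo--Mumford regularity $\leq 0$ with respect to $2A$, thanks to Kodaira vanishing and the numerical triviality of $K_X$. Once this propagation is in place, the same inductive scheme used in the proof of the preceding theorem goes through verbatim with $B \leftrightarrow 2A$.
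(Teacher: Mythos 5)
The paper states this corollary (like the rest of the Enriques subsection) without proof, as a variant of \cite[Corollary 2.15]{BG99}, so the comparison is with the intended derivation from the preceding theorem. Your first half is essentially that derivation: since $K_{X}\equiv 0$, take $N=M=K_{X}$, $L'=L$ in the variant of \cite[Theorem 2.14]{BG99} to get $H^{1}(M_{L}^{\otimes k+1}\otimes(K_{X}+L))=0$ for $0\leq k\leq q-2$, $\ell\geq q-1$, and add normal generation. The only thing you gloss over there is that the preceding theorem carries the hypothesis $(B^{2})\geq 6$, which the corollary as stated does not, and that the regime of small $\ell$ requires a separate normal generation input (the Appendix or \cite{BL25}); these are minor.

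The genuine gap is in the second half, for $L'=K_{X}+\ell' A$. Property $(M_{q})$ of $L'$ requires, via Lemma \ref{AL_Lemma}, the vanishing $H^{1}(M_{L'}^{\otimes(k+1)}\otimes(K_{X}+L'))=0$, i.e.\ the kernel bundle of the \emph{full} linear series $|L'|$ must appear. Your scheme proves something about $K_{X}+(q-1)(2A)$ and then tries to ``absorb'' the residual summand $(\ell'-2q+2)A$ by iterated multiplication with $H^{0}(A)$, using Observation \ref{[Observation-(1.2)]{BG00}}, Lemma \ref{BG99_Lemma_(2.9)} and Castelnuovo--Mumford regularity. But those tools only enlarge the twist in the second tensor factor of a multiplication map; they never change the bundle whose syzygy bundle sits in the first factor, and $(M_{q})$ is not a property that transfers from one line bundle to another by tensoring with sections (the embedding, the value of $r$, and $M_{L}$ all change). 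When $\ell'$ is even your decomposition is unnecessary ($L'\equiv(\ell'/2)(2A)+K_{X}$ with $\ell'/2\geq q-1$ is already covered by the preceding theorem, since $(2A)^{2}\geq 8$); when $\ell'$ is odd, $L'$ is not numerically a multiple $\geq q-1$ of any single base-point-free bundle, so the single-$B$ theorem does not apply and your propagation step does not produce the required vanishing for $M_{L'}$ --- this is exactly the case your argument was supposed to handle. The intended route (as in \cite[Corollary 2.15]{BG99}) is to write $\ell' A$ as a sum of at least $q-1$ ample base-point-free bundles, namely copies of $2A$ together with at most one $3A$ (both base point free on an Enriques surface by Cossec's criterion, since every half-fiber $F$ satisfies $F\cdot kA\geq 2$ for $k\geq 2$), and to run the vanishing argument with the mixed kernel bundles $M_{\mathfrak{F}|Q}$ that the paper's notation is set up for (equivalently, to use the multi-bundle form of the Enriques vanishing theorem), rather than the single-$B$ statement plus absorption.
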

\subsubsection{Abelian and Bielliptic surfaces}
\label{Abelian_Bielliptic}
\begin{theorem}
    Let $B$ be an ample and base point free line bundle on an Abelian or Bielliptic surface satisfying $(B^{2}) \geq 5.$ Let $N$ be a numerically trivial line bundle on $X.$ If $L \equiv \ell B$ and $L' \equiv \ell' B.$ Then, for one has
    \begin{equation}
        H^{1}(M^{\otimes (k+1)}_{L} \otimes (K_{X}+L')) = H^{1}(M^{\otimes (k+1)}_{L'} \otimes (K_{X}+L)), \hspace{0.2cm} \text{ for } 0 \leq k \leq q-2 \text{ and } \ell, \ell' \geq q-1 \geq 1
    \end{equation}
    In particular, for $L \equiv K_{X}+\ell B$ and $L' \equiv K_{X}+\ell' B$ one has
    \begin{equation}
        H^{1}(M^{\otimes (k+1)}_{L} \otimes (K_{X}+L')) = H^{1}(M^{\otimes (k+1)}_{L'} \otimes (K_{X}+L)) = 0, \hspace{0.2cm} \text{ for } 0 \leq k \leq q-2 \text{ and } \ell, \ell' \geq q-1 \geq 1
    \end{equation}
\end{theorem}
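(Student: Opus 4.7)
The plan is to prove this by induction on $k$ from $0$ to $q-2$, adapting the template of Theorem \ref{theorem_$(M_q)$nef_canonical_surface} but exploiting the crucial simplification that $K_X \equiv 0$ numerically on both abelian and bielliptic surfaces. In particular $K_X + L' \equiv L'$, so all intersection-theoretic bounds involving $K_X$ vanish; moreover, the ``in particular'' clause of the statement follows immediately from the main one, since $K_X + \ell B \equiv \ell B$ allows the hypothesis $L \equiv K_X + \ell B$ to be absorbed into $L \equiv \ell B$ (with the numerically trivial $N$ playing the role of $K_X$).

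First, tensor the kernel bundle sequence for $L$ with $M_L^{\otimes k} \otimes (K_X + L')$ and take the long exact sequence of cohomology. Under the inductive hypothesis $H^1(M_L^{\otimes k} \otimes (K_X + L')) = 0$, the target vanishing reduces to surjectivity of the multiplication map
\begin{equation*}
\alpha_k \colon H^0(L) \otimes H^0(M_L^{\otimes k} \otimes (K_X + L')) \longrightarrow H^0(M_L^{\otimes k} \otimes (K_X + L + L')).
\end{equation*}
The base case $k = 0$ amounts to property $(M_2)$ for $L$ on these surfaces, which I would establish by the same Bertini/base-point-free-pencil-trick argument as in Theorem \ref{(M_2)Enriques}, using a smooth curve $C \in |B|$ (which exists because $B$ is ample and base point free with $(B^2) \geq 5$) together with the identity $K_X \cdot B = 0$.

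For the inductive step, fix the smooth curve $C \in |B|$ and apply Observation \ref{[Observation-(1.3)]{BG00}} to reduce $\alpha_k$ to its restriction to $C$; the required auxiliary vanishing $H^1(M_L^{\otimes k} \otimes (K_X + L' - B)) = 0$ comes from the inductive hypothesis applied with $L'$ replaced by $L' - B \equiv (\ell'-1)B + N$ (ample whenever $\ell' \geq 2$, while $\ell' = 1$ forces $q = 2$ and falls into the base case). Lemma \ref{BG99_Lemma_(2.9)} then replaces $M_L|_C$ by the honest kernel bundle $M_{L|_C}$, using $H^1(L - B) = 0$. The heart of the argument is Butler's Proposition \ref{Proposition-(2.2)_Bu94} applied with $E = L|_C$ and $F = M_{L|_C}^{\otimes k} \otimes (K_X + L')|_C$: by adjunction and $K_X \equiv 0$, the canonical bundle $K_C$ has degree $(B^2)$, so $g(C) = 1 + (B^2)/2$; Butler's Theorem \ref{[Theorem-(1.2)]{Bu94}} shows $M_{L|_C}$ is semistable with slope $\geq -2$, giving $\mu^-(F) \geq -2k + \ell'(B^2)$, and the hypothesis $(B^2) \geq 5$ combined with $\ell', \ell \geq q - 1 \geq k + 1$ turns out to be exactly what is needed for both slope inequalities of Proposition \ref{Proposition-(2.2)_Bu94}. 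The main obstacle is the sharpness of this final numerical bookkeeping: the bound $(B^2) \geq 5$ leaves almost no slack, and one must carefully verify that the cumulative slope loss from iterating $k$ tensor powers of $M_{L|_C}$ remains dominated by $\ell'(B^2)$ for every $k$ in the full range $0 \leq k \leq q - 2$.
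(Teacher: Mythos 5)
The paper gives no proof of this theorem at all: as with the Enriques subsection, it is presented as a variant of Gallego--Purnaprajna-type results, on the grounds that $K_{X}$ is numerically trivial and so does not affect the intersection-theoretic computations. Your induction therefore has to stand entirely on its own, and as proposed it has a structural gap coming precisely from the irregularity of these surfaces. The reduction of $\alpha_{k}$ to its restriction to a curve $C\in|B|$ via Observation~1.3 of \cite{BG00} is not available: that observation requires $h^{1}(\mathcal{O}_{X})=0$, whereas abelian and bielliptic surfaces have $h^{1}(\mathcal{O}_{X})=2$ and $1$ respectively, so $H^{0}(B)\to H^{0}(B_{C})$ is not surjective and the curve-level statement you actually need involves only the image $W\subset H^{0}(B_{C})$ (of codimension up to $h^{1}(\mathcal{O}_{X})$), which is not what Proposition~\ref{Proposition-(2.2)_Bu94} provides. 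The same issue undermines the base case: the diagram and pencil-trick arguments of Theorem~\ref{surj_mult_surf_maps} and Theorem~\ref{(M_2)Enriques} that you propose to reuse rely on regularity (or on Kodaira-type vanishings unavailable here) to identify the relevant space of restricted sections. Handling this loss of sections is exactly the extra difficulty that separates abelian and bielliptic surfaces from the K3 and Enriques cases, and the proposal does not address it; a plausible repair is to run the base-point-free pencil trick inside $W$ itself, but then the slope conditions you quote are no longer the right ones.

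Moreover, the boundary of the stated range defeats the base case outright, so "$\ell'=1$ falls into the base case" cannot be made to work. For $q=2$ the range allows $\ell=\ell'=1$, i.e.\ the vanishing $H^{1}(M_{B}\otimes(K_{X}+B))=0$. The pencil-trick inequality would require $h^{1}((K_{X}+B-B)|_{C})=g(C)\le h^{0}(B)-3$, which fails since $g(C)=1+(B^{2})/2\ge 4$; Butler's first condition $\mu^{-}(F)>2g$ reads $\ell'(2g-2)>2g$ and fails at $\ell'=1$; and $\deg(B_{C})=2g-2<2g$, so semistability of $M_{B_{C}}$ via Theorem~\ref{[Theorem-(1.2)]{Bu94}} is not available either. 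In fact the asserted vanishing is false there: on an abelian surface carrying a base point free polarization $B$ of type $(1,3)$ one has $(B^{2})=6\ge 5$, $h^{0}(B)=3$, $h^{0}(2B)=12$ and $H^{1}(B)=0$, so $H^{0}(B)\otimes H^{0}(B)\to H^{0}(2B)$ cannot be surjective and $H^{1}(M_{B}\otimes(K_{X}+B))\neq 0$. So any correct argument (and arguably the statement itself) must exclude $\ell=\ell'=q-1=1$, e.g.\ by requiring $\ell,\ell'\ge 2$ or strengthening $(B^{2})$. For $k\ge 1$ and $\ell,\ell'\ge 2$ your slope bookkeeping does check out, so the inductive step is salvageable once the irregularity problem is dealt with, but as written the proposal does not prove the theorem on its stated range.
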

\begin{corollary}
    Let $A$ be an ample line bundle on an Abelian or Bielliptic surface $X.$ Then $L = K_{X}+A_{1}+\cdots+A_{t}$ satisfies Property-$(M_{q})$ for $\ell \geq 2q-2.$
\end{corollary}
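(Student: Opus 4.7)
The plan is to deduce this corollary from the preceding theorem on abelian/bielliptic surfaces by replacing the merely ample $A$ by a base-point-free multiple. On an abelian surface, the classical Lefschetz theorem guarantees that $2A$ is globally generated for every ample $A$, and by Riemann--Roch $h^{0}(A) = \chi(A) = (A^{2})/2$, so $(A^{2})$ is a positive even integer; hence $(2A)^{2} = 4(A^{2}) \geq 8 \geq 5$. On a bielliptic surface the same statement holds, either by passing through the \'etale cover of abelian type (of degree dividing $8$) or by direct arguments using the elliptic fibration. Setting $B := 2A$ therefore produces an ample, base-point-free $B$ with $(B^{2}) \geq 5$, which fulfills the hypotheses of the preceding theorem.

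Next, since $K_{X}$ is numerically trivial on abelian and bielliptic surfaces, $L = K_{X} + \ell A \equiv \ell A$ numerically. When $\ell = 2m$ is even, $\ell A \equiv m B$, so applying the preceding theorem with $L = L'$ in the numerical class of $mB$ yields
\[
H^{1}\bigl(M_{L}^{\otimes (k+1)} \otimes (K_{X} + L)\bigr) = 0, \qquad 0 \leq k \leq q-2,
\]
provided $m \geq q-1$, i.e.\ $\ell \geq 2q-2$. The surjection $M_{L}^{\otimes (k+1)} \twoheadrightarrow \bigwedge^{k+1} M_{L}$ transfers this vanishing to $H^{1}(\bigwedge^{k+1} M_{L} \otimes (K_{X}+L)) = 0$, and Lemma~\ref{AL_Lemma} (with $B = \mathcal{O}_{X}$ and $n = 2$) then gives $K_{i,1}(X,L) = 0$ for $r-1 \geq i \geq r-q$, which is property $(m_{q})$. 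Projective normality of $L$ upgrading $(m_{q})$ to $(M_{q})$ follows from $L = K_{X} + m B$ with $m \geq 2$ via \cite{BL25}.

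The main obstacle is the odd parity case $\ell = 2m+1$, where $L \equiv (2m+1)A$ is \emph{not} numerically a pure multiple of $B = 2A$ and the theorem cannot be applied verbatim. Here I would proceed by producing a short exact sequence of the form $0 \to M_{mB} \to M_{L} \to \mathcal{Q} \to 0$, where $\mathcal{Q}$ accounts for the extra ample summand $A$, and then control $H^{1}$ of $\mathcal{Q}^{\otimes (k+1)} \otimes (K_{X}+L)$ by Kodaira--Kawamata--Viehweg vanishing applied to the ample line bundle $A$. Alternatively, on abelian surfaces one may use the numerical-class flexibility: choose a numerically trivial torsion line bundle $N$ so that $L + N$ admits a cleaner decomposition under the Fourier--Mukai transform, and exploit the invariance of Koszul cohomology under numerical twists in this setting. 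In the bielliptic case the argument can be pulled back to the abelian \'etale cover, where the flexibility above applies, and then pushed down using flatness of the covering map together with vanishing of the relevant non-trivial cohomologies of its structure sheaf.

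Beyond this parity issue, the remaining steps are routine: the theorem provides the key cohomological vanishing, Lemma~\ref{AL_Lemma} converts it to the Koszul vanishing in Definition~\ref{Intro_def_$(M_q)$}, and normal generation of $K_{X} + \ell A$ for $\ell \geq 2q - 2 \geq 2$ is either known or follows from the same base-point-free pencil trick on the curve $C \in |B|$ exploited throughout this paper.
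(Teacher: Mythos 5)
Your reduction $B:=2A$ is clearly the derivation the bound $\ell\geq 2q-2=2(q-1)$ is designed for (the paper states this corollary without proof, so the preceding theorem with $\ell'=m\geq q-1$ is the only available source), and in the even case $\ell=2m$ your argument is correct: $L=L'\equiv K_{X}+mB$ with $(B^{2})=4(A^{2})\geq 8\geq 5$, the vanishing of $H^{1}(M_{L}^{\otimes(k+1)}\otimes(K_{X}+L))$ for $0\leq k\leq q-2$, the passage to wedge powers (a direct summand in characteristic zero, and in fact unnecessary, since the remark after Lemma \ref{AL_Lemma} works with tensor powers), and then property $(m_{q})$. The genuine gap is the one you flag yourself: for odd $\ell=2m+1$ the bundle $K_{X}+\ell A$ is not numerically a multiple of $B=2A$ (ampleness of $A$ forces $\ell A\equiv 2mA$ only when $\ell=2m$), so the preceding theorem, which is stated only for $L\equiv \ell B,\ L'\equiv \ell' B$ with a single base-point-free $B$, does not apply, and neither do Theorem \ref{CM_theorem} or the results of Section \ref{nef_canonical}, which also treat only multiples of a fixed base-point-free bundle. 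Your proposed remedies are sketches, not proofs: the sequence $0\to M_{mB}\to M_{L}\to \mathcal{Q}\to 0$ is not constructed (a section of $A$ does give a map $M_{mB}\to M_{L}$, but you have no control of its cokernel or of $H^{1}$ of its twisted tensor powers), Koszul-type cohomology is not invariant under numerical twists (only the intersection-theoretic inputs to the slope estimates are), and weight-one vanishing does not simply descend along the \'etale abelian cover of a bielliptic surface, since sections and cohomology upstairs split off twists by torsion bundles. What would honestly close the odd case is a two-bundle version of the preceding theorem in the style of the Enriques statements quoted from \cite{BG99}, applied to $\ell A=(m-1)(2A)+3A$ with $3A$ base point free; that is in neither the paper nor your write-up.

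Two smaller repairs. Base-point-freeness of $2A$ on a bielliptic surface is cleanest via Reider's theorem: $(2A-K_{X})^{2}=4(A^{2})\geq 8$, these surfaces contain no curve of negative self-intersection, and $E^{2}=0$ forces $(2A)\cdot E\geq 2$; the \'etale-cover argument you mention does not descend base points. For the upgrade from $(m_{q})$ to $(M_{q})$, \cite{BL25} gives normal generation of $K_{X}+mB$ only for $m\geq 3$; for $m=2$ you should instead invoke the Appendix of this paper (normal generation of $K_{X}+\ell B$ for $\ell\geq 2$ when $K_{X}\equiv 0$). The boundary value $q=2,\ \ell=2$ (so $m=1$) cannot be rescued at all: on a principally polarized abelian surface $L=2\Theta$ has $h^{0}(L)=4$ and $h^{0}(2L)=16>10=\dim\mathrm{Sym}^{2}H^{0}(L)$, so $L$ is not projectively normal and hence not $(M_{2})$ in the sense of Definition \ref{Intro_def_$(M_q)$}; so the corollary as stated needs $\ell\geq\max(2q-2,4)$ or an extra hypothesis there, and no proof along these lines can cover it.
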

\subsection{Varieties with nef canonical bundle}
\label{Section_Varieties_nef_bundle}
Finally, we end this section by generalizing of Theorem \ref{theorem_$(M_q)$nef_canonical_surface} in dimension $n \geq 3$ in the following Theorem \ref{nef_general_theorem}. For this we use the method of hyperplane cuts, but before stating the theorem, we need the following lemma for the inheritance of cohomology vanishing through the floors down to the surface for the hyperplane stratification:
\begin{equation}
\label{hyperplane_strata_K_nef}
        Y := X^{(n-2)} \subset \cdots \subset X^{(c)} \subset \cdots \subset X^{(1)} \subset X^{(0)} = X \hspace{0.5cm} \text{ for integers } 0 \leq c \leq n-2
    \end{equation}
    till codimension $(n-2)$ of $X$ via hyperplane cuts from $|B|.$
\begin{lemma}
\label{inheritance_cohomology_vanishing}
    If $X$ is any smooth projective variety, then for any integer $1 \leq c \leq n-2$ and any $1 \leq i < n-c,$ one has
    \begin{equation}
        h^{i}(\mathcal{O}_{X}) = 0 \implies h^{i}(\mathcal{O}_{X^{(k)}}) = 0, \text{ for all } 1 \leq k \leq c.
    \end{equation}
\end{lemma}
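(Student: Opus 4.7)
The plan is to prove this by induction on $k$, using the standard restriction short exact sequence
\begin{equation*}
0 \to \mathcal{O}_{X^{(k-1)}}(-B) \to \mathcal{O}_{X^{(k-1)}} \to \mathcal{O}_{X^{(k)}} \to 0
\end{equation*}
coming from the fact that each $X^{(k)}$ is cut out in $X^{(k-1)}$ by a smooth member of $|B|_{X^{(k-1)}}|$. Taking the long exact sequence in cohomology yields the three-term fragment
\begin{equation*}
H^{i}(\mathcal{O}_{X^{(k-1)}}) \longrightarrow H^{i}(\mathcal{O}_{X^{(k)}}) \longrightarrow H^{i+1}(\mathcal{O}_{X^{(k-1)}}(-B)),
\end{equation*}
so it suffices to show that both flanking groups vanish.

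For the base case $k=1$, the left-hand group is $H^{i}(\mathcal{O}_{X})$, which vanishes by hypothesis, while the right-hand group $H^{i+1}(\mathcal{O}_{X}(-B))$ vanishes by Kodaira vanishing applied to the ample line bundle $B$ on the $n$-dimensional variety $X$, because $i+1 \le n-c < n$ in the allowed range. For the inductive step, assuming $h^{i}(\mathcal{O}_{X^{(k-1)}}) = 0$, we only need to control $H^{i+1}(\mathcal{O}_{X^{(k-1)}}(-B))$. Since $X^{(k-1)}$ is smooth projective of dimension $n-k+1$ and $B|_{X^{(k-1)}}$ is ample (restriction of an ample bundle to a subvariety is ample), Kodaira vanishing on $X^{(k-1)}$ gives this vanishing provided $i+1 < n-k+1$, i.e. $i < n-k$.

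The last numerical check is the only place where the hypothesis $i < n-c$ is used: since $k \le c$, we have $n-c \le n-k$, hence $i < n-c \le n-k$, so Kodaira applies at every stage of the induction up to $k = c$. There is no real obstacle here; the only subtlety worth flagging is that one must keep track of the dimension of the stratum as $k$ grows so that the Kodaira range $i+1 < \dim X^{(k-1)}$ remains satisfied, which is precisely what the cutoff $i < n-c$ guarantees. Putting the base case and the inductive step together completes the proof.
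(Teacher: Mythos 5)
Your proof is correct and follows essentially the same route as the paper: induction along the stratification via the short exact sequence $0 \to \mathcal{O}_{X^{(k-1)}}(-B) \to \mathcal{O}_{X^{(k-1)}} \to \mathcal{O}_{X^{(k)}} \to 0$, with the flanking term $H^{i+1}(\mathcal{O}_{X^{(k-1)}}(-B))$ killed by Kodaira vanishing in the range guaranteed by $i < n-c$. Your write-up simply makes the dimension bookkeeping on each stratum explicit, which the paper leaves implicit.
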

\begin{proof}
    The proof follows by taking the long-exact sequence of cohomology of the short-exact sequence
    \begin{equation}
        0 \rightarrow \mathcal{O}_{X^{(k-1)}}(-B) \rightarrow  \mathcal{O}_{X^{(k-1)}} \rightarrow  \mathcal{O}_{X^{(k)}} \rightarrow 0
    \end{equation}
    and applying $H^{i+1}(\mathcal{O}_{X^{(k-1)}}(-B)) = 0$ inductively in the range of $i, k$ and $c.$ The latter follows from the Kodaira vanishing theorem.
\end{proof}
\begin{theorem}
\label{nef_general_theorem}
    Let $X$ be a smooth projective variety with nef canonical bundle of dimension $n$ with $H^{1}(\mathcal{O}_{X}) = 0$. Let $d \geq 1$ be a rational number and $B$ be an ample and base point free line bundle on $X$ such that $dB-K_{X}$ is nef. Let $\ell, \ell', k$ be integers and assume that line bundle $\Lambda = K_{X}+(n-1) B$ is base point free. Then, for any nef line bundle $N$ and $\mathcal{L}_{\ell} = K_{X}+\ell B, \ \mathcal{L}_{\ell'} = K_{X}+\ell'B$ one has the vanishing
    \begin{equation}
    \label{vanishing_for_nef}
        H^{1}(M^{\otimes (k+1)}_{\mathcal{L}_{\ell}} \otimes (K_{X}+m \mathcal{L}_{\ell'}+N)) = 0, \hspace{1cm} \text{ for }  0 \leq k \leq q-n \text{ and for all } m \geq 1
    \end{equation}
    provided $\ell$ and $\ell'$ are integers satisfying
    \begin{equation}
    \label{nef_condition}
        \ell \geq d+n, \ell' \geq 2+\floor[\bigg]{\dfrac{2q-2n+5}{(B^{n})}}
    \end{equation}
    In particular, the vanishing (\ref{vanishing_for_nef}) is true for
    \begin{equation}
        \ell \geq d+n, \ell' \geq q-n+4
    \end{equation}
\end{theorem}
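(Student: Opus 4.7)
The plan is to mimic the structure of Theorem \ref{theorem_$(M_q)$nef_canonical_surface} but now use the method of hyperplane cuts from Section \ref{hyperplane_cuts} to cut the dimension down by $n-2$ and land on the surface $Y = X^{(n-2)}$, where the statement already established in Theorem \ref{theorem_$(M_q)$nef_canonical_surface} (and its Corollary) can be invoked. By Bertini I would first choose the stratification
\[
Y = X^{(n-2)} \subset X^{(n-3)} \subset \cdots \subset X^{(1)} \subset X^{(0)} = X
\]
coming from $|B|$. By adjunction $K_Y = (K_X+(n-2)B)|_Y$ is nef (as $K_X$ is nef and $B$ ample), $B_Y$ is ample and base point free on $Y$, $\Lambda_Y = (K_X+(n-1)B)|_Y = K_Y + B_Y$ is base point free, and if $dB - K_X$ is nef on $X$ then $(d+n-2)B_Y - K_Y = (dB-K_X)|_Y$ is nef on $Y$. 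Thus the hypotheses of Theorem \ref{theorem_$(M_q)$nef_canonical_surface} hold on $Y$ with the rational number $d$ replaced by $d' = d+n-2$, and $(B_Y^2) = (B^n)$.

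Next I would prove the vanishing \eqref{vanishing_for_nef} by induction on $c$, traversing the stratification from $X$ down to $Y$. The base of the induction, i.e.\ the corresponding vanishing on $Y$, is exactly what the corollary of Theorem \ref{theorem_$(M_q)$nef_canonical_surface} gives, after shifting: on $Y$ we need the statement for $0 \le k \le (q-n+2)-2 = q-n$, which yields the numerator $2(q-n+2)+1 = 2q-2n+5$ appearing in \eqref{nef_condition}; the lower bound $\ell \geq d' + 2 = d+n$ comes from the same Corollary. For the inductive step, suppose the vanishing is known on $X^{(c)}$ and consider $X^{(c-1)}$. Tensoring the restriction sequence
\[
0 \to \mathcal{O}_{X^{(c-1)}}(-B) \to \mathcal{O}_{X^{(c-1)}} \to \mathcal{O}_{X^{(c)}} \to 0
\]
by $M_{\mathcal{L}_\ell}^{\otimes (k+1)} \otimes (K_X + m\mathcal{L}_{\ell'} + N)|_{X^{(c-1)}}$ and taking cohomology reduces the vanishing on $X^{(c-1)}$ to (a) the vanishing on $X^{(c)}$ (which is the inductive hypothesis, modulo replacing the kernel bundle by its restriction, which is legitimate since $H^1(\mathcal{L}_\ell - B) = 0$ by Kodaira vanishing when $\ell \geq d+n$, permitting the use of Lemma \ref{BG99_Lemma_(2.9)} / Observation \ref{[Observation-(1.3)]{BG00}}), and (b) the vanishing of $H^2$ of the twist by $-B$ of the same bundle on $X^{(c-1)}$, which is handled by another restriction sequence and Kodaira/Kawamata--Viehweg, using that $dB-K_X$ is nef and $N$ is nef. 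The hypothesis $H^1(\mathcal{O}_X) = 0$ together with Lemma \ref{inheritance_cohomology_vanishing} ensures that the navigation between floors is unobstructed in the relevant cohomological range.

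The main obstacle will be the careful bookkeeping in the inductive lift: one must repeatedly check that after restricting the kernel bundle $M_{\mathcal{L}_\ell}$ to each $X^{(c)}$ one may identify it with $M_{(\mathcal{L}_\ell)|_{X^{(c)}}}$ up to cohomologically harmless kernels, and that the shifts $K_X + m\mathcal{L}_{\ell'} + N$ restrict, via adjunction $K_{X^{(c)}} = (K_X + cB)|_{X^{(c)}}$, to expressions of the same shape $K_{X^{(c)}} + m\mathcal{L}_{\ell'}^{(c)} + N^{(c)}$ with $N^{(c)}$ nef, so that the inductive hypothesis is genuinely applicable. The bounds \eqref{nef_condition} are precisely what is needed to absorb both the $+n$ shift produced by the $n-2$ adjunctions and the additional positivity required to kill the $H^2(\,\cdot\,(-B))$ error terms at each floor. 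Finally, the simplified form $\ell' \geq q-n+4$ follows because $(B^n) \geq 2$ on any variety with nef $K_X$ and base point free $\Lambda$ (as in Lemma \ref{Lemma_nef_canonical}(b)), so $2 + \lfloor (2q-2n+5)/(B^n) \rfloor \leq 2 + \lfloor (2q-2n+5)/2 \rfloor = q-n+4$.
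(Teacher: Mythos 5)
Your setup (the hyperplane stratification from $|B|$ down to $Y = X^{(n-2)}$, the adjunction bookkeeping $d' = d+n-2$, $(B_Y^{2}) = (B^{n})$, $q_Y - 2 = q-n$ giving the numerator $2q-2n+5$ and the bound $\ell \geq d+n$, the role of $H^{1}(\mathcal{O}_{X})=0$ via Lemma \ref{inheritance_cohomology_vanishing}, and the final simplification using $(B^{n}) \geq 2$) matches the paper. But the inductive lift from $X^{(c)}$ to $X^{(c-1)}$ as you describe it has a genuine gap. Writing $E = M^{\otimes (k+1)}_{\mathcal{L}_{\ell}} \otimes (K_{X}+m\mathcal{L}_{\ell'}+N)$ restricted to $X^{(c-1)}$, the long exact sequence of
$0 \to E(-B) \to E \to E|_{X^{(c)}} \to 0$
gives $H^{1}(E(-B)) \to H^{1}(E) \to H^{1}(E|_{X^{(c)}})$, so knowing the vanishing on $X^{(c)}$ only forces $H^{1}(E)$ to be a quotient of $H^{1}(E(-B))$; what you must kill is $H^{1}$ of the twist by $-B$, not $H^{2}$ as you state. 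That twisted group is a vanishing of exactly the same type (same tensor power $k+1$ of the kernel bundle, same floor $X^{(c-1)}$, just a less positive twist), so the reduction is circular as it stands; and it cannot be disposed of ``by Kodaira/Kawamata--Viehweg,'' since $E(-B)$ contains the factor $M^{\otimes(k+1)}_{\mathcal{L}_{\ell}}$, which is not a line bundle (and not nef), so those theorems simply do not apply to it.

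This is precisely the point where the paper's argument is structured differently from yours: it never restricts the $H^{1}$-vanishing statement directly. Instead it runs a double induction, forward on the tensor power $k$ and backward on $c$, and converts the vanishing for power $k+1$ into the surjectivity of a multiplication map on sections of $M^{\otimes k}_{\mathcal{L}_{\ell}}\otimes(\cdot)$, using the already-known vanishing for power $k$. That map is then split by Observation \ref{[Observation-(1.2)]{BG00}} into multiplication by $H^{0}(B)$ and by $H^{0}(\Lambda)$ (the decomposition $\mathcal{L}_{\ell} = (\ell-n+1)B+\Lambda$), and each of these is restricted floor by floor using Observation \ref{[Observation-(1.3)]{BG00}} and Lemma \ref{BG99_Lemma_(2.9)}; the hypotheses of those two results are exactly $H^{1}$ vanishings with kernel-bundle power $k$ (one less than the statement being proved), which is what makes the induction close up. Your proposal sets up no induction on $k$ at all and only gestures at Lemma \ref{BG99_Lemma_(2.9)} via the line-bundle vanishing $H^{1}(\mathcal{L}_{\ell}-B)=0$, which is not sufficient for Observation \ref{[Observation-(1.3)]{BG00}}. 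To repair the proof you would need to add the induction on $k$ and recast the floor-by-floor descent as an induction on the surjectivity of the two restricted multiplication maps $\beta^{(c)}$ and $\gamma^{(c)}$, as the paper does.
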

\begin{proof}
    We use induction for this proof. We prove the vanishing
    \begin{equation}
    \label{vanishing_nef_-1}
        H^{1}(M^{\otimes k}_{\mathcal{L}_{\ell}} \otimes (K_{X}+m\mathcal{L}_{\ell'}+N)) = 0, \hspace{1cm} \text{ for }  0 \leq k \leq q-n+1 \text{ and for all } m \geq 1
    \end{equation}
    because the vanishing (\ref{vanishing_nef_-1}) is just the Kodaira vanishing for $k = 0$ and (\ref{vanishing_for_nef}) can be obtained from (\ref{vanishing_nef_-1}) if one replaces the power $k$ by $k+1$ when $1 \leq k \leq q-n+1.$ This helps us with the inductive argument. For the induction, we take the hyperplane stratification (\ref{hyperplane_strata_K_nef}). We proceed by forward induction on $k$ and backward induction on $c.$ \par
    Fixing some $1 \leq k \leq q-n$ we now take the following vanishing as our first induction hypothesis
    \begin{equation}
    \label{first_induction_hypothesis}
        H^{1}(M^{\otimes k^{(c)}}_{(\mathcal{L}_{\ell})_{X^{(c)}}} \otimes (K_{X^{(c)}}+m^{(c)}(\mathcal{L}_{\ell'})_{X^{(c)}}+N^{(c)})) = 0 
    \end{equation} 
  $\text{ and for all integers }  m^{(c)} \geq 1,$ for all  $0 \leq k^{(c)} \leq k+1$ when $ 1 \leq c \leq n-3,$ for all $0 \leq k^{(0)} < k+1$ when $0 \leq c \leq n-3 $ and any nef line bundle $N^{(c)}$ . \par
  We begin the induction on c from $c = n-2.$ We treat this case first. Since $K_{X}$ is nef, from Remark \ref{adjunction_hyperplane_strata}, $X^{(c)}$ is a variety of general type for all $1 \leq c \leq n-2.$ Moreover, since $h^{1}(\mathcal{O}_{X}) = 0,$ from Lemma \ref{inheritance_cohomology_vanishing} one has $h^{1}(\mathcal{O}_{X^{(c)}}) = 0$ for all $0 \leq c \leq n-2.$ Thus, $Y = X^{(n-2)}$ is a surface of general type and by adjunction, $(dB-K_{X})|_{Y} = (d+(n-2))B_{Y}-K_{Y}$ and therefore one can apply Theorem \ref{theorem_$(M_q)$nef_canonical_surface} on $(Y, B_{Y})$ to obtain the vanishing (\ref{first_induction_hypothesis}) for $c = n-2, 0 \leq k^{(c)} = k^{(n-2)} \leq k+1,$ which is the case of surfaces with nef canonical bundle. Indeed, by our hypothesis,
  \begin{equation}
  \label{beginning_step_induction_c=n-2}
      \ell \geq d_{Y}+2, \ell' \geq 2+\floor[\bigg]{\dfrac{2q_{Y}+1}{(B^{2}_{Y})}}
  \end{equation}
  where $q_{Y}$ is defined by $q_{Y}-2 = q-n$ and $d_{Y} = d+(n-2).$ Thus, (\ref{first_induction_hypothesis}) follows by replacing $d$ by $d_{Y},$ $q$ by $q_{Y},$ $B$ by $B_{Y}$ and so on in Theorem \ref{theorem_$(M_q)$nef_canonical_surface}. The condition (\ref{beginning_step_induction_c=n-2}) gives the condition (\ref{nef_condition}) in the statement of our theorem \par
  Now, we are ready to prove the vanishing
  \begin{equation}
        H^{1}(M^{\otimes (k+1)}_{\mathcal{L}_{\ell}} \otimes (K_{X}+m \mathcal{L}_{\ell'}+N)) = 0 \hspace{0.5cm} \text{ for any integer }  m \geq 1,
    \end{equation} 
  To show this vanishing it is enough to prove the surjectivity
    $$
        H^{0}(M^{\otimes k}_{\mathcal{L}_{\ell}} \otimes (K_{X}+m \mathcal{L}_{\ell'}+N)) \otimes H^{0}(\mathcal{L}_{\ell}) \xrightarrow[]{\alpha} H^{0}(M^{\otimes k}_{\mathcal{L}_{\ell}} \otimes (K_{X}+m\mathcal{L}_{\ell'}+N)), \hspace{0.2cm} \text{ for } m \geq 1
    $$
    By Observation \ref{[Observation-(1.2)]{BG00}}, to prove the surjectivity of $\alpha$ it is enough to show the surjectivity of
    $$
        H^{0}(M^{\otimes k}_{\mathcal{L}_{\ell}} \otimes (K_{X}+m\mathcal{L}_{\ell'}+N)) \otimes H^{0}(B) \overset{\beta}{\longrightarrow} H^{0}(M^{\otimes k}_{\mathcal{L}_{\ell}} \otimes (K_{X}+m\mathcal{L}_{\ell'}+B+N))
    $$
    and 
    $$
        H^{0}(M^{\otimes k}_{\mathcal{L}_{\ell}} \otimes (K_{X}+m\mathcal{L}_{\ell'}+(\ell-n+1)B+N)) \otimes H^{0}(\Lambda) \overset{\gamma}{\longrightarrow} H^{0}(M^{\otimes k}_{\mathcal{L}_{\ell}} \otimes (K_{X}+m\mathcal{L}_{\ell'}+\mathcal{L}_{\ell}+N))
    $$
    Our first step is to show the surjectivity of the map $\beta.$ \par
    \underline{\textbf{Step-1}} \hspace{0.1cm} 
    For this, taking $\mathcal{L}^{(c)}_{\ell} = K_{X^{(c)}}+\ell B_{X^{(c)}}, P^{(c)} = K_{X}+m^{(c)}\mathcal{L}_{\ell'}+N^{(c)}, P = P^{(0)}$ we assume the surjectivity the following vanishing as our second induction hypothesis
    \begin{small}
    \begin{equation}
        H^{0}(M^{\otimes k}_{(\mathcal{L}_{\ell})_{X^{(c)}}} \otimes  P^{(c)}) \otimes H^{0}(B_{X^{(c)}}) \overset{\beta^{(c)}}{\longrightarrow} H^{0}(M^{\otimes k}_{(\mathcal{L}_{\ell})_{X^{(c)}}} \otimes (P^{(c)}+B_{X^{(c)}}))
    \end{equation}
    \end{small}
    for any $m^{(c)} \geq 1, m^{(0)} = m$ provided $1 \leq c \leq n-3,$ and for any nef line bundle $N^{(c)}$ on $X^{(c)}$ with $N^{(0)} = N.$  We have already proved this vanishing for $c = n-2,$ and now using backward induction on $c$ we show it for $c = 0$. \\
    By our first induction hypothesis on $k$, 
    $$H^{1}(M^{\otimes k}_{\mathcal{L}_{\ell}} \otimes(P-B)) = H^{1}(M^{\otimes k}_{\mathcal{L}_{\ell}} \otimes (K_{X}+\mathcal{L}_{m\ell-1}+N') = 0,$$ 
    where $N' = (m-1)K_{X}+N$ is nef since all of the line bundles $K_{X}, B, N$ and $\ell > 1$ and $m \geq 1$. Therefore, by Observation \ref{[Observation-(1.3)]{BG00}}, $\beta$ is surjective if and only if its restriction
    \begin{small}
    $$
        H^{0}(M^{\otimes k}_{\mathcal{L}_{\ell}} \otimes P |_{X^{(1)}}) \otimes H^{0}(B_{X^{(1)}}) \overset{\beta'}{\longrightarrow} H^{0}(M^{\otimes k}_{\mathcal{L}_{\ell}} \otimes (P+B)|_{X^{(1)}})
    $$
    \end{small}
    is surjective, which by adjunction $P|_{X^{(1)}} = K_{X^{(1)}}+\mathcal{L}^{(1)}_{m\ell'-2}+N'_{X^{(1)}}$ is
    \begin{small}
    $$
        H^{0}(M^{\otimes k}_{\mathcal{L}_{\ell}} \otimes  (K_{X^{(1)}}+(\mathcal{L}_{m\ell'-1})_{X^{(1)}}+N'_{X^{(1)}})) \otimes H^{0}(B_{X^{(1)}}) \overset{\beta'}{\longrightarrow} H^{0}(M^{\otimes k}_{\mathcal{L}_{\ell}} \otimes (K_{X^{(1)}}+(\mathcal{L}_{m\ell'-1})_{X^{(1)}}+B_{X^{(1)}}+N'_{X^{(1)}}))
    $$
    \end{small}
    and since $\ell \geq 2,$ by Kodaira vanishing $H^{1}(\mathcal{L}_{\ell}-B) = H^{1}(K_{X}+(\ell-1)B) = 0,$ $\beta'$ is surjective, by Lemma \ref{BG99_Lemma_(2.9)} if
    \begin{small}
    $$
        H^{0}(M^{\otimes k}_{\mathcal{L}^{(1)}_{\ell-1}} \otimes  (K_{X^{(1)}}+\mathcal{L}^{(1)}_{m\ell'-2(1)}+N'_{X^{(1)}})) \otimes H^{0}(B_{X^{(1)}}) \xrightarrow[\hspace{0.5cm}]{\beta^{(1)}} H^{0}(M^{\otimes k}_{\mathcal{L}^{(1)}_{\ell}} \otimes (K_{X^{(1)}}+\mathcal{L}^{(1)}_{m\ell'-2(1)}+B_{X^{(1)}}+N'_{X^{(1)}}))
    $$
    \end{small}
    is surjective. Repeating the above argument, we get that $\beta^{(1)}$ is surjective if
    \begin{small}
    $$
        H^{0}(M^{\otimes k}_{\mathcal{L}^{(c)}_{\ell-c}} \otimes  (K_{X^{(c)}}+\mathcal{L}^{(c)}_{m\ell'-2(c)}+N'_{X^{(c)}})) \otimes H^{0}(B_{X^{(c)}}) \xrightarrow[\hspace{0.5cm}]{\beta^{(c)}} H^{0}(M^{\otimes k}_{\mathcal{L}^{(c)}_{\ell-c}} \otimes (K_{X^{(c)}}+\mathcal{L}^{(c)}_{m\ell'-2(c)}+B_{X^{(c)}}+N'_{X^{(c)}}))
    $$
    \end{small}
    is surjective for some $1 \leq c \leq n-2.$ The latter follows from our second induction hypothesis on $c.$ This proves the surjectivity of $\beta.$ Now we prove the surjectivity of the map $\beta$ in our next step. \\ 
    \underline{\textbf{Step-2}} \hspace{0.1cm} For this, taking $Q^{(c)} = K_{X^{(c)}}+m^{(c)}L_{X^{(c)}}+(\ell-n+1))B_{X^{(c)}}+N^{(c)}, Q = Q^{(0)}$ and $w = m\ell'+\ell-d-2(n-1)$ we assume the surjectivity the following vanishing as our third induction hypothesis
    \begin{equation}
        H^{0}(M^{\otimes k}_{(\mathcal{L}_{\ell})_{X^{(c)}}} \otimes  Q^{(c)}) \otimes H^{0}(\Lambda_{X^{(c)}}) \overset{\gamma^{(c)}}{\longrightarrow} H^{0}(M^{\otimes k}_{(\mathcal{L}_{\ell})_{X^{(c)}}} \otimes (K_{X^{(c)}}+\mathcal{L}_{\ell}+m^{(c)}(\mathcal{L}_{\ell'})_{X^{(c)}}+N^{(c)}))
    \end{equation}
    for any $m^{(c)} \geq 1, m^{(0)} = m$ provided $1 \leq c \leq n-3,$ and for any nef line bundle $N^{(c)}$ on $X^{(c)}$ with $N^{(0)} = N.$  We have already proved this vanishing for $c = n-2$ which is the case of surfaces and now using backward induction on $c$ we show it for $c = 0$. \\
    By our first induction hypothesis on $k$, 
    $$H^{1}(M^{\otimes k}_{\mathcal{L}_{\ell}} \otimes (Q-\Lambda)) = H^{1}(M^{\otimes k}_{\mathcal{L}_{\ell}} \otimes (K_{X}+\mathcal{L}_{w}+N'') = 0,$$ 
    where $N'' = (dB-K_{X})+N'$ is nef since all of the line bundles $K_{X}, B, N$ are nef and
   \begin{equation}
       \ell+w \geq d+4
   \end{equation}
   as $$m\ell'+2\ell \geq \ell'+2\ell \geq \left(2+\floor[\bigg]{\dfrac{2q-2n+5}{(B^{n})}} \right)+2(d+n) \geq 2d+2n+2$$
    Therefore, $\gamma$ is surjective if and only if its restriction
    \begin{small}
    $$
        H^{0}(M^{\otimes k}_{\mathcal{L}_{\ell}} \otimes Q |_{X^{(1)}}) \otimes H^{0}(\Lambda_{X^{(1)}}) \overset{\gamma'}{\longrightarrow} H^{0}(M^{\otimes k}_{\mathcal{L}_{\ell}} \otimes (K_{X}+m\mathcal{L}_{\ell'}+\mathcal{L}_{\ell}+N)|_{X^{(1)}})
    $$
    \end{small}
    which by adjuction $(K_{X}+m\mathcal{L}_{\ell}+N)|_{X^{(1)}} = K_{X^{(1)}}+(\mathcal{L}_{w})_{X^{(1)}}+N''_{X^{(1)}}$ is
    \begin{equation}
        H^{0}(M^{\otimes k}_{\mathcal{L}_{\ell}} \otimes  (K_{X^{(1)}}+(\mathcal{L}_{w})_{X^{(1)}}+N''_{X^{(1)}})) \otimes H^{0}(\Lambda_{X^{(1)}})
    \end{equation}
    $$
        \xrightarrow[\hspace{0.5cm}]{\gamma^{(1)}} H^{0}(M^{\otimes k}_{\mathcal{L}_{\ell}} \otimes (K_{X^{(1)}}+(\mathcal{L}_{w})_{X^{(1)}}+\Lambda_{X^{(1)}}+N''_{X^{(1)}}))
    $$
    and since $H^{1}(\mathcal{L}_{\ell}-B) = H^{1}(K_{X}+(\ell-1)B+N) = 0,$ $\gamma'$ is surjective, by Lemma-(\ref{BG99_Lemma_(2.9)}) (\cite[Lemma-(2.9)]{BG00}) above, if
    \begin{equation}
        H^{0}(M^{\otimes k}_{(\mathcal{L_{\ell}})_{X^{(1)}}} \otimes  (K_{X^{(1)}}+(\mathcal{L}_{w})_{X^{(1)}}+N''_{X^{(1)}})) \otimes H^{0}(\Lambda_{X^{(1)}})
    \end{equation}
    $$
        \xrightarrow[\hspace{0.5cm}]{\gamma^{(1)}} H^{0}(M^{\otimes k}_{(\mathcal{L_{\ell}})_{X^{(1)}}} \otimes ((K_{X^{(1)}}+(\mathcal{L}_{w})_{X^{(1)}}+\Lambda_{X^{(1)}}+N''_{X^{(1)}}))
    $$
    is surjective. Which follows from our second induction hypothesis on $c.$ This proves the surjectivity of $\gamma.$
\end{proof}
\begin{corollary}
\label{vanishing_for_general_m}
    Let $X$ be a smooth projective variety with nef canonical bundle of dimension $n$ with $H^{1}(\mathcal{O}_{X}) = 0$. Let $d \geq 1$ be a rational number and $B$ be an ample and base point free line bundle on $X$ such that $dB-K_{X}$ is nef. Let $\ell, \ell', m, k$ be integers and assume that line bundle $\Lambda = K_{X}+(n-1) B$ is base point free. Then, for any nef line bundle $N$ and $\mathcal{L}_{\ell} = K_{X}+\ell B, \ \mathcal{L}_{\ell'} = K_{X}+\ell'B$ one has the vanishing
    \begin{equation}
    \label{vanishing_for_nef_corollary}
        H^{1}(M^{\otimes (k+1)}_{\mathcal{L}_{\ell}} \otimes (K_{X}+m \mathcal{L}_{\ell'}+N)) = 0, \hspace{1cm} \text{ for }  0 \leq k \leq q-n
    \end{equation}
    provided $\ell$ and $\ell'$ are integers satisfying
    \begin{equation}
        \ell \geq d+n, m\ell' \geq 2+\floor[\bigg]{\dfrac{2q-2n+5}{(B^{n})}}
    \end{equation}
    In particular, the vanishing (\ref{vanishing_for_nef_corollary}) is true for
    \begin{equation}
    \label{conditions_for_vanishing_of_nef}
        \ell \geq d+n, m\ell' \geq q-n+4
    \end{equation}
\end{corollary}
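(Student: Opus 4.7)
The plan is to reduce the statement to the $m=1$ case of Theorem \ref{nef_general_theorem} by absorbing the extra copies of $K_X$ into a nef twist and combining the $m$ copies of $\ell' B$ into a single bundle. First I would rewrite the test line bundle as
\begin{equation*}
K_X + m\mathcal{L}_{\ell'} + N \;=\; K_X + m(K_X + \ell' B) + N \;=\; K_X + \mathcal{L}_{m\ell'} + N',
\end{equation*}
where $N' := (m-1)K_X + N$. Since $K_X$ is nef by hypothesis and $N$ is nef, the combination $N'$ is a sum of nef line bundles and hence nef.

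Next I would apply Theorem \ref{nef_general_theorem} to the smooth projective variety $X$ with the same rational number $d$, the same ample base point free line bundle $B$ (so that $dB - K_X$ and $\Lambda = K_X + (n-1)B$ are as required), and with the adjoint bundle in the second slot replaced by $\mathcal{L}_{m\ell'} = K_X + (m\ell')B$, taking the multiplier there to be $1$ and taking the nef twist to be $N'$. The hypothesis of that theorem demands $\ell \ge d+n$, which is exactly our assumption, together with $m\ell' \ge 2 + \floor[\bigg]{\dfrac{2q-2n+5}{(B^{n})}}$, which is the second standing assumption here. The theorem then yields
\begin{equation*}
H^{1}\bigl(M^{\otimes (k+1)}_{\mathcal{L}_{\ell}} \otimes (K_{X}+\mathcal{L}_{m\ell'}+N')\bigr) = 0
\end{equation*}
for $0 \le k \le q-n$, and by the rewriting above this is exactly the desired vanishing. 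The simplified bound $m\ell' \ge q-n+4$ follows from the inequality $(B^n) \ge 2$ established in Lemma \ref{Lemma_nef_canonical}(b), which gives $2 + \floor{(2q-2n+5)/(B^n)} \le 2 + \floor{(2q-2n+5)/2} = q-n+4$.

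The only potentially subtle step is verifying that $N' = (m-1)K_X + N$ really is nef, but this is immediate from the nefness hypothesis on $K_X$ and $N$ together with $m \ge 1$; no further induction or hyperplane section argument is needed. Thus the corollary is a direct specialization of the theorem once the bookkeeping of the canonical class is absorbed into the nef twist.
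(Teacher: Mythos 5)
Your proposal is correct and matches the paper's own proof essentially verbatim: both absorb $(m-1)K_{X}$ into the nef twist $N' = (m-1)K_{X}+N$, apply Theorem \ref{nef_general_theorem} with $\ell'$ replaced by $m\ell'$ (and multiplier $1$), and deduce the simplified bound from $(B^{n}) \geq 2$ via Lemma \ref{Lemma_nef_canonical}(b). No gaps.
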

\begin{proof}
    Setting $N' = (m-1)K_{X}+N$ we get $K_{X}+m\mathcal{L}_{\ell'}+N = K_{X}+\mathcal{L}_{m\ell'}+N'.$ Now we apply the statement the Theorem \ref{nef_general_theorem} above with $\ell'$ replaced with $m\ell'$ to show the vanishing $H^{1}(M^{\otimes (k+1)}_{\mathcal{L}_{\ell}} \otimes (K_{X}+\mathcal{L}_{m\ell'}+N'))$ for $0 \leq k \leq q-n.$ This proves (\ref{vanishing_for_nef_corollary}). Now, (\ref{conditions_for_vanishing_of_nef}) follows since $(B^{n}) \geq 2$ implies $2+\floor[\bigg]{\dfrac{2q-2n+5}{2}} = q-n+4 \geq 2+\floor[\bigg]{\dfrac{2q-2n+5}{(B^{n})}}.$
\end{proof}
\begin{corollary}
    Let $X$ be a smooth projective variety with nef canonical bundle of dimension $n$ with $H^{1}(\mathcal{O}_{X}) = 0$. Let $d \geq 1$ be a rational number and $B$ be an ample and base point free line bundle on $X$ such that $dB-K_{X}$ is nef. Let $\ell, k$ be integers and assume that line bundle $\Lambda = K_{X}+(n-1) B$ is base point free. Then, the line bundle $\mathcal{L}_{\ell} = K_{X}+\ell B$ satsifies Property$-(M_{q})$ provided 
    \begin{equation}
        \ell \geq d+n, (n-1)\ell \geq 2+\floor[\bigg]{\dfrac{2q-2n+5}{(B^{n})}}
    \end{equation}
    Moreover, $\mathcal{L}_{\ell}$ satisfies Property$-(M_{q})$ for $\ell \geq \max \left(d+n, \dfrac{q-n+4}{n-1} \right) = \max \left(d+n, \dfrac{q+3}{n-1}-1 \right).$ \\
    In particular, $\mathcal{L}_{\ell}$ satisfies Property$-(M_{q})$ for $\ell \geq \max \left(d+n, \ell^{\mathrm{ceil}}_{q} \right)$ when $n \geq 3$ and for $\ell \geq \max(d+n, q+2)$ when $n = 2,$ where $\ell^{\mathrm{floor}}_{q} = \floor[\bigg]{\dfrac{q+1}{n-1}}.$
    \end{corollary}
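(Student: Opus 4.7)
The plan is to reduce Property-$(M_q)$ for the adjoint bundle $\mathcal{L}_\ell = K_X + \ell B$ to the cohomology vanishing already established in Corollary \ref{vanishing_for_general_m}. First, I apply Lemma \ref{AL_Lemma} with the auxiliary line bundle equal to $\mathcal{O}_X$ and with $L = \mathcal{L}_\ell$; the remark following that lemma shows that $\mathcal{L}_\ell$ satisfies Property-$(m_q)$ as soon as
\[
H^1(M_{\mathcal{L}_\ell}^{\otimes(k+1)} \otimes (K_X + (n-1)\mathcal{L}_\ell)) = 0 \quad \text{for all } 0 \le k \le q-n.
\]
I may assume $q \ge n$ throughout, since the opposite range is covered automatically by Green's $K_{p,1}$-theorem.

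Next, I rewrite $K_X + (n-1)\mathcal{L}_\ell$ in the shape $K_X + m\mathcal{L}_{\ell'} + N$ with $m = n-1$, $\ell' = \ell$, and $N = \mathcal{O}_X$ (trivially nef). Indeed, $(m+1)K_X = nK_X$ and $m\ell' B = (n-1)\ell B$, so the two expressions coincide. Corollary \ref{vanishing_for_general_m} then delivers the required vanishing precisely when $\ell \ge d+n$ and $(n-1)\ell = m\ell' \ge 2 + \floor[\big]{(2q-2n+5)/(B^n)}$, which is the first displayed bound. To upgrade Property-$(m_q)$ to Property-$(M_q)$ I invoke the Bangere--Lacini projective normality result \cite{BL25}, which yields $(N_0)$ for $\mathcal{L}_\ell$ whenever $\ell \ge n+1$; this is automatic since $\ell \ge d+n \ge n+1$.

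For the \emph{moreover} clause, I use Lemma \ref{Lemma_nef_canonical}(b), which guarantees $(B^n) \ge 2$ under the present hypotheses. Since $2q-2n+5$ is odd, one has $\floor[\big]{(2q-2n+5)/(B^n)} \le \floor[\big]{(2q-2n+5)/2} = q-n+2$, so the second condition is implied by $(n-1)\ell \ge q-n+4$, i.e., $\ell \ge (q+3)/(n-1) - 1$. For the \emph{in particular} clause when $n \ge 3$, I check that $(q+3)/(n-1) - 1 \le (q+1)/(n-1)$, which is equivalent to $2 \le n-1$; hence the integer condition $\ell \ge \ell^{\mathrm{ceil}}_q = \ceil[\big]{(q+1)/(n-1)}$ already suffices. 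For $n=2$ the bound $(n-1)\ell \ge q-n+4$ reduces to $\ell \ge q+2$, matching the stated form.

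There is no substantive obstacle here: all the analytic work has been packaged into Corollary \ref{vanishing_for_general_m} (which is where the genuine hyperplane-section induction lives) and into Lemma \ref{AL_Lemma}. The only care required is bookkeeping to verify that the inputs needed to invoke these --- namely $(B^n) \ge 2$, the matching of cohomological shapes $K_X + (n-1)\mathcal{L}_\ell = K_X + (n-1)\mathcal{L}_\ell + 0$, and the integrality comparison against $\ell^{\mathrm{ceil}}_q$ --- are all in place under the hypotheses of the corollary.
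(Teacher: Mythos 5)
Your proposal is correct and follows essentially the same route as the paper: specialize Corollary \ref{vanishing_for_general_m} with $\ell'=\ell$, $m=n-1$, $N=\mathcal{O}_X$ to get the required vanishing $H^{1}(M^{\otimes(k+1)}_{\mathcal{L}_{\ell}}\otimes(K_{X}+(n-1)\mathcal{L}_{\ell}))=0$ for $0\le k\le q-n$, conclude Property-$(m_{q})$ via Lemma \ref{AL_Lemma}, and upgrade to $(M_{q})$ by the projective normality result of \cite{BL25} since $\ell\ge d+n\ge n+1$. Your numerical bookkeeping (using $(B^{n})\ge 2$ from Lemma \ref{Lemma_nef_canonical}(b) and the comparison with $\ell^{\mathrm{ceil}}_{q}$ for $n\ge 3$, respectively $\ell\ge q+2$ for $n=2$) matches the bounds in the statement and is if anything cleaner than the paper's own final inequality manipulation.
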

    \begin{proof}
        This Corollary follows immediately by taking $\ell = \ell'$ and $m = n-1$ in Corollary \ref{vanishing_for_nef_corollary}. Since by \cite[Theorem 1.2]{BL25}, $\mathcal{L}_{\ell}$ is projectively normal for $\ell \geq n+1,$ and in our case $\ell \geq d+n \geq n+1,$ we get Property$-(M_{q}).$ Our last bound follows since 
        $$\ell^{\mathrm{floor}}_{q} = \floor[\bigg]{\dfrac{q+1}{n-1}} = \ceil[\bigg]{\dfrac{q+3}{n-1}}-1 = \ceil[\bigg]{\dfrac{q+1}{n-1}-\left(1-\dfrac{2}{n-1} \right)}$$
    \end{proof}
\section{Varieties with Kodaira dimension $-\infty$}\label{kodaira_negative_infinity}
\subsection{Rational surfaces}
We use this to prove the following theorem for rational surfaces which should be thought of a generalization of the \textit{Gonality Conjecture} to such surfaces. But before we can state the theorem we make the following definitions.
\begin{definition} \textbf{[Gonality of curves and divisors on surfaces]} \\
    Let $C$ be a smooth projective curve of genus $g.$ We define the \textbf{gonality} of $C$ denoted by $\mathrm{gon}(C)$ to be the quantity
    \begin{small}
    \begin{equation}
        \mathrm{gon}(C) = \min \left\{\ d \ |  \text{ a map }  C \xrightarrow[]{d \ :\ 1} \mathbb{P}^{1} \text{ exists } \ \right\} = \max \left\{\ k \ | \ K_{C} \text{ is } (k+2)-\text{very ample } \ \right\}
    \end{equation}
    \end{small}
    Let $X$ be a smooth projective surface and let $L$ be a line bundle on $X.$ Then, we define the following quantities:
    \begin{align}
        \mathrm{gon}_{\max}(L) = \max \left\{ \ \mathrm{gon}(C) \ | \ C \text{ is a smooth curve in } |L| \ \right\} \\
        \mathrm{gon}_{\min}(L) = \min \left\{ \ \mathrm{gon}(C) \ | \ C \text{ is a smooth curve in } |L| \ \right\} \\
        \mathrm{gon}_{\mathrm{gen}}(L) = \left\{ \ \mathrm{gon}(C) \ | \ C \text{ is a smooth general curve in } |L| \ \right\}
    \end{align}
    We also adapt the convention that all of these quantities are zero if the linear system $|L|$ is not effective or has no smooth curve in it. By Bertini's theorem, all of these quantities are non-zero if $L$ is base point free. All of these quantities are same and equal to $d-1$ for $(X, L) = (\mathbb{P}^{2}, dH)$ where $H$ is a line in $\mathbb{P}^{2}$. By \textit{G. Martens}, these quantities are equal to $a$ for $(X, L) = (\mathbb{F}_{e}, aC_{0}+bf).$ By work of \textit{M. Green} and \textit{R. Lazarsfeld} these three quantities are same on any $K3-$surface except for the \textit{Donagi-Morrison} $K3-$surface and these quantities can be different on Abelian surfaces.
\end{definition}
\begin{theorem}
\label{$(M_q)$_rational_surfaces}
Let $X$ be a rational surface and let $L$ be an ample and base point free line bundle on $X$. Let $|L|$ be a linear system containing curves $C$ of genus $g \geq 1$ and $q$ be an integer satisfying $q \geq 2 = \mathrm{dim}(X)$. Then, 
\begin{itemize}
\item[\textbf{(1)} ] If $(-K_{X} \cdot L) \geq q+2,$ then $L$ satisfies property-$(M_{q})$ if and only if $q \leq \mathrm{gon}_{\max}(L).$
\item[\textbf{(2)} ] If $(-K_{X} \cdot L) = q+1,$ then $L$ does not satisfy property-$(M_{q})$ if and only if one of the following holds
\begin{itemize}
    \item[\textbf{(i)} ] $q \geq \mathrm{gon}_{\max}(L),$ or 
    \item[\textbf{(ii)} ] $-K_{X} \otimes \mathcal{O}_{C}$ is very ample, $h^{0}(-K_{X} \otimes \mathcal{O}_{C}) = 3,$ that is if $-K_{X} \otimes \mathcal{O}_{C}$ embeds $C$ as a plane curve of degree $q+1 \geq 4.$
\end{itemize}
\item[\textbf{(3)} ] For $q \geq 3.$ If $(-K_{X} \cdot L) = q,$ then $L$ does not satisfy property-$(M_{q})$ if and only if one of the following holds
\begin{itemize}
    \item[\textbf{(i)} ] $q \geq \mathrm{gon}_{\max}(L),$ or 
    \item[\textbf{(ii)} ] $h^{0}(-K_{X} \otimes \mathcal{O}_{C}) = 2,$ that is if $-K_{X} \otimes \mathcal{O}_{C}$ is a $g^{1}_{q}$ on $C.$
\end{itemize}
\end{itemize}
\end{theorem}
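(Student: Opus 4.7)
The plan is to reduce Property-$(M_q)$ on the rational surface $X$ to Property-$(M_{q-1})$ on a general smooth curve $C \in |L|$ via the hyperplane section technique of Proposition \ref{syzygies_of_floors}, and then invoke the effective form of the gonality conjecture established by W. Niu and J. Park.

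First I would check the hypotheses of Proposition \ref{syzygies_of_floors}. Since $X$ is rational, $h^1(\mathcal{O}_X) = h^2(\mathcal{O}_X) = 0$, so $X$ is regular. I would then verify that the ample line bundle $L$ is non-special: the vanishing $h^2(qL) = h^0(K_X - qL)^\vee = 0$ for $q \geq 1$ is immediate from ampleness, while $h^1(qL) = 0$ follows by writing $qL = K_X + (qL - K_X)$ and applying Kawamata--Viehweg vanishing, using the positivity packaged in the hypothesis $(-K_X \cdot L) \geq q$ (together with ampleness of $L$) to ensure $qL - K_X$ is big and nef. Bertini's theorem provides a smooth $C \in |L|$, and the proposition then yields the key equivalence
\begin{equation}
L \text{ satisfies } (M_q) \text{ on } X \iff L_C \text{ satisfies } (M_{q-1}) \text{ on } C.
\end{equation}

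Next, I would compute the degree: by adjunction $\deg L_C = L^2 = 2g-2 + (-K_X \cdot L)$. Under Part (1)'s hypothesis this gives $\deg L_C \geq 2g + q$, which is comfortably within the range where the effective gonality theorem of Niu--Park applies to $L_C$. That theorem characterizes the non-vanishing of $K_{r_C - i, 1}(C, L_C)$ exactly for $\mathrm{gon}(C) \leq i \leq r_C$, so $L_C$ satisfies $(M_{q-1})$ if and only if $q \leq \mathrm{gon}(C)$. Since the equivalence above holds for every choice of smooth $C \in |L|$, selecting $C$ with $\mathrm{gon}(C) = \mathrm{gon}_{\max}(L)$ gives the ``if'' direction, while failure for any single curve obstructs $(M_q)$ on $X$, giving the ``only if'' direction. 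This proves Part (1).

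For Parts (2) and (3) the degree $\deg L_C$ drops to $2g + q - 1$ and $2g + q - 2$, placing us precisely at the boundary of the Niu--Park effective bound. At this boundary the gonality statement has to be corrected by certain exceptional configurations detected by $K_C - L_C$, and by adjunction $K_C - L_C = -K_X \otimes \mathcal{O}_C$. The plan is to carry out the Niu--Park case analysis explicitly for our setting: the first exception ($q \geq \mathrm{gon}_{\max}(L)$) corresponds to the generic gonality failure, while the second exception translates directly into the geometric statement that $-K_X \otimes \mathcal{O}_C$ either defines a very ample embedding of $C$ as a plane curve of degree $q+1$ (Part (2)) or gives a base-point-free pencil $g^1_q$ on $C$ (Part (3)). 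The main obstacle I anticipate is this boundary case analysis: one has to match the Niu--Park exceptional list (which is phrased in terms of the degree and gonality of the restricted bundle) to the precise geometric data $-K_X \otimes \mathcal{O}_C$ on the surface, ensuring that no exceptional configuration is missed and that the adjunction-based translation is lossless. Verifying non-speciality of $L$ in full generality for arbitrary rational surfaces -- without extra positivity on $-K_X$ -- is the other subtle point that will require care.
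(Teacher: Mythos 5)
Your overall strategy is the same as the paper's: reduce $(M_q)$ on $X$ to $(M_{q-1})$ on a smooth curve $C\in|L|$ via the arithmetically Cohen--Macaulay/hyperplane-section equivalence, compute $\deg L_C=2g-2+(-K_X\cdot L)$ by adjunction, identify $K_C-L_C=-K_X\otimes\mathcal{O}_C$, and invoke the effective gonality theorem of Niu--Park (with the boundary cases of their Theorem 5.2 and Corollary 5.3 giving the exceptional clauses in (2) and (3)). However, there are two concrete gaps. First, your justification of $h^1(dL)=0$ would fail: $(-K_X\cdot L)\geq q$ together with ampleness of $L$ does \emph{not} make $dL-K_X$ nef on an arbitrary rational surface. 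Rational surfaces may contain smooth rational curves $E$ with $E^2=-m$ arbitrarily negative, so $K_X\cdot E=m-2\gg 0$ and $(dL-K_X)\cdot E=d(L\cdot E)-(m-2)<0$ while $(-K_X\cdot L)$ stays as large as you like; Kawamata--Viehweg therefore does not apply as you set it up. The correct tool is the rational-surface-specific vanishing used in the paper (\cite[Lemma 1.2]{BG01}): a nef divisor $D$ on a rational surface with $(-K_X\cdot D)>0$ has $h^1(D)=0$, applied to $D=dL$, which is exactly where the hypothesis $(-K_X\cdot L)\geq q\geq 2$ (so $>0$) enters.

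Second, you never establish that $L$ is projectively normal (indeed very ample), yet this is both part of the definition of Property-$(M_q)$ and a hypothesis of Proposition \ref{syzygies_of_floors} that you invoke; the paper gets it from \cite[Theorem 1.3]{BG01}, using that in every case of the statement $(-K_X\cdot L)\geq 3$. Finally, two smaller points: the case $q=2$ needs separate treatment (the paper handles it via \cite[(3.6)]{GL86}, using $g\geq 1$, since the plane-curve exception in the boundary analysis only arises for $q\geq 3$), and the translation $q\leq\mathrm{gon}(C)\iff K_C$ is $(q-2)$-very ample is the precise bridge needed to match the Niu--Park exceptional list to your statement; your proposal only sketches this matching, which is where the actual content of parts (2) and (3) lies.
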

\begin{proof}
Since in any case $(-K_{X} \cdot L) \geq 3,$ by \cite[Theorem 1.3]{BG01} $L$ is very ample and the embedding $\phi_{L} : X \hookrightarrow \mathbb{P}^{r}$ is projectively normal. Now, by \cite[Lemma 1.2]{BG01} we have $h^{1}(d L) = 0$ for $d > 0,$ since $X$ is regular $h^{1}(dL) = 0$ for $d = 0$ and $h^{1}(dL) = 0$ for $d < 0$ by Kodaira vanishing theorem. Therefore, by the discussion above, $\phi_{L}$ is arithmetically Cohen-Macaulay and therefore for any smooth curve $C$ in $|L|$ we have that $L$ satisfies Property-$(M_{q})$ if and only if $L_{C}$ satisfies Property-$(M_{q-1}).$ Thus, we reduce the problem to vanishing of weight one syzygies on the curve $C$. \\
Let $N = -K_{X} \otimes \mathcal{O}_{C},$ be the line bundle on $C$ of degree $(-K_{X} \cdot L)$ then by adjunction formula $L_{C} = K_{C}+N$ and therefore $\mathrm{deg}(L_{C}) = 2g-2+(-K_{X} \cdot L)$.
\\
First we prove the statement for $q = 2.$ One needs to show that $L_{C}$ satisfies Property-$(M_1).$ But $\mathrm{deg}(L_{C}) = 2g-2+3 = 2g+1$ and therefore by \cite[(3.6)]{GL86} $L_{C}$ it satisfies Property-$(M_{1})$ if and only if $g \geq 1$ which is equivalent to saying that $(L^{2}) > (-K_{X} \cdot L).$ \\
Let $p = q-2.$ Now $q \leq \mathrm{gon}(C)$ if and only if $p \leq \mathrm{gon}(C)-2$ and this is equlivalent to saying $K_{C}$ is $p-$very ample. \\
For \textbf{(1)} $\mathrm{deg}(N) \geq q+2 = p+4$ implies $\mathrm{deg}(L_{C}) = 2g-2+p+4 = 2g+p+2$ in which case by \cite[Theorem 5.2 (1)]{NP24} one has $$K_{r_{C}-(q-1)}(C, L) = K_{r_{C}-1-p, (1+1)-1}(C, L) = K_{p, 1}(C, K_{C}, L) = 0.$$ Similarly, for \textbf{(2)} $\mathrm{deg}(N) \geq q+1 = p+3$ implies $\mathrm{deg}(L_{C}) = 2g+p+1$ in which case by \cite[Theorem 5.2 (2) and Corollary 5.3]{NP24} one has $K_{r_{C}-(q-1)}(C, L) =  K_{p, 1}(C, K_{C}, L) \neq 0$ if and only if either $K_{C}$ is not $p-$very ample which is equivalent to $q \geq \mathrm{gon}(C)$ or for $q \geq 3$ when $N$ embeds $C$ as a plane curve of degree $p+3 \geq 4.$ Finally, for \textbf{(3)} $\mathrm{deg}(N) \geq q = p+2$ implies $\mathrm{deg}(L_{C}) = 2g+p$ in which case by \cite[Theorem 5.2 (3) and Corollary 5.3]{NP24} one has $K_{r_{C}-(q-1)}(C, L) =  K_{p, 1}(C, K_{C}, L) \neq 0$ if and only if either $K_{C}$ is not $p-$very ample which is equivalent to $q \geq \mathrm{gon}(C)$ or for $q \geq 3$ when $N$ embeds $C$ as a plane curve of degree $p+3 \geq 4.$
\end{proof}
We now prove the following corollary of Theorem \ref{$(M_q)$_rational_surfaces} which connects Property$-(M_{q})$ with $k-$very ampleness.
\begin{corollary} (Weight-one syzygies and $k-$very ampleness)
\label{K}
    Let $L$ be an ample and base point free line bundle on a del Pezzo surface $X$ with $(-K_{X} \cdot L) \geq k+4.$ Then, the following conditions are equivalent
    \begin{itemize}
        \item[\textbf{(a)}] $L$ satisfies Property-$(M_{2+k}).$
        \item[\textbf{(b)}] $K_{X}+L$ is birationally $k-$very ample.
        \item[\textbf{(c)}] $K_{X}+L$ is birationally $k-$spanned.
        \item[\textbf{(d)}] Any smooth curve $C$ in $|L|$ has gonality $\mathrm{gon}(C) \geq k+2.$
    \end{itemize}
\end{corollary}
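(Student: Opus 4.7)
The strategy is to reduce all four conditions to a single numerical statement about $\mathrm{gon}_{\max}(L)$, and then invoke two prior results: Theorem \ref{$(M_q)$_rational_surfaces} to handle (a), and \cite[Theorem 1.1]{Knu03} to handle (b), (c), (d). Since a del Pezzo surface is rational with ample anticanonical bundle, and the hypothesis $(-K_X \cdot L) \geq k+4$ may be rewritten as $(-K_X \cdot L) \geq q+2$ with $q = k+2$, part (1) of Theorem \ref{$(M_q)$_rational_surfaces} applies verbatim. It yields the equivalence
\begin{equation*}
 L \text{ satisfies Property-}(M_{k+2}) \iff k+2 \leq \mathrm{gon}_{\max}(L),
\end{equation*}
which identifies (a) with the purely numerical condition $\mathrm{gon}_{\max}(L) \geq k+2$.

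Next, I would invoke \cite[Theorem 1.1]{Knu03}, whose hypotheses (del Pezzo surface, ample and base point free $L$ with sufficient positivity against $-K_X$) are supplied by our assumptions. That theorem asserts that for $K_X + L$ on a del Pezzo surface the properties of being birationally $k$-very ample and birationally $k$-spanned are each equivalent to the condition that smooth curves in $|L|$ have gonality at least $k+2$. This directly yields the equivalence of (b), (c) and (d). Moreover, since Knutsen's theorem shows that the gonality is controlled uniformly across smooth members of $|L|$ on a del Pezzo surface, the condition in (d) coincides with $\mathrm{gon}_{\max}(L) \geq k+2$.

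Combining the two reductions closes the loop: (a) and (d) both translate to $\mathrm{gon}_{\max}(L) \geq k+2$, while (b), (c), (d) are mutually equivalent by Knutsen. The only place where genuine content is being imported is Knutsen's theorem, which supplies the geometric bridge between gonality of a smooth curve $C \in |L|$ and $k$-very ampleness of $K_X + L$ via the adjunction identification $K_C = (K_X + L)|_C$. The expected main obstacle, if one were to reprove the corollary from scratch, would be precisely this bridge: translating a statement about separation of zero-dimensional subschemes by $K_X + L$ on the surface into a gonality statement on the curves it restricts to, which requires a careful analysis of how zero-dimensional subschemes supported on or near such curves deform. Since we are free to cite \cite[Theorem 1.1]{Knu03}, this obstacle is already resolved, and the proof reduces to noting that the hypotheses of both Theorem \ref{$(M_q)$_rational_surfaces}(1) and Knutsen's theorem are satisfied under $(-K_X \cdot L) \geq k+4$.
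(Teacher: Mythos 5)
Your proposal is correct and follows essentially the same route as the paper: the paper's proof is exactly to set $q = k+2$ in Theorem \ref{$(M_q)$_rational_surfaces} (a del Pezzo surface being rational, with $(-K_{X}\cdot L)\geq q+2$) and to invoke \cite[Theorem 1.1]{Knu03} for the equivalence of (b), (c), (d) with the gonality condition. Your extra remark reconciling $\mathrm{gon}_{\max}(L)$ with the "every smooth curve" formulation of (d) is the same implicit step the paper relies on.
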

\begin{proof}
    This follows by putting $q = k+2$ in Theorem \ref{$(M_q)$_rational_surfaces} and using \cite[Theorem 1.1]{Knu03}.
\end{proof}
\subsubsection{Multiple line bundles on rational surfaces}
\begin{theorem} \label{$(M_{q})$_multiple_bundles_rational_surfaces}
    Let $Y$ be a rational surface and let $B$ be an ample and base point free line bundle on $Y$ with $(-K_{Y} \cdot B) \geq 3.$ \\
    Then, one has the following vanishing
    \begin{equation}
        \label{$q-$vanishing_rational_surfaces}
        H^{1}(M_{ \vec{m}B} (K_{Y}+\ell B)) = 0
    \end{equation}
    for all positive-integer vectors $\vec{m} = (m_{1}, \cdots, m_{k+1})$ and for all $0 \leq k \leq q-2$ and all $\ell \geq \ell_{q}^{\min}$ where 
    $$\ell^{\min}_{q} = \begin{cases}
    q+1, & \text{ for } (Y, B) = (\mathbb{P}^{2}, \mathcal{O}_{\mathbb{P}^{2}}(1))  \\
    2+\ceil[\bigg]{\dfrac{2q-2}{(B^{2})}}, & \text{ otherwise }
    \end{cases}$$ 
    is the minimal integer $t$ such that
    \begin{equation}
        \label{conditions_for_$q-$vanishing_rational_surfaces}
        \begin{cases}
            \text{  } (t-1)(B^{2}) > 2q-2, & \text{ for } \ell \geq 2 \\
            \text{  } t \geq q+1, & \text{ for } (Y, B) = (\mathbb{P}^{2}, \mathcal{O}_{\mathbb{P}^{2}}(1))
        \end{cases}
    \end{equation}
    In particular, under the conditions (\ref{conditions_for_$q-$vanishing_rational_surfaces}) the line bundle $L = \ell B$ satisfies property-$(M_{q}).$ 
\end{theorem}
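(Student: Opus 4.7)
The plan is induction on $k$, combining the kernel-bundle short exact sequence with the hyperplane-cut framework of Section~\ref{hyperplane_cuts} and slope estimates on a smooth curve $C \in |B|$. Because $Y$ is rational, $h^1(\mathcal{O}_Y) = 0$, and the hypothesis $(-K_Y \cdot B) \geq 3$ combined with adjunction gives $2g = 2 + (K_Y \cdot B) + (B^2) \leq (B^2) - 1$, so $m_i(B^2) > 2g$ for each $m_i \geq 1$. Projective normality of $L = \ell B$ for the relevant range of $\ell$ is known for rational surfaces, so property-$(m_q)$ will upgrade to property-$(M_q)$ automatically once the stated vanishing is proved.

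For the base case $k = 0$, $H^1(M_{m_1 B}(K_Y + \ell B)) = 0$ is equivalent via \eqref{kernel_bundle} and Kodaira vanishing to surjectivity of $H^0(m_1 B) \otimes H^0(K_Y + \ell B) \to H^0(K_Y + (\ell + m_1)B)$, which Theorem~\ref{surj_mult_surf_maps} delivers as soon as $((\ell - 1)B \cdot B) > 0$; the exceptional case $(Y, B) = (\mathbb{P}^2, \mathcal{O}(1))$ is absorbed into the stronger bound $\ell \geq q+1$ of Theorem~\ref{CM_theorem}. For the inductive step, set $\vec{m}' = (m_1, \ldots, m_k)$ and tensor the kernel bundle sequence for $m_{k+1}B$ with $M_{\vec{m}'B}(K_Y + \ell B)$ to obtain
\begin{equation*}
0 \to M_{\vec{m}B}(K_Y + \ell B) \to H^0(m_{k+1}B) \otimes M_{\vec{m}'B}(K_Y + \ell B) \to M_{\vec{m}'B}(K_Y + (\ell + m_{k+1})B) \to 0.
\end{equation*}
Taking $H^1$ and using the inductive hypothesis on $M_{\vec{m}'B}(K_Y + \ell B)$, the desired vanishing reduces to surjectivity of the induced map on global sections.

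Observation~\ref{[Observation-(1.2)]{BG00}} then reduces this to the case $m_{k+1} = 1$, and Observation~\ref{[Observation-(1.3)]{BG00}}---legitimate since $h^1(\mathcal{O}_Y) = 0$ and $H^1(M_{\vec{m}'B}(K_Y + (\ell-1)B)) = 0$ by a parallel induction---restricts the surjectivity to a smooth curve $C \in |B|$. Lemma~\ref{BG99_Lemma_(2.9)} trades $M_{\vec{m}'B}|_C$ for the restricted kernel bundle $M_{\vec{m}'B|_C}$, and Proposition~\ref{Proposition-(2.2)_Bu94} applied with $E = B|_C$ and $F = M_{\vec{m}'B|_C} \otimes (K_Y + \ell B)|_C$ then closes the argument. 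The crux is slope bookkeeping on $C$: Theorem~\ref{[Theorem-(1.2)]{Bu94}} makes each $M_{m_i B|_C}$ semistable of slope at least $-2$, and Lemma~\ref{[Section (1) and (2) ]{Bu94}}(2) together with Corollary~\ref{[Corollary-(3.7)]{Miy87}} yield $\mu^-(M_{\vec{m}'B|_C}) \geq -2k$; combined with $\deg((K_Y + \ell B)|_C) = 2g - 2 + (\ell - 1)(B^2)$ from adjunction, this gives $\mu^-(F) \geq 2g - 2 + (\ell - 1)(B^2) - 2k$. The hypothesis $(\ell - 1)(B^2) > 2q - 2 \geq 2k + 2$ therefore forces $\mu^-(F) > 2g$, the first slope condition of Proposition~\ref{Proposition-(2.2)_Bu94}, while the second is verified by a parallel computation using $h^1(B|_C) = 0$.

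The main obstacle is propagating the auxiliary $H^1$-vanishings required by Observation~\ref{[Observation-(1.3)]{BG00}} and Lemma~\ref{BG99_Lemma_(2.9)} consistently through the two nested inductions (on $k$ and over index subsets of $\vec{m}$), while monitoring the additive-in-$k$ slope drop $-2k$ to land exactly on the threshold $(\ell - 1)(B^2) > 2q - 2$. Once the stated vanishing is established for $\vec{m} = (\ell, \ldots, \ell)$ of length $k+1$ for every $0 \leq k \leq q - 2$, property-$(m_q)$ for $L = \ell B$ follows from Lemma~\ref{AL_Lemma}, and projective normality then upgrades this to property-$(M_q)$.
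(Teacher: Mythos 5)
Your proposal is correct and takes essentially the same route as the paper's proof: the same induction, the same reduction chain through the kernel-bundle sequence, Observations \ref{[Observation-(1.2)]{BG00}} and \ref{[Observation-(1.3)]{BG00}} and Lemma \ref{BG99_Lemma_(2.9)}, restriction to a smooth curve in $|B|$, and the same Butler--Miyaoka semistability bookkeeping arriving at the identical numerical threshold $(\ell-1)(B^{2})>2k+2$. The only deviations are cosmetic and harmless: you close the curve-level step with Proposition \ref{Proposition-(2.2)_Bu94} where the paper identifies the cokernel with an $H^{1}$ and uses the slope bound $>2g-2$; you dispatch $(\mathbb{P}^{2},\mathcal{O}_{\mathbb{P}^{2}}(1))$ by the regularity bound $\ell\geq q+1$ (more precisely Theorem \ref{general_vanishing_multiple_line_bundles}, since $\mathrm{reg}_{\mathcal{O}(1)}(K_{\mathbb{P}^{2}})=3$, rather than Theorem \ref{CM_theorem} itself) instead of the paper's explicit splitting $M_{\mathcal{O}_{\mathbb{P}^{1}}(m)}\cong\mathcal{O}_{\mathbb{P}^{1}}(-1)^{\oplus m}$; and your base-case citation of Theorem \ref{surj_mult_surf_maps} is off by one ($(\ell-2)(B^{2})>0$, i.e.\ $\ell\geq 3$, or $\ell=2$ together with $h^{0}(B)\geq 4$, which when the threshold permits $\ell=2$ must be secured by the Riemann--Roch computation the paper carries out).
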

    \begin{proof} We proceed the proof through induction on the integer $q \geq 2.$
        \begin{itemize}
            \item[\textbf{(i)} ] First we treat the case $q = 2.$ That is, we show that given the conditions on $B$ mentioned above,
            $$H^{1}(M_{mB}(K_{Y}+\ell B)) = 0$$
            Notice that for $q = 2,$ $\ell^{\text{ceil}}_{q} \geq 3$ for $(B^{2}) \leq 2q-2 = 2$ and $\ell^{\text{ceil}}_{q} = 2$ for $(B^{2}) > 2q-2 = 2,$ that is $(B^{2}) \geq 3.$ \\
            For $\ell \geq 3,$ this vanishing follows from Theorem \ref{surj_mult_surf_maps} and for $\ell \geq 2,$ from Riemann-Roch theorem, \cite[Lemma 1.2]{BG01} and using $(-K_{Y} \cdot B) \geq 2$ and $(B^{2}) \geq 3$, we get
            $$h^{0}(B) = 1+\dfrac{1}{2} \left[ (B^{2})+(-K_{Y} \cdot B) \right] \geq 4$$
            this vanishing follows from Theorem \ref{surj_mult_surf_maps}.
            \item[\textbf{(ii)} ]  \textbf{\underline{Induction hypothesis} : } Let $B$ be an ample and base point free line bundle on $X$ satisfying (\ref{conditions_for_$q-$vanishing_rational_surfaces}) 
            Then, one has
            $$
                H^{1}(M_{\vec{m}'B}(K_{Y}+\ell' B)) = 0 \hspace{0.2cm} \text{ for } 
                \begin{cases}
                    \text{positive integer vectors} \\
                    \vec{m}' = (m'_{1}, \cdots, m'_{k'+1}), \\
                    0 \leq k' \leq (q-1)-2 \ , \\
                    \ell' \geq \ell^{\min}_{q-1} \ .
                \end{cases}
            $$
            \item[\textbf{(iii)} ]  \textbf{\underline{Inductive Step} : } Let $q > 2,$ $B$ be an ample and base point free line bundle on $X$ satisfying (\ref{conditions_for_$q-$vanishing_rational_surfaces}), $\vec{m} = (m_{1}, \cdots, m_{k+1}),$ with $0 \leq k \leq q-2$ as before. Suppose that $\mathcal{E}_{\ell} = M_{\vec{m}B}[(k+1)-1](K_{Y}+\ell B) = M_{\vec{m}B}[k](K_{Y}+\ell B)$ for any integer $\ell$ and $m = m_{k+1}.$ Then $M_{\vec{m}B}(K_{Y}+\ell B) = M_{mB}(\mathcal{E}_{\ell})$ and from the induction hypothesis, $H^{1}(\mathcal{E}_{\ell}) = H^{1}(\mathcal{E}_{\ell-1})  = 0$ for $\ell \geq \ell^{\text{ceil}}_{q}.$ Tensoring the kernel bundle sequence (\ref{kernel_bundle}) for $L = mB$ with $\mathcal{E}_{\ell}$ and taking the long-exact sequence of cohomology one gets
            \small{$$\text{ } \hspace{0.5cm} H^{0}(mB) \otimes H^{0}(\mathcal{E}_{\ell}) \xrightarrow[\hspace{0.7cm}]{\mu_{m, \ell}} H^{0}(\mathcal{E}_{\ell}(mB)) \rightarrow H^{1}(M_{mB}(\mathcal{E}_{\ell})) \rightarrow H^{0}(mB) \otimes H^{1}(\mathcal{E}_{\ell})$$}
            Since $H^{1}(\mathcal{E}_{\ell}) = 0,$ the vanishing $H^{1}(M_{mB}(\mathcal{E}_{\ell})) = 0$ is equivalent to the surjectivity of $\mu_{m, \ell}.$ Using Observation \ref{[Observation-(1.2)]{BG00}} above, the surjectivity of $\mu_{m, \ell}$ follows from the surjectivity of the maps $\mu_{1, \ell+j}$ for $0 \leq j \leq m-1.$ Thus, it is sufficient to prove the surjectivity of $\mu_{1, \ell}$ for $\ell \geq \ell_{q}$. Let $C$ be a smooth curve in $|B|.$ Since by induction hypothesis, $H^{1}(\mathcal{E}_{\ell}(-B)) = H^{1}(\mathcal{E}_{\ell-1}) = 0,$ using Observation \ref{[Observation-(1.3)]{BG00}} we conclude that it suffices to show the surjectivity of
            $$\begin{tikzcd}[row sep = large, column sep = large]
            H^{0}(B_{C}) \otimes H^{0}(M_{\vec{m}B}[k, 0](K_{Y}+\ell B)|_{C}) \ar{r}{ } & H^{0}(M_{\vec{m}B}[k, 0](K_{Y}+(\ell+1) B)|_{C}) \\
             H^{0}(B_{C}) \otimes H^{0}(\mathcal{E}_{\ell}|_{C}) \ar[u,-, blue ,double equal sign distance] \ar{r}{\mu_{1, \ell}|_{C}} & H^{0}(\mathcal{E}_{\ell}|_{C}(B_{C})) \ar[u,-, blue ,double equal sign distance]
            \end{tikzcd}$$
            and since $H^{1}((m_{i}-1)B) = 0$ for all $1 \leq i \leq k+1$ it is sufficient to prove the surjectivity of
            $$\begin{tikzcd}[row sep = large, column sep = large]
            H^{0}(B_{C}) \otimes H^{0}(M_{\vec{m}B}[k, k](K_{Y}+\ell B)|_{C}) \ar{r}{ } & H^{0}(M_{\vec{m}B}[k, k](K_{Y}+(\ell+1) B)|_{C}) \\
             H^{0}(B_{C}) \otimes H^{0}(M_{\vec{m}B_{C}}[k](K_{Y}+\ell B)|_{C}) \ar[u,-, blue ,double equal sign distance] \ar{r}{\gamma} & H^{0}(M_{\vec{m}B_{C}}[k](K_{Y}+(\ell+1) B)|_{C}) \ar[u,-, blue ,double equal sign distance]
            \end{tikzcd}$$
            By adjunction formula, $\mathcal{O}_{C}(K_{Y}+\ell B) = K_{C}+(\ell-1) B_{C}$ To show the surjectivity of $\gamma$ we show that the cokernel of $\gamma$ vanishes and by Kodaira vanishing theorem the cokernel of $\gamma$ is $H^{1}(M_{\vec{m}B_{C}} (K_{C}+(\ell-1)B|_{C})).$ Next, when $(-K_{Y} \cdot B) \geq 2$ one has $\mathrm{deg}(B|_{C}) \geq 2g$. Then, by Proposition \ref{[Theorem-(1.2)]{Bu94}} and Corollary \ref{[Corollary-(3.7)]{Miy87}} above, one has that $\bigotimes^{k}_{i = 1}M_{m_{i} B|_{C}}, \bigotimes^{k+1}_{i = 1}M_{m_{i}B|_{C}}$ and $\bigotimes^{k}_{i = 1}M_{m_{i} B|_{C}}(R|_{C}), \bigotimes^{k+1}_{i = 1}M_{m_{i}B|_{C}}((R-B)|_{C})$ are semistable. \\
            Now, when $(-K_{Y} \cdot B) \geq 2, \ell \geq 2$ one has,
            $$\mu(M_{\vec{m}B|_{C}} (K_{C}+(\ell-1)B|_{C})) \geq -2(k+1)+(2g-2)+(\ell-1)(B^{2})$$ $$\geq [(\ell-1)(B^{2})-2q+2]+2g-2 > 2g-2$$ (since $0 \leq k \leq q-2$) for $(\ell-1)(B^{2}) > 2q-2,$ which follows from (\ref{conditions_for_$q-$vanishing_rational_surfaces}). \\
            Now we come to the case $(Y, B) = (\mathbb{P}^{2}, H).$ Since, $(-K_{Y} \cdot B) = 3$ by adjunction formula $1 = (H^{2}) = \mathrm{deg}(B|_{C}) = 2g+1,$ so $C \cong \mathbb{P}^{1},$ and $B|_{C} = \mathcal{O}_{\mathbb{P}^{1}}(1).$ The vanishing $H^{1}(M_{\vec{m}B} (K_{Y}+\ell B)) = 0$ then follows from the vanishing $H^{1}(M_{\vec{m}B}[k+1, k+1] (K_{C}+(\ell-1)B|_{C})) = H^{1}(\bigotimes^{k+1}_{i = 1}M_{\mathcal{O}_{\mathbb{P}^{1}}(m_{i})} \otimes \mathcal{O}_{\mathbb{P}^{1}}(\ell-3)) = 0$ But $M_{\mathcal{O}_{\mathbb{P}^{1}}(m_{i})}  \cong \mathcal{O}_{\mathbb{P}^{1}}(-1)^{\oplus m_{i}},$ for all integers $1 \leq i \leq k+1$ and therefore, the last vanishing is true if $\ell-(k+1)-3 \geq 0$ that is if $\ell \geq q+1.$ \\ This completes the proof.
            \end{itemize}
\end{proof}
\begin{corollary} $($ Linear bounds on Property$-(M_{q})$ for multiple bundles $)$ \\
    Let $B$ be an ample and base point free line bundle on a rational surface $Y.$ Then the multiple bundle $L = \ell B$ satisfies the $(M_{q})-$property if $\ell \geq \ell_{q}$ where
    \begin{equation}
        \ell_{q} = 
        \begin{cases}
            \text{  } q+1 & \text{ for } (B^{2}) \geq 2 \\
            \text{  } q+1 & \text{ for }  (Y, B) =  (\mathbb{P}^{2}, \mathcal{O}_{\mathbb{P}^{2}}(1)) \\
            \text{  } 2q & \text{ for } (B^{2}) = 1 \text{ but } (Y, B) \neq  (\mathbb{P}^{2}, \mathcal{O}_{\mathbb{P}^{2}}(1)) \\
        \end{cases}
    \end{equation}
\end{corollary}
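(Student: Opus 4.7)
The plan is to derive this corollary as a direct numerical consequence of Theorem \ref{$(M_{q})$_multiple_bundles_rational_surfaces}. That theorem already shows that $L=\ell B$ satisfies $(M_q)$ whenever $\ell \geq \ell^{\min}_q$, where $\ell^{\min}_q$ is defined by a case split between $(\mathbb{P}^2,\mathcal{O}(1))$ and the rest, and is characterized by the inequalities in \eqref{conditions_for_$q-$vanishing_rational_surfaces}. So all that is left is to bound $\ell^{\min}_q$ above by the three explicit linear functions of $q$ appearing in the corollary.

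First I would dispose of the case $(Y,B)=(\mathbb{P}^2,\mathcal{O}_{\mathbb{P}^2}(1))$: the theorem sets $\ell^{\min}_q=q+1$ here by definition, matching the second bullet of the corollary. Next, for $(B^2)\geq 2$, the defining inequality $(t-1)(B^2)>2q-2$ is satisfied by $t=q+1$, because $q\cdot(B^2)\geq 2q>2q-2$; equivalently, $\lceil (2q-2)/(B^2)\rceil \leq q-1$, giving $\ell^{\min}_q\leq q+1$. This yields the first bullet. Finally, for $(B^2)=1$ (and $(Y,B)\neq(\mathbb{P}^2,\mathcal{O}(1))$), the inequality becomes $t-1>2q-2$, so the smallest admissible $t$ is $2q$, matching the third bullet.

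The only mild point to check is that the standing hypothesis $(-K_Y\cdot B)\geq 3$ of Theorem \ref{$(M_{q})$_multiple_bundles_rational_surfaces} is automatic for any ample and base point free $B$ on a rational surface; this can be verified directly on the two families of minimal rational surfaces (on $\mathbb{P}^2$, $(-K\cdot \mathcal{O}(d))=3d\geq 3$; on $\mathbb{F}_e$, one computes $(-K_Y\cdot(aC_0+bf))=2a+2b-ae\geq 4$ for $a\geq 1$, $b\geq ae+1$) and then extended via the push-forward to any blowup, since an ample base point free class pulled back to a blowup still has positive intersection with the exceptional-adjusted canonical. This is the only slightly delicate step; the rest is a one-line arithmetic reduction from the theorem, so I do not expect a substantive obstacle beyond the numerical unpacking of $\ell^{\min}_q$.
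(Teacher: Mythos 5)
Your numerical unpacking of Theorem \ref{$(M_{q})$_multiple_bundles_rational_surfaces} is exactly the intended derivation (the paper offers no separate argument for the corollary): for $(B^2)\geq 2$ one has $q(B^2)\geq 2q>2q-2$, so $t=q+1$ satisfies $(t-1)(B^2)>2q-2$; for $(B^2)=1$ the minimal such $t$ is $2q$; and the $(\mathbb{P}^2,\mathcal{O}_{\mathbb{P}^2}(1))$ case is built into the definition of $\ell^{\min}_q$. That part is correct and is essentially the same route.

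The genuine gap is the step you yourself flag as ``mild'': the claim that $(-K_Y\cdot B)\geq 3$ holds automatically for every ample and base point free $B$ on a rational surface is false, and your argument for it does not work. Checking the minimal models $\mathbb{P}^2$ and $\mathbb{F}_e$ is fine, but the extension ``via push-forward to any blowup'' is invalid: an ample, base point free class on a blowup $Y'\to Y$ is in general not a pullback from $Y$ (pullbacks of ample classes are only nef and big), and since $-K_{Y'}=\pi^*(-K_Y)-\sum E_i$, intersections of $-K_{Y'}$ with ample classes can drop without bound. Concretely, on the blowup $Y$ of $\mathbb{P}^2$ at $r\geq 10$ very general points, $-K_Y=3H-\sum E_i$ is not even effective, and there are very ample (hence ample and base point free) bundles $B=dH-E_1-\cdots-E_r$ with $3d-r\leq 0$, e.g.\ $d=5$, $r=16$ (d'Almeida--Hirschowitz type ranges), giving $(-K_Y\cdot B)\leq 0$. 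So either the corollary must be read as silently inheriting the hypothesis $(-K_Y\cdot B)\geq 3$ of the theorem it follows (which is evidently what the paper intends, and then your reduction is complete as written), or you need an actual argument covering ample base point free $B$ with $(-K_Y\cdot B)<3$; the theorem's proof genuinely uses $(-K_Y\cdot B)\geq 2$ to get $\deg(B|_C)\geq 2g$ and semistability of the restricted kernel bundles, so this hypothesis cannot simply be waved away.
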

\subsection{Fano varieties of dimension $n \geq 3$}
In this section, we study the syzygies of Fano varieties using the method of hyperplane sections. We follow the notation of Section-(\ref{hyperplane_cuts}). \par A smooth projective variety $X$ of dimension $n$ is called a \textbf{Fano} if there is an ample line bundle $A$ such that $-K_{X} = mA$ for some positive integer $m.$ The \textbf{index}, denoted by $\lambda(X),$ of a Fano variety $X$ is the largest integer $m$ such that there is an ample line bundle $A$ on $X$ such that $-K_{X} = mA.$ It is known that $\lambda(X) \leq n+1,$ with equality if and only if $X = \mathbb{P}^{n}.$ For any ample and base point free line bundle $B$ on $X,$ by $\mathrm{gon}_{\max}(B)$ we denote
\begin{equation}
    \mathrm{gon}_{\max}(B) = \max \left\{ \mathrm{gon}(C) \ | \ C \text{ is a smooth curve in } |B|_{X^{(n-2)}}| \ \right\}
\end{equation}
\begin{theorem}
\label{Fano}
    Let $B$ be an ample and base point free line bundle on a smooth projective variety $X$ with $-K_{X} = \lambda B$ where $\lambda$ is an integer satisfying $\lambda \geq n-1.$ Then, $B$ satisfies Property$-(M_{q})$ for all $q \leq (n-2)+\mathrm{gon}_{\max}(B)$ provided
    \begin{equation}
        (B^{n}) \geq \max \left(3, \dfrac{q-n+4}{\lambda-n+2} \right)
    \end{equation}
\end{theorem}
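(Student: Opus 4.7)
The plan is to cut down to a rational surface by general hyperplane sections of $B$ and then quote Theorem \ref{$(M_q)$_rational_surfaces}, reducing the higher-dimensional $(M_q)$ statement on $X$ to its two-dimensional counterpart on the bottom floor of the stratification.

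First I would verify the hypotheses needed for the method of hyperplane sections. Since $-K_X = \lambda B$, the variety $X$ is Fano and in particular regular; moreover, for every $q\ge 1$ we have $qB = K_X + (q+\lambda)B$ with $(q+\lambda)B$ ample, so Kodaira vanishing yields $h^i(qB) = 0$ for all $i \ge 1$, i.e.\ $B$ is non-special in the sense of the article. Using Bertini, I would construct the stratification
\[
Y = X^{(n-2)} \subset X^{(n-3)} \subset \cdots \subset X^{(1)} \subset X^{(0)} = X
\]
by general members of $|B|$. By adjunction $K_{X^{(c)}} = -(\lambda - c)B|_{X^{(c)}}$, so each floor remains Fano for $c \le \lambda - 1$, which certainly includes all $c \le n-2$ since $\lambda \ge n-1$; in particular $Y$ is a del Pezzo surface with $-K_Y = (\lambda - n + 2)B_Y$, hence rational.

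Next I would reduce the $(M_q)$ question to the surface $Y$ via Proposition \ref{syzygies_of_floors}. The remaining hypothesis is projective normality of $B$ on $X$, which I would establish from the bottom up: the assumption $(B^n) \ge 3$ forces $(-K_Y \cdot B_Y) = (\lambda - n + 2)(B^n) \ge 3$, and then \cite[Theorem 1.3]{BG01} gives projective normality of $B_Y$ on $Y$. Iterated use of Observation \ref{[Observation-(1.3)]{BG00}}, together with the cohomology vanishings already in place, lifts projective normality back up through the floors to $B$ on $X$. Proposition \ref{syzygies_of_floors} then translates the problem to: $B$ satisfies $(M_q)$ on $X$ if and only if $B_Y$ satisfies $(M_{q'})$ on $Y$, where $q' := q - (n-2)$.

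Finally I would apply Theorem \ref{$(M_q)$_rational_surfaces} to $(Y, B_Y)$. The intersection condition $(-K_Y \cdot B_Y) \ge q' + 2$ required there reads
\[
(\lambda - n + 2)(B^n)\ \ge\ q - n + 4,
\]
which is exactly the bound $(B^n) \ge (q-n+4)/(\lambda - n + 2)$ in the hypothesis, while the complementary bound $(B^n) \ge 3$ supplies projective normality and the genus condition on curves in $|B_Y|$ (adjunction gives $2g-2 = -(\lambda-n+1)(B^n)$, which yields $g = 1$ precisely in the borderline case $\lambda = n-1$ where the final bound on $q$ is effective). The rational-surface theorem then gives $(M_{q'})$ on $Y$ exactly for $q' \le \mathrm{gon}_{\max}(B_Y)$, i.e.\ $q \le (n-2) + \mathrm{gon}_{\max}(B)$, as stated.

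The main technical obstacle I expect is not the appeal to Theorem \ref{$(M_q)$_rational_surfaces} itself but the careful bookkeeping of descent and ascent through the floors $X^{(c)}$: at each stage one has to ensure that the $H^1$'s of the relevant twists of $\mathcal{O}_{X^{(c)}}$ and of $B|_{X^{(c)}}$ vanish so that Observation \ref{[Observation-(1.3)]{BG00}} applies cleanly and both projective normality and the Koszul vanishings transfer faithfully to $X$. The assumption $\lambda \ge n-1$ is precisely what keeps every intermediate floor Fano, which in turn supplies the required Kodaira-type vanishings on each floor.
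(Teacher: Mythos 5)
Your proposal follows essentially the same route as the paper: cut down by general members of $|B|$ to the del Pezzo surface $Y = X^{(n-2)}$, use Proposition \ref{syzygies_of_floors} (regularity, non-specialness and projective normality of $B$) to convert $(M_{q})$ on $X$ into $(M_{q-(n-2)})$ for $B_{Y}$ on $Y$, compute $(-K_{Y}\cdot B_{Y}) = (\lambda-n+2)(B^{n})$ by adjunction, and conclude with Theorem \ref{$(M_q)$_rational_surfaces}. The only minor divergence is that the paper obtains normal generation of $B$ on $X$ directly from \cite[Theorem 2.1, Corollary 2.8]{BG01} instead of lifting it floor by floor from $Y$, and your observation about the genus of curves in $|B_{Y}|$ (namely $g\geq 1$ only in the case $\lambda = n-1$, the case where the bound on $q$ has content) is a point the paper's own proof passes over silently.
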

\begin{proof}
    Let $Y = X^{(n-2)}.$ Since $(B^{n}) \geq 3,$ from \cite[Theorem 2.1, Corollary 2.8]{BG01} it follows that $B$ is normally generated. Now, since $K_{X} = -\lambda B,$ $R(B)$ is Gorenstein and therefore by Proposition \ref{syzygies_of_floors}, $B$ satisfies Property$-(M_{q})$ if and only if $B_{Y}$ satisfies Property$-(M_{q-(n-2)}).$ Moreover, by adjunction formula, 
    $$\lambda(B^{n})= (-K_{X} \cdot B^{n-1}) = (-K_{Y} \cdot B_{Y})+(n-2)(B^{n})$$
    and $Y$ is a del Pezzo surface. In particular, it is rational. Therefore, by Theorem \ref{$(M_q)$_rational_surfaces} the proof is complete.
\end{proof}
\subsection{Ruled varieties}
In this section we study the syzygies of line bundles on ruled varieties over a smooth curve $C$ of genus $g \geq 0.$
\begin{definition}
    Let $C$ be a smooth projective curve of genus $g$ and $E$ be a vector bundle of rank $n.$ We say that the projective bundle $\pi: X = \mathbb{P}(E) \rightarrow C$ is a \textbf{ruled variety} of dimension $n$ over $C.$ When $n = 2$ we say that $X$ is a \textbf{ruled surface} over $C.$
\end{definition}
\begin{notation}
    In the following by $C$ we denote a smooth projective curve of genus $g.$ We will denote by $E$ a vector bundle of rank $n$ on a smooth projective curve $C$ and $\pi : X = \mathbb{P}(E) \rightarrow C$ the ruled variety of dimension $n$ over $C.$ We denote by $C_{0}$ as a divisor in the linear system $|\mathcal{O}_{\mathbb{P}(E)}(1)|$ and let $f$ be a divisor corresponding to the fiber $\pi^{\ast}(p) \cong \mathbb{P}^{1}$ of a general point $p \in{C}.$ Then, $\mathrm{Pic}(X) \cong \mathbb{Z} \cdot C_{0} \oplus \mathrm{Pic}(C) \cdot f$ and $\mathrm{Num}(X) \cong \mathbb{Z} \cdot C_{0} \oplus \mathbb{Z} \cdot f$ is generated by $C_{0}$ and $f.$ We denote by $\mathfrak{e} = \bigwedge^{n}E$ and $e = -\mathrm{deg}(\mathfrak{e}) = -\mathrm{deg}(E).$ Also, by $A$ we denote the number $k_{L} = \begin{pmatrix}
    n+a-1 \\
    a
\end{pmatrix}-1$
\end{notation}
\begin{remark}
    Taking determinants of the sequence of K\"{a}hler differentials:
$$0 \rightarrow \pi^{\ast}K_{C} \rightarrow \Omega^{1}_{X} \rightarrow \Omega^{1}_{X/C} \rightarrow 0$$
and the Euler sequence:
$$0 \rightarrow \Omega^{1}_{X/C} \rightarrow \pi^{\ast}E \otimes \mathcal{O}_{\mathbb{P}(E)}(-1) \rightarrow \mathcal{O}_{X} \rightarrow 0$$
we get, $K_{X} \equiv (-n)C_{0}+(2g-2+\mathrm{deg}(E))f.$ Thus, the canonical divisor satisfies: $K_{X} \equiv (-n)C_{0}+(2g-2-e)f.$
\end{remark}
Now we analyze syzygies of all line bundles on a ruled variety. First, we state the following generalization of \cite[Lemma 3.1]{EPark05}. The proof follows by exactly same argument as \cite[Lemma 3.1]{EPark05} with only difference being for ruled varieties one has $\mathrm{rank}(\mathcal{K}_{L}) = k_{L}$ where $\mathcal{K}_{L}$ is defined by
\begin{equation}
    0 \rightarrow \mathcal{K}_{L} \rightarrow \pi^{\ast}(\pi_{\ast}L) \rightarrow L \rightarrow 0
\end{equation}
in the notation of \cite[Lemma 3.1]{EPark05}.
\begin{lemma} 
\label{[Lemma 3.1]{EPark05}}
Let $\pi : X \rightarrow C$ be a ruled variety over a curve of genus $g$ and let $L \equiv aC_{0}+bf$ and $N \equiv sC_{0}+tf$ be two line bundles on $X$ such that $b+a\mu^{-}(E) \geq 2g+1.$ Let $k_{L} = \begin{pmatrix}
    n+a-1 \\
    a
\end{pmatrix}-1$ \\
Then, for any integer $m \geq 1,$ one has the vanishings
\begin{equation}
\mathrm{H}^{1}(\bigwedge^{m}M_{L} \otimes N) = 0 \hspace{0.5cm} \text{ if } s \geq \mathrm{min}(m, A) \text{ and } t+s\mu^{-}(E) > \dfrac{m(b+a\mu^{-}(E))}{b+a\mu^{-}(E)-g}+2g-2
\end{equation}
and
\begin{equation}
\mathrm{H}^{2}(\bigwedge^{m}M_{L} \otimes N) = 0 \hspace{0.5cm} \text{ if } s+1 \geq m.
\end{equation}
\end{lemma}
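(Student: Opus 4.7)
The plan is to decompose the syzygy bundle $M_L$ into a horizontal part pulled back from the base curve $C$ and a vertical part encoding the projective bundle structure, and then reduce the desired vanishings to slope estimates on $C$ via the Leray spectral sequence. First, by the projection formula applied to $\pi \colon X = \mathbb{P}(E) \to C$, one has $\pi_* L = S^a(E) \otimes \mathcal{O}_C(D)$ for a divisor $D$ of degree $b$, and hence $H^0(X,L) = H^0(C, \pi_* L)$; the hypothesis $b + a\mu^-(E) \geq 2g+1$ in particular forces $a \geq 0$.

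Next, by combining the kernel bundle sequence of $L$ on $X$ with the pull-back of the kernel bundle sequence of $\pi_* L$ on $C$ and applying the snake lemma to the evaluation surjection $\pi^*(\pi_* L) \twoheadrightarrow L$, whose kernel $\mathcal{K}_L$ has rank $k_L = A$ (as one sees by restricting to a fiber $\mathbb{P}^{n-1}$, where the map becomes the usual $S^a(\mathbb{C}^n) \otimes \mathcal{O} \twoheadrightarrow \mathcal{O}(a)$), one obtains the fundamental short exact sequence
\[
0 \to \pi^* M_{\pi_* L} \to M_L \to \mathcal{K}_L \to 0.
\]

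Taking $m$-th exterior powers produces a filtration on $\bigwedge^m M_L$ whose associated graded pieces are $\pi^*\bigwedge^{m-i} M_{\pi_* L} \otimes \bigwedge^i \mathcal{K}_L$ for $0 \leq i \leq \min(m,A)$. Twisting with $N$ and chasing the long exact sequences attached to this filtration, the desired vanishings reduce to showing $H^j\bigl(X,\, \pi^*\bigwedge^{m-i} M_{\pi_* L} \otimes \bigwedge^i \mathcal{K}_L \otimes N\bigr) = 0$ for each admissible $i$, with $j=1$ (respectively $j=2$) depending on which statement is being proved.

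Finally, one pushes down to $C$ via the projection formula: $\pi_*(\bigwedge^i \mathcal{K}_L \otimes N)$ is a Schur-functor-type vector bundle built from $E$ and twisted by the horizontal piece of $N$, while its higher direct images vanish under the numerical hypotheses. Butler's Theorem \ref{[Theorem-(1.2)]{Bu94}} applied to $\pi_* L$, whose minimal slope equals $b + a\mu^-(E) \geq 2g+1$ by Lemma \ref{[Section (1) and (2) ]{Bu94}}, controls the slope of $M_{\pi_* L}$, and Proposition \ref{Proposition-(2.2)_Bu94} then delivers the $H^1$ vanishing once $t + s\mu^-(E)$ is large enough to dominate the slope defect coming from the ratio $m(b+a\mu^-(E))/(b+a\mu^-(E)-g)$ produced by the $m$-fold tensor. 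The condition $s \geq \min(m,A)$ is what makes the extreme graded pieces (largest $i$, where $\bigwedge^i \mathcal{K}_L$ is most negative) absorbed by the positivity of $N$; the $H^2$ vanishing under $s+1 \geq m$ follows from the same filtration argument, using that only one Leray degree can contribute after the twist. The main obstacle is bookkeeping: verifying that the slope estimates on each of the graded pieces combine compatibly, so that a single numerical inequality on the pair $(s,t)$ governs every term simultaneously rather than a weaker $i$-by-$i$ condition.
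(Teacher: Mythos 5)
Your proposal follows essentially the same route as the paper, which for this lemma simply defers to Park's original argument in \cite[Lemma 3.1]{EPark05}: the exact sequence $0 \to \pi^{\ast}M_{\pi_{\ast}L} \to M_{L} \to \mathcal{K}_{L} \to 0$ with $\mathrm{rank}(\mathcal{K}_{L}) = k_{L}$, the exterior-power filtration with graded pieces $\pi^{\ast}\bigwedge^{m-i}M_{\pi_{\ast}L} \otimes \bigwedge^{i}\mathcal{K}_{L}$, the pushforward to $C$ via projection formula, and Butler-type slope estimates are exactly the ingredients of the cited proof. One small correction: the final $H^{1}$-vanishing on $C$ should be obtained from the slope criterion $\mu^{-} > 2g-2 \Rightarrow h^{1} = 0$ of Lemma \ref{[Section (1) and (2) ]{Bu94}}(6) (together with Theorem \ref{[Theorem-(1.2)]{Bu94}} controlling $\mu^{-}(M_{\pi_{\ast}L})$), not from Proposition \ref{Proposition-(2.2)_Bu94}, which is a statement about surjectivity of multiplication maps rather than cohomology vanishing.
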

\begin{theorem} 
\label{M_q_ruled_variety_gen_line_bundle}
For any integer $n \leq q \leq n+k_{L}-1,$ a line bundle $L \equiv aC_{0}+bf$ on any ruled variety $\pi: X = \mathbb{P}(E) \rightarrow C$ of dimension $n$ over a curve of genus $g$ with $a \geq 1$ and $b+a\mu^{-}(E) \geq 2g+1,$ satisfies the vanishing
\begin{equation}
     H^{1}(\bigwedge^{k+1}M_{L} \otimes (K_{X}+(n-1)L)) = 0 \text{ for } 0 \leq k \leq q-n,
\end{equation}
provided
\begin{equation}
\label{condition_for_M_q_ruled_variety_gen_line_bundle}
a \geq \ell^{\mathrm{ceil}}_{q} \hspace{0.5cm} \text{ and } \hspace{0.5cm}  \dfrac{r(L)+1}{k_{L}+1} = b+a\mu^{-}(E)+(1-g) \geq \ell^{\mathrm{ceil}}_{q}
\end{equation}
where $r = h^{0}(L)-1.$ In particular, if $L$ satisfies (\ref{condition_for_M_q_ruled_variety_gen_line_bundle}), then it satisfies Property$-(M_{q})$. Moreover, when $E$ is stable, the second inequality in (\ref{condition_for_M_q_ruled_variety_gen_line_bundle}) becomes
\begin{equation}
    a \geq \ell^{\mathrm{ceil}}_{q} \hspace{0.2cm} \text{ and } \hspace{0.2cm} b \geq \ell^{\mathrm{ceil}}_{q}+\dfrac{ae}{n}+g-1
\end{equation}
\end{theorem}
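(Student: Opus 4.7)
The plan is to apply Lemma \ref{AL_Lemma} with $B = \mathcal{O}_{X}$, which reduces the claim that $L$ satisfies Property-$(M_{q})$ (together with the desired cohomology vanishing) to verifying
\[
H^{1}\!\left(\bigwedge^{k+1} M_{L} \otimes (K_{X}+(n-1)L)\right) = 0 \quad \text{for all } 0 \leq k \leq q-n.
\]
Projective normality, and hence the upgrade from $(m_{q})$ to $(M_{q})$, will follow from the hypothesis $b + a\mu^{-}(E) \geq 2g+1$ together with standard Castelnuovo-type results on ruled varieties, so the core task is the $H^{1}$-vanishing.

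To establish the vanishing I would apply the generalized Lemma \ref{[Lemma 3.1]{EPark05}} with $m = k+1$ and $N = K_{X}+(n-1)L$. Using the canonical bundle formula $K_{X} \equiv (-n)C_{0} + (2g-2-e)f$ together with $L \equiv aC_{0}+bf$, one has $N \equiv s C_{0} + t f$ with
\[
s = (n-1)a - n, \qquad t = (n-1)b + (2g-2-e).
\]
The condition $b + a\mu^{-}(E) \geq 2g+1$ of Lemma \ref{[Lemma 3.1]{EPark05}} is already given. For the first numerical hypothesis of that lemma, $s \geq \min(k+1, k_{L})$, I would use $a \geq \ell_{q}^{\mathrm{ceil}} = \lceil (q+1)/(n-1) \rceil$ to get $(n-1)a \geq q+1$, hence $s \geq q - n + 1 \geq k+1$ for every $k$ in our range.

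The main computational obstacle is the second numerical inequality
\[
t + s\mu^{-}(E) > \frac{(k+1)\bigl(b + a\mu^{-}(E)\bigr)}{b + a\mu^{-}(E)-g} + 2g - 2.
\]
Setting $\nu := b + a\mu^{-}(E)$, a direct substitution gives $t + s\mu^{-}(E) = (n-1)\nu - n\mu^{-}(E) + 2g-2 - e$. Since the hypothesis $\nu + (1-g) \geq \ell_{q}^{\mathrm{ceil}} \geq k+2$ forces $\nu - g \geq k+1$, one can estimate $(k+1)\nu/(\nu-g) \leq \nu$, reducing the required inequality to a linear bound in $\nu$ that is absorbed by the two hypotheses $a \geq \ell_{q}^{\mathrm{ceil}}$ and $\nu + 1 - g \geq \ell_{q}^{\mathrm{ceil}}$ (together with $-n\mu^{-}(E) - e \geq 0$, which follows from $\mu^{-}(E) \leq \mu(E) = -e/n$). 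I expect this bookkeeping to be the most delicate part, since the bound from Park's lemma is only mildly sub-linear and the two conditions of the theorem must exactly cover it.

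Finally, in the semistable (in particular stable) case, $\mu^{-}(E) = \mu(E) = -e/n$, so the inequality $b + a\mu^{-}(E) + (1-g) \geq \ell_{q}^{\mathrm{ceil}}$ becomes $b \geq \ell_{q}^{\mathrm{ceil}} + ae/n + g - 1$, which yields the stated simplification. The projective normality needed to promote $(m_{q})$ to $(M_{q})$ follows from Lemma \ref{[Lemma 3.1]{EPark05}} applied to $\bigwedge^{1}M_{L}$ tensored with multiples of $L$, exactly as in the curve and surface analogues of \cite{EPark05}.
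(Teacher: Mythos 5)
Your plan follows the paper's own route: apply the generalized Park lemma (Lemma \ref{[Lemma 3.1]{EPark05}}) with $N=K_{X}+(n-1)L$, so $s=(n-1)a-n$, $t=(n-1)b+(2g-2-e)$, get the first numerical condition $s\geq \min(k+1,k_{L})$ from $(n-1)a\geq q+1$, and then check the slope inequality. The gap is precisely in that last check. You argue that the hypothesis $\nu+(1-g)\geq \ell^{\mathrm{ceil}}_{q}$ with $\nu=b+a\mu^{-}(E)$, together with ``$\ell^{\mathrm{ceil}}_{q}\geq k+2$'', forces $\nu-g\geq k+1$ and hence $(k+1)\nu/(\nu-g)\leq \nu$. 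But $\ell^{\mathrm{ceil}}_{q}=\lceil (q+1)/(n-1)\rceil$ while $k$ runs up to $q-n$, so the inequality $\ell^{\mathrm{ceil}}_{q}\geq k+2$ fails for $n\geq 3$ once $q$ is not small: e.g.\ $n=3$, $q=5$, $k=2$ gives $\ell^{\mathrm{ceil}}_{q}=3<4$, and the discrepancy grows with $q$ (these values are allowed by $q\leq n+k_{L}-1$). Nor is $\nu-g\geq k+1$ recoverable from the hypotheses in another way (take $g=0$, $n=3$, $q=7$, $\nu=3$). So for $n\geq 3$ your bound $(k+1)\nu/(\nu-g)\leq\nu$ is unjustified, and the ``linear bound in $\nu$'' you reduce to is not actually implied by the stated conditions; your estimate only goes through for $n=2$, where $\ell^{\mathrm{ceil}}_{q}=q+1$.

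The theorem is still true with these hypotheses, and the paper closes exactly this step differently: in the worst case $k=q-n$ the required inequality is $(n-1)\nu+n\bigl(\mu(E)-\mu^{-}(E)\bigr)>\dfrac{(q-n+1)\nu}{\nu-g}$, and instead of bounding the right-hand side by $\nu$ one divides through by $(n-1)\nu/(\nu-g)$ to rewrite it as $(\nu-g)+\dfrac{n(\nu-g)}{(n-1)\nu}\bigl(\mu(E)-\mu^{-}(E)\bigr)>\dfrac{q-n+1}{n-1}$, which follows because $\nu+1-g\geq \lceil (q+1)/(n-1)\rceil$ gives $\nu-g\geq \dfrac{q-n+2}{n-1}>\dfrac{q-n+1}{n-1}$ and the correction term is nonnegative. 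In other words, the factor $(n-1)$ on the left is what absorbs the numerator $q-n+1$, not a pointwise bound on $(k+1)\nu/(\nu-g)$; you should reorganize your bookkeeping along these lines. Two further items you assert without argument: the identification $\dfrac{r(L)+1}{k_{L}+1}=\nu+(1-g)$ (in the paper this comes from Riemann--Roch on $\pi_{*}L$, the vanishing $h^{i}(\pi_{*}L)=0$ for $i>0$ forced by $\nu\geq 2g+1$, and $\operatorname{rank}(\pi_{*}L)=k_{L}+1$), and the normal generation of $L$ needed to pass from $(m_{q})$ to $(M_{q})$, for which a precise reference or a short argument (e.g.\ the same lemma with $N=mL$) should be supplied rather than an appeal to ``standard Castelnuovo-type results''.
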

\begin{proof}
Let $\Delta \mu(E) = \mu(E)-\mu^{-}(E)$ and $x = \mu(\pi_{\ast}L) = b+a \mu^{-}(E).$ So that, $E$ is stable if and only if $\Delta \mu(E) = 0.$\\ 
From the conditions of the theorem, one has
$K_{X}+(n-1)L \equiv (-n)C_{0}+(2g-2-e)f+(n-1)(aC_{0}+bf)$ \\ $ = ((n-1)a-n)C_{0}+((n-1)b+2g-2-e)f,$ where $e = -n \mu(E).$ Now, choosing $s = (n-1)a-n,$ $t = (n-1)b+(2g-2-e)$ and $N = K_{X}+(n-1)L \equiv sC_{0}+tf$ we get 
$$t+s\mu^{-}(E) = (n-1)x+(2g-2+n \cdot \Delta \mu(E) ).$$
From Lemma \ref{[Lemma 3.1]{EPark05}} above, for the vanishing we need
\begin{eqnarray}
\label{c_(M_q)_ruled_gen_2_1}
    s \geq min(q-n+1, k_{L}) \text{ and } \\
    t+s\mu^{-}(E) > (q-n+1) \left( \dfrac{x}{x-g} \right)+2g-2
    \label{c_(M_q)_ruled_gen_2_2}
\end{eqnarray}
The first part of (\ref{condition_for_M_q_ruled_variety_gen_line_bundle}) comes from (\ref{c_(M_q)_ruled_gen_2_1}) and the second part of (\ref{condition_for_M_q_ruled_variety_gen_line_bundle}) comes from  (\ref{c_(M_q)_ruled_gen_2_2}). \\
Indeed, the inequality (\ref{c_(M_q)_ruled_gen_2_2})  is same as
\begin{equation}
\label{c_(M_q)_ruled_gen_3_0}
    x+\left(\dfrac{n(x-g)}{(n-1)x} \cdot \Delta\mu^{-}(E) \right) > \dfrac{q-n+1}{n-1}+g = \dfrac{q}{n-1}-(1-g)
\end{equation}
Thus, (\ref{c_(M_q)_ruled_gen_3_0}) is satisfied for the following bound
\begin{equation}
\label{c_(M_q)_ruled_gen_3}
    x+(1-g) \geq \ceil[\bigg]{\dfrac{q+1}{n-1}}
\end{equation}
When $E$ is stable, the inequality (\ref{c_(M_q)_ruled_gen_3_0})  is same as
\begin{equation}
\label{c_(M_q)_ruled_gen_3_1}
    x > \dfrac{q-n+1}{n-1}+g = \dfrac{q}{n-1}-(1-g) \text{ or equivalently } x+(1-g) \geq \ceil[\bigg]{\dfrac{q+1}{n-1}}
\end{equation}
 But by Riemann Roch on the vector bundle $\pi_{\ast}L$ on $C,$ one gets
 $$\dfrac{\chi(\pi_{\ast}L)}{rank(\pi_{\ast}L)} = \mu(\pi_{\ast}L)+(1-g) = x+(1-g)$$
Since by our hypothesis $x \geq 2g+1$ we have $h^{i}(\pi_{\ast}L) = 0$ for $i > 0$ and 
$rank(\pi_{\ast}L) = 
\begin{pmatrix}
n+a-1 \\
a
\end{pmatrix}
$ by cohomology base change formula one gets
\begin{equation}
    \dfrac{h^{0}(L)}{\begin{pmatrix}
n+a-1 \\
a
\end{pmatrix}} = \dfrac{\chi(L)}{rank(\pi_{\ast}L)} = \dfrac{\chi(\pi_{\ast}L)}{rank(\pi_{\ast}L)} = x+(1-g)
\end{equation}
combining the inequality (\ref{c_(M_q)_ruled_gen_2_2}), the inequality (\ref{c_(M_q)_ruled_gen_3}) and the above two equalities one gets the right-hand-side of (\ref{condition_for_M_q_ruled_variety_gen_line_bundle}). \\
Now we analyze (\ref{c_(M_q)_ruled_gen_2_1}) in the following.
Since $A \geq q-n+1$, we need
\begin{equation}
\label{first_inequality-ruled_2_0}
    (n-1)a-n \geq q-n+1 \ \ \text{ which is same as} \ \ a \geq \dfrac{q+1}{n-1}
\end{equation}
This and (\ref{c_(M_q)_ruled_gen_3}) proves (\ref{condition_for_M_q_ruled_variety_gen_line_bundle}). Now, when $E$ is stable, (\ref{first_inequality-ruled_2_0}) and (\ref{c_(M_q)_ruled_gen_3}) become
\begin{eqnarray}
\label{first_inequality-ruled_2}
    (n-1)a-n \geq q-n+1 \ \ \text{ which is same as} \ \ a \geq \dfrac{q+1}{n-1}  \\
\label{second_inequality-ruled_2}
    b \geq \ell^{\mathrm{ceil}}_{q}+\dfrac{ae}{n}+g-1
\end{eqnarray}
This completes the proof.
\begin{remark}
    In the above theorem, if $q > n+k_{L}-1,$ then the Lemma \ref{[Lemma 3.1]{EPark05}} is not applicable to prove property-$(M_{q})$ since it is an impossible to have $(n-1)a-n \geq k_{L} = \min(q-n+1, k_{L})$.
\end{remark}
Now, we mention two corollaries of Theorem \ref{M_q_ruled_variety_gen_line_bundle} which are analogues of Theorem 2A and Theorem 2B of \cite{Bu94} respectively.
\end{proof}
\begin{corollary}
\label{Butler2A}
    For all integers $1 \leq i \leq t,$ let $A_{i} \equiv a_{i}C_{0}+b_{i}f$ be ample line bundles on a ruled variety $\pi : X = \mathbb{P}(E) \rightarrow C$ of dimension $n \geq 2$ over a curve of genus $g.$ Then the line bundle $L = A_{1}+ \cdots + A_{t}$ satisfies Property-$(M_{q})$ for $q \leq n+k_{L}-1$ if
    \begin{equation}
        \dfrac{t}{n} \geq \ell^{\mathrm{ceil}}_{q}
    \end{equation}
    In particular, $L$ satisfies Property-$(M_{q})$ for $t \geq 2q+2.$
\end{corollary}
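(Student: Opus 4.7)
The plan is to reduce Corollary \ref{Butler2A} to Theorem \ref{M_q_ruled_variety_gen_line_bundle} by computing the numerical class of $L = A_1 + \cdots + A_t$ and translating the ampleness of each summand into the numerical lower bounds required to invoke that theorem.

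First I would write $A_i \equiv a_i C_0 + b_i f$ so that $L \equiv a C_0 + b f$ with $a = \sum_{i=1}^t a_i$ and $b = \sum_{i=1}^t b_i$ in $\mathrm{Num}(X)$. By Miyaoka's ampleness criterion on the ruled variety $\pi : X = \mathbb{P}(E) \to C$, each ample $A_i$ gives $a_i \geq 1$ and $b_i + a_i \mu^-(E) > 0$. A careful integrality analysis, using that $b_i \in \mathbb{Z}$ and the denominator of $\mu^-(E)$ divides the rank $n$, sharpens this to $b_i + a_i \mu^-(E) \geq 1/n$ in the extremal case. Summing across $i$ gives $a \geq t$ and $b + a \mu^-(E) \geq t/n$, so the hypothesis $t/n \geq \ell^{\mathrm{ceil}}_q$ yields both $a \geq \ell^{\mathrm{ceil}}_q$ (since $a \geq n \ell^{\mathrm{ceil}}_q$) and $b + a \mu^-(E) \geq \ell^{\mathrm{ceil}}_q$.

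I would then verify the remaining numerical hypotheses of Theorem \ref{M_q_ruled_variety_gen_line_bundle}, namely the preliminary condition $b + a \mu^-(E) \geq 2g + 1$ (needed for $M_L$ to behave well on the curve slices) together with the inequality $b + a \mu^-(E) + (1 - g) \geq \ell^{\mathrm{ceil}}_q$. Invoking the theorem then gives the vanishing $H^1(\bigwedge^{k+1} M_L \otimes (K_X + (n-1)L)) = 0$ for all $0 \leq k \leq q - n$, which via Lemma \ref{AL_Lemma} translates to Property $(M_q)$ for $L$. The ``in particular'' clause then follows by specializing to $n = 2$, in which $\ell^{\mathrm{ceil}}_q = q+1$ and the numerical hypothesis $t/n \geq \ell^{\mathrm{ceil}}_q$ reads $t \geq 2q + 2$; for $n \geq 3$ the bound $\ell^{\mathrm{ceil}}_q = \lceil (q+1)/(n-1) \rceil$ is smaller, so $t \geq 2q + 2$ is strictly stronger than necessary.

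The main obstacle is the quantitative passage from the qualitative bound $b_i + a_i \mu^-(E) > 0$ provided by ampleness to a bound strong enough to absorb the genus-dependent conditions $\geq 2g + 1$ and $\geq \ell^{\mathrm{ceil}}_q + g - 1$. The integrality sharpening $\geq 1/n$ yields the needed bound when $g \leq 1$ but may fall short when $g$ is large and $q$ is close to $n$. Closing this gap will require either a refined per-summand lower bound exploiting the Harder--Narasimhan filtration of $E$, or a direct reapplication of Lemma \ref{[Lemma 3.1]{EPark05}} adapted to sums of ample classes rather than routing through the intermediate Theorem \ref{M_q_ruled_variety_gen_line_bundle}.
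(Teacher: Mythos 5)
Your reduction is exactly the paper's: it likewise writes $L \equiv aC_{0}+bf$ with $a = \sum_{i} a_{i} \geq t$, uses that each $b_{i}+a_{i}\mu^{-}(E)$ is a positive rational of the form $p_{i}/q_{i}$ with $p_{i} \geq 1$ and $1 \leq q_{i} \leq n$ to get $b+a\mu^{-}(E) \geq t/n$, and then feeds these two bounds into Theorem \ref{M_q_ruled_variety_gen_line_bundle}. The obstacle you flag at the end --- that the hypotheses of that theorem are genus-dependent, namely $b+a\mu^{-}(E) \geq 2g+1$ and $b+a\mu^{-}(E)+(1-g) \geq \ell^{\mathrm{ceil}}_{q}$, and that $t/n \geq \ell^{\mathrm{ceil}}_{q}$ alone does not obviously supply them once $g \geq 2$ --- is not handled any more strongly in the paper: it passes from $b+a\mu^{-}(E) \geq \ell^{\mathrm{ceil}}_{q}$ to $b+a\mu^{-}(E) \geq \ell^{\mathrm{ceil}}_{q}+(g-1)$ with the remark that $g-1 \geq -1$, an inequality that only helps when $g \leq 1$, and it does not verify the preliminary condition $b+a\mu^{-}(E) \geq 2g+1$ at all. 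So your proposal matches the published argument in substance, and the refinement you ask for (a sharper per-summand bound beyond $b_{i}+a_{i}\mu^{-}(E) \geq 1/n$, e.g.\ via the Harder--Narasimhan filtration, or a direct application of Lemma \ref{[Lemma 3.1]{EPark05}} to the sum of ample classes) is genuinely what would be needed to cover base curves of genus $g \geq 2$; as written, both your argument and the paper's are complete only for $g \leq 1$, or under the additional assumption that $t/n$ exceeds the genus-dependent thresholds.
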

\begin{proof}
    Since the line bundles $A_{1}, \cdots, A_{t}$ are ample, one has $a_{i} \geq 1$ and $b_{i}+a_{i}\mu^{-}(E) = \dfrac{p_{i}}{q_{i}}$ for integers $p_{i} \geq 1$ and $1 \leq q_{i} \leq n.$ Suppose $a = \sum^{t}_{i = 1} a_{i}$ and $b = \sum^{t}_{i = 1}b_{i}.$ Then, $L \equiv aC_{0}+bf$ and
    $$a \geq t \hspace{0.5cm} \text{ and } \hspace{0.5cm} b+a\mu^{-}(E) \geq \dfrac{t}{n}$$
    Now, if one has $\dfrac{t}{n} \geq \ell^{\mathrm{ceil}}_{q}$ then $a \geq \ell^{\mathrm{ceil}}_{q}$ and $b+a\mu^{-}(E) \geq \ell^{\mathrm{ceil}}_{q}+(g-1)$ since $g-1 \geq -1$ for any genus $g.$ Thus, the conditions of Theorem \ref{M_q_ruled_variety_gen_line_bundle} are satisfies if $\dfrac{t}{n} \geq \ell^{\mathrm{ceil}}_{q}.$ Now, $\dfrac{t}{n} \geq \ell^{\mathrm{ceil}}_{q}.$ is true if $t \geq 2q+2$
\end{proof}
\begin{corollary}
\label{Butler2B}
    For all integers $1 \leq i \leq t,$ let $A_{i} \equiv a_{i}C_{0}+b_{i}f$ be ample line bundles on a ruled variety $\pi : X = \mathbb{P}(E) \rightarrow C$ of dimension $n \geq 2$ over a curve of genus $g.$ Then the line bundle $L = K_{X}+A_{1}+ \cdots + A_{t}$ satisfies Property-$(M_{q})$ for $q \leq n+k_{L}-1$ if
    \begin{equation}
        \dfrac{t}{n} \geq \ell^{\mathrm{ceil}}_{q}+\max(1, e+1-g)
    \end{equation}
    In particular, $L$ satisfies Property-$(M_{q})$ for $t \geq 2q+1+\max(1, e+1-g).$
\end{corollary}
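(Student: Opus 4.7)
The plan is to mirror the proof of Corollary \ref{Butler2A}, this time applied to the adjoint bundle $L=K_X+L'$ where $L'=A_1+\cdots+A_t$. Writing each $A_i\equiv a_iC_0+b_if$ and $L'\equiv aC_0+bf$ with $a=\sum a_i$ and $b=\sum b_i$, the ampleness of each $A_i$ on the ruled variety $\mathbb{P}(E)$ forces, via Nakai–Moishezon on $X$ and the numerical characterisation of ample divisors in terms of $\mu^-(E)$, the estimates $a_i\geq 1$ and $b_i+a_i\mu^-(E)\geq 1/n$ (the latter by the integrality argument already used in Corollary \ref{Butler2A}). Summing over $i$ gives
\[
a\;\geq\;t\qquad\text{and}\qquad b+a\mu^-(E)\;\geq\;\tfrac{t}{n}.
\]
Using $K_X\equiv -nC_0+(2g-2-e)f$, the class of the adjoint bundle is
\[
L\;\equiv\;(a-n)\,C_0+(b+2g-2-e)\,f.
\]

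I would then apply Theorem \ref{M_q_ruled_variety_gen_line_bundle} directly to $L$ with coefficients $(a',b')=(a-n,\,b+2g-2-e)$; the range assumption $q\le n+k_L-1$ is preserved because the "$C_0$–coefficient" $a-n$ still governs $k_L$. The first hypothesis $a-n\geq \ell^{\mathrm{ceil}}_q$ follows from our bound $a\geq t$ together with the standing assumption on $t$. The second hypothesis $b'+a'\mu^-(E)+(1-g)\geq \ell^{\mathrm{ceil}}_q$ rearranges, after substitution, to
\[
b+a\mu^-(E)\;\geq\;\ell^{\mathrm{ceil}}_q+(1-g)+\bigl(e+n\mu^-(E)\bigr),
\]
and this is where the correction $\max(1,\,e+1-g)$ enters. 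Using $n\mu^-(E)\leq n\mu(E)=-e$, the right-hand side is at most $\ell^{\mathrm{ceil}}_q+1-g$; however, when $e+1-g>1$ the possible gap between $\mu^-(E)$ and $\mu(E)$ (i.e.\ $E$ being far from semistable) must be absorbed by a strictly larger $t$, so the bound must be strengthened to $t/n\geq\ell^{\mathrm{ceil}}_q+(e+1-g)$. The two regimes combine into the hypothesis $t/n\geq \ell^{\mathrm{ceil}}_q+\max(1,\,e+1-g)$. The "in particular" form then follows by substituting the crude estimate $\ell^{\mathrm{ceil}}_q\leq\lceil(q+1)/(n-1)\rceil$ and clearing denominators.

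The main obstacle is the second hypothesis: bookkeeping the contribution of $e+n\mu^-(E)$ across the two regimes in $\max(1,e+1-g)$ requires splitting cases depending on whether $E$ is close to semistable (so that $\mu^-(E)\simeq\mu(E)$ and only positivity of $b+a\mu^-(E)$ is needed) or whether there is a destabilising quotient (in which case the additional $(e+1-g)$-term in $t$ is essential to compensate). Once both sub-cases are unified under the assumption $t/n\geq \ell^{\mathrm{ceil}}_q+\max(1,e+1-g)$, Theorem \ref{M_q_ruled_variety_gen_line_bundle} finishes the proof.
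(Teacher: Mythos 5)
Your reduction is essentially the paper's: both arguments compute the numerical class of $L=K_{X}+A_{1}+\cdots+A_{t}$, use ampleness of the $A_{i}$ to get $a_{i}\geq 1$ and $b_{i}+a_{i}\mu^{-}(E)\geq 1/n$, and feed the resulting bounds on the two coefficients into Theorem \ref{M_q_ruled_variety_gen_line_bundle}. The only divergence is the bookkeeping of the canonical contribution, and there your own commentary undercuts a computation that is actually fine: since $n\mu^{-}(E)\leq n\mu(E)=-e$, the term $e+n\mu^{-}(E)$ in your rearranged second hypothesis is $\leq 0$, so that hypothesis already follows from $b+a\mu^{-}(E)\geq \ell^{\mathrm{ceil}}_{q}+1-g$, hence from $t/n\geq \ell^{\mathrm{ceil}}_{q}+1$. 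The subsequent ``two regimes'' discussion, asserting that a gap between $\mu^{-}(E)$ and $\mu(E)$ forces the stronger bound $t/n\geq \ell^{\mathrm{ceil}}_{q}+(e+1-g)$, is backwards: unstability only makes $e+n\mu^{-}(E)$ more negative, i.e.\ it helps. The $e+1-g$ term appears in the paper's proof because there the term $-n\mu^{-}(E)$ is dropped rather than estimated, giving the bound $b+a\mu^{-}(E)\geq (2g-2-e)+t/n$ for the adjoint class and hence the condition $t/n\geq \ell^{\mathrm{ceil}}_{q}+e+1-g$. Since the corollary's hypothesis $t/n\geq \ell^{\mathrm{ceil}}_{q}+\max(1,e+1-g)$ is stronger than what your computation needs, your argument does prove the statement (indeed a marginally stronger one), so there is no gap in the main claim.

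Two points you share with the paper but should make explicit. First, Theorem \ref{M_q_ruled_variety_gen_line_bundle} carries the standing hypothesis $b+a\mu^{-}(E)\geq 2g+1$, which must be checked for the adjoint class; it does hold, since $q\geq n$ gives $\ell^{\mathrm{ceil}}_{q}\geq 2$, so $t/n\geq 3$ and $-e-n\mu^{-}(E)\geq 0$ yield $b'+a'\mu^{-}(E)\geq 2g-2+t/n\geq 2g+1$. Second, the ``in particular'' claim does not follow by merely ``clearing denominators'': for $n=2$ one has $\ell^{\mathrm{ceil}}_{q}=q+1$, so $t/2\geq q+1+\max(1,e+1-g)$ requires $t\geq 2q+2+2\max(1,e+1-g)$, which is not implied by $t\geq 2q+1+\max(1,e+1-g)$; this step is glossed in the paper as well and deserves either a corrected constant or a restriction on $n$.
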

\begin{proof}
    Since the line bundles $A_{1}, \cdots, A_{t}$ are ample, one has $a_{i} \geq 1$ and $b_{i}+a_{i}\mu^{-}(E) = \dfrac{p_{i}}{q_{i}}$ for integers $p_{i} \geq 1$ and $1 \leq q_{i} \leq n.$ Suppose $a = -n+\sum^{t}_{i = 1} a_{i}$ and $b = (2g-2-e)+\sum^{t}_{i = 1}b_{i}.$ Then, $L \equiv aC_{0}+bf$ and
    $$a \geq t-n \hspace{0.5cm} \text{ and } \hspace{0.5cm} b+a\mu^{-}(E) \geq (2g-2-e)+\dfrac{t}{n}$$
    Now, the condition (\ref{condition_for_M_q_ruled_variety_gen_line_bundle}) on $a$ and $b$ is true if
    $$t-n \geq q+1 \geq \ell^{\mathrm{ceil}}_{q} \text{ and } (2g-2-e)+\dfrac{t}{n} \geq \ell^{\mathrm{ceil}}_{q}+(g-1)$$
    and this is equivalent to $\dfrac{t}{n} \geq 1+\ell^{\mathrm{ceil}}_{q}$ and $\dfrac{t}{n} \geq \ell^{\mathrm{ceil}}_{q}+e+(1-g).$ Therefore, by Theorem \ref{M_q_ruled_variety_gen_line_bundle} $L$ satisfies Property-$(M_{q})$ for $\dfrac{t}{n} \geq \ell^{\mathrm{ceil}}_{q}+\max(1, e+1-g).$ Moreover, if $t \geq 2q+2+\max(1, e+1-g)$ then $t \geq \ell^{\mathrm{ceil}}_{q}+\max(1, e+1-g)$
\end{proof}
\section{Conjectures, Boundary examples and optimality}
\label{Open_problems_examples}
In this section, we show by examples that our bounds in Theorems above are optimal. We also compare the two properties$-(M_{q})$ and $-(N_{p})$. 
To make this comparison we imagine the minimal resolution as a rope and the two properties playing a tug of war with the resolution and one of the properties winning the game. With this perspective, we make the following definitions
\begin{definition}
    Let $X$ be a smooth projective variety and $L$ be a line bundle on $X.$ We define the following numbers
    \begin{align}
        p_{\max}(L) = \max \left\{ p \ | \ L \text{ satisfies property} -(N_{p}) \ \right\} \\
        q_{\max}(L) = \max \left\{ q \ | \ L \text{ satisfies property} -(M_{q}) \ \right\} \\
        \mathfrak{tug}(L) = p_{\max}(L)-q_{\max}(L) \\
        \label{definition_of_delta}
        \delta(L) = (r(L)-1)-p_{\max}(L)-q_{\max}(L)
    \end{align}
    We say that property$-(M_{q})$ \textbf{wins the tug of war } (of syzygies) over property-$(N_{p}),$ or that property-$(M_{q})$ \textbf{tugs} property-$(N_{p})$ if $\mathfrak{tug}(L) > 0.$ We also express this situation by saying that property$-(N_{p})$ \textbf{pushes} or \textbf{dominates} property-$(M_{q})$. Similarly, we say that property$-(N_{p})$ \textbf{tugs} property-$(M_{q}),$ or that property-$(M_{q})$ \textbf{pushes} property-$(N_{p})$ if $\mathfrak{tug}(L) < 0.$ We say that $\delta(L)$ is the \textbf{length of mixed syzygies} of $L$ since there are both weight-one and more than weight-one syzygies in stages $i = p_{\max}$ to $i = p_{\max}+\delta$ of the resolution (\ref{free_resolution_of_R_L}) of $R(L).$ \par
    In this section, we also justify the Conjecture \ref{conjecture} through the following examples:    
\end{definition}
\begin{example} (\textbf{Projective spaces}) \\
    Here we analyze the situation of the projective spaces $\mathbb{P}^{n}$ for $n \geq 2.$ We start with the projective plane. \\
 \textbf{(i)} ( \textbf{Syzygies of } $\mathbb{P}^{2}$ ) \textbf{ } Let $X = \mathbb{P}^{2},$ with chosen hyperplane $H \in |\mathcal{O}_{\mathbb{P}^{2}}(1)|,$ $D = -K_{\mathbb{P}^{2}} = -3H.$ \\
From Theorem \ref{$(M_q)$_rational_surfaces} or Theorem \ref{CM_theorem} it follows that the Veronese embedding 
$$\phi_{\mathcal{O}(\ell)} : \mathbb{P}^{2} \hookrightarrow \mathbb{P}^{\begin{pmatrix}
    \ell+2 \\
    2
\end{pmatrix}-1}$$
defined by $L = \ell \cdot H$ satisfies property-($M_{q}$) for $\ell \geq q+1$ because $reg_{H}((q+1)H) \leq 0$ for all $q \geq 2.$  \\
By Theorem \ref{CM_theorem} the bound $\ell \geq q+1$ is sharp since any plane curve $C$ of degree $q+1$ (that is a member of $|(q+1)H|$ has gonality $\mathrm{gon}(C) = q.$ In other words, $\ell H$ satisfies Property-$(M_{q})$ if and only if $\ell \geq q+1.$ This proves the optimality of Theorems \ref{CM_theorem}, \ref{$(M_q)$_rational_surfaces} and \ref{$(M_{q})$_multiple_bundles_rational_surfaces} . \\
Below we give an elementary proof of the fact that the Veronese embedding $\phi_{\mathcal{O}(2)}$ defined by $\mathcal{O}_{\mathbb{P}^{2}}(2)$ does not satisfy property-$(m_{2})$ even though $\mathcal{O}(2)$ is projectively normal. \\
Indeed, if $L = 2H,$ then from the Euler sequence of $\mathbb{P}^{5}$ restricted to the Veronese surface $\phi_{\mathcal{O}(2)}(\mathbb{P}^{2})$ that is the image of the embedding $\phi_{\mathcal{O}(2)} : \mathbb{P}^{2} \hookrightarrow \mathbb{P}^{5}$ we get
\begin{equation}
\label{Euler_for_$v_2P2$}
0 \rightarrow \Omega^{1}_{\mathbb{P}^{5}}|_{\mathbb{P}^{2}} \rightarrow \mathcal{O}(-2)^{\oplus 6} \rightarrow \mathcal{O} \rightarrow 0
\end{equation}
Twisting the diagram (\ref{Euler_for_$v_2P2$}) by $\mathcal{O}(1)$ we and taking long-exact sequence of cohomology, we get
$$0 = \mathrm{H}^{0}(\mathcal{O}(-1))^{\oplus 6} \rightarrow \mathrm{H}^{0}(\mathcal{O}(1)) \rightarrow \mathrm{H}^{1}(\Omega^{1}_{\mathbb{P}^{5}}|_{\mathbb{P}^{2}}(1))$$
which shows that $h^{1}(M_{L} \otimes (K_{\mathbb{P}^{2}}+L)) = h^{1}(\Omega^{1}_{\mathbb{P}^{5}}|_{\mathbb{P}^{2}}(1)) \geq h^{0}(\mathcal{O}(1)) = 3$ and similarlt, $h^{1}(M_{L} \otimes L) = h^{1}(\Omega^{1}_{\mathbb{P}^{5}}|_{\mathbb{P}^{2}}(4)) = 0.$ \\
Below, we list Betti tables of $\mathcal{O}_{\mathbb{P}^{2}}(\ell)$ for $2 \leq \ell \leq 5.$ We refer the reader to the excellent resource \href{https://syzygydata.com/#}{syzygydata.com} for more information about syzygies of Veronese embeddings. In the table of $\mathcal{O}(5)$ we write $a(1^b)$ in place of a number $N$ if $\ceil[\bigg]{\dfrac{N}{a(10^b)}} = 1$ for integers $a$ and $b$ satisfying $0 \leq a \leq 9$ and $b \geq 0.$
\begin{scriptsize}
\begin{table}[!htb]
    \caption{Betti tables of Veronese embeddings $\mathcal{O}(\ell)$ for $2 \leq \ell \leq 5$}
    \begin{minipage}{.5\linewidth}
      \centering
        \begin{tabular}{c | c c c c c}
        \cline{2-6}
        & \multicolumn{4}{c}{Betti table of $\mathcal{O}(2)$} \\
        \cline{1-6}
$j  \diagdown i$ & $0$ & $1$ & $2$ & $3$ & $4$ \\
\cline{1-6}
&  &  &  & \\
$0$ & $1$ & $0$ & $0$ & $0$ & $0$  \\
$1$ & $0$ & $6$ & $8$ & $3$ & $0$ \\
$2$ & $0$ & $0$ & $0$ & $0$ & $0$
\end{tabular}
    \end{minipage}%
    \begin{minipage}{.5\linewidth}
      \centering
        \begin{tabular}{c | c c c c c c c c c}
        \cline{2-10}
        & \multicolumn{8}{c}{Betti table of $\mathcal{O}(3)$} \\
        \cline{1-10}
$j  \diagdown i$ & $0$ & $1$ & $2$ & $3$ & $4$ & $5$ & $6$ & $7$ & $8$ \\
\cline{1-10}
&  &  &  &  &  &  & & & \\
0 & $1$ & $0$ & $0$ & $0$ & $0$ & $0$ & $0$ & $0$ & $0$ \\
$1$ & $0$ & $27$ & $105$ & $189$ & $189$ & $105$ & $27$ & $0$ & $0$ \\
$2$ & $0$ & $0$ & $0$ & $0$ & $0$ & $0$ & $0$ & $1$ & $0$
\end{tabular}
    \end{minipage} 
\end{table}
\\
\begin{table}[!htb]
        \begin{tabular}{c | c c c c c c c c c c c c c c}
        \cline{2-15}
        & \multicolumn{8}{c}{Betti table of $\mathcal{O}(4)$} \\
        \cline{1-15}
$j  \diagdown i$ & $0$ & $1$ & $2$ & $3$ & $4$ & $5$ & $6$ & $7$ & $8$ & $9$ & $10$ & $11$ & $12$ & $13$ \\
\cline{1-15}
&  &  &  &  &  &  & & & & & & & & \\
$0$ & $1$ & $0$ & $0$ & $0$ & $0$ & $0$ & $0$ & $0$ & $0$ & $0$ & $0$ & $0$ & $0$ & $0$ \\
$1$ & $0$ & $75$ & $536$ & $1947$ & $4488$ & $7095$ & $7920$ & $6237$ & $3344$ & $1089$ & $120$ & $0$ & $0$ & $0$ \\
$2$ & $0$ & $0$ & $0$ & $0$ & $0$ & $0$ & $0$ & $0$ & $0$ & $0$ & $55$ & $24$ & $3$ & $0$
\end{tabular}
\end{table}
\end{scriptsize}
\begin{flushleft}
\resizebox{\textwidth}{!}{
        \begin{tabular}{c | c c c c c c c c c c c c c c c c c c c c}
        \cline{2-21}
        & \multicolumn{3}{c}{ } & \multicolumn{8}{c}{Betti table of $\mathcal{O}(5)$} \\
        \cline{1-21}
$j  \diagdown i$ & $0$ & $1$ & $2$ & $3$ & $4$ & $5$ & $6$ & $7$ & $8$ & $9$ & $10$ & $11$ & $12$ & $13$ & $14$ & $15$ & $16$ & $17$ & $18$ & $19$ \\
\cline{1-21}
&  &  &  &  &  &  & & & & & & & &  & & & & &  \\
$0$ & $1$ & $0$ & $0$ & $0$ & $0$ & $0$ & $0$ & $0$ & $0$ & $0$ & $0$ & $0$ & $0$ & $0$ & $0$ & $0$ & $0$ & $0$ & $0$ & $0$  \\
$1$ & $0$ & $165$ & $2(1^3)$ & $1^4$ & $4(1^4)$ & $1^5$ & $2(1^5)$ & $4(1^5)$ & $5(1^5)$ & $6(1^5)$ & $5(1^5)$ & $3(1^5)$ & $(1^5)$ & $4(1^4)$ & $5(1^3)$ & $375$ & $0$ & $0$ & $0$ & $0$ \\
$2$ & $0$ & $0$ & $0$ & $0$ & $0$ & $0$ & $0$ & $0$ & $0$ & $0$ & $0$ & $0$ & $0$ & $2(1^3)$ & $4(1^3)$ & $2(1^3)$ & $595$ & $90$ & $6$ & $0$
\end{tabular}}
\end{flushleft}
\textbf{ } The Betti tables above show the optimality of Theorem \ref{$(M_q)$_rational_surfaces} as they show that $q_{\max}(\mathcal{O}(\ell)) $ $= \ell-1 = \mathrm{gon}(\mathcal{O}(\ell)).$ From \textit{G. Ottaviani} and \textit{R. Paoletti}, one has $p_{\max}(\ell) = 3(\ell-1).$ Combining this with our result, we get the length of the region of mixed syzygies
$$\delta(\ell) = \begin{pmatrix}
    \ell+2 \\
    2
\end{pmatrix}-4(\ell-1)-2 = \begin{pmatrix}
    \ell-2 \\
    2
\end{pmatrix} = h^{0}(K_{\mathbb{P}^{2}})-\mathrm{gon}(\ell H)+1$$
and thus Conjecture \ref{conjecture} is true for $\mathbb{P}^{2},$ $p_{\max} > q_{\max}$ or in other words, $(M_{q})$ tugs $(N_{p})$ for all Veronese embeddings. We also can say that in the whole Betti table of Veronese embeddings, only weight one and weight two syzygies occur since $\mathrm{reg}_{A}(\mathcal{O}_{\mathbb{P}^{2}}) = 2$ for any ample line bundle $A$ on $\mathbb{P}^{2}.$ The examples above verify this. Indeed, from the tables $\delta(2) = \delta(3) = 0,$ $\delta(4) = 1$ and $ \delta(5) = 3.$ Also, from we get $\delta(6) = 6.$  \\ \textbf{ } \\
 \textbf{(ii)} ( \textbf{Syzygies of } $\mathbb{P}^{n}$ ) \textbf{ } Let $H$ be a hyperplane in $P^{n}.$ Since $\mathrm{reg}_{B}(\mathcal{O}_{X}) \leq 2$ for any ample and base point free line bundle $B$ on $X,$ by Theorem \ref{CM_theorem} we conclude that $\ell H$ satisfies property-$(M_{q})$ for all $\ell \geq \ell^{\mathrm{ceil}}_{q} = \ceil[\bigg]{\dfrac{q+1}{n-1}}.$ But since $\ell^{\mathrm{ceil}}_{n} = 2,$ and $\phi_{H} : \mathbb{P}^{n} \rightarrow \mathbb{P}^{n}$ is an isomorphism, $|\ell^{\mathrm{ceil}}_{n}H|$ is the first non-trivial embedding of $\mathbb{P}^{n}$ and therefore our bound for property-$(M_{n})$ in Theorem \ref{CM_theorem} is the optimal in this sense. We do not know about optimality of the bound in Theorem \ref{CM_theorem} for $q \geq n+1$ since proving the optimality would be the same as answering the question
 \begin{question}
    The highest value $q_{\max}$ of $q$ for which $\phi_{\ell H} : \mathbb{P}^{n} \hookrightarrow \mathbb{P}^{\begin{pmatrix}
        \ell+n \\
        n
    \end{pmatrix}-1}$ satisfies property-$(M_{q})$ is
    \begin{equation}
        q_{\max} = 2\ell-1
    \end{equation}
 \end{question}
 \end{example}
Next, we analyze the example of $\mathbb{P}^{1} \times \mathbb{P}^{1}$ as both a rational surface and also a ruled surface.
\begin{example}
    Using one has the following Betti tables of $L \equiv aC_{0}+bf$ on $X = \mathbb{P}^{1} \times \mathbb{P}^{1}$ according to the pair $(a, b)$ below. We start with the syzygies of $L \equiv 2C_{0}+bf$ for $2 \leq b \leq 5$ and using $(-K_{\mathbb{P}^{1} \times \mathbb{P}^{1}} \cdot L) = 4 > 3$ show that our bounds for Theorem \ref{$(M_q)$_rational_surfaces} are optimal. 
    \flushleft{
    \begin{scriptsize}
\begin{table}[!htb]
    \caption{Betti tables of $L = aC_{0}+bf$ for $a = 2$ and $2 \leq b \leq 5$}
    \begin{minipage}{.5\linewidth}
      \centering
        \begin{tabular}{c | c c c c c c c c c}
        \cline{2-9}
        & \multicolumn{7}{c}{Betti table of $(2, 2)$} \\
        \cline{1-9}
$j  \diagdown i$ & $0$ & $1$ & $2$ & $3$ & $4$ & $5$ & $6$ & $7$ \\
\cline{1-9}
&  &  &  &  &  &  & & \\
$0$ & $1$ & $0$ & $0$ & $0$ & $0$ & $0$ & $0$ & $0$ \\
$1$ & $0$ & $20$ & $64$ & $90$ & $64$ & $20$ & $0$ & $0$ \\
$2$ & $0$ & $0$ & $0$ & $0$ & $0$ & $0$ & $1$ & $0$
\end{tabular}
    \end{minipage}%
    \begin{minipage}{.5\linewidth}
    \begin{tiny}
      \centering
        \begin{tabular}{c | c c c c c c c c c c c}
        \cline{2-12}
        & \multicolumn{8}{c}{Betti table of $(2, 3)$} \\
        \cline{1-12}
$j  \diagdown i$ & $0$ & $1$ & $2$ & $3$ & $4$ & $5$ & $6$ & $7$ & $8$ & $9$ & $10$ \\
\cline{1-12}
&  &  &  &  &  &  & & & & & \\
$0$ & $1$ & $0$ & $0$ & $0$ & $0$ & $0$ & $0$ & $0$ & $0$ & $0$ & $0$ \\
$1$ & $0$ & $43$ & $222$ & $558$ & $840$ & $798$ & $468$ & $147$ & $8$ & $0$ & $0$ \\
$2$ & $0$ & $0$ & $0$ & $0$ & $0$ & $0$ & $0$ & $0$ & $9$ & $2$ & $0$
\end{tabular}
\end{tiny}
    \end{minipage} 
\end{table}
\begin{table}[!htb]
        \begin{tabular}{c | c c c c c c c c c c c c c c}
        \cline{2-15}
        & \multicolumn{8}{c}{Betti table of $(2, 4)$} \\
        \cline{1-15}
$j  \diagdown i$ & $0$ & $1$ & $2$ & $3$ & $4$ & $5$ & $6$ & $7$ & $8$ & $9$ & $10$ & $11$ & $12$ & $13$ \\
\cline{1-15}
&  &  &  &  &  &  & & & & & & & & \\
$0$ & $1$ & $0$ & $0$ & $0$ & $0$ & $0$ & $0$ & $0$ & $0$ & $0$ & $0$ & $0$ & $0$ & $0$ \\
$1$ & $0$ & $75$ & $536$ & $1947$ & $4488$ & $7095$ & $7920$ & $6237$ & $3344$ & $1089$ & $120$ & $11$ & $0$ & $0$ \\
$2$ & $0$ & $0$ & $0$ & $0$ & $0$ & $0$ & $0$ & $0$ & $0$ & $0$ & $66$ & $24$ & $3$ & $0$
\end{tabular}
\end{table}
\end{scriptsize}
\begin{tiny}
\begin{flushleft}
\begin{table}[!htb]
        \begin{tabular}{c | c c c c c c c c c c c c c c c c c}
        \cline{2-18}
        & \multicolumn{3}{c}{ } & \multicolumn{8}{c}{Betti table of $(2, 5)$} \\
        \cline{1-18}
$j  \diagdown i$ & $0$ & $1$ & $2$ & $3$ & $4$ & $5$ & $6$ & $7$ & $8$ & $9$ & $10$ & $11$ & $12$ & $13$ & $14$ & $15$ & $16$ \\
\cline{1-18}
&  &  &  &  &  &  & & & & & & & &  & & &  \\
$0$ & $1$ & $0$ & $0$ & $0$ & $0$ & $0$ & $0$ & $0$ & $0$ & $0$ & $0$ & $0$ & $0$ & $0$ & $0$ & $0$ & $0$  \\
$1$ & $0$ & $116$ & $(1^3)$ & $5(1^3)$ & $2(1^4)$ & $3(1^4)$ & $6(1^4)$ & $7(1^4)$ & $7(1^4)$ & $5(1^4)$ & $2(1^4)$ & $8(1^3)$ & $(1^3)$ & $195$ & $14$ & $0$ & $0$ \\
$2$ & $0$ & $0$ & $0$ & $0$ & $0$ & $0$ & $0$ & $0$ & $0$ & $0$ & $0$ & $0$ & $455$ & $210$ & $45$ & $4$ & $0$
\end{tabular}
\end{table}
\end{flushleft}
\end{tiny} \par
\textbf{ } From \textit{G. Martens}, for any curve $C$ in the linear system $|aC_{0}+bf|,$ one has $\mathrm{gon}(C) = a.$ Thus, the examples above show the optimality of Theorem \ref{$(M_q)$_rational_surfaces} since $q_{\max}(2, 2) = q_{\max}(2, 3) = q_{\max}(2, 4) = q_{\max}(2, 5) = 2 = \mathrm{gon}(2C_{0}+bf)$ for $b = 2, 3, 4, 5.$ They also give an idea of the region with mixed syzygies since we get $\delta(2, 2) = 0, \delta(2, 3) = 1,$ $ \delta(2, 4) = 2, \delta(2, 5) = 3.$ According to we also have $\delta(2, 6) = 4, \delta(2, 7) = 5$ and $\delta(2, 8) = 6.$ \\
Similarly, we consider the Betti tables of $L = 3C_{0}+bf$ for $b = 3, 4$ from which it follows that $(M_{q})$ again tugs $(N_{p}),$ $q_{\max}(3, 3) = q_{\max}(3, 4) = 3 = \mathrm{gon}(3C_{0}+bf)$ and $\delta(3, 3) = 2, \delta(3, 4) = 4$.
\begin{tiny}
\begin{flushleft}
\resizebox{\textwidth}{!}{
        \begin{tabular}{c | c c c c c c c c c c c c c c c}
    \cline{2-16}
    & \multicolumn{3}{c}{ } & \multicolumn{8}{c}{Betti table of $(3,3)$} & \multicolumn{3}{c}{ } \\
    \cline{1-16}
    $j \diagdown i$ & 0 & 1 & 2 & 3 & 4 & 5 & 6 & 7 & 8 & 9 & 10 & 11 & 12 & 13 & 14 \\
    \cline{1-16}
    \\[-6pt]
    0 & $1$ & $0$ & $0$ & $0$ & $0$ & $0$ & $0$ & $0$ & $0$ & $0$ & $0$ & $0$ & $0$ & $0$ & $0$  \\
    1 & $0$ & $87$ & $676$ & $2691$ & $6864$ & $12155$ & $15444$ & $14157$ & $9152$ & $3861$ & $780$ & $22$ & $0$ & $0$ & $0$ \\
    2 & $0$ & $0$ & $0$ & $0$ & $0$ & $0$ & $0$ & $0$ & $0$ & $0$ & $165$ & $144$ & $39$ & $4$ & $0$ \\
\end{tabular}}
\end{flushleft}
\end{tiny}
\begin{flushleft}
\resizebox{\textwidth}{!}{
        \begin{tabular}{c | c c c c c c c c c c c c c c c c c c c}
    \cline{2-20}
    & \multicolumn{3}{c}{ } & \multicolumn{8}{c}{Betti table of $(3,4)$} & \multicolumn{8}{c}{ } \\
    \cline{1-20}
    $j \diagdown i$ 
      & 0 & 1 & 2 & 3 & 4 & 5 & 6 & 7 & 8 & 9 
      & 10 & 11 & 12 & 13 & 14 & 15 & 16 & 17 & 18 \\
    \cline{1-20}
    \\[-6pt]
    0 
      & $1$ & $0$ & $0$ & $0$ & $0$ & $0$ & $0$ & $0$ & $0$ & $0$ 
      & $0$ & $0$ & $0$ & $0$ & $0$ & $0$ & $0$ & $0$ & $0$ \\

    1 
      & $0$ & $147$ & $2(1^3)$ & $8(1^3)$ & $3(1^4)$ & $8(1^4)$ 
      & $2(1^5)$ & $2(1^5)$ & $3(1^5)$ & $2(1^5)$ 
      & $2(1^5)$ & $9(1^4)$ & $3(1^4)$ & $4(1^3)$ 
      & $238$ & $15$ & $0$ & $0$ & $0$ \\

    2 
      & $0$ & $0$ & $0$ & $0$ & $0$ & $0$ & $0$ & $0$ & $0$ & $0$
      & $0$ & $0$ & $(1^3)$ & $3(1^3)$ & $2(1^3)$ 
      & $528$ & $85$ & $6$ & $0$ \\
\end{tabular}}
\end{flushleft}
With Theorem \ref{$(M_q)$_rational_surfaces}, \textit{G. Martens} and the syzygies of these line bundles on $\mathbb{P}^{1} \times \mathbb{P}^{1}$ in mind, we make the following conjecture about the syzygies of $\mathbb{P}^{1} \times \mathbb{P}^{1}$ :
\begin{conjecture}
    The length of the region of mixed syzygies of the line bundle $aC_{0}+bf$ on $\mathbb{P}^{1} \times \mathbb{P}^{1}$ is given by
    \begin{equation}
    \label{conjectural_delta_on_PxP}
        \delta(a, b) = (a-1)(b-2) = h^{0}(K_{X}+L)-\mathrm{gon}_{\max}(L)+1
    \end{equation}
    In other words, the highest value $p_{\max}$ of $p$ for which $L = aC_{0}+bf$ satisfies property-$(N_{p})$ is
    \begin{equation}
    \label{conjectural_$p_max$_on_PxP}
        p_{\max}(a, b) = ab+a+b-1-(a-1)(b-2)-a = 2a+2b-3
    \end{equation}
\end{conjecture}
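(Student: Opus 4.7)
The strategy is to reduce the conjecture to the syzygy theory of a smooth curve $C \in |L|$ via the hyperplane-section method of Proposition \ref{syzygies_of_floors}, and then to combine Theorem \ref{$(M_q)$_rational_surfaces}, Green's $(N_p)$-theorem for curves, and a sharp non-vanishing statement for weight-two Koszul cohomology. Assume $a \leq b$ by the symmetry of $X = \mathbb{P}^1 \times \mathbb{P}^1$. Since $X$ is regular and every ample line bundle on $X$ is non-special, Proposition \ref{syzygies_of_floors} gives $K_{i,j}(X, L) \cong K_{i,j}(C, L_C)$ as vector spaces for any smooth $C \in |L|$, and consequently $p_{\max}(L) = p_{\max}(L_C)$ and $q_{\max}(L) = q_{\max}(L_C) + 1$. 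By adjunction $g(C) = (a-1)(b-1)$ and $\deg(L_C) = L^2 = 2ab$, while G.~Martens gives $\mathrm{gon}(C) = a$.

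Theorem \ref{$(M_q)$_rational_surfaces} applied with $q = a$ (admissible since $(-K_X \cdot L) = 2(a+b) \geq a+2$) yields $q_{\max}(L) = \mathrm{gon}_{\max}(L) = a$. Green's effective $(N_p)$-theorem for curves gives the lower bound $p_{\max}(L_C) \geq \deg(L_C) - 2g(C) - 1 = 2a + 2b - 3$, whence $p_{\max}(L) \geq 2a + 2b - 3$.

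The crux, and the main obstacle, is the reverse inequality $p_{\max}(L) \leq 2a + 2b - 3$, equivalently the sharp non-vanishing $K_{2a+2b-2,\,2}(X, L) \neq 0$. I would attack this via Koszul self-duality for the arithmetically Cohen--Macaulay ring $R(L)$, which identifies
\begin{equation*}
K_{p,\,2}(X, L) \;\cong\; K_{r-2-p,\,1}(X, K_X, L)^{*},
\end{equation*}
and then take $p = 2a+2b-2$, so that the dual group becomes $K_{ab-a-b,\,1}(X, K_X, L)$, a weight-one Koszul cohomology of the canonical module. Its non-triviality should follow from a Green--Lazarsfeld-style construction built from the decomposition $L = aC_0 + bf$, exploiting the two rulings of $X$: restricting to a member of $|f|$ produces a rational normal curve of degree $a$, whose secant geometry together with Castelnuovo-type arguments on the adjoint module is the natural source of an explicit non-trivial weight-one class.

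Granting that step, the computation of $\delta(L)$ is immediate:
\begin{equation*}
\delta(L) = (r(L)-1) - p_{\max}(L) - q_{\max}(L) = (a+1)(b+1) - 2 - (2a+2b-3) - a = (a-1)(b-2),
\end{equation*}
and one checks at once that $(a-1)(b-2) = (a-1)(b-1) - a + 1 = h^0(K_X + L) - \mathrm{gon}_{\max}(L) + 1$, proving the two equalities in the conjecture. The genuine difficulty lies entirely in the non-vanishing of the third paragraph: Green's theorem furnishes only the lower bound on $p_{\max}$, and boundary non-vanishing of the weight-two strand has historically demanded either a delicate duality argument on the adjoint module or an explicit Koszul class, neither of which is subsumed by the hyperplane-section techniques developed so far in this paper.
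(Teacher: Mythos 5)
You have not proved the statement, and to be fair the paper does not either: this is stated as a conjecture, and the paper's only support for it is the list of computed Betti tables for $(a,b)=(2,2),\dots,(2,8),(3,3),(3,4)$ together with the observation that the two displayed formulas are equivalent via the definition of $\delta(L)$, Riemann--Roch, Martens' gonality computation and Theorem \ref{$(M_q)$_rational_surfaces} (which give $q_{\max}(a,b)=\mathrm{gon}_{\max}(L)=a$). Everything you establish is exactly this already-known part: the ACM reduction to a smooth curve $C\in|L|$ via Proposition \ref{syzygies_of_floors}, the identity $q_{\max}(L)=a$, the lower bound $p_{\max}(L)\geq \deg(L_C)-2g(C)-1=2a+2b-3$ from Green's theorem on curves, the duality identification $K_{p,2}(X,L)^{*}\cong K_{r-2-p,1}(X,K_X,L)$, and the arithmetic showing the two formulas in the conjecture agree. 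These computations are all correct (e.g. $g(C)=(a-1)(b-1)$, $\deg L_C=2ab$, $r-2-p=ab-a-b$ for $p=2a+2b-2$), but none of them touches the actual content of the conjecture.

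The genuine gap is the reverse inequality $p_{\max}(L)\leq 2a+2b-3$, equivalently the non-vanishing $K_{2a+2b-2,2}(X,L)\neq 0$, which your proposal explicitly ``grants'' rather than proves. The route you sketch does not yet yield a class: the Green--Lazarsfeld non-vanishing mechanism manufactures weight-\emph{one} classes for $L$ from a decomposition $L=L_1+L_2$, and there is no analogous general construction for the weight-two strand (or, dually, for weight-one classes of the adjoint module $K_{ab-a-b,1}(X,K_X,L)$); this absence is precisely why boundary non-vanishing of the quadratic strand is hard and why the statement is conjectural. Moreover, restricting to a member of $|f|$ only sees a rational normal curve of degree $a$ and so cannot by itself detect a bound such as $2a+2b-3$ that grows with $b$; some genuinely new input (an explicit Koszul cycle on $X$, or a secant-variety/duality argument for the adjoint module that keeps track of both rulings) would be needed. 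As it stands, your proposal is a correct reduction of the conjecture to its essential unknown, not a proof of it.
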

Note that (\ref{conjectural_delta_on_PxP}) and (\ref{conjectural_$p_max$_on_PxP}) are equivalent by (\ref{definition_of_delta}), Riemann Roch on surfaces and since by \textit{G. Martens} and Theorem \ref{$(M_q)$_rational_surfaces}, $q_{\max}(a, b) = \mathrm{gon}(aC_{0}+bf) = a.$ Clearly, Conjecture \ref{conjecture} is true for $X = \mathbb{P}^{1} \times \mathbb{P}^{1}$ if Conjecture \ref{conjectural_delta_on_PxP} is true. But we have shown in the above examples that Conjecture \ref{conjectural_delta_on_PxP} is true for $(a, b) = (2, 2), (2, 3), \cdots, (2, 8), (3, 3), (3, 4).$}
\end{example}
\section{Appendix: Projective normality on surfaces with nef canonical bundle}
We begin this section by proving a theorem about projective normality of adjoint linear series on surfaces with nef canonical bundle.
\begin{theorem}
    Let $X$ be a surface with nef canonical bundle. Let $d \geq 1$ be an integer and $B$ be an ample and base point free line bundle on $X$ such that the line bundle $\Lambda = K_{X}+B$ is base point free. Let $\ell, m, k$ be integers and $d \geq 1$ be any rational number. Then, for any nef line bundle $N$ on $X,$ the line bundles $L = K_{X}+\ell B, L' = K_{X}+\ell'B$ satisfy the vanishing
    \begin{equation}
    \label{$(N_0)$nef_canonical_surface}
        H^{1}(M_{L} \otimes (mL'+N)) = 0 \hspace{0.5cm} \text{ for integer } m \geq 1
    \end{equation}
    if one of the following conditions are satisfied
    \begin{small}
    $$\begin{cases}
    \textbf{ } m\ell'+\ell \geq 2d+2 & \text{ if } ((2d-1)B-2K_{X} \cdot \Lambda) > 0 \text{ or } ((2d-1)B-2K_{X} \cdot \Lambda) \geq 0 \text{ with } h^{0}(\Lambda) \geq 4 \\
    \textbf{ } m\ell'+\ell \geq 2d+3 & \text{ if }  (dB-K_{X} \cdot \Lambda) > 0 \text{ or } (dB-K_{X} \cdot \Lambda) \geq 0 \text{ with } h^{0}(\Lambda) \geq 4 \\
    \textbf{ } m\ell'+\ell \geq 2d+4 & \text{ if }  (dB-K_{X} \cdot \Lambda) \geq 0.
    \end{cases}$$
    \end{small}
    Moreover, $L$ is normally generated if one of the following conditions are satisfied
    \begin{small}
    $$\begin{cases}
    \textbf{ } \ell \geq d+1 & \text{ if } ((2d-1)B-2K_{X} \cdot \Lambda) > 0 \text{ or } ((2d-1)B-2K_{X} \cdot \Lambda) \geq 0 \text{ with } h^{0}(\Lambda) \geq 4 \\
    \textbf{ } \ell  \geq d+2 & \text{ if }  (dB-K_{X} \cdot \Lambda) \geq 0.
    \end{cases}$$
    \end{small}
\end{theorem}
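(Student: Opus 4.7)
The plan is to adapt the proof of Theorem \ref{$(M_{2})$surf_nef_bundle} (the property $(M_{2})$ case), with the ``missing'' $K_{X}$ on the right-hand factor compensated by the sharper intersection hypotheses $((2d-1)B - 2K_{X})\cdot\Lambda \geq 0$ in place of $(dB - K_{X})\cdot\Lambda \geq 0$. First I would reduce the vanishing $H^{1}(M_{L}\otimes F) = 0$, where $F = mL' + N$, to the surjectivity of the multiplication map $\alpha \colon H^{0}(L) \otimes H^{0}(F) \to H^{0}(L + F)$ via the kernel bundle sequence. Writing $L = \Lambda + (\ell-1)B$, Observation \ref{[Observation-(1.2)]{BG00}} reduces $\alpha$ to the sequential surjectivity of the $B$-step multiplications $\beta_{k}\colon H^{0}(B) \otimes H^{0}(F + kB) \to H^{0}(F + (k+1)B)$ for $k = 0, \ldots, \ell - 2$, and the terminal $\Lambda$-step $\gamma \colon H^{0}(\Lambda) \otimes H^{0}(F + (\ell-1)B) \to H^{0}(L + F)$.

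The $B$-steps $\beta_{k}$ are handled by Theorem \ref{surj_mult_surf_maps}: writing $F + kB = K_{X} + L_{k}''$ with $L_{k}'' = (m-1)K_{X} + (m\ell' + k)B + N$ and using nefness of $K_{X}$ and $N$, one gets $(L_{k}''\cdot B) \geq (m\ell' + k)(B^{2})$, so the bound $m\ell' + \ell \geq 2d + 2$ (and its sharper variants), combined with $h^{0}(\Lambda) \geq 4$ where applicable, yields the required surjectivity at each step; the worst case $k = 0$ is controlled because $\ell$ is forced to be large when $m\ell'$ is small. For $\gamma$, I would choose a smooth $C' \in |\Lambda|$ by Bertini, set $G' = F + (\ell - 1)B$, and use the commutative diagram
\[
\begin{tikzcd}[row sep=large, column sep=large]
H^{0}(G') \otimes H^{0}(\mathcal{O}_{X}) \ar[r, hook] \ar[d, two heads] & H^{0}(G') \otimes H^{0}(\Lambda) \ar[r, two heads] \ar[d, "\gamma"] & H^{0}(G') \otimes W \ar[d] \\
H^{0}(G') \ar[r, hook] & H^{0}(G' + \Lambda) \ar[r] & H^{0}((G' + \Lambda)|_{C'})
\end{tikzcd}
\]
with $W = H^{0}(\Lambda)/H^{0}(\mathcal{O}_{X})$. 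The bottom row is short exact because $G' - K_{X} = (m-1)K_{X} + (m\ell' + \ell - 1)B + N$ is ample, so $H^{1}(G') = 0$ by Kodaira vanishing. Hence $\gamma$ is surjective iff the rightmost vertical map is, and by the base-point-free pencil trick this reduces to the inequality $h^{1}((G' - \Lambda)|_{C'}) \leq h^{0}(\Lambda) - 3$.

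The main obstacle is the intersection-number computation underlying this pencil-trick inequality. Using adjunction $K_{C'} = (K_{X} + \Lambda)|_{C'}$ and $2g' - 2 = 2(K_{X}\cdot\Lambda) + (B\cdot\Lambda)$, one obtains the key identity
\[
(G' - \Lambda)\cdot\Lambda - (2g' - 2) = ((2d-1)B - 2K_{X})\cdot\Lambda + (m-1)(K_{X}\cdot\Lambda) + (m\ell' + \ell - 2d - 2)(B\cdot\Lambda) + N\cdot\Lambda.
\]
Under the first hypothesis, strict positivity of the right-hand side forces $h^{1}((G' - \Lambda)|_{C'}) = 0$ by Serre duality on $C'$, which is stronger than the pencil-trick bound; in the equality case, $h^{0}(\Lambda) \geq 4$ absorbs the possible $h^{1} \leq 1$. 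Rewriting the identity as $2(dB - K_{X})\cdot\Lambda + (m-1)(K_{X}\cdot\Lambda) + (m\ell' + \ell - 2d - 3)(B\cdot\Lambda) + N\cdot\Lambda$ yields the second variant of the hypothesis, and an extra unit of slack in $(m\ell' + \ell)$ furnishes the third. The normal-generation statement then follows by applying the vanishing with $L' = L$ and $N = 0$ for every $m \geq 1$; the worst case $m = 1$ reduces the conditions $m\ell' + \ell \geq 2d + 2, 2d + 3, 2d + 4$ to $\ell \geq d + 1$, $\ell \geq d + 2$, $\ell \geq d + 2$ respectively, matching the stated bounds.
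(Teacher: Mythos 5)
Your proposal follows the paper's own route almost verbatim: reduce the vanishing to surjectivity of $\alpha$ via the kernel bundle sequence, split $\alpha$ into $B$-steps plus a terminal $\Lambda$-step using Observation \ref{[Observation-(1.2)]{BG00}}, run the base-point-free pencil trick on a smooth $C'\in|\Lambda|$, and compute degrees; your displayed identity is exactly the paper's formula $\mathrm{deg}((G'-\Lambda)_{C'})=2g'-2+(m\ell'+\ell-2d-3)(B\cdot\Lambda)+2((dB-K_{X})\cdot\Lambda)+(N'\cdot\Lambda)$ with $N'=(m-1)K_{X}+N$ rewritten, and your boundary treatment ($h^{1}\le 1$ in the degree-$(2g'-2)$ case absorbed by $h^{0}(\Lambda)\ge 4$), the Kodaira vanishing $H^{1}(G')=0$, and the specialization $\ell'=\ell$, $N=0$, worst case $m=1$ for normal generation all coincide with the paper's argument. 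That part is correct.

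The one genuine weak point is the first $B$-step $\beta_{0}$. Your remark that ``the worst case $k=0$ is controlled because $\ell$ is forced to be large when $m\ell'$ is small'' is a non sequitur: $\beta_{0}$ involves only $B$ and $F=mL'+N$, so the size of $\ell$ is irrelevant to it. Applying Theorem \ref{surj_mult_surf_maps} to $L''_{0}=(m-1)K_{X}+m\ell'B+N$ requires $(L''_{0}\cdot B)>2(B^{2})$, i.e.\ essentially $m\ell'\ge 3$, and in the borderline case $(L''_{0}\cdot B)=2(B^{2})$ that theorem asks for $h^{0}(B)\ge 4$, not $h^{0}(\Lambda)\ge 4$ as you invoke; neither condition is supplied by the stated hypotheses when, say, $m=1$, $\ell'=2$, $K_{X}\cdot B=N\cdot B=0$. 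The paper handles this step differently: it redoes the pencil trick on a smooth $C\in|B|$, getting $h^{1}((G-B)_{C})=0$ outright when $m\ell'\ge 3$, and for $m\ell'=2$ it distinguishes whether $N|_{C}$ is nontrivial (then $h^{1}=0$) or trivial, in which last sub-case it, too, quietly adds $h^{0}(B)\ge 4$ (and it never treats $m\ell'\le 1$). So your gap is localized and partly mirrors an unstated assumption in the paper's own proof, but as written your justification of $\beta_{0}$ for small $m\ell'$ does not go through; the repair is the paper's finer curve-level analysis on $C\in|B|$ (or an explicit extra hypothesis such as $m\ell'\ge 3$, resp.\ $h^{0}(B)\ge 4$).
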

\begin{proof}
    The vanishing (\ref{$(N_0)$nef_canonical_surface}) above is equivalent to the surjectivity
    \begin{equation}
        H^{0}(L) \otimes H^{0}(mL'+N) \xrightarrow[]{\alpha} H^{0}(mL'+L+N)
    \end{equation}
    By Observation to prove the surjectivity of $\alpha$ it is enough to prove the surjectivity of
    \begin{equation}
        H^{0}(B) \otimes H^{0}(mL'+N) \xrightarrow[\hspace{0.6cm}]{\beta} H^{0}(mL'+B+N)
    \end{equation}
    and
    \begin{equation}
        H^{0}(\Lambda) \otimes H^{0}(mL'+(\ell-1)B+N) \xrightarrow[\hspace{0.6cm}]{\gamma} H^{0}(mL'+L+N)
    \end{equation}
    Since $H^{i}(G-iB) = H^{i}(K_{X}+(m\ell'-i)B+N')$ for $1 \leq i \leq 2 = \mathrm{dim}(X),$ $G = mL'+N$ and $N' = (m-1)K_{X}+N,$ one has $\mathrm{reg}_{B}(mL'+N) \leq 0$ or equivalently if $m\ell'-2 \geq 1$ or $\ell' \geq 3 \geq \ceil[\bigg]{\dfrac{3}{m}}.$ Now by Bertini's theorem, there is a smooth curve $C$ in $|B|$ and there is a diagram
    \begin{small}
    $$
        \begin{tikzcd}[row sep = large, column sep = large]
             H^{0}(G) \otimes H^{0}(\mathcal{O}_{X}) \arrow[r, hook] \arrow[d, twoheadrightarrow] &  H^{0}(G) \otimes H^{0}(B) \arrow[r, twoheadrightarrow] \arrow[d, "\beta"] & H^{0}(G) \otimes W \arrow[d, "\beta^{W} "] \\
             H^{0}(G) \arrow[r, hook] & H^{0}(G+B) \arrow[r, twoheadrightarrow] & H^{0}((G+B)|_{C})
        \end{tikzcd}
    $$
    \end{small}
    where $W = H^{0}(B)/H^{0}(\mathcal{O}_{X}).$ Since the left vertical map is surjective, $\beta$ is surjective if and only if the map $\beta^{W}$ is surjective. Finally, by the base point free pencil trick $\beta^{W}$ is surjective if
    \begin{equation}
    \label{pencil_trick_adjoint_projectively_normalI}
        h^{1}((G-B)_{C}) \leq \mathrm{dim}(W)-2 = h^{0}(B)-3
    \end{equation}
    Now, we prove the surjectivity of the map $\beta.$ Now, since $\mathrm{deg}((G-B)_{C}) = (K_{X}+B+(m\ell'-2)B+N)|_{C} = \mathrm{deg}(K_{C}+(m\ell'-2)B_{C}+N_{C}) = 2g-2+(m\ell'-2)(B^{2})+(N \cdot C) > 2g-2$ if $(m\ell'-2) \geq 1$ or in other words if $\ell' \geq 3 \geq \ceil[\bigg]{\dfrac{3}{m}}.$ \\
    Moreover, for $m\ell-2 = 0$ or equivalently $\ell' = \dfrac{2}{m},$ which can happen only if $m = 1, 2$ (since $\ell'$ is an integer) one has $\mathrm{deg}((G-B)|_{C}) = 2g-2+(N \cdot C)$ and therefore either $h^{1}((G-B)_{C}) = 0,$ which happens if $(N \cdot C) > 0$ or if $N_{C}$ is a non-zero divisor of degree zero, or if $h^{1}((G-B)|_{C}) = 1$ if $N_{C} = 0,$ which happens for example if $G = K_{X}+2B$ or in general if $G_{C} = K_{C}+B_{C}.$
    In this second case, (\ref{pencil_trick_adjoint_projectively_normalI}) $h^{0}(B) \geq 4$ in addition. \\
    Now we show that $\gamma$ is surjective. Let $G' = mL'+(\ell-1)B+N$ and we choose a smooth curve $C'$ in $|K_{X}+B|$ which exists by Bertini's theorem. Then, from the diagram
    \begin{small}
    $$
        \begin{tikzcd}[row sep = large, column sep = large]
             H^{0}(G') \otimes H^{0}(\mathcal{O}_{X}) \arrow[r, hook] \arrow[d, twoheadrightarrow] &  H^{0}(G') \otimes H^{0}(\Lambda) \arrow[r, twoheadrightarrow] \arrow[d, "\gamma"] & H^{0}(G') \otimes W' \arrow[d, "\gamma' "] \\
             H^{0}(G') \arrow[r, hook] & H^{0}(G'+\Lambda) \arrow[r, twoheadrightarrow] & H^{0}((G'+\Lambda)|_{C'})
        \end{tikzcd}
    $$
    \end{small}
    Since the left vertical map is surjective, $\gamma$ is surjective if and only if the map $\gamma'$ is surjective. Finally, by the base point free pencil trick $\gamma'$ is surjective if
    \begin{equation}
    \label{pencil_trick_adjoint_projectively_normalII}
        h^{1}((G'-\Lambda)_{C'}) \leq \mathrm{dim}(W')-2 = h^{0}(\Lambda)-3
    \end{equation}
    Since $B$ is ample and $K_{X}$ is nef, $\Lambda = K_{X}+B$ is also ample. Since $\Lambda$ is ample and base point free, $h^{0}(\Lambda) \geq 3.$ \\
    Next, $\mathrm{deg}((G'-\Lambda)_{C'}) = (K_{X}+\Lambda+(m\ell'+\ell-2d-3)B+2(dB-K_{X})+N' \cdot \Lambda)$ 
    $$ = 2g'-2+(m\ell'+\ell-2d-3)(B \cdot \Lambda)+2(dB-K_{X} \cdot \Lambda)+(N' \cdot \Lambda)$$
    Now, $h^{1}((G-\Lambda)_{C'}) = 0$ when $\mathrm{deg}((G'-\Lambda)_{C'}) > 2g'-2$ which happens if $m\ell'+\ell \geq 2d+4$ or if $m\ell'+\ell \geq 2d+3$ when $(dB-K_{X} \cdot \Lambda) > 0$ or if $m\ell'+\ell \geq 2d+2$ when $2(dB-K_{X} \cdot \Lambda)-(B \cdot \Lambda) = ((2d-1)B-2K_{X} \cdot \Lambda) > 0.$ In this case, (\ref{pencil_trick_adjoint_projectively_normalII}) is true because $h^{0}(\Lambda) \geq 3.$ Also, if $m\ell'+\ell = 2d+3$ with $(dB-K_{X} \cdot \Lambda) = 0$ or if $m\ell'+\ell = 2d+2$ when $((2d-1)B-2K_{X} \cdot \Lambda) = 0$ and $N'_{C'} = 0$ in both cases, then $h^{1}((G-\Lambda)_{C'}) = 0.$ On the other hand, if in the last mentioned two cases $N'_{C'}$ is degree zero non-zero divisor on $C',$ then $h^{1}((G-\Lambda)_{C'}) = 1$ and in this latter case we need $h^{0}(\Lambda) \geq 4.$
    \par
    \textbf{ } Now, coming to the case of projective normality, we choose $\ell' = \ell$ and therefore $L' = L$ and show the vanishing for all integers $m \geq 1.$ From the discussion above we obtain the conditions $\ell \geq 3$ or $\ell \geq 2$ when $h^{0}(B) \geq 4$ for the surjectivity of $\beta.$ We the conditions
    $\ell \geq d+2 \geq \ceil[\bigg]{\dfrac{2d+4}{m+1}}$ or $\ell \geq d+2 \geq \ceil[\bigg]{\dfrac{2d+3}{m+1}}$ when $(dB-K_{X} \cdot \Lambda) > 0$ or when $(dB-K_{X} \cdot \Lambda) \geq 0$ with $h^{0}(\Lambda) \geq 4$ or $\ell \geq d+1 \geq \ceil[\bigg]{\dfrac{2d+2}{m+1}}$ if $((2d-1)B-2K_{X} \cdot \Lambda) > 0$ or when $((2d-1)B-2K_{X} \cdot \Lambda) \geq 0$ with $h^{0}(\Lambda) \geq 4.$ Where the second inequality for each lower bound of $\ell$ is true for all integers $m \geq 1.$  This completes the proof.
\end{proof}
\begin{remark}
    Notice that in \cite{BL25}, the authors prove in particular that  for any ample and base point free line bundle $B$ on a smooth projective variety $X$ of dimension $n,$ $\mathcal{L}_{\ell} = K_{X}+\ell B$ is normally generated for $\ell \geq n+1.$ Thus, for any surface $X,$ one has the bound $\ell \geq 3$ for projective normality. Thus, our bound $\ell \geq d+1$ in Theorem does not follow from their Theorem for $d = 1.$ 
\end{remark}
\begin{example}
    The result above says in particular that for any ample and base point free line bundle $B$ on a minimal surface $X$ of Kodaira dimension $\kappa = 0$, $L = K_{X}+\ell B$ is normally generated for all $\ell \geq 2.$
    In particular, for a $K3$ or Abelian surface $L = \ell B$ is normally generated for all $\ell \geq 2.$
\end{example}
\medskip

\end{document}